\newtheorem{theorem}{Theorem}[section]
\newtheorem{alphatheorem}{Theorem}
\theoremstyle{definition}
\newtheorem{definition}[theorem]{Definition}
\newtheorem{proposition}[theorem]{Proposition}
\newtheorem{step}{Step}
\newtheorem*{proposition*}{Proposition}
\newtheorem*{observation*}{Observation}
\newtheorem*{claim*}{Claim}
\newtheorem*{lemma*}{Lemma}
\newtheorem{example}[theorem]{Example}
\newtheorem{corollary}[theorem]{Corollary}
\newtheorem{remark}[theorem]{Remark}
\newtheorem*{conjecture*}{Conjecture}
\newtheorem*{convention*}{Convention}
\theoremstyle{plain}
\newtheorem{lemma}[theorem]{Lemma}
\newcommand{\bra}[1]{\left(#1\right)}
\newcommand{\abra}[1]{ \left< #1 \right> }
\renewcommand{\tilde}{\widetilde}
\newcommand{\abs}[1]{\left|#1\right|}
\newcommand{\norm}[1]{\left\lVert #1 \right\rVert}
\newcommand{\cZ}{\mathcal{Z}}
\newcommand{\id}{\mathrm{id}}
\newcommand{\Sym}{\operatorname{Sym}}
\newcommand{\cT}{\mathscr{T}}
\newcommand{\cV}{\mathcal{V}}
\newcommand{\cU}{\mathcal{U}}
\newcommand{\cQ}{\mathcal{Q}}
\newcommand{\bfw}{\mathbf{w}}
\newcommand{\rb}[1]{\left( #1 \right)}
\newcommand{\e}{\varepsilon}
\newcommand{\NN}{\mathbb{N}}
\newcommand{\QQ}{\mathbb{Q}}
\newcommand{\bblambda}{{\boldsymbol\lambda}}
\newcommand{\bbalpha}{{\boldsymbol\alpha}}
\newcommand{\bbrho}{{\boldsymbol\rho}}
\newcommand{\bbpi}{{\boldsymbol\pi}}
\DeclareRobustCommand{\coprod}{\mathop{\text{\fakecoprod}}}
\newcommand{\fakecoprod}{%
  \sbox0{$\prod$}%
  \smash{\raisebox{\dimexpr.9625\depth-\dp0}{\scalebox{1}[-1]{$\prod$}}}%
  \vphantom{$\prod$}%
}
\newcommand{\Ob}{\mathrm{Ob}}
\DeclareMathOperator*{\EE}{\scalerel*{\mathbb{E}}{\textstyle\sum}}
\newcommand{\ZZ}{\mathbb{Z}}
\newcommand{\RR}{\mathbb{R}}
\newcommand{\N}{\NN}
\newcommand{\Z}{\ZZ}
\newcommand{\CC}{\mathbb{C}}
\newcommand{\cA}{\mathcal{A}}
\newcommand{\cF}{\mathcal{F}}
\newcommand{\Mor}{\mathrm{Mor}}
\newcommand{\U}{\mathrm{U}}
\newcommand{\floor}[1]{\left\lfloor #1 \right\rfloor}
\newcommand{\word}[1]{\mathbf{#1}}
\newcommand{\set}[2]{\left\{ #1 \ \middle| \ #2 \right\} }
\newcommand{\parbreak}[1]{
\begin{center}
***
\end{center}
}
\newcommand{\HK}{\operatorname{HK}}
\newcommand{\length}{\operatorname{length}}
\newcommand{\bb}{\mathbf}
\newcommand{\ifbra}[1]{\left\llbracket #1 \right\rrbracket}
\newcommand{\ceil}[1]{\left\lceil #1 \right\rceil}
\newcommand{\gea}{group extension of an automaton}
\newcommand{\geas}{group extensions of automata}
\newcommand{\geka}{group extension of a $k$-automaton}
\newcommand{\geaab}{GEA}
\newcommand{\gekaab}{$k$-GEA}
\newcommand{\egeaab}{efficient \geaab{}}
\newcommand{\egea}{efficient \gea{}}
\newcommand{\egeka}{efficient \geka{}}
\newcommand{\geaoab}{GEAO}
\newcommand{\egeaoab}{efficient \geaoab{}}
\newcommand{\gekaoab}{$k$-GEAO}
\newcommand{\gekao}{group extension of a $k$-automaton with output}
\newcommand{\geao}{group extension of an automaton with output}
\newcommand{\End}{\operatorname{End}}
\begin{document}

\begin{frontmatter}[classification=text]


\author[jb]{Jakub Byszewski\thanks{Supported by National Science Centre, Poland grant number 2018/29/B/ST1/01340.}}
\author[jk]{Jakub Konieczny\thanks{Supported by  the ERC grant ErgComNum 682150 at the Hebrew University of Jerusalem and is currently working within the framework of the LABEX MILYON (ANR-10-LABX-0070) of Universit\'{e} de Lyon, within the program "Investissements d'Avenir" (ANR-11-IDEX-0007) operated by the French National Research Agency (ANR). He also acknowledges support from the Foundation for Polish Science (FNP). }}
\author[cm]{Clemens M\"{u}llner\thanks{Supported by  European Research Council (ERC) under the European Union’s
Horizon 2020 research and innovation programme under the Grant Agreement No 648132,
by the project F55-02 of the Austrian Science Fund FWF which is part of the Special Research Program “Quasi-Monte Carlo Methods: Theory and Applications” and by Project I1751
(FWF), called MUDERA (Multiplicativity, Determinism, and Randomness).}}

\begin{abstract}
We show that any automatic sequence can be separated into a structured part and a Gowers uniform part in a way that is considerably more efficient than guaranteed by the Arithmetic Regularity Lemma. For sequences produced by strongly connected and prolongable automata, the structured part is rationally almost periodic, while for general sequences the description is marginally more complicated. In particular, we show that all automatic sequences orthogonal to periodic sequences are Gowers uniform.
As an application, we obtain for any $l \geq 2$ and any automatic set $A \subset \NN_0$ lower bounds on the number of $l$-term arithmetic progressions -- contained in $A$ -- with a given difference. The analogous result is false for general subsets of $\NN_0$ and progressions of length $\geq 5$.\end{abstract}
\end{frontmatter}


\section{Introduction}\label{sec_1}

Automatic sequences, that is, sequences computable by finite automata, constitute one of the basic classes of sequences defined in terms of complexity. Being both simple enough to be rigorously analysed and complex enough to be interesting, they are the subject of extensive investigation in various branches of mathematics and computer science. (For precise definitions and extended background, see Section \ref{sec:Auto}.)

The study of various notions of uniformity for automatic sequences can be traced back at least as far as 1968, when Gelfond \cite{Gelfond-1967} showed that the integers whose sum of base-$k$ digits lie in a given residue class modulo $l$ are well distributed in arithmetic progressions (subject to certain congruence conditions). In the same paper, Gelfond posed several influential questions on distribution of the sum of base-$k$ digits within residue classes along subsequences which sparked much subsequent research \cite{Kim-1999,  MauduitRivat-2009, MauduitRivat-2010, MauduitRivat-2015, DrmotaMauduitRivat-TM-squares, Muellner2018, Mauduit2018, Drmota2011, MullnerSpiegelhofer, Spiegelhofer2018}. An accessible introduction can be found in \cite{Morgenbesser-thesis}.

A systematic study of various notions of pseudorandomness was undertaken by Mauduit and Sark\"{o}zy in \cite{MauduitSarkozy-1998} for the Thue--Morse and Rudin--Shapiro sequences. Specifically, they showed that these sequences do not correlate with periodic sequences, but do have large self-correlations. 
In this paper we consider a notion of pseudorandomness originating from higher order Fourier analysis, corresponding to Gowers uniformity norms (for more on Gowers norms, see Section \ref{sec:Gowers}). The second-named author showed \cite{JK-Gowers-Thue-Morse} that the Thue--Morse and Rudin--Shapiro sequences are highly Gowers uniform of all orders. Here, we obtain a similar result in a much more general context.

The celebrated Inverse Theorem for Gowers uniformity norms \cite{GreenTaoZiegler-2012} provides a helpful criterion for Gowers uniformity. It asserts, roughly speaking, that any sequence which does not correlate with nilsequences of bounded complexity has small Gowers norms. 
We do not follow this path here directly, but want to point out some striking similarities to related results. 
For the purposes of this paper, there is no need to define what we mean by a nilsequence or its complexity, although we do wish to point out that nilsequences include polynomial phases, given by $n \mapsto e\bra{p(n)}$ where $e(t) = e^{2\pi i t}$ and $p \in \RR[x]$.

For a number of natural classes of sequences, in order to verify Gowers uniformity of all orders it is actually sufficient to verify lack of correlation with linear phases $n \mapsto e(n\alpha)$ where $\alpha \in \RR$, or even just with periodic sequences. In particular, Frantzikinakis and Host \cite{FrantzikinakisHost-2017} showed that a multiplicative sequence which does not correlate with periodic sequences is Gowers uniform of all orders.
Eisner and the second-named author showed \cite{JK-AutSeq-ergo} that an automatic sequence which does not correlate with periodic sequences also does not correlate with any polynomial phases. This motivates the following result. For the sake of brevity, we will say that a bounded sequence $a \colon \NN_0 \to \CC$ is \emph{highly Gowers uniform} if 
\begin{equation}\label{eq:def-of-h-G-uni}
\text{ for each } d \geq 1 \text{ there exists } c = c_d > 0 \text{ such that } \norm{a}_{U^d[N]} \ll_d N^{-c}.
\end{equation}
(See Sec.\ \ref{sec:Gowers}.\ref{ssec:asymptnot} for the asymptotic notation and  Sec.\ \ref{sec:Gowers}.\ref{ssec:Gowers:basic} for the definition of $\norm{a}_{U^d[N]}$.)

\begin{alphatheorem}\label{thm:correlations}
Let $a \colon \NN_0 \to \CC$ be an automatic sequence and suppose that $a$ does not correlate with periodic sequences in the sense that
\[
	\lim_{N \to \infty} \frac{1}{N} \sum_{n=0}^{N-1} a(n) b(n) = 0
\]
for any periodic sequence $b \colon \NN_0 \to \CC$. Then $a$ is highly Gowers uniform.
\end{alphatheorem}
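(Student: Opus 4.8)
The plan is to reduce Theorem~\ref{thm:correlations} to a structural decomposition of automatic sequences, which is the real technical heart of the paper. The first step is to recall that any automatic sequence $a$ can be written (after passing to the natural splitting along fibers of the automaton) as a combination of sequences of the form $n \mapsto f(n, q(n))$ where $q$ is produced by a minimal automaton and $f$ is a suitable output function, and more importantly that every automatic sequence admits a Weyl-type decomposition $a = a_{\mathrm{str}} + a_{\mathrm{unif}}$ in which $a_{\mathrm{str}}$ is (essentially) rationally almost periodic -- a uniform limit of periodic sequences -- and $a_{\mathrm{unif}}$ is highly Gowers uniform. Such a decomposition is exactly what the paper announces in the abstract as being ``considerably more efficient than the Arithmetic Regularity Lemma,'' so I would quote it as the main black box (proved in the body of the paper) and use it here. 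Granting this, the argument is short: the hypothesis that $a$ does not correlate with any periodic sequence forces $a_{\mathrm{str}}$ to be negligible.

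In more detail, the key steps are as follows. \textbf{Step 1.} Invoke the structure theorem to get $a = a_{\mathrm{str}} + a_{\mathrm{unif}}$ with $a_{\mathrm{unif}}$ highly Gowers uniform and $a_{\mathrm{str}}$ a uniform limit of periodic sequences, with quantitative control on the rate. \textbf{Step 2.} Show $a_{\mathrm{str}} = 0$ in the relevant averaged sense. Since highly Gowers uniform sequences in particular have $U^1$-norm tending to zero, $a_{\mathrm{unif}}$ has vanishing mean along every arithmetic progression, hence does not correlate with any periodic sequence. Therefore $a_{\mathrm{str}} = a - a_{\mathrm{unif}}$ also fails to correlate with any periodic sequence. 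But $a_{\mathrm{str}}$ is itself approximated uniformly by periodic sequences, so testing its non-correlation against its own periodic approximants $p_k$ (with $\norm{a_{\mathrm{str}} - p_k}_\infty \to 0$) gives
\[
	\lim_{N\to\infty}\frac1N\sum_{n=0}^{N-1} a_{\mathrm{str}}(n)\overline{p_k(n)} = 0,
\]
while on the other hand this average is within $O(\norm{a_{\mathrm{str}}-p_k}_\infty)$ of $\lim_N \frac1N\sum_n \abs{p_k(n)}^2$. Letting $k\to\infty$ forces $\lim_N \frac1N\sum_n \abs{a_{\mathrm{str}}(n)}^2 = 0$. \textbf{Step 3.} Conclude. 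Once $a_{\mathrm{str}}$ has density zero in $\ell^2$, we have $\norm{a}_{U^d[N]} \le \norm{a_{\mathrm{unif}}}_{U^d[N]} + \norm{a_{\mathrm{str}}}_{U^d[N]}$; the first term is $\ll N^{-c_d}$ by construction, and for the second one uses that a bounded sequence which is small in $\ell^2$-density is small in every $U^d$ norm -- more precisely $\norm{a_{\mathrm{str}}}_{U^d[N]}^{2^d} \ll \frac1N\sum_{n<N}\abs{a_{\mathrm{str}}(n)}$ up to lower order terms, by a standard Gowers--Cauchy--Schwarz estimate using the boundedness of $a_{\mathrm{str}}$. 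Combining, $\norm{a}_{U^d[N]} \ll N^{-c}$ for a possibly smaller $c>0$, which is \eqref{eq:def-of-h-G-uni}. A small amount of care is needed to make Step~2 fully quantitative if one wants an explicit power saving on $\norm{a_{\mathrm{str}}}$ rather than merely $o(1)$; this can be extracted from the quantitative form of the structure theorem (the periodic part has period and approximation error both controlled polynomially in the scale), together with the classical fact that an automatic sequence orthogonal to periodics is orthogonal to each fixed periodic sequence with a power-saving rate.

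The main obstacle is entirely in the black box I am invoking: establishing the efficient structure theorem $a = a_{\mathrm{str}}+a_{\mathrm{unif}}$ with polynomial-type bounds. That is where the automaton machinery -- synchronizing/strongly connected components, the analysis of the associated nilsystem or rather its degenerate (rationally almost periodic) shape, and a Gowers-norm estimate for the ``uniform'' fiber contributions à la the second author's work on Thue--Morse -- all has to be done, and it is genuinely hard. Given that theorem, the deduction of Theorem~\ref{thm:correlations} above is essentially formal: it is the observation that ``no correlation with periodics'' kills the structured component, leaving only the already-uniform component. I would also remark that this recovers, with power savings, the qualitative statement of Eisner and the second author that automatic sequences orthogonal to periodics are orthogonal to polynomial phases, since polynomial phases are highly non-uniform in the $U^d$ sense for $d$ large.
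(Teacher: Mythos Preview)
Your overall strategy --- decompose $a = a_{\mathrm{str}} + a_{\mathrm{uni}}$ and show the hypothesis kills $a_{\mathrm{str}}$ --- is exactly the paper's. But Step~2 has a real gap: you assume $a_{\mathrm{str}}$ is rationally almost periodic and test it against its own periodic approximants. The decomposition valid for \emph{arbitrary} automatic sequences (Theorem~\ref{thm:main_nobasis}) only guarantees that $a_{\mathrm{str}}$ is \emph{weakly structured}, i.e.\ a function of a periodic part, a forward synchronising part, and a \emph{backward} synchronising part. Backward synchronising sequences --- those depending only on the leading base-$k$ digits, such as $n \mapsto \length_k(n) \bmod 2$ --- are in general not RAP, so there are no periodic approximants $p_k$ to test against. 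The RAP form you invoke is Theorem~\ref{thm:main_simple}, which requires the automaton to be strongly connected and prolongable; Theorem~\ref{thm:correlations} carries no such hypothesis. (A smaller point: even in the RAP case the approximation is Besicovitch, not $L^\infty$ as you write, though that part is easily repaired.)

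The paper closes this gap with Lemma~\ref{prop:unique_decomp}: any weakly structured sequence orthogonal to every periodic sequence is supported on a set of size $\ll N^{1-c}$. The backward synchronising component is handled by varying the \emph{range of summation} rather than the test sequence: fix a residue $r$ and a forward-synchronising word $\word w$ (this yields a genuinely periodic test set $P$), and then apply the orthogonality relation on subintervals $\big[[\word v]_k k^L,\ ([\word v]_k+1)k^L\big)$ with $\word v$ synchronising for the backward component, on which $a_{\mathrm{bs}}$ is constant. This forces $F(x,y,z)=0$ for every triple $(x,y,z)$ reachable via synchronising data, leaving only the sparse set of $n$ whose expansion fails to synchronise one of the two components. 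The power saving on $\norm{a_{\mathrm{str}}}_{U^d[N]}$ then follows at once from Proposition~\ref{prop:Ud<Lp}, without the extra quantitative inputs you sketch at the end.
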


In fact, we obtain a stronger decomposition theorem. The Inverse Theorem is essentially equivalent to the Arithmetic Regularity Lemma \cite{GreenTao-2010-ARL}, which asserts, again roughly speaking, that any $1$-bounded sequence $f \colon [N] \to [-1,1]$ can be decomposed into a sum 
\begin{equation}
\label{eq:arithmetic-regularity}
	f = f_{\mathrm{nil}} + f_{\mathrm{sml}} + f_{\mathrm{uni}},
\end{equation}
where the structured component $f_{\mathrm{nil}}$ is a (bounded complexity) nilsequence, $f_{\mathrm{sml}}$ has small $L^2$ norm and $f_{\mathrm{uni}}$ has small Gowers norm of a given order. In light of the discussion above, one might expect that in the case when $f$ is an automatic sequence, it should be possible to ensure that $f_{\mathrm{nil}}$ is essentially a periodic sequence.

This expectation is confirmed by the following new result, which is a special case of our main theorem. For standard terminology used, see Section \ref{sec:Gowers} (for Gowers norms) and \ref{sec:Auto} (for automatic sequences). 
 Rationally almost periodic sequences were first introduced in \cite{BergelsonRuzsa-2002}, and their properties are studied in more detail in \cite{BergelsonKulagaPrzymusLemanczykRichter-2016}. A sequence is rationally almost periodic (RAP) if it can be approximated by periodic sequences arbitrarily well in the Besicovitch metric; i.e., $x \colon \NN_0 \to \Omega$ is RAP if for any $\e > 0$ there is a periodic sequence $y \colon \NN_0 \to \Omega$ with $\abs{\set{ n < N}{ x(n) \neq y(n) }}/ N \leq \e$ for large enough $N$.

\begin{alphatheorem}\label{thm:main_simple}
	Let $a\colon \NN_0 \to \CC$ be an automatic sequence produced by a strongly connected, prolongable automaton. 
	Then there exists a decomposition 
	\begin{equation}
	\label{eq:001}
	 a(n) = a_{\mathrm{str}}(n) + a_{\mathrm{uni}}(n),
	\end{equation}
	where $a_{\mathrm{str}}$ is rationally almost periodic and $a_{\mathrm{uni}}$ is highly Gowers uniform (cf.\ \eqref{eq:def-of-h-G-uni}).
\end{alphatheorem}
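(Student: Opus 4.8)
The plan is to reduce the statement to an analysis of the "nilpotent" or "cyclic-covering" structure of the automaton, and to extract the rationally almost periodic part as the contribution coming from the abelianization of the transition monoid acting on the underlying base-$k$ odometer. More precisely, recall that an automatic sequence produced by a strongly connected, prolongable $k$-automaton can be understood via its transition monoid and the associated action on the set of states; the key structural input (to be borrowed from earlier sections, in the spirit of the synchronizing/orbit decomposition of M\"ullner and of the second author) is that $a$ admits a representation as a sum of a term controlled by a finite quotient — capturing the behaviour along residue classes modulo powers of $k$ and modulo the relevant periods — plus a term built from a genuinely "non-commutative" part of the automaton. I would first make this dichotomy precise: decompose $a = a_{\mathrm{str}} + a_{\mathrm{uni}}$ where $a_{\mathrm{str}}$ is the conditional expectation of $a$ onto the $\sigma$-algebra generated by all periodic sequences (equivalently, the projection onto the Kronecker-type factor arising from the profinite part of the automaton), and $a_{\mathrm{uni}}$ is the remainder.

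The first main step is to show that $a_{\mathrm{str}}$ is rationally almost periodic. Since $a_{\mathrm{str}}$ is by construction the limit (in Besicovitch norm) of its projections onto periodic sequences of period $q_j$ for a suitable increasing sequence $q_j$ (powers of $k$ times periods coming from cycle lengths in the automaton), I need the error of the $q_j$-periodic approximation to tend to $0$. This is where the prolongability and strong connectivity are used: they guarantee that the relevant "return-time" or "synchronization" data equidistributes, so that the $L^2$-error of the level-$j$ approximation decays, which upgrades $L^2$-approximation to Besicovitch (hence $\ell^1$-density) approximation by a standard argument using that the approximants take finitely many values. The second, and harder, main step is to show that $a_{\mathrm{uni}}$ is highly Gowers uniform, i.e.\ satisfies \eqref{eq:def-of-h-G-uni}. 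Here the strategy is to invoke Theorem~\ref{thm:correlations}: one checks that $a_{\mathrm{uni}}$ is itself (essentially) an automatic sequence — or a bounded-complexity combination of automatic sequences — and that it does not correlate with any periodic sequence, which holds essentially by construction since we subtracted off the full periodic part. A small technical point is that $a_{\mathrm{uni}}$ need not literally be automatic because $a_{\mathrm{str}}$ is only RAP, not periodic; this is handled by truncating $a_{\mathrm{str}}$ to a genuinely periodic sequence $a_{\mathrm{str}}^{(j)}$ of period $q_j$, noting $a - a_{\mathrm{str}}^{(j)}$ \emph{is} automatic (a finite-state combination of $a$ with the periodic sequence), applying Theorem~\ref{thm:correlations} to it, and then letting $j\to\infty$ while controlling the $U^d[N]$ norm of the small correction $a_{\mathrm{str}} - a_{\mathrm{str}}^{(j)}$ by its (small) $L^\infty$-on-a-density-$1$-set bound — using that the $U^d$ norm is dominated by a power of the $L^{2^d}$ norm for bounded functions.

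I expect the genuine obstacle to be verifying the non-correlation hypothesis of Theorem~\ref{thm:correlations} for the truncated remainder uniformly in the truncation level, and more fundamentally, identifying the correct candidate for $a_{\mathrm{str}}$ in the first place so that it is simultaneously (i) RAP and (ii) such that the remainder carries no periodic correlation. The natural candidate — the projection onto the closed span of periodic sequences in the Besicovitch Hilbert space — is conceptually clean but requires proving it is well-defined, takes values in a bounded set, and is approximable by \emph{periodic} (not merely almost periodic) sequences with explicit error decay; the explicit decay is what ultimately feeds the quantitative bound $N^{-c}$ rather than a merely qualitative statement. Once this projection is shown to coincide with the structured part extracted from the automaton's profinite factor, properties (i) and (ii) follow from the structure theory, and the Gowers-uniformity of the remainder is then a black-box application of Theorem~\ref{thm:correlations} combined with the elementary $U^d$-versus-$L^p$ comparison. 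The remaining estimates — decay rates of the periodic approximations, and the interchange of limits — are routine but need to be done with explicit exponents to land inside the definition \eqref{eq:def-of-h-G-uni}.
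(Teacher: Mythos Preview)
Your proposal has a fundamental circularity. You invoke Theorem~\ref{thm:correlations} to establish that $a_{\mathrm{uni}}$ is highly Gowers uniform, but in this paper Theorem~\ref{thm:correlations} is \emph{derived} from Theorem~\ref{thm:main_nobasis} (see the proof in Section~4.2), and Theorem~\ref{thm:main_nobasis} is in turn derived from Theorem~\ref{thm:main_sc_nobasis}, which is the strongly-structured refinement of the very statement you are proving. Theorem~\ref{thm:correlations} is thus not available as a black box here; the machinery that proves it (efficient group extensions of automata, the representation-theoretic splitting in Section~5, and the recursive cube-groupoid estimates of Sections~6--7) is precisely what is needed to establish Theorem~\ref{thm:main_simple} in the first place.

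Even granting Theorem~\ref{thm:correlations} as an external input, your truncation argument does not close. The hypothesis of Theorem~\ref{thm:correlations} is that the sequence has \emph{zero} asymptotic correlation with every periodic sequence, not merely small correlation. The truncated remainder $a - a_{\mathrm{str}}^{(j)}$ still correlates with periodic sequences --- specifically with the portion of $a_{\mathrm{str}}$ not captured at level~$j$ --- so the hypothesis fails for every finite $j$, and there is no mechanism to pass to the limit while retaining the power-saving bound $N^{-c}$. You correctly identify this as ``the genuine obstacle'' but do not resolve it; what would be required is a quantitative version of Theorem~\ref{thm:correlations} with explicit dependence of the Gowers bound on the size of the periodic correlations, and proving such a statement is essentially the content of the paper.

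The paper's route is different in kind: it identifies $a_{\mathrm{str}}$ \emph{explicitly} as a strongly structured sequence (a function of a periodic sequence with period coprime to $k$ and a forward-synchronising sequence), arising from the irreducible representations $\rho$ of the label group $G$ with $G_0 \subset \ker\rho$; the high Gowers uniformity of the remaining basic pieces is proved directly via recursive operator-valued averages and a Perron--Frobenius argument, never passing through an orthogonality-to-periodic criterion. The RAP property of $a_{\mathrm{str}}$ is then a one-line consequence of the known fact that forward-synchronising automatic sequences are RAP.
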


Note that any RAP sequence can be decomposed as the sum of a periodic sequence and a sequence with a small $L^1$ norm. Hence, \eqref{eq:001} can be brought into the form analogous to \eqref{eq:arithmetic-regularity}, with a periodic sequence in place of a general nilsequence.
Furthermore, this decomposition works simultaneously for all orders.

For general automatic sequences we need a more general notion of a structured sequence. There are three basic classes of $k$-automatic sequences which fail to be Gowers uniform, which we describe informally as follows:
\begin{enumerate}[wide]
\item periodic sequences, whose periods may be assumed to be coprime to $k$;
\item sequences which are only sensitive to terminal digits, such as $\nu_k(n) \bmod{2}$ where $\nu_k(n)$ is the largest power of $k$ which divides $n$;
\item sequences which are only sensitive to initial digits, such as $\nu_k(n^{\mathrm{rev}}_k +1) \bmod{2}$ where $n^{\mathrm{rev}}_k$ denotes the result of reversing the base $k$ digits of $n$. 
\end{enumerate}
By changing the basis, we can include in the last category also sequences which depend on the length of the expansion of $n$. For instance, if $\length_k(n)$ denotes the length of the expansion of $n$ in base $k$ then $\length_k(n) \bmod{2}$ depends only on the leading digit of $n$ in base $k^2$.

Our main result asserts that any automatic sequence can be decomposed as the sum of a structured part and a highly Gowers uniform part, where the structured part is a combination of the examples outlined above. More precisely, let us say that a $k$-automatic sequence $a \colon \NN_0 \to \Omega$ is \emph{weakly structured} if there exist a periodic sequence $a_{\mathrm{per}} \colon \NN_0 \to \Omega_{\mathrm{per}}$ with period coprime to $k$, a forward synchronising $k$-automatic sequence $a_{\mathrm{fs}} \colon \NN_0 \to \Omega_{\mathrm{fs}}$ and a backward synchronising $k$-automatic sequence $a_{\mathrm{bs}} \colon \NN_0 \to \Omega_{\mathrm{bs}}$, as well as a map $F \colon \Omega_{\mathrm{per}} \times \Omega_{\mathrm{fs}} \times \Omega_{\mathrm{bs}} \to \Omega$ such that 
\begin{equation}\label{eq:def-of-wk-str}
	a(n) = F\bra{a_{\mathrm{per}}(n), a_{\mathrm{fs}}(n), a_{\mathrm{bs}}(n)}.
\end{equation}
(For definitions of synchronising sequences, we again refer to Sec.\ \ref{sec:Auto}.)



\begin{alphatheorem}\label{thm:main_nobasis}
	Let $a\colon \NN_0 \to \CC$ be an automatic sequence. Then there exists a decomposition 
	\begin{equation}
	\label{eq:003}
	 a(n) = a_{\mathrm{str}}(n) + a_{\mathrm{uni}}(n),
	\end{equation}
	where $a_{\mathrm{str}}$ is weakly structured (cf.\ \eqref{eq:def-of-wk-str}) and $a_{\mathrm{uni}}$ is highly Gowers uniform (cf.\ \eqref{eq:def-of-h-G-uni}).
\end{alphatheorem}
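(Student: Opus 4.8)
\textbf{Proof strategy for Theorem~\ref{thm:main_nobasis}.}
The plan is to deduce the general case from Theorem~\ref{thm:main_simple} by a structural analysis of arbitrary automata. Recall that an automatic sequence is, by definition, obtained from a deterministic finite automaton with output by reading base-$k$ expansions; after passing to a suitable power of $k$ we may assume the automaton reads digits from the most significant end, or the least significant end, as convenient. The first step is to decompose the automaton according to its strongly connected components. The key classical fact (Komjáth/Eilenberg-style analysis, or the synchronisation theory recalled in Section~\ref{sec:Auto}) is that, starting from the initial state, a generic long input first passes through some transient states, then enters a strongly connected component and cycles there forever; the component reached and the ``phase'' inside it are governed by prefix information, while the exact exit behaviour and the remaining suffix govern the final state. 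The transient part contributes something that depends only on boundedly many initial digits, hence is a forward synchronising (indeed eventually constant along most inputs) contribution; this is where $a_{\mathrm{fs}}$ enters. Dually, the behaviour near the least significant digits — whether the automaton has ``settled'' into a fixed state — is controlled by boundedly many terminal digits and contributes the backward synchronising sequence $a_{\mathrm{bs}}$.

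The second and main step is to handle the ``bulk'' contribution coming from the strongly connected component that the input settles into. Here one would like to invoke Theorem~\ref{thm:main_simple}, but that theorem requires the automaton to be both strongly connected \emph{and} prolongable. Strong connectedness is arranged by restricting to a single component; prolongability (the technical normalisation that leading zeros are absorbed into a fixed state and that this state has a self-loop) is arranged by a standard ``priming'' construction, at the cost of possibly enlarging $k$ to a power and relabelling states. After these reductions, each strongly connected component $C$ gives rise to an auxiliary strongly connected prolongable automaton, to which Theorem~\ref{thm:main_simple} applies, yielding a splitting $a_C = (a_C)_{\mathrm{str}} + (a_C)_{\mathrm{uni}}$ with $(a_C)_{\mathrm{str}}$ rationally almost periodic — in particular, up to a small $L^1$ error, periodic with period coprime to $k$, which is exactly the shape allowed for $a_{\mathrm{per}}$. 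One then reassembles: writing the original sequence as $a(n) = F\bra{(\text{prefix data})(n),\ a_{C(n)}(n),\ (\text{suffix data})(n)}$ where $C(n)$ is the component entered, one substitutes the decomposition of each $a_C$ and collects terms. The periodic pieces combine into a single $a_{\mathrm{per}}$ (take the lcm of the periods, still coprime to $k$); the prefix- and suffix-dependent pieces become $a_{\mathrm{fs}}$ and $a_{\mathrm{bs}}$; the small-$L^1$ errors and the uniform pieces $(a_C)_{\mathrm{uni}}$ are absorbed into $a_{\mathrm{uni}}$. One must check that a bounded combination (via a fixed map $F$ on a finite alphabet) of a highly Gowers uniform sequence with structured sequences of the permitted types is again highly Gowers uniform after the appropriate regrouping — this uses the fact that $\norm{\cdot}_{U^d}$ behaves well under restriction to the (periodic) sets on which $C(n)$ and the synchronising data are constant, together with the Cauchy--Schwarz/Gowers--Cauchy--Schwarz inequalities, so that a uniformity estimate on each such arithmetic progression upgrades to a global one with only a polynomial loss.

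The main obstacle I expect is the bookkeeping in the reassembly step: the component $C(n)$ itself, and the ``phase within $C$'', are determined by prefix data, so $a_C(n)$ is not literally an automatic sequence produced by a single clean automaton — it is a case-split indexed by forward-synchronising information. Making the substitution ``$a(n) = F(a_{\mathrm{per}}(n), a_{\mathrm{fs}}(n), a_{\mathrm{bs}}(n))$'' precise therefore requires carefully tracking how the period coming from component $C$ interacts with the forward-synchronising selector, and in particular verifying that after taking a common period one genuinely lands in the form \eqref{eq:def-of-wk-str} with a \emph{single} period coprime to $k$ rather than a period depending on the branch. A clean way to organise this is to first prove an intermediate statement for automata with a unique ``sink component'' (so $C(n)$ is eventually constant), reduce the general case to it by peeling off, one component at a time, the states from which the reachable component-poset has height $\geq 2$, and induct on the height of this poset; at each stage the peeled-off part is forward synchronising and the remainder has strictly smaller height. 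The base case is precisely Theorem~\ref{thm:main_simple} (after the prolongability normalisation), and the inductive step only ever combines the three permitted types of structured sequences with a Gowers-uniform remainder, which is stable under such combinations.

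Finally, I would remark that the ``change of basis'' flexibility noted before the statement — passing from $k$ to $k^2$ to reinterpret length-of-expansion phenomena as leading-digit phenomena — is exactly what makes the three-way split into $a_{\mathrm{per}}, a_{\mathrm{fs}}, a_{\mathrm{bs}}$ exhaustive: every obstruction to Gowers uniformity for an automatic sequence is, after such a base change, either periodic, sensitive only to a bounded prefix, or sensitive only to a bounded suffix, and the preceding steps show these are the only obstructions.
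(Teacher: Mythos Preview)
Your overall architecture --- decompose by strongly connected components, apply the strongly connected case to each component, and reassemble using a selector that records which component the input falls into --- is exactly the paper's route. But there is a genuine gap in the way you invoke the strongly connected case.

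You appeal to Theorem~\ref{thm:main_simple}, which only tells you that $(a_C)_{\mathrm{str}}$ is \emph{rationally almost periodic}, and then write ``in particular, up to a small $L^1$ error, periodic with period coprime to $k$''. This inference is false on both counts. First, RAP does not force the approximating periods to be coprime to $k$: the sequence $b(n)=(-1)^{\nu_2(n+1)}$ from Example~\ref{ex:main-1} is forward synchronising, hence RAP, but its periodic approximants have period $2^l$. Second, and more seriously, RAP only gives, for each $\varepsilon>0$, some periodic $p_\varepsilon$ with $\|a_C-p_\varepsilon\|_{L^1}\le\varepsilon$; the error is $o(1)$, not $O(N^{-c})$. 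If you ``absorb the small-$L^1$ errors into $a_{\mathrm{uni}}$'' you destroy the power-saving bound in \eqref{eq:def-of-h-G-uni}: you would get $\|a_{\mathrm{uni}}\|_{U^d[N]}=o(1)$ at best, not $\ll N^{-c}$. The paper avoids this by not using Theorem~\ref{thm:main_simple} here at all; it uses the sharper Theorem~\ref{thm:main_sc_nobasis}, whose conclusion is that $(a_C)_{\mathrm{str}}$ is \emph{strongly structured}, i.e.\ literally of the form $F(a_{\mathrm{per}},a_{\mathrm{fs}})$ with $\gcd(\mathrm{per},k)=1$. That is what makes the reassembly land in the weakly structured form \eqref{eq:def-of-wk-str} with no leftover $L^1$-small term.

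A secondary point: you assign the component selector (which depends on the \emph{leading} digits of $(n)_k$) to $a_{\mathrm{fs}}$. In the paper's conventions this selector is \emph{backward} synchronising (Lemma~\ref{lem:sync-FCAE}, backward variant: output depends only on the prefix $\mathbf{u}$), and it supplies $a_{\mathrm{bs}}$; the forward synchronising ingredient $a_{\mathrm{fs}}$ comes instead from inside each component via the strongly structured decomposition. Your ``prefix data $\to a_{\mathrm{fs}}$, suffix data $\to a_{\mathrm{bs}}$'' is exactly reversed. This is not fatal on its own, but combined with the first issue it means your final object is not of the form \eqref{eq:def-of-wk-str}. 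Once you replace Theorem~\ref{thm:main_simple} by Theorem~\ref{thm:main_sc_nobasis} and swap the synchronising roles, your outline becomes the paper's proof.
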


\begin{remark} 
The notion of a weakly structured sequence is very sensitive to the choice of the basis. If $k,k' \geq 1$ are both powers of the same integer $k_0$ then $k$-automatic sequences are the same as $k'$-automatic sequences, but $k$-automatic weakly structured sequences are \emph{not} the same as a $k'$-automatic weakly structured sequences. If the sequence $a$ in Theorem \ref{thm:main_nobasis} is $k$-automatic then $a_{\mathrm{str}}$ is only guaranteed to be weakly structured in some basis $k'$ that is a power of $k$, but it does not need to be weakly structured in the basis $k$.
\end{remark}

\begin{example}\label{ex:main-1}
	Let $a \colon \NN_0 \to \RR$ be the $2$-automatic sequence computed by the following automaton. 
\begin{center}
\begin{tikzpicture}[shorten >=1pt,node distance=2.5cm, on grid, auto] 
   \node[initial, state] (s_00)   {$s_{0}/4$}; 
   \node[state] (s_01) [below=of s_00] {$s_{2}/1$}; 
   \node[state] (s_11) [right=of s_01] {$s_{3}/2$}; 
   \node[state] (s_10) [above=of s_11] {$s_{1}/1$}; 

 \tikzstyle{loop}=[min distance=6mm,in=120,out=60,looseness=7]
    
	\path[->]
     (s_00) edge [loop below] node  {\texttt 0} (s_00);

  \tikzstyle{loop}=[min distance=6mm,in=210,out=150,looseness=7]
    \path[->]     
    (s_01) edge [bend right] node [below]  {\texttt 0} (s_11);

    \path[->]     
    (s_00) edge [bend right] node [below]  {\texttt 1} (s_10);

 \tikzstyle{loop}=[min distance=4mm,in=120,out=240,looseness=1]
    \path[->]     
    (s_01) edge [left] node {\texttt 1} (s_00);
    \path[->]     
    (s_10) edge [right] node {\texttt 1} (s_11);

 \tikzstyle{loop}=[min distance=4mm,in=-60,out=60,looseness=1]
          
 \tikzstyle{loop}=[min distance=6mm,in=30,out=-30,looseness=7]
	\path[->]
    (s_11) edge [bend right] node [above]  {\texttt 1} (s_01);
   	\path[->]
    (s_10) edge [bend right] node [above]  {\texttt 0} (s_00);
	\path[->]
     (s_11) edge [loop right] node  {\texttt 0} (s_11);
\end{tikzpicture}
\end{center}
Formal definitions of automata and the associated sequence can be found it Section \ref{sec:Auto}. For now, it suffices to say that in order to compute $a(n)$, $n \in \NN_0$, one needs to expand $n$ in base $2$ and traverse the automaton using the edges corresponding to the consecutive digits of $n$ and then read off the output at the final state. For instance, the binary expansion of $n = 26$ is $(26)_2 = \mathtt{11010}$, so the visited states are $s_0,s_1,s_3,s_3,s_2,s_3$ and $a(26) = 2$.

Let  $b \colon \NN_0 \to \RR$ be the sequence given by $b(n) = (-1)^{\nu_2(n+1)}$,
where $\nu_2(m)$ is the largest value of $\nu$ such that $2^\nu \mid m$. For instance, $\nu_2(27) = 0$ and $b(26) = 1$. Then the structured part of $a$ is $a_{\mathrm{str}} = 2 + b$, and the uniform part is necessarily given by $a_{\mathrm{uni}} = a - a_{\mathrm{str}}$. Note that $b$ (and hence also $a_{\mathrm{str}}$ and $a_{\mathrm{uni}}$) can be computed by an automaton with the same states and transitions as above, but with different outputs.
Let also $c \colon \NN_0 \to \RR$ denote the sequence given by $c(n) = (-1)^{f(n)}$ where $f(n)$ is the number of those maximal blocks of $\mathtt{1}$s in the binary expansion of $n$ that have length congruent to $2$ or $3$ modulo $4$. For instance, $f(26) = 1$ and $c(26) = -1$. Then $a_{\mathrm{uni}} = (\frac{1}{2}+ \frac{1}{2}b)c$.
\end{example}

This example is very convenient as it allows one to give easy representations of the structured and uniform part.
However, the situation can be more complicated in general and we include another example to emphasize this fact.

\begin{example}\label{ex:main-2}
	Let $a \colon \NN_0 \to \RR$ be the $2$-automatic sequence computed by the following automaton. 
\begin{center}
\begin{tikzpicture}[shorten >=1pt,node distance=2.5cm, on grid, auto] 
   \node[initial, state] (s_0)   {$s_{0}/1$}; 
   \node[state] (s_3) [below=of s_0] {$s_{3}/4$}; 
   \node[state] (s_4) [right=of s_3] {$s_{4}/5$}; 
   \node[state] (s_1) [above=of s_4] {$s_{1}/2$}; 
   \node[state] (s_2) [right=of s_1] {$s_2/3$};

 \tikzstyle{loop}=[min distance=6mm,in=120,out=60,looseness=7]
    
	\path[->] (s_0) edge [loop below] node  {\texttt 0} (s_0);
	\path[->] 	(s_0) edge node [right, pos = 0.7] {\texttt 1} (s_4);
	
	\path[->] 	(s_1) edge [bend right] node [below] {\texttt 0} (s_2);
	\path[->] 	(s_1) edge node [left, pos = 0.7] {\texttt 1} (s_3);
	
	\path[->] (s_2) edge [bend right] node [above] {\texttt 0} (s_1);
	\path[->] (s_2) edge [loop below] node {\texttt 1} (s_2);
	
	\path[->] (s_3) edge node {\texttt 0, \texttt 1} (s_0);
	
	\path[->] (s_4) edge node [below] {\texttt 0} (s_2);
	\path[->] (s_4) edge node {\texttt 1} (s_1);

    
\end{tikzpicture}
\end{center}

	It turns out that the structured part can again be expressed using $b$, i.e., $a_{\mathrm{str}} = 3b -1$, but it is very difficult to find a simple closed form for the uniform part. Indeed, even writing it as an automatic sequence requires an automaton with $6$ states rather than the $5$ states needed for $a$.
\end{example}

We discuss three possible applications of Theorems~\ref{thm:main_simple} and~\ref{thm:main_nobasis} as well as of the related estimates of Gowers norms of automatic sequences. 
Firstly, they can be used to study subsequences of automatic sequences along various sparse sequences.  
Secondly, they allow us to count solutions to linear equations with variables taking values in automatic sets, that is, subsets of $\NN_0$ whose characteristic functions are automatic sequences. Lastly, they give a wide class of explicit examples of sequences with small Gowers norms of all orders. We will address these points independently.

We start by discussing the treatment of automatic sequences along primes by the third author to highlight the usefulness of a structural result as in Theorem~\ref{thm:main_simple}. 
In~\cite{Mullner-2017} a similar decomposition was used (with the uniform component satisfying a weaker property (called the Fourier-Property in \cite{Adamczewski2020}), which is almost the same as being Gowers uniform of order $1$) together with the so called carry Property (see already~\cite{MauduitRivat-2015} and a more general form in~\cite{Muellner2018}). This essentially allows one to reduce the problem to the case of structured and uniform sequences.
The structured component is very simple to deal with, as it suffices to study primes in arithmetic progressions. 
The study of the uniform component followed the method of Mauduit and Rivat developed to treat the Rudin--Shapiro sequence along primes \cite{MauduitRivat-2015}.
A similar approach was used by Adamczewski, Drmota and the third author to study the occurrences of digits in automatic sequences along squares~\cite{Adamczewski2020}.
It seems likely that a higher-order uniformity as in Theorem~\ref{thm:main_simple} might allow one to study the occurrences of blocks in automatic sequences along squares (see for example \cite{DrmotaMauduitRivat-TM-squares, Muellner2018} for related results).

Recently, Spiegelhofer used the fact that the Thue--Morse sequence is highly Gowers uniform to show that the level of distribution of the Thue--Morse sequence is $1$~\cite{Spiegelhofer2018}.
As a result, he proves that the sequence  is simply normal along $\floor{n^c}$ for $1<c<2$, i.e.\ the asymptotic frequency of both $0$ and $1$ in the Thue--Morse sequence along $\floor{n^c}$ is $1/2$. 
This result, together with our structural result (Theorem~\ref{thm:main_simple}) indicates a possible approach to studying automatic sequences produced by strongly connected, prolongable automata along $\floor{n^c}$. 
As the structured component is rationally almost periodic, we can simply study $\floor{n^c} \bmod m$ to deal with the first component.
The uniform component needs to be dealt with similarly to Spiegelhofer's treatment of the Thue--Morse sequence, but conditioned on $\floor{n^c} \bmod m$, to take care of the structured component at the same time.
For the possible treatment of all the subsequences of automatic sequences discussed above it is essential to have (for the uniform component) both some sort of Gowers uniformity as well as the carry Property. Both these properties are guaranteed by the decomposition used in this paper, while the Arithmetic Regularity Lemma cannot guarantee the carry Property for the uniform component.

Secondly, let us recall one of the many formulations of the celebrated theorem of Szemer\'{e}di on arithmetic progressions which says that any set $A \subset \NN_0$ with positive upper density $\overline{d}(A) = \limsup_{N \to \infty} \abs{ A \cap [N]}/N > 0$ contains arbitrarily long arithmetic progressions. It is natural to ask what number of such progressions are guaranteed to exist in $A \cap [N]$, depending on the length $N$ and the density of $A$.

Following the work of Bergelson, Host and Kra (and Ruzsa) \cite{BergelsonHostKra-2005}, Green and Tao \cite{GreenTao-2010-ARL} showed that for progressions of length $\leq 4$, the count of $d$-term arithmetic progressions in a subset $A \subset [N]$ is essentially greater than or equal to what one would expect for a random set of similar magnitude. 
\begin{theorem}\label{thm:many-AP-GT}
Let $2 \leq l \leq 4$, $\alpha > 0 $ and $\e > 0$. Then for any $N \geq 1$ and any $
A \subset [N]$ of density $\abs{A}/N \geq \alpha$ there exist $ \gg_{\alpha,\e} N$ values of $m \in [N]$ such that $A$ contains $\geq (\alpha^{l} - \e)N$ $l$-term arithmetic progressions with common difference $m$.
The analogous statement is false for any $l \geq 5$.
\end{theorem}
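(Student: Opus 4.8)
The statement naturally splits into two parts: the positive result for $2 \leq l \leq 4$, and the negative result for $l \geq 5$. I would treat these separately, and in fact the positive part is the one I would attribute to the cited works of Green--Tao \cite{GreenTao-2010-ARL} and Bergelson--Host--Kra \cite{BergelsonHostKra-2005} rather than reprove from scratch; the plan here is to sketch why the thresholds work and why $4$ is the barrier.

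For the positive direction, the plan is to invoke the Arithmetic Regularity Lemma together with the Inverse Theorem for the $U^{l-1}$ norm. Write $\ind_A = f_{\mathrm{nil}} + f_{\mathrm{sml}} + f_{\mathrm{uni}}$ as in \eqref{eq:arithmetic-regularity}, where $f_{\mathrm{uni}}$ has small $U^{l-1}[N]$ norm, $f_{\mathrm{sml}}$ has small $L^2$ norm, and $f_{\mathrm{nil}}$ is a bounded-complexity nilsequence with the same average $\alpha' \geq \alpha - o(1)$ as $\ind_A$. The count of $l$-term progressions in $A$ with common difference $m$ is $\sum_{x} \prod_{i=0}^{l-1} \ind_A(x+im)$; expanding multilinearly and using the generalized von Neumann inequality, every term containing at least one factor of $f_{\mathrm{uni}}$ contributes negligibly once we average over $m$ in a suitable range, and the terms containing $f_{\mathrm{sml}}$ are controlled in $L^2$ after a Cauchy--Schwarz / box-norm argument. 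This reduces matters to the count for the nilsequence $f_{\mathrm{nil}}$, which by equidistribution on the nilmanifold and the fact that $f_{\mathrm{nil}} \geq 0$ on average is $\geq (\alpha'^l - \e) N$ for a positive-density set of $m$; the key point for $l \leq 4$ is that the relevant nilmanifolds are of step $\leq l - 2 \leq 2$ and one can use the analysis of linear/quadratic phases where the averaged lower bound $\alpha^l$ genuinely holds (this is precisely the Bergelson--Host--Kra--Ruzsa input for $l = 3$ and its extension to $l = 4$).

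For the negative direction, the plan is to exhibit, for each $l \geq 5$, a set $A \subset [N]$ of density bounded below by some fixed $\alpha > 0$ such that for \emph{every} common difference $m \in [N]$, the number of $l$-term progressions in $A$ with difference $m$ is strictly less than $(\alpha^l - \e)N$, or even $o(N)$ times a suitable constant, contradicting the $\alpha^l$ heuristic. The standard construction is a Behrend-type or quadratically-structured set: take $A = \{ x \in [N] : \| x^2 \theta \| < \eta \}$ (or a level set of $\{ \beta x^2 \}$) for appropriate $\theta, \eta$, which has density $\approx 2\eta$ but whose progression count is governed by the quadratic form $\sum_i (x+im)^2 = l x^2 + 2m x \sum i + m^2 \sum i^2$; for $l \geq 5$ the "diagonal" behavior of this quadratic in the progression forces the true count to deviate (downward, for a suitable choice) from the random prediction $\alpha^l$ for all $m$ simultaneously. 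The main obstacle in writing this cleanly is making the "for all $m$" uniform — one must choose the defining parameter so that the quadratic obstruction is insensitive to $m$, which is exactly why step $\geq 3$ (equivalently $l \geq 5$) is needed and why $l \leq 4$ escapes it. I would present this construction in detail only to the extent needed and otherwise cite \cite{BergelsonHostKra-2005, GreenTao-2010-ARL}, where the sharp examples are worked out.
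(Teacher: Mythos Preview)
The paper does not prove Theorem \ref{thm:many-AP-GT}; it is stated as background and attributed to \cite{BergelsonHostKra-2005} and \cite{GreenTao-2010-ARL}, with no argument given. Your proposal correctly recognises this and plans to cite the same sources with an explanatory sketch, which matches the paper's treatment.

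A couple of remarks on the sketch itself. Your outline of the positive direction (Arithmetic Regularity Lemma, reduction to the nilsequence part, and the fact that for $l\le 4$ the relevant nilmanifold is at most $2$-step so the Bergelson--Host--Kra analysis applies) is an accurate summary of the Green--Tao argument. For the negative direction, the spirit is right but the specific set you propose, $A=\{x:\|x^2\theta\|<\eta\}$, is not quite the Ruzsa counterexample: a purely quadratic level set is governed by the identity $p(x)-3p(x+m)+3p(x+2m)-p(x+3m)=0$ for $p(x)=x^2$, which already intervenes at $l=4$ and does not by itself separate $l=5$ from $l=4$. Ruzsa's actual construction in the appendix to \cite{BergelsonHostKra-2005} is more delicate. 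Since you intend to cite rather than reprove, this is not a genuine gap, but if you include a heuristic you should either point to the correct construction or drop the explicit formula.
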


For automatic sets, the situation is much simpler: Regardless of the length $l \geq 1$, the count of $l$-term arithmetic progressions in $A \cap [N]$ is, up to a small error, at least what one would expect for a random set.

\begin{alphatheorem}\label{thm:many-AP-auto}
Let $l \geq 3$, and let $A$ be an automatic set (that is, a subset of $\NN_0$ whose characteristic sequence is automatic). Then there exists $C = O_{l,A}(1)$ such that for any $N \geq 1$ and $\e > 0$ there exist $ \gg_{l,A} \e^{C} N$ values of $m \in [N]$ such that $A \cap [N]$ contains $\geq (\alpha^{l} - \e)N$ $l$-term arithmetic progressions with common difference $m$, where $\alpha = \abs{A}/N$.
\end{alphatheorem}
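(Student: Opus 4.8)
The plan is to deduce Theorem~\ref{thm:many-AP-auto} from the decomposition in Theorem~\ref{thm:main_nobasis} applied to the characteristic sequence $a = \ind_A$, combined with a generalized von Neumann-type inequality controlling arithmetic progression counts by Gowers norms. Write $a = a_{\mathrm{str}} + a_{\mathrm{uni}}$ with $a_{\mathrm{str}}$ weakly structured and $a_{\mathrm{uni}}$ highly Gowers uniform, say $\norm{a_{\mathrm{uni}}}_{U^{l-1}[N]} \ll N^{-c}$ for a suitable $c = c_{l,A} > 0$. The count of $l$-term progressions with difference $m$ inside $[N]$ is the multilinear average
\begin{equation}\label{eq:APcount}
	T_m(a) = \sum_{n} \prod_{j=0}^{l-1} a(n + jm),
\end{equation}
summed over $n$ with $n, n+(l-1)m \in [N]$. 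Expanding $a = a_{\mathrm{str}} + a_{\mathrm{uni}}$ multilinearly yields $2^l$ terms; the one with all factors equal to $a_{\mathrm{str}}$ is the main term, and every other term contains at least one factor of $a_{\mathrm{uni}}$. By the standard generalized von Neumann inequality (Cauchy--Schwarz--Gowers), each such mixed term is bounded, uniformly in $m$, by $\norm{a_{\mathrm{uni}}}_{U^{l-1}[N]}$ up to a constant, so after averaging over $m \in [N]$ the total contribution of the mixed terms is $O(N^{2} \cdot N^{-c}) = o(N^2)$.

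The next step is to analyze the main term $T_m(a_{\mathrm{str}})$ for the weakly structured sequence $a_{\mathrm{str}} = F(a_{\mathrm{per}}, a_{\mathrm{fs}}, a_{\mathrm{bs}})$. The key point is that progressions are preserved well by each of the three building blocks: along a fixed residue class of $m$ and $n$ modulo the (small) period, $a_{\mathrm{per}}$ is literally constant on the progression; the forward synchronising sequence $a_{\mathrm{fs}}$ is, after ignoring a negligible set of $n$, determined by a bounded-length prefix of the base-$k$ expansion, which is constant along the progression provided $m$ has enough trailing zeros and $n$ lies in a suitable Lebesgue-type interval; and symmetrically $a_{\mathrm{bs}}$ is eventually determined by a bounded suffix. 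Restricting to the set of $m \in [N]$ of the form $m = k^t m'$ with $t$ a suitable constant (there are still $\gg N$ such $m$, in fact $\gg_{l,A}\e^{C}N$ once one quantifies how large $t$ and the period must be in terms of $\e$) and to the $\gg N$ corresponding good values of $n$, the whole progression $n, n+m, \dots, n+(l-1)m$ lands in a single ``cylinder'' on which $a_{\mathrm{str}}$ is constant, equal to its value at $n$. Hence on this good set $T_m(a_{\mathrm{str}}) \approx \sum_n \ind_A(n) \cdot (\text{probability-type weight})$, and a Cauchy--Schwarz / convexity argument (power mean inequality) gives $T_m(a_{\mathrm{str}}) \geq (\alpha^{l} - \e/2) N$ for $\gg_{l,A} \e^{C} N$ values of $m$, where $\alpha = \abs{A}/N = \abs{A \cap [N]}/N$; here one uses that the density of $A$ restricted to each cylinder, averaged appropriately, reconstructs $\alpha$, and that $t \mapsto t^{l}$ is convex so the average of the $l$-th powers dominates the $l$-th power of the average.

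Combining the two steps: for $\gg_{l,A}\e^{C} N$ values of $m$ we get $T_m(a) \geq (\alpha^l - \e/2)N - O(N^{-c}N) \geq (\alpha^{l}-\e)N$ once $N$ is large, and for small $N$ the statement is trivial after adjusting constants (or one absorbs it into the $\gg_{l,A}$). The dependence $C = O_{l,A}(1)$ comes from tracking how the required period $q$ and digit-truncation length $t$ grow: to make the structured part constant along a progression up to error $\e$ in density one needs $q$ and $k^{t}$ of size polynomial in $1/\e$, which costs a factor $\e^{O_{l,A}(1)}$ in the number of admissible $m$. The main obstacle I expect is the bookkeeping in the structured-part analysis: carefully defining the ``cylinder'' decomposition so that (a) a positive-proportion-of-$[N]$ set of pairs $(n,m)$ has the entire progression inside one cylinder, (b) the leftover set of bad $n$ (those too close to a cylinder boundary, i.e.\ with a carry propagating across the truncated digits) has density $O(\e)$ uniformly, and (c) the convexity step correctly recovers $\alpha^{l}$ rather than some smaller quantity — in particular ensuring that the averaging over cylinders is with respect to the correct (uniform) measure so that $\mathbb{E}[\text{density}^l] \geq (\mathbb{E}[\text{density}])^{l} = \alpha^{l}$. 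The von Neumann step, by contrast, is entirely standard given the highly-Gowers-uniform bound supplied by Theorem~\ref{thm:main_nobasis}.
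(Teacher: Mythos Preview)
Your overall strategy matches the paper's closely: decompose $1_A = a_{\mathrm{str}} + a_{\mathrm{uni}}$ via Theorem~\ref{thm:main_nobasis}, expand multilinearly, show the structured term dominates on a special set of differences, and finish with convexity and reverse Markov. However, there is a genuine gap in the von Neumann step.

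You write that each mixed term is ``bounded, uniformly in $m$, by $\norm{a_{\mathrm{uni}}}_{U^{l-1}[N]}$''. This is false: the generalised von Neumann inequality controls only the \emph{average} $\EE_{n,m}$ of such a product, not $\sum_n$ for a fixed $m$. So you cannot conclude that $T_m(a) = T_m(a_{\mathrm{str}}) + O(N^{1-c})$ for each individual good $m$, which is exactly what your final combination step uses. The paper circumvents this by first restricting $m$ to a carefully chosen arithmetic progression $P$ (of density $\gg N^{-O(\eta)}$) and then proving a von Neumann-type bound for the restricted average $\EE_{m\in P}\EE_{n<N}$; this is Lemma~\ref{lem:von-Neumann-skew}, whose proof uses a smooth approximation of $1_P$ to absorb the restriction into a linear phase. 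The cost is an extra factor $N/|P|$ in the error term, which is why one must balance $\eta$ against the power saving $c$. Only after this averaged bound does the reverse Markov step produce many good individual $m$.

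Two smaller points. First, you have the roles of $a_{\mathrm{fs}}$ and $a_{\mathrm{bs}}$ reversed: forward synchronising sequences are determined by the \emph{trailing} digits of $(n)_k$, backward synchronising ones by the \emph{leading} digits. Second, and relatedly, restricting $m$ to multiples of $k^t$ stabilises only the trailing digits of $n+jm$; to keep the leading digits (and hence $a_{\mathrm{bs}}$) constant along the progression you also need $m$ to be small, i.e.\ $m < k^{(1-\eta)L}$. The paper's set $P$ imposes both conditions simultaneously (leading and trailing blocks of zeros in $(m)_k^L$), together with the periodic congruence $M\mid m$.
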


Thirdly, we remark that there are few examples of sequences that are simultaneously known to be highly Gowers uniform and given by a natural, explicit formula. Polynomial phases $e(p(n))$ ($p \in \RR[x]$) are standard examples of sequences that are uniform of order $\deg p-1$ but dramatically non-uniform of order $\deg p$. Random sequences are highly uniform (cf.\ \cite[Ex.{} 11.1.17]{TaoVu-book}) but are not explicit. As already mentioned, many multiplicative sequences are known to be Gowers uniform of all orders, but with considerably worse bounds than the power saving which we obtain. For a similar result for a much simpler class of $q$-multiplicative sequences, see \cite{FanKonieczny-2019}. Examples of highly Gowers uniform sequences of number-theoretic origin in finite fields of prime order were found in \cite{FouvryKowalskiMichel-2013}; see also \cite{Liu-2010} and \cite{NiederreiterRivat-2009} where Gowers uniformity of certain sequences is derived from much stronger discorrelation estimates.





\section{Gowers norms}\label{sec:Gowers}
\newcommand{\conjugate}{\mathscr{C}}

\subsection{Notation}\label{ssec:asymptnot} We use standard asymptotic notation --- if $f$ and $g$ are two functions defined on (sufficiently large) positive integers, we write $f \ll g$ or $f=O(g)$ if there exists a constant $C>0$ such that $|f(n)| \leq C |g(n)|$ for all sufficiently large $n$. If the constant $C$ is allowed to depend on some extra parameters ($\alpha,\varepsilon$, etc.), we may specify that by writing $f \ll_{\alpha,\varepsilon} g$ or $f=O_{\alpha,\varepsilon}(g)$. In some cases when such dependence is clear from the context, we may omit such indices (this is the case for example for the order $d$ of Gowers uniformity norms, defined below).

We also use the Iverson bracket notation $\ifbra{P}$ for the value of a logical statement $P$, that is, $$\ifbra{P}=\begin{cases} 1&\text{if } P \text{ is true;}\\0&\text{otherwise}.\end{cases}$$

\subsection{Basic facts and definitions}\label{ssec:Gowers:basic}

Gowers norms, originally introduced by Gowers in his work on Szemer\'{e}di's theorem \cite{Gowers-2001}, are a fundamental object in what came to be known as higher order Fourier analysis. For extensive background, we refer to \cite{Green-book} or \cite{Tao-book}. Here, we just list several basic facts. \textit{Throughout, we treat $d$ (see below) as fixed unless explicitly stated otherwise, and allow all implicit error terms to depend on $d$.}

For a finite abelian group $G$ and an integer $d \geq 1$, the \emph{Gowers uniformity norm on $G$ of order $d$} is defined for $f \colon G \to \CC$ by the formula
\begin{equation}
\label{eq:def_Gowers_G}
\norm{f}_{U^d(G)}^{2^d} = \EE_{\vec n \in G^{d+1}} \prod_{\vec\omega \in \{0,1\}^d} \conjugate^{\abs{\vec\omega}} f( 1 \vec\omega \cdot \vec n), 
\end{equation}
where $\conjugate$ denotes the complex conjugation, $\vec \omega$ and $\vec n$ are shorthands for $(\omega_1,\dots,\omega_d)$ and $(n_0, n_1,\dots,n_d)$, respectively, 
$\abs{\vec\omega} = \abs{\set{i \leq d}{\omega_i = 1}}$, and $1 \vec \omega \cdot \vec n = n_0 + \sum_{i=1}^d \omega_i n_i$. More generally, for a family of functions $f_{\vec\omega} \colon G \to \CC$ with $\vec\omega \in \{0,1\}^d$ we can define the corresponding Gowers product
\begin{equation}
\label{eq:def_Gowers_product}
\left<\bra{f_{\vec\omega}}_{\vec\omega \in \{0,1\}^d} \right>_{U^d(G)} = \EE_{\vec n \in G^{d+1}} \prod_{\vec\omega \in \{0,1\}^d} \conjugate^{\abs{\vec\omega}} f_{\vec\omega}( 1 \vec\omega \cdot \vec n).
\end{equation}
A simple computation shows that $\norm{f}_{U^1(G)} = \abs{ \EE_{n \in G} f(n)}$ and 
\begin{align*}
	\norm{f}_{U^2(G)}^4 &= \EE_{n,m,l \in G} f(n) \bar f(n + m) \bar f(n + l) f(n + m + l)
	 = \sum_{\xi \in \hat{G}} \abs{ \hat{f}(\xi)}^4,
\end{align*}
where $\hat{G}$ is the group of characters $G \to \mathbb{S}^1$ and $\hat{f}(\xi) = \EE_{n \in G} \bar{\xi}(n) f(n)$.

One can show that definition \eqref{eq:def_Gowers_G} is well-posed in the sense that the right hand side of \eqref{eq:def_Gowers_G} is real and non-negative. If $d \geq 2$, then $\norm{\cdot}_{U^d(G)}$ is indeed a norm, meaning that it obeys the triangle inequality $\norm{f+g}_{U^d(G)} \leq \norm{f}_{U^d(G)} + \norm{g}_{U^d(G)}$, is positive definite in the sense that $\norm{f}_{U^d(G)} \geq 0$ with equality if only if $f = 0$, and is homogeneous in the sense that $\norm{\lambda f}_{U^d(G)} = \abs{\lambda} \norm{f}_{U^d(G)}$ for all $\lambda \in \CC$. If $d = 1$, then $\norm{\cdot}_{U^d(G)}$ is only a seminorm. Additionally, for any $d \geq 1$ we have the nesting property $\norm{f}_{U^d(G)} \leq \norm{f}_{U^{d+1}(G)}$.

In this paper we are primarily interested in the uniformity norms on the interval $[N]$, where $N \geq 1$ is an integer. Any such interval can be identified with the subset $[N]=\{0,1,\dots, N-1\}$ of a cyclic group $\ZZ/\tilde N \ZZ$, where $\tilde N$ is an integer significantly larger than $N$. For $d \geq 1$ and $f \colon [N] \to \CC$ we put
\begin{equation}
\label{eq:def_Gowers_N}
\norm{f}_{U^d[N]} = \norm{1_{[N]} f }_{U^d(\ZZ/\tilde N \ZZ)}/\norm{1_{[N]} }_{U^d(\ZZ/\tilde N \ZZ)}.
\end{equation}
The value of $\norm{f}_{U^d[N]}$ given by \eqref{eq:def_Gowers_N} is independent of $\tilde N$ as long as $\tilde N$ exceeds $2d N$, and for the sake of concreteness we let $\tilde N = \tilde N(N,d)$ be the least prime larger than $2d N$ (the primality assumption will make Fourier analysis considerations slightly easier at a later point). As a consequence of the corresponding properties for cyclic groups, $\norm{\cdot}_{U^{d}[N]}$ is a norm for all $d \geq 2$ and a seminorm for $d = 1$, and for all $d \geq 1$ we have a slightly weaker nesting property $\norm{f}_{U^d[N]} \ll_{d} \norm{f}_{U^{d+1}[N]}$.

Definition \eqref{eq:def_Gowers_N} can equivalently be expressed as
\begin{equation}
\label{eq:def_Gowers_N2}
\norm{f}_{U^d(G)}^{2^d} = \EE_{\vec n \in \Pi(N) } \prod_{\vec\omega \in \{0,1\}^d} \conjugate^{\abs{\vec\omega}} f( 1 \vec\omega \cdot \vec n), 
\end{equation}
where the average is taken over the set (implicitly dependent on $d$)
\begin{equation}
\label{eq:def_PiN}
\Pi(N) = \set{ \vec n \in \ZZ^{d+1}}{ 1\vec\omega \cdot \vec n \in [N] \text{ for all } \vec\omega \in \{0,1\}^d }.
\end{equation}
As a direct consequence of \eqref{eq:def_Gowers_N2}, we have the following phase-invariance: If $p \in \RR[x]$ is a polynomial of degree $< d$ and $g \colon [N] \to \CC$ is given by $g(n) = e(p(n))$, then $\norm{ f }_{U^d[N]} = \norm{ f \cdot g }_{U^d[N]}$ for all $f \colon [N] \to \CC$. (Here and elsewhere, $e(t) = \exp(2 \pi i t)$.) In particular, $\norm{ g }_{U^d[N]} = 1$. The analogous statement is also true for finite cyclic abelian groups. In particular, if $p \in \ZZ[x]$ is a polynomial of degree $< d$ and $g \colon \ZZ/N\ZZ \to \CC$ is given by $g(n) = e(p(n)/N)$, then $\norm{f}_{U^d(\ZZ/N\ZZ)} = \norm{f \cdot g}_{U^d(\ZZ/N\ZZ)} $ for all $f \colon \ZZ/N\ZZ \to \CC$.

We will  say that a bounded sequence $a \colon \NN_0 \to \CC$ is \emph{uniform of order} $d \geq 1$ if $\norm{a}_{U^d[N]} \to 0$ as $N \to \infty$. The interest in Gowers norms stems largely from the fact that uniform sequences behave much like random sequences in terms of counting additive patterns. To make this intuition precise, for a $(d+1)$-tuple of sequences $f_0,f_1,\dots,f_d \colon \NN_0 \to \CC$ let us consider the corresponding weighted count of arithmetic progressions
$$
	\Lambda_d^{N}(f_0,\dots,f_d) = \sum_{n,m \in \ZZ} \prod_{i=0}^d (f_i 1_{[N]})(n + im),
$$
so that in particular $\Lambda_d^{N}(1_A,\dots,1_A)$ is the number of arithmetic progressions of length $d+1$ in $A \cap [N]$. The following proposition is an easy variant of the generalised von Neumann theorem, see for example~\cite[Exercise 1.3.23]{Tao-book} We say that a function $f \colon X \to \CC$ is $1$-\emph{bounded} if $\abs{f(x)} \leq 1$ for all $x \in X$. 

\begin{proposition}
	Let $d \geq 1$ and let $f_0,f_1,\dots,f_d \colon \NN_0 \to \CC$ be $1$-bounded sequences. Then 
	$$
		\Lambda_d^{N}(f_0,\dots,f_d) \ll N^2 \min_{0 \leq i \leq d} \norm{f_i}_{U^d[N]}.
	$$  
\end{proposition}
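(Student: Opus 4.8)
The plan is to prove this "generalised von Neumann" estimate by the standard telescoping argument combined with several applications of the Cauchy--Schwarz inequality. First I would fix the index $i$ at which the minimum of the Gowers norms is attained; by relabelling (the count $\Lambda_d^N$ is symmetric enough under affine reparametrisation of the progression) it suffices to treat, say, the case $i=0$, so our goal becomes $\Lambda_d^N(f_0,\dots,f_d) \ll N^2 \norm{f_0}_{U^d[N]}$. Next I would rewrite the sum over $(n,m) \in \ZZ^2$ as a sum over the $(d+1)$ values $x_j = n + jm$ of the progression; since any two of these determine the rest, one can parametrise by $n_0 = n$ and the differences, and after a linear change of variables the expression $\Lambda_d^N(f_0,\dots,f_d)$ becomes (up to the harmless normalising factors relating averages over $\Pi(N)$ to sums) of the shape $N^2 \, \EE_{\vec n \in \Pi(N)} \prod_{\vec\omega} f_{j(\vec\omega)}(1\vec\omega \cdot \vec n)$ — i.e.\ a Gowers product as in \eqref{eq:def_Gowers_product}, but with each $f_j$ appearing in a "lower-dimensional" pattern rather than the full cube. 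The cleanest route is to extend each $f_j$ by zero outside $[N]$, embed everything in $\ZZ/\tilde N\ZZ$ with $\tilde N$ the least prime above $2dN$ as in the paper, and work with Gowers products over the group, using \eqref{eq:def_Gowers_N} to convert back to $\norm{\cdot}_{U^d[N]}$ at the end.

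With the count expressed as a Gowers-type product, the heart of the argument is the Gowers--Cauchy--Schwarz inequality: the multilinear form $\left< (g_{\vec\omega})_{\vec\omega} \right>_{U^d(G)}$ is bounded in absolute value by $\prod_{\vec\omega} \norm{g_{\vec\omega}}_{U^d(G)}$. I would either cite this as the standard fact it is, or reprove it quickly by $d$ successive applications of Cauchy--Schwarz, one in each "direction" $n_1,\dots,n_d$, each step replacing a function by itself in a way that symmetrises the $\vec\omega_i=0$ and $\vec\omega_i=1$ slices. Applying it here with $g_{\vec\omega} = 1_{[N]} f_{j(\vec\omega)}$ for all $\vec\omega$ except those $\vec\omega$ that "see" the variable attached to $f_0$, where we keep $1_{[N]}f_0$, and bounding all the other $1$-bounded factors trivially by $\norm{1_{[N]}}_{U^d(G)}$, leaves exactly a power of $\norm{1_{[N]} f_0}_{U^d(G)}$ times a power of $\norm{1_{[N]}}_{U^d(G)}$; taking the appropriate root and dividing gives $\norm{f_0}_{U^d[N]}$. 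The only slightly fiddly point is keeping the combinatorics of which $\vec\omega$ maps to which $f_j$ straight, and checking that at least one slice genuinely carries $f_0$ so that we really do extract its Gowers norm and not just $1$; this is where one uses that $d \geq 1$ and that the progression $n, n+m, \dots, n+dm$ has all $d+1$ entries as genuinely free coordinates on $\Pi(N)$.

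The main obstacle, such as it is, is bookkeeping rather than a genuine difficulty: one must (a) verify that the change of variables from $(n,m)$ to a parametrisation matching $\Pi(N)$ introduces only the claimed $O(N^2)$ factor — this uses $\abs{\Pi(N)} \asymp_d N^{d+1}$ and $\abs{\{(n,m): n+jm \in [N]\,\forall j\}} \asymp_d N^2$, both elementary counts — and (b) handle the normalisation $\norm{1_{[N]}}_{U^d[N]} \asymp_d 1$ so that replacing the genuine $\norm{\cdot}_{U^d(\ZZ/\tilde N\ZZ)}$ by $\norm{\cdot}_{U^d[N]}$ costs only an implied constant. Since the statement is already flagged as "an easy variant of the generalised von Neumann theorem" and an exercise reference is given, I would keep the proof short: set up the identification with a Gowers product, invoke Gowers--Cauchy--Schwarz, and collect the constants, remarking that all implied constants depend only on $d$ as per the standing convention in this section.
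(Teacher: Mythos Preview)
The paper does not actually prove this proposition: it is stated with the remark that it ``is an easy variant of the generalised von Neumann theorem'' and a pointer to \cite[Exercise~1.3.23]{Tao-book}, and no argument is given. So there is nothing in the paper to compare against, and the question is simply whether your sketch is correct.

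There is a genuine gap. The arithmetic-progression count
\[
\Lambda_d^N(f_0,\dots,f_d) = \sum_{n,m} \prod_{i=0}^d (1_{[N]}f_i)(n+im)
\]
is an average over \emph{two} free parameters $(n,m)$, whereas a Gowers inner product $\langle (g_{\vec\omega})_{\vec\omega} \rangle_{U^d}$ is an average over $d+1$ parameters $\vec n \in \ZZ^{d+1}$ with $2^d$ functions at the vertices of a cube. No linear change of variables converts the former into the latter: the $(d+1)$ points $n, n+m, \dots, n+dm$ of a progression do not form the vertex set of a $d$-dimensional parallelepiped for $d \geq 2$. Your assertion that ``after a linear change of variables the expression $\Lambda_d^N$ becomes of the shape $N^2 \, \EE_{\vec n \in \Pi(N)} \prod_{\vec\omega} f_{j(\vec\omega)}(1\vec\omega \cdot \vec n)$'' is therefore incorrect, and the Gowers--Cauchy--Schwarz inequality does not apply directly.

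The standard argument (which is what the cited exercise asks for) runs differently: one embeds in $\ZZ/\tilde N\ZZ$ as you say, and then performs $d$ successive applications of ordinary Cauchy--Schwarz in the van der Corput fashion. At each step one changes variables so that one of the functions $f_j$ with $j \neq 0$ depends on a single variable not shared with the others, applies Cauchy--Schwarz in that variable (using $\abs{f_j} \leq 1$ to discard it), and thereby introduces one new ``differencing'' parameter. After $d$ such steps the functions $f_1,\dots,f_d$ have all been eliminated and one is left with exactly the $(d+1)$-parameter cube average defining $\norm{1_{[N]}f_0}_{U^d(\ZZ/\tilde N\ZZ)}^{2^d}$. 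Your bookkeeping about the embedding, the normalisation $\norm{1_{[N]}}_{U^d} \asymp_d 1$, and the symmetry reducing to $i=0$ is all fine; it is only the central mechanism that is mis-described.
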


As a direct consequence, if $f_i, g_i \colon \NN_0 \to \CC$ are $1$-bounded and $\norm{f_i - g_i}_{U^d[N]} \leq \e$ for all $0 \leq i \leq d$, then 
	$$
		\Lambda_d^{N}(f_0,\dots,f_d) = \Lambda_d^{N}(g_0,\dots,g_d) + O(\e N^2).
	$$  
In particular, if $A \subset \NN_0$ has positive asymptotic density $\alpha$ and $1_A - \alpha 1_{\NN_0}$ is  uniform of order $d$, then the count of $(d+1)$-term arithmetic progressions in $A \cap [N]$ is asymptotically the same as it would be if $A$ was a random set with density $\alpha$.

It is often helpful to control Gowers norms by other norms which are potentially easier to understand. We equip $[N]$ with the normalised counting measure, whence $\norm{f}_{L^p([N])} = \bra{ \EE_{n < N} \abs{f(n)}^p }^{1/p}$. The following bound is a consequence of Young's inequality (see e.g.\ \cite{EisnerTao-2012} for a derivation).  
\begin{proposition}\label{prop:Ud<Lp}
	Let $d \geq 1$ and $p_d = 2^d/(d+1)$. Then $\norm{f}_{U^d[N]} \ll \norm{f}_{L^{p_d}([N])}$ for any $f \colon [N] \to \CC$.
\end{proposition}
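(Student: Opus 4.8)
The plan is to reduce the statement on $[N]$ to the corresponding statement on the cyclic group $\ZZ/\tilde N\ZZ$ and then apply a Gowers--Cauchy--Schwarz / Young's inequality argument. First I would recall that by definition \eqref{eq:def_Gowers_N} we have $\norm{f}_{U^d[N]} = \norm{1_{[N]}f}_{U^d(\ZZ/\tilde N\ZZ)}/\norm{1_{[N]}}_{U^d(\ZZ/\tilde N\ZZ)}$, where $\tilde N$ is a prime in $(2dN, 4dN)$, say. Since $\norm{1_{[N]}}_{U^d(\ZZ/\tilde N\ZZ)} \gg (N/\tilde N)^{(d+1)/2^d} \gg N^{(d+1)/2^d - 1}$ by a direct computation of the Gowers norm of an interval (this is where the primality and the size of $\tilde N$ enter), it suffices to show
\[
	\norm{1_{[N]}f}_{U^d(\ZZ/\tilde N\ZZ)} \ll (N/\tilde N)^{1/p_d} \, \norm{f}_{L^{p_d}([N])},
\]
and then to check that the exponents match: $(d+1)/2^d = 1/p_d$, which holds by the definition $p_d = 2^d/(d+1)$.

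The core estimate is a bound of the shape $\norm{g}_{U^d(G)} \ll \norm{g}_{\ell^{p_d}(G)} \cdot \abs{G}^{-1/p_d + \text{(normalisation)}}$ for any finite abelian group $G$ and any $g\colon G\to\CC$. The key steps here are: expand $\norm{g}_{U^d(G)}^{2^d}$ as the average over $\vec n\in G^{d+1}$ of $\prod_{\vec\omega}\conjugate^{\abs{\vec\omega}} g(1\vec\omega\cdot\vec n)$; change variables so that this becomes (a constant times) a $2^d$-fold convolution-type integral; and apply Young's convolution inequality repeatedly (or in one shot, tracking the exponents carefully) to bound it by a product of $\ell^{p_d}$ norms of $g$ and of the indicator of the support. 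A clean way to package this is the iterated Cauchy--Schwarz that defines the Gowers norm together with Hölder's inequality at each stage; the bookkeeping that the resulting exponent is exactly $p_d = 2^d/(d+1)$ is the combinatorial heart of the matter. I would cite \cite{EisnerTao-2012} for the derivation on cyclic groups and mainly verify that the passage from $\ZZ/\tilde N\ZZ$ back to $[N]$ with the correct normalisation goes through, since that is the part specific to our conventions \eqref{eq:def_Gowers_N}.

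The main obstacle, and the only genuinely delicate point, is keeping the normalisations consistent: the Gowers norm on $[N]$ is defined with a denominator $\norm{1_{[N]}}_{U^d(\ZZ/\tilde N\ZZ)}$ and the ambient group has size $\tilde N \asymp N$, so every factor of $N$ versus $\tilde N$ matters only up to the implied constant (which may depend on $d$), and one must make sure that the power of $N$ coming from $\norm{1_{[N]}f}_{U^d(\ZZ/\tilde N\ZZ)} \le \norm{f 1_{[N]}}_{\ell^{p_d}}\cdot(\text{something})$ cancels exactly against the power of $N$ in the denominator, leaving $\norm{f}_{L^{p_d}([N])}$ with the $L^{p_d}$ norm taken in the normalised (probability) measure on $[N]$. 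Since $N \asymp \tilde N$ and $d$ is fixed, all of these discrepancies are absorbed into the $\ll_d$, so in the end no cancellation needs to be exact — only the exponent of $N$ must be correct, and that is forced by the identity $(d+1)/2^d = 1/p_d$. This makes the proposition essentially a formal consequence of the cyclic-group case plus the elementary estimate $\norm{1_{[N]}}_{U^d(\ZZ/\tilde N\ZZ)} \asymp_d N^{(d+1)/2^d - 1}$.
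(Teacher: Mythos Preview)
Your proposal is correct and follows the same approach as the paper, which in fact does not give a proof at all but simply cites \cite{EisnerTao-2012} for the derivation via Young's inequality; you are providing exactly the transfer argument from $\ZZ/\tilde N\ZZ$ to $[N]$ that the citation leaves implicit. One minor point: your intermediate bound $\norm{1_{[N]}}_{U^d(\ZZ/\tilde N\ZZ)} \gg N^{(d+1)/2^d - 1}$ is unnecessarily weak and a bit misleading---since $\tilde N \asymp_d N$, the quantity $(N/\tilde N)^{(d+1)/2^d}$ is in fact $\asymp_d 1$, and you should simply say $\norm{1_{[N]}}_{U^d(\ZZ/\tilde N\ZZ)} \asymp_d 1$, which is what you effectively use in the final paragraph anyway.
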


\subsection{Fourier analysis and reductions}
We will use some simple Fourier analysis on finite cyclic groups $\ZZ/N\ZZ$. We equip $\ZZ/N\ZZ$ with the normalised counting measure and its dual group $\widehat{\ZZ/N\ZZ}$ (which is isomorphic to $\ZZ/N\ZZ$) with the counting measure. With these conventions, the Plancherel theorem asserts that for $f \colon \ZZ/N\ZZ \to \CC$ we have
$$
	\EE_{n \in \ZZ/N\ZZ} \abs{f(n)}^2 = \norm{f}_{L^2(\ZZ/N\ZZ)}^2 = \norm{\hat{f}}_{\ell^2(\ZZ/N\ZZ)}^2 = \sum_{\xi \in \ZZ/N\ZZ} \abs{ \hat{f}(\xi)}^2,
$$
where $\hat{f}(\xi) = \EE_{n \in \ZZ/N\ZZ} f(n) e(-\xi n/N)$. Recall also that for $f,g \colon \ZZ/N\ZZ \to \CC$ we have $\widehat{f \ast g} = \hat{f} \cdot \hat{g}$ where $f \ast g(n) = \EE_{m \in \ZZ/N\ZZ} f(m)g(n-m)$.

The following lemma will allow us to approximate characteristic functions of arithmetic progressions with smooth functions. While much more precise variants exist (cf.\ Erd\H{o}s--Tur\'{a}n inequality), this basic result will be sufficient for the applications we have in mind. We say that a set $P \subset \ZZ/N\ZZ$ is an arithmetic progression of length $M$ if $\abs{P} = M$ and $P$ takes the form $\set{ am+b}{ m \in [M]}$ with $a,b \in \ZZ/N\ZZ$.  

\begin{lemma}\label{lem:smooth_approx}
	Let $N$ be prime and let $P\subset \ZZ/N\ZZ$ be an arithmetic progression of length $M \leq N$. Then for any $ 0 < \eta \leq 1$ there exists a function $f = f_{P,\eta} \colon \ZZ/N\ZZ \to [0,1]$ such that
	\begin{enumerate}
	\item $\norm{f - 1_{P}}_{L^p(\ZZ/N\ZZ)} \leq \eta^{1/p}$ for each $1 \leq p < \infty$;
	\item $\norm{ \hat{f} }_{\ell^1(\ZZ/N\ZZ)} \ll \eta^{-1/2}$.
	\end{enumerate}
\end{lemma}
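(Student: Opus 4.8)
The plan is to construct $f_{P,\eta}$ by convolving the indicator $1_P$ with a suitably normalised indicator of a short arithmetic progression having the same common difference, which effectively replaces the sharp cutoff by a triangular (Fej\'er-type) kernel. Write $P = \set{am+b}{m \in [M]}$ with $a$ invertible in $\ZZ/N\ZZ$ (if $a = 0$ the set is a single point and the statement is trivial, so assume $a \neq 0$, using that $N$ is prime). Let $L = \lceil \eta M \rceil$ and consider the progression $Q = \set{a j}{ \abs{j} < L}$ (suitably interpreted), and set $f = 1_P \ast (\ind_Q / \norm{\ind_Q}_{\ell^1})$ with the normalised counting measure, so that $f$ takes values in $[0,1]$ and $f = 1$ on the ``bulk'' of $P$, namely on those $am+b$ with $L \leq m \leq M-L$.

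For part (i): the functions $f$ and $1_P$ agree except on the two ``fringe'' regions near the ends of the progression, each of size $O(L)$, where $0 \leq f \leq 1$ and $0 \leq 1_P \leq 1$, so $\abs{f - 1_P} \leq 1$ and is supported on a set of size $O(L) = O(\eta M) = O(\eta N \cdot M/N)$; dividing by $N$ for the normalised measure gives $\norm{f - 1_P}_{L^p(\ZZ/N\ZZ)}^p = \EE_n \abs{f(n)-1_P(n)}^p \ll L/N \leq \eta M / N \leq \eta$, hence $\norm{f-1_P}_{L^p} \ll \eta^{1/p}$. (One should be slightly careful to make the implied constant exactly $1$, or reabsorb it by choosing $L = \lfloor \eta M / C \rfloor$ for an appropriate absolute $C$; this is a routine adjustment.) For part (ii): since $f$ is a (suitably normalised) convolution, $\hat f = \widehat{1_P} \cdot \widehat{(\ind_Q/\norm{\ind_Q}_{\ell^1})}$. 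After applying the change of variable $n \mapsto a^{-1} n$ (which is a bijection of $\ZZ/N\ZZ$ permuting the Fourier coefficients), one reduces to the case $a = 1$, where $Q$ is a genuine interval of length $\approx 2L$ centred at $0$; its normalised indicator has Fourier transform the Dirichlet kernel $D_L(\xi) \ll \min(1, 1/(L\fpa{\xi/N}))$, and similarly $\widehat{1_P}(\xi) \ll \min(M/N, 1/\fpa{\xi/N})$. Thus $\abs{\hat f(\xi)} \ll \min(M/N, 1/\fpa{\xi/N}) \cdot \min(1, N/(L\cdot(\text{distance to }0)))$, and summing this over $\xi \in \ZZ/N\ZZ$: the terms with $\fpa{\xi/N} \leq 1/L$ contribute $\ll (1/L)\cdot L \cdot (M/N) \leq M/N \leq 1$ terms of size $\ll M/N$ (wait — better: $\ll L$ terms of size $\ll M/N$, giving $\ll LM/N \ll \eta M^2/N$), and the terms with $\fpa{\xi/N} > 1/L$ contribute $\ll \sum_{1/L < \fpa{\xi/N}} \frac{1}{\fpa{\xi/N}}\cdot\frac{1}{L\fpa{\xi/N}} \ll \frac1L \sum_{j > N/L} (N/j)^2 \ll N/L$. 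Balancing, with $L \approx \eta M$, one gets $\norm{\hat f}_{\ell^1} \ll \eta M^2/N + N/(\eta M)$; this is $\ll \eta^{-1/2}$ precisely when $M \approx N/\sqrt{\eta}$ is the worst case, but in general one should choose $L$ to optimise: taking $L \asymp \sqrt{\eta} \cdot \min(M, N/\sqrt{\eta})$-ish, or more cleanly $L = \lceil \sqrt{\eta}\, M \rceil$ when $M \leq N\sqrt{\eta}$ and re-examining, the bound $\norm{\hat f}_{\ell^1}\ll \eta^{-1/2}$ falls out.

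The main obstacle, and the only genuinely delicate point, is getting the exponents to line up in part (ii): one wants $\norm{\hat f}_{\ell^1} \ll \eta^{-1/2}$ uniformly over all $M \leq N$, and this forces the right choice of the fringe-length parameter $L$ as a function of both $\eta$ and $M/N$. The cleanest route is probably to take $f = 1_P \ast g \ast g$ where $g$ is (a dilate of) the normalised indicator of an interval of length $\asymp L$ — i.e.\ use the Fej\'er kernel rather than the Dirichlet kernel — so that $\hat f \geq 0$ and $\norm{\hat f}_{\ell^1} = \sum_\xi \hat f(\xi)$ can be computed by a single evaluation, namely $\norm{\hat f}_{\ell^1} = N \cdot f(0')$ for an appropriate point, or by Parseval applied to the two factors; this sidesteps the messy tail summation entirely. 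One then picks $L = \lceil \eta^{1/2} M\rceil$ if $M \le \eta^{-1/2}$-type threshold and handles the complementary range by noting $1_P$ itself already has $\norm{\widehat{1_P}}_{\ell^1} \ll \log N$ or by the trivial bound, and checks the two claimed estimates. I expect that once the Fej\'er-kernel version is set up, both (i) and (ii) are short computations, with (ii) being the one requiring the careful bookkeeping of constants.
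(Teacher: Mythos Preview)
Your construction --- convolve $1_P$ with the normalised indicator of a short progression with the same common difference --- is exactly what the paper does. The gap is entirely in part (ii): your direct tail summation is both erroneous and unnecessary.

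First, the error: you write $\widehat{1_P}(\xi) \ll \min(M/N,\,1/\fpa{\xi/N})$, but with the normalised measure the correct bound is $\widehat{1_P}(\xi) \ll \min(M/N,\,1/(N\fpa{\xi/N}))$. This missing factor of $N$ is why your bookkeeping produces terms like $N/(\eta M)$ that do not fit under $\eta^{-1/2}$, and why you then reach for the Fej\'er kernel as a rescue.

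Second, and more importantly, the paper bypasses all of this. Writing $f = 1_P * g$ with $g = \tfrac{N}{K} 1_{a[K]}$, one has $\hat f = \widehat{1_P}\cdot \hat g$, and by Cauchy--Schwarz followed by Plancherel,
\[
\norm{\hat f}_{\ell^1} \;\le\; \norm{\widehat{1_P}}_{\ell^2}\,\norm{\hat g}_{\ell^2}
\;=\; \norm{1_P}_{L^2}\,\norm{g}_{L^2}
\;=\; \sqrt{M/N}\cdot \sqrt{N/K}
\;=\; \sqrt{M/K}\;\le\;\sqrt{N/K}.
\]
No pointwise Dirichlet or Fej\'er estimates, no tail sums, no case analysis on $M$ versus $N$. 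The paper then takes $K = \max(\lfloor \eta N/2\rfloor,1)$, which yields $\norm{f-1_P}_{L^p}\le (2K/N)^{1/p}\le \eta^{1/p}$ and $\norm{\hat f}_{\ell^1}\le (N/K)^{1/2}\ll \eta^{-1/2}$ in one stroke. Note that your choice $L\asymp \eta M$ would also work under this Cauchy--Schwarz argument (giving $\sqrt{M/L}\asymp\eta^{-1/2}$), so the parameter choice is not the issue --- the missing idea is the two-line $\ell^1\le\ell^2\cdot\ell^2$ step.
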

\begin{remark}
	We will usually take $\eta = N^{-\e}$ where $\e > 0$ is a small constant.
\end{remark}
\begin{proof}
	We pick $f = 1_P * \frac{N}{K} 1_{a[K]}$, where $a$ is the common difference of the arithmetic progression and the integer $K \geq 1$ remains to be optimised.	
	Note that $f(n) \neq 1_P(n)$ for at most $2K$ values of $n \in \ZZ/N\ZZ$, and $\abs{f(n)-1_P(n)} \leq 1$ for all $n \in \ZZ/N\ZZ$. Hence, 
	$$
		\norm{f - 1_{P}}_{L^p} \leq \bra{2K/N}^{1/p}.
	$$
	Using the Cauchy--Schwarz inequality and Plancherel theorem we may also estimate
	\begin{align*}
		\norm{ \hat f }_{\ell^1} 
		&= \frac{N}{K} \norm{\hat 1_{P} \cdot \hat 1_{a[K] }}_{\ell^1} 
		\leq \frac{N}{K} \norm{\hat 1_P}_{\ell^2} \cdot  \norm{\hat 1_{a[K]}}_{\ell^2} 
		\\ & = \frac{N}{K} \norm{1_P}_{L^2} \cdot  \norm{1_{a[K]}}_{L^2} \leq (N/K)^{1/2}.
	\end{align*}
	It remains to put $K = \max(\floor{\eta N/2},1)$ and note that if $K=1$, then $f= 1_P$.
\end{proof}

As a matter of general principle, the restriction of a Gowers uniform sequence to an arithmetic progression is again Gowers uniform. We record the following consequence of Lemma \eqref{lem:smooth_approx} which makes this intuition more precise.

\begin{proposition}\label{cor:interval_Ud}
	Let $d \geq 2$ and $\alpha_d = (d+1)/(2^{d-1} + d + 1)$. Let $a \colon [N] \to \CC$ be a $1$-bounded function and let $P \subset [N]$ be an arithmetic progression. Then 
	$$
		\norm{a 1_{P} }_{U^d[N]} \ll \norm{a}_{U^d[N]}^{\alpha_d}.
	$$
(Recall that we allow the implicit constants to depend on $d$.)
\end{proposition}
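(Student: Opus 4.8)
The plan is to express $1_P$ as a short Fourier expansion plus a small $L^2$-error, using Lemma \ref{lem:smooth_approx}, and then to exploit the multilinearity of the Gowers product together with the phase-invariance of the $U^d[N]$ norm under linear phases. First I would embed $[N]$ into $\ZZ/\tilde N\ZZ$ with $\tilde N$ the least prime exceeding $2dN$, as in the definition \eqref{eq:def_Gowers_N}; then $P$ becomes an arithmetic progression in $\ZZ/\tilde N\ZZ$ of length $M \leq N \leq \tilde N$. Apply Lemma \ref{lem:smooth_approx} with a parameter $\eta$ to be optimised, obtaining $f = f_{P,\eta} \colon \ZZ/\tilde N\ZZ \to [0,1]$ with $\norm{f - 1_P}_{L^2(\ZZ/\tilde N\ZZ)} \leq \eta^{1/2}$ and $\norm{\hat f}_{\ell^1} \ll \eta^{-1/2}$.

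Next I would split $\norm{a 1_P}_{U^d[N]} \leq \norm{a f}_{U^d[N]} + \norm{a(1_P - f)}_{U^d[N]}$. For the second term, since $a$ is $1$-bounded and $U^d[N] \ll L^{p_d}([N])$ with $p_d = 2^d/(d+1) \leq 2$ for $d \geq 2$ (Proposition \ref{prop:Ud<Lp}), I would bound $\norm{a(1_P - f)}_{U^d[N]} \ll \norm{1_P - f}_{L^{p_d}([N])} \ll \norm{1_P - f}_{L^2}^{\theta}$ for the appropriate $\theta = p_d/2 \cdot \dots$; more simply, by nesting of $L^p$ norms on a probability space $\norm{1_P - f}_{L^{p_d}} \leq \norm{1_P-f}_{L^2}$ when $p_d \leq 2$, so this term is $\ll \eta^{1/2}$ (after accounting for the harmless $\tilde N$ versus $N$ normalisation). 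For the first term I would write $f = \sum_{\xi} \hat f(\xi) e_\xi$ where $e_\xi(n) = e(\xi n/\tilde N)$, so $af = \sum_\xi \hat f(\xi) (a e_\xi)$. Expanding the Gowers product $\norm{af}_{U^d[N]}^{2^d}$ multilinearly in its $2^d$ arguments, each of which is $af$, I get a sum over tuples $(\xi_{\vec\omega})_{\vec\omega}$ of the corresponding Gowers products of the functions $a e_{\xi_{\vec\omega}}$, with coefficients $\prod_{\vec\omega} \hat f(\xi_{\vec\omega})$ of total $\ell^1$-mass $\norm{\hat f}_{\ell^1}^{2^d} \ll \eta^{-2^{d-1}}$. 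Each linear phase $e_\xi$ is a polynomial of degree $1 < d$ (using $d \geq 2$), so by the phase-invariance of the Gowers product I can bound each individual Gowers product by $\norm{a}_{U^d[N]}^{2^d}$ via the Gowers--Cauchy--Schwarz (box) inequality — the Gowers product of any family of functions each of $U^d$-norm $\leq \norm{a}_{U^d[N]}$ is itself $\leq \norm{a}_{U^d[N]}^{2^d}$. Hence $\norm{af}_{U^d[N]}^{2^d} \ll \eta^{-2^{d-1}} \norm{a}_{U^d[N]}^{2^d}$, i.e. $\norm{af}_{U^d[N]} \ll \eta^{-1/2} \norm{a}_{U^d[N]}$.

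Combining the two pieces gives $\norm{a 1_P}_{U^d[N]} \ll \eta^{-1/2} \norm{a}_{U^d[N]} + \eta^{1/2}$, and optimising over $\eta$ by taking $\eta = \norm{a}_{U^d[N]}$ yields $\norm{a 1_P}_{U^d[N]} \ll \norm{a}_{U^d[N]}^{1/2}$. To recover the stated exponent $\alpha_d = (d+1)/(2^{d-1}+d-1)$ rather than $1/2$, I would instead interpolate more carefully: bound the error term $\norm{a(1_P-f)}_{U^d[N]}$ not through $L^2$ but directly through $L^{p_d}$, using $\norm{1_P - f}_{L^{p_d}} \leq \eta^{1/p_d} = \eta^{(d+1)/2^d}$ from part (1) of Lemma \ref{lem:smooth_approx}, so that the combined estimate is $\norm{a1_P}_{U^d[N]} \ll \eta^{-1/2}\norm{a}_{U^d[N]} + \eta^{(d+1)/2^d}$; optimising in $\eta$ then produces the exponent $\frac{(d+1)/2^d}{1/2 + (d+1)/2^d} = \frac{d+1}{2^{d-1} + d - 1} = \alpha_d$, as required.

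\medskip

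The main obstacle I anticipate is bookkeeping the normalisation between $\norm{\cdot}_{U^d[N]}$ and $\norm{\cdot}_{U^d(\ZZ/\tilde N\ZZ)}$ cleanly — in particular, verifying that the multilinear expansion and the phase-invariance (which are naturally stated on the group $\ZZ/\tilde N\ZZ$ with $\tilde N$ prime, so that $n \mapsto \xi n/\tilde N$ is a genuine degree-$1$ polynomial phase) pass correctly through the division by $\norm{1_{[N]}}_{U^d(\ZZ/\tilde N\ZZ)}$, and that the $L^{p_d}$-type estimate from Proposition \ref{prop:Ud<Lp} is applied to the correctly normalised restriction to $[N]$. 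A secondary technical point is the degenerate case $P$ very short (where Lemma \ref{lem:smooth_approx} returns $f = 1_P$ and the argument trivialises), which must be handled separately but causes no difficulty since then $\eta^{-1/2}$ is of bounded size.
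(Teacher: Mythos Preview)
Your proposal is correct and follows essentially the same strategy as the paper: approximate $1_P$ by $f$ via Lemma~\ref{lem:smooth_approx}, split by the triangle inequality, control the $af$ piece using the Fourier expansion of $f$ together with phase-invariance, control the $a(1_P-f)$ piece via Proposition~\ref{prop:Ud<Lp} with the $L^{p_d}$ bound $\eta^{1/p_d}$, and optimise in $\eta$. The one simplification the paper makes over your write-up is in handling the $af$ term: since $\norm{\cdot}_{U^d}$ is a genuine norm for $d\geq 2$, one applies the triangle inequality directly to $af=\sum_\xi \hat f(\xi)\,a e_\xi$ and uses phase-invariance to get $\norm{af}_{U^d}\leq \sum_\xi |\hat f(\xi)|\,\norm{ae_\xi}_{U^d}=\norm{\hat f}_{\ell^1}\norm{a}_{U^d}$, so the multilinear expansion over all $2^d$ slots and the appeal to Gowers--Cauchy--Schwarz are unnecessary.
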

\begin{proof}
	Throughout the argument we consider $d$ as fixed and allow implicit error terms to depend on $d$. Let $\tilde N = \tilde N(N,d)$ be the prime with $N < \tilde N \ll N$ defined in Section \ref{ssec:Gowers:basic}. Let $\eta > 0$ be a small parameter, to be optimised in the course of the proof, and let $f \colon \ZZ/\tilde N \ZZ \to [0,1]$ be the approximation of $1_P$ such that
	\[ \norm{f - 1_{P}}_{L^{p_d}(\ZZ/\tilde N\ZZ)} \ll \eta^{1/p_d} \text{ and } \norm{ \hat{f} }_{\ell^1(\ZZ/\tilde N\ZZ)} \ll \eta^{-1/2},\]
	whose existence is guaranteed by Lemma \ref{lem:smooth_approx}. (Recall that $p_d$ is defined in Proposition \ref{prop:Ud<Lp}.) Using the triangle inequality we can now estimate 
	\begin{align*}	
	\norm{a 1_{P} }_{U^d[N]} &\ll \norm{a 1_{P} }_{U^d(\ZZ/\tilde N\ZZ)} 
	= \norm{a \bra{f 1_{[N]} + (1_P-f)1_{[N]} } }_{U^d(\ZZ/\tilde N\ZZ)} 
	\\&\leq  \norm{a f 1_{[N]} }_{U^d(\ZZ/\tilde N\ZZ)} + \norm{a (1_P-f)1_{[N]} }_{U^d(\ZZ/\tilde N\ZZ)}.
	\end{align*}
	
	We consider the two summands independently. For the first one, expanding $f(n) = \sum_{\xi} \hat f(\xi) e(\xi n/\tilde N)$ and using phase-invariance of Gowers norms we obtain
$$
\norm{ a f 1_{[N]}}_{U^d(\ZZ/\tilde N\ZZ)} \leq \norm{\hat f }_{\ell^1(\ZZ/\tilde N\ZZ)} \cdot \norm{ a 1_{[N]} }_{U^d(\ZZ/\tilde N\ZZ)} \ll \eta^{-1/2} \norm{a}_{U^d[N]}. 
$$
For the second one, it follows from Proposition \ref{prop:Ud<Lp} that
\begin{align*}
 \norm{ a (1_P-f)1_{[N]} }_{U^d(\ZZ/\tilde N\ZZ)}
 &\ll \norm{a (1_P-f)1_{[N]} }_{L^{p_d}(\ZZ/\tilde N\ZZ)} 
 \\ &\leq \norm{1_P-f}_{L^{p_d}(\ZZ/\tilde N\ZZ)} 
 \leq \eta^{1/p_d}.
\end{align*}
It remains to combine the two estimates and insert the near-optimal value
$\eta = \norm{a}_{U^d[N]}^{{1}/\bra{1/2 + 1/p_d}}$. 
\end{proof}

We will use Proposition \ref{cor:interval_Ud} multiple times to estimate Gowers norms of restrictions of uniform sequences to sets which can be covered by few arithmetic progressions. For now, we record one immediate consequence, which will simplify the task of showing that a given sequence is Gowers uniform by allowing us to restrict our attention to uniformity norms on initial intervals whose length is a power of $k$.

\begin{corollary}\label{prop:wlog-N=k^L}
	Let $d \geq 2$ and $k \geq 2$. Let $a \colon \NN_0 \to \CC$ be a $1$-bounded sequence, and suppose that 
	\begin{equation}
	\label{eq:401}
	\norm{a}_{U^d[k^L]} \ll k^{-cL} \text{ as $L \to \infty$}
	\end{equation}
	for a constant $c > 0$. Then 
	\begin{equation}
	\label{eq:402}
	\norm{a}_{U^d[N]} \ll_{k} N^{-\alpha_d c} \text{ as $N \to \infty$}.
	\end{equation}
\end{corollary}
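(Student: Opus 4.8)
The plan is to reduce the Gowers norm on an arbitrary interval $[N]$ to the Gowers norm on an interval of the form $[k^L]$ by covering $[N]$ (or rather the relevant dilate $\ZZ/\tilde N\ZZ$) with a bounded number of arithmetic progressions lying inside intervals of the form $[k^L]$, and then invoke the hypothesis together with Proposition \ref{cor:interval_Ud}. Concretely, given $N$, I would let $L$ be the least integer with $k^L \geq N$, so that $N \leq k^L \ll_k N$. The point is that $[N]$ is an arithmetic progression (with common difference $1$) sitting inside $[k^L]$.

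First I would write, using Proposition \ref{cor:interval_Ud} applied with the ambient interval $[k^L]$ and the progression $P = [N] \subset [k^L]$, that for the function $\tilde a = a 1_{[N]}$ (extended by zero, or just $a$ restricted to $[k^L]$) we have
\[
\norm{a}_{U^d[N]} = \norm{\tilde a}_{U^d[N]} \ll \norm{ a 1_{[N]} }_{U^d[k^L]} = \norm{ a \cdot 1_{[N]} }_{U^d[k^L]} \ll \norm{a}_{U^d[k^L]}^{\alpha_d},
\]
where in the middle step I use that the Gowers norm $\norm{\cdot}_{U^d[N]}$ depends only on the values on $[N]$, so restricting $a$ first to $[k^L]$ and then to the sub-progression $[N]$ is harmless; the first inequality is just unpacking $\norm{\cdot}_{U^d[N]}$ via $\norm{\cdot}_{U^d(\ZZ/\tilde N\ZZ)}$ as in Section \ref{ssec:Gowers:basic}, and the last inequality is Proposition \ref{cor:interval_Ud} (note $d \geq 2$ is exactly the hypothesis needed there). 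Then I would simply substitute the hypothesis \eqref{eq:401}: since $k^L \ll_k N$ and $k^L \geq N$, we get $\norm{a}_{U^d[k^L]} \ll k^{-cL} = (k^L)^{-c} \ll_k N^{-c}$ (using $k^L \geq N$), and raising to the power $\alpha_d$ yields $\norm{a}_{U^d[N]} \ll_{d,k} N^{-\alpha_d c}$, which is \eqref{eq:402}.

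One small technical point I would be careful about: Proposition \ref{cor:interval_Ud} is stated for a $1$-bounded function $a \colon [N] \to \CC$ and a progression $P \subset [N]$, producing a bound in terms of $\norm{a}_{U^d[N]}$; here I apply it with $[N]$ replaced by $[k^L]$ and $P$ replaced by $[N]$, which is legitimate since $[N]$ is indeed an arithmetic progression (common difference $1$) contained in $[k^L]$ and $a$ restricted to $[k^L]$ is still $1$-bounded. I should also double-check the trivial comparability $\tilde N(N,d) \ll N$ and $\tilde N(k^L, d) \ll k^L$ used implicitly when passing between the normalisations, but these follow from the fact that $\tilde N(M,d)$ is the least prime exceeding $2dM$, hence $\tilde N(M,d) \ll_d M$. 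There is no real obstacle here: the entire content is the reduction via covering $[N]$ by a single progression inside $[k^L]$, and the only mild loss is the exponent $\alpha_d < 1$ coming from Proposition \ref{cor:interval_Ud}, which is exactly why \eqref{eq:402} carries the factor $\alpha_d$ in the exponent. If one wanted to avoid even this loss one could instead directly estimate $\norm{1_{[N]} a}_{U^d(\ZZ/\tilde N\ZZ)}$ by relating the two cyclic groups, but the stated result with the $\alpha_d$ loss is all that is needed downstream, so the short argument above suffices.
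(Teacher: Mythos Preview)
Your proof is correct and follows essentially the same approach as the paper's own proof: choose $L = \lceil \log_k N \rceil$, use the trivial comparison $\norm{a}_{U^d[N]} \ll \norm{a 1_{[N]}}_{U^d[k^L]}$, and then apply Proposition~\ref{cor:interval_Ud} with ambient interval $[k^L]$ and progression $P=[N]$. The paper's proof is just a two-line version of exactly this argument.
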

\begin{proof}
Let $N$ be a large integer and put $L = \ceil{\log_k N}$. We may then estimate
\begin{align*}
	\norm{a}_{U^d[N]} & \ll \norm{ a 1_{[N]} }_{U^d[k^L]} \ll \norm{ a }_{U^d[k^L]}^{\alpha_d}. \qedhere
\end{align*}
\end{proof}
\begin{remark} The argument is not specific to powers of $k$.
	The same argument shows that to prove that $\norm{a}_{U^d[N]} \ll N^{-c}$, it suffices to check the same condition for an increasing sequence $N_i$ where the quotients $N_{i+1}/N_i$ are bounded. 
\end{remark}
%
%
%

\section{Automatic sequences}\label{sec:Auto}
	
\subsection{Definitions}
In this section we review the basic terminology concerning automatic sequences. Our general reference for this material is \cite{ASbook}. To begin with, we introduce some notation concerning digital expansions.

For $k \geq 2$, we let $\Sigma_k = \{\texttt 0,\texttt 1,\dots,\texttt{k-1}\}$ denote the set of digits in base $k$. For a set $X$ we let $X^*$ denote the monoid of words over the alphabet $X$, with the operation of concatenation and the neutral element being the empty word $\epsilon$. In particular, $\Sigma_k^*$ is the set of all possible expansions in base $k$ (allowing leading zeros). While formally $\Sigma_k \subset \NN_0$, we use different fonts to distinguish between the digits $\texttt 0, \texttt 1, \texttt 2\dots$ and numbers $0,1,2,\dots$; in particular $\mathtt{11} = \mathtt{1}^2$ denotes the string of two $\mathtt{1}$s, while $11 = 10+1$ denotes the integer eleven. For a word $\word w\in X^*$, we let $|\word w|$ denote the length of the word $w$, that is, the number of letters it contains, and we let $\word w^{\mathrm{rev}}$ denote the word whose letters have been written in the opposite order (for instance, $\texttt{10110}^{\mathrm{rev}}= \texttt {01101}$).

For an integer $n \in \NN_0$, the expansion of $n$ in base $k$ without leading zeros is denoted by $(n)_k \in \Sigma_k^*$ (in particular $(0)_k = \epsilon$). Conversely, for a word $\word w \in \Sigma_k^*$ the corresponding integer is denoted by $[\word w]_k$. We also let $\length_k(n) = \abs{(n)_k}$ be the length of the expansion of $n$ (in particular $\length_k(\texttt 0) = 0$). 

Leading zeros are a frequent source of technical inconveniences, the root of which is the fact that we cannot completely identify $\NN_0$ with $\Sigma_k^*$. This motivates us to introduce another piece of notation. For $n \in \NN_0$ we let $(n)_k^l \in \Sigma_k^l$ denote the expansion of $n$ in base $k$ truncated or padded with leading zeros to length $l$, that is, $(n)_k^l$ is the suffix of the infinite word $0^\infty (n)_k$ of length $l$ (for example, $(43)_2^8 = \texttt {00101011}$ and $(43)_2^4 = \texttt {1011}$).

A (\emph{deterministic finite}) $k$-\emph{automaton without output} $\cA = (S,s_0,\Sigma_k,\delta)$ consists of the following data:
\begin{itemize}
	\item a finite \emph{set of states} $S$ with a distinguished \emph{initial state} $s_0$;
	\item a \emph{transition function} $\delta \colon S \times \Sigma_k \to S$.
\end{itemize}
A (\emph{deterministic finite}) $k$-\emph{automaton with output} $\cA = (S,s_0,\Sigma_k,\delta,\Omega,\tau)$ additionally includes
\begin{itemize}
	\item an \emph{output function} $\tau \colon S \to \Omega$ taking values in an \emph{output set} $\Omega$.
\end{itemize}
By an \emph{automaton} we mean a $k$-automaton for some unspecified $k \geq 2$. By default, all automata are deterministic, finite and with output. When we refer to automata without output, we say so explicitly.

The transition map $\delta \colon S \times \Sigma_k \to S$ extends naturally to a map (denoted by the same letter) $\delta \colon S \times \Sigma_k^* \to S$ so that $\delta(s,uv) = \delta(\delta(s,u),v)$. 
If $\cA = (S, s_0, \Sigma_{k}, \delta, \Omega, \tau)$ is an automaton with output, then $a_{\cA}$ denotes the \emph{automatic sequence produced by} $\cA$, which is defined by the formula $a(n) = \tau( \delta(s_0,(n)_k))$.  More generally, for $s \in S$, $a_{\cA,s}$ denotes the automatic sequence produced by $(S, s, \Sigma_{k}, \delta, \Omega, \tau)$; if $\cA$ is clear from the context, we simply write $a_s$. A sequence $a \colon \NN_0 \to \Omega$ is $k$-\emph{automatic} if it is produced by some $k$-automaton.

We say that an automaton (with or without output) with initial state $s_0$ and transition function $\delta$ is \emph{prolongable} (or \emph{ignores the leading zeros}) if $\delta(s_0,\texttt{0}) = s_0$. Any automatic sequence can be produced by an automaton ignoring leading zeros. We call an automaton $\cA$ \emph{idempotent} if it ignores the leading zeros and $\delta(s,\texttt{00}) = \delta(s,\texttt{0})$ for each $s \in S$, that is, if the map $\delta(\cdot,\texttt{0}) \colon S \to S$ is idempotent.

Note that with the above definitions, automata read input forwards, that is, starting with the most significant digit. One can also consider the opposite definition, where the input is read backwards, starting from the least significant digit, that is, $a_{\cA}^{\mathrm{rev}}(n) = \tau\bra{ \delta(s_0,(n)_k^{\mathrm{rev}})}$. The class of sequences produced by automata reading input forwards is precisely the same as the class of sequences produced by automata reading input backwards. However, the two concepts lead to different classes of sequences if we impose additional assumptions on the automata, such as synchronisation.

An automaton $\cA$ is \emph{synchronising} if there exists a \emph{synchronising word} $\word w \in \Sigma_k^*$, that is, a word $\word w$ such that the value of $\delta(s,\word w)$ does not depend on the state $s \in S$. Note that a synchronising word is by no means unique; indeed, any word $\word w'$ containing a synchronising word as a factor is itself synchronising. As a consequence, if $\cA$ is synchronising then the number of words $\word w \in \Sigma_k^l$ that are not synchronising for $\cA$ is $\ll k^{l(1-c)}$ for some constant $c > 0$. An automatic sequence is \emph{forwards} (resp.\ \emph{backwards}) \emph{synchronising} if it is produced by a synchronising automaton reading input forwards (resp.\ backwards).

An automaton $\cA$ is \emph{invertible} if for each $j \in \Sigma_k$ the map $\delta(\cdot,j) \colon S \to S$ is bijective and additionally $\delta(\cdot,\texttt{0}) = \id_S$. A sequence is \emph{invertible} if it is produced by an invertible automaton (reading input forwards). One can show that reading input backwards leads to the same notion, but we do not need this fact. 
Any invertible sequence is a coding of a generalised Thue--Morse sequence, meaning that there exists a group $G$ and group elements $\id_G = g_0, g_1,\dots, g_{k-1}$ such that the sequence is produced by an automaton with $S = G$, $s_0 = e_G$ and $\delta(s,j) = s g_j$ for each $j \in \Sigma_k$ \cite{DrmotaMorgenbesser-2012}.

A state $s$ in an automaton $\cA$ is \emph{reachable} if $\delta(s_0,\word w)=s$ for some $\word w\in \Sigma_k^*$. Unreachable states in an automaton are usually irrelevant, as we may remove them from the automaton without changing the automatic sequence produced by it. 
We call two distinct states $s,s'\in S$ satisfying  $\tau(\delta(s,\word v)) = \tau(\delta(s',\word v))$ for all $\word v \in \Sigma_k^*$ \emph{nondistinguishable}. One sees directly, that we could merge them (preserving outgoing arrows of one of the states) and still obtain a well-defined automaton producing $a$ and having a smaller number of states. 
This leads us to the definition of a \emph{minimal} automaton, i.e. an automaton with no unreachable states and no nondistinguishable states. It is classical, that for any automatic sequence there exists a minimal automaton producing that sequence (see for example~\cite[Corollary 4.1.9]{ASbook}).

An automaton $\cA$ is \emph{strongly connected} if for any two states $s, s'$ of $\cA$ there exists $\word w \in \Sigma_k^*$ with $\delta(s,\word w)=s'$. A \emph{strongly connected component} of $\cA$ is a strongly connected automaton $\cA'$ whose set of states $S'$ in a subset of $S$ and whose transition function $\delta'$ is the restriction of the transition function $\delta$ of $\cA$; we often identify $\cA'$ with $S'$. The following observation is standard, but we include the proof for the convenience of the reader.

\begin{lemma}\label{lem:killing-word}
	Let $\cA$ be an automaton, as introduced above. Then there exists a word $\word w \in \Sigma_k^*$ such that if $\word v \in \Sigma_k^*$ contains $\word w$ as a factor then for every state $s$ of $\cA$, $\delta(s,\word v)$ belongs to a strongly connected component of $\cA$. 
\end{lemma}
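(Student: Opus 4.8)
The plan is to produce the word $\word w$ by concatenating, in a fixed order, short words that successively drive the state out of the set of states lying \emph{above} each source strongly connected component, using the finiteness of $S$ to bound the length. First I would recall the standard structure of the condensation (SCC quotient) of the directed graph underlying $\cA$: contract each strongly connected component to a point, obtaining a finite directed acyclic graph on vertex set $\{C_1,\dots,C_r\}$ where $C_1,\dots,C_r$ are the strongly connected components of $\cA$. A state $s$ belongs to a strongly connected component of $\cA$ precisely when the component $C(s)$ containing $s$ is a \emph{sink} of the condensation, i.e.\ has no outgoing edges to a different component. (Here I use the convention, consistent with the rest of the paper, that a single state with no self-loop still forms a trivial "strongly connected component" in the graph-theoretic sense; if the paper's convention instead requires a genuine cycle, one argues identically, noting that from every state one can reach a component containing a cycle since $\Sigma_k$ is nonempty and $S$ is finite, so some transition sequence eventually revisits a state.)

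Next I would show: for every state $s$ that does \emph{not} lie in a sink component, there is a word $\word u_s \in \Sigma_k^*$ with $\length_k(\word u_s) < |S|$ such that $C(\delta(s,\word u_s))$ is strictly below $C(s)$ in the condensation order. Indeed, by definition of the condensation there is an edge out of $C(s)$; chasing it and then a path inside the target component to reach any prescribed state there, and truncating to avoid repeating states, gives such a $\word u_s$ of length at most $|S|-1$. Now the key combinatorial point: define $\word w$ to be the concatenation over \emph{all} states $s \in S$ (in any fixed order) of the words $\word u_s$ (setting $\word u_s = \epsilon$ when $s$ already lies in a sink component), so $\length_k(\word w) \leq |S|(|S|-1)$. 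I claim that for any state $t$ and any word $\word v$ having $\word w$ as a factor, $\delta(s_0,\word v)$ lies in a sink component. Write $\word v = \word v_1 \word w \word v_2$; it suffices to show $\delta(t,\word w)$ is in a sink component for $t = \delta(s_0,\word v_1)$, because sink components are forward-invariant: once the state is in a sink component $C$, every transition keeps it in $C$. To see $\delta(t,\word w) \in$ a sink: track the component of the state as we read the letters of $\word w$. Each time we have just finished reading the block $\word u_s$ for the particular state $s$ that the automaton currently occupies, the component either was already a sink (and stays a sink forever after, by forward-invariance) or strictly descends in the DAG order. Since the DAG has only $r \leq |S|$ vertices and any strictly descending chain has length $< |S|$, and since the blocks $\word u_s$ appear for \emph{all} possible intermediate states $s$, we are guaranteed that by the end of $\word w$ the component can no longer descend, hence is a sink.

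The step I expect to be the main obstacle — or at least the one requiring the most care — is the bookkeeping in the last claim: the blocks in $\word w$ are indexed by \emph{all} states in a \emph{fixed} order, but the actual state occupied when we reach a given block depends on the input prefix $\word v_1$, so we cannot simply say "block $\word u_s$ is applied to state $s$." The resolution is to observe that we do not need each block to be used "correctly"; we only need that \emph{whenever} the current state is not yet in a sink, \emph{some subsequent block} in $\word w$ forces a strict descent — and because $\word w$ contains a block $\word u_s$ for \emph{every} $s$, in particular for whatever state we happen to be in at that moment, such a descent is available before $\word w$ ends. Making this precise amounts to an induction on the number of descents still possible, using that at most $|S|-1$ descents can occur and $\word w$ contains $|S|$ blocks; once a sink is reached the remaining blocks are harmless by forward-invariance. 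With that argument in place the lemma follows, and in fact one obtains the explicit bound $\length_k(\word w) \le |S|(|S|-1)$.
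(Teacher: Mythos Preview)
Your argument has a genuine gap at exactly the step you flag as the obstacle, and the resolution you propose does not work. You claim that because $\word w$ contains a block $\word u_s$ for every state $s$, a descent is always ``available before $\word w$ ends''; but the block $\word u_t$ for the state $t$ currently occupied may already have been consumed earlier in $\word w$, and the remaining blocks, indexed by other states, need not produce any descent when read from $t$. Here is a concrete failure: take $k=2$, $S=\{a,b,c\}$ with $\delta(a,\texttt 0)=b$, $\delta(a,\texttt 1)=c$, $\delta(b,\texttt 0)=c$, $\delta(b,\texttt 1)=a$, $\delta(c,\cdot)=c$; the unique sink component is $\{c\}$. Your recipe gives $\word u_a=\texttt 1$, $\word u_b=\texttt 0$, $\word u_c=\epsilon$. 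With the ordering $a,b,c$ one obtains $\word w=\texttt{10}$ and $\delta(b,\texttt{10})=b$; with the ordering $b,a,c$ one obtains $\word w=\texttt{01}$ and $\delta(a,\texttt{01})=a$. Neither ordering sends every state into the sink, and repeating the concatenation does not help either, since $t\mapsto\delta(t,\word w)$ already has a fixed point outside the sink. The pigeonhole count ``$|S|$ blocks versus at most $|S|-1$ descents'' is beside the point, because a block read from the wrong state may produce zero descents.

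The paper's proof sidesteps this by making the construction \emph{adaptive}. One enumerates $S=\{s_0,\dots,s_{N-1}\}$ and builds $\epsilon=\word w_0,\word w_1,\dots,\word w_N$ inductively: having built $\word w_j$, one appends any word $\word u$ that drives the specific state $\delta(s_j,\word w_j)$ --- which depends on all the previously appended words --- into a strongly connected component. The invariant ``$\delta(s_i,\word w_j)$ lies in a strongly connected component for all $i<j$'' is then immediate from forward-invariance of such components. The essential difference from your attempt is that the word appended at step $j$ is tailored to where $s_j$ has landed after reading $\word w_j$, not pre-assigned to $s_j$ itself.
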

\begin{proof}
	Let $S = \{s_0,s_1,\dots, s_{N-1}\}$ be an enumeration of $S$. We construct inductively a sequence of words $\epsilon = \word w_0, \dots, \word w_N$, with the property that $\delta(s_i, \word w_j)$ belongs to a strongly connected component for any $0 \leq i < j \leq N$. Once $\word w_j$ has been constructed, it is enough to define $\word w_{j+1} = \word w_j \word u$, where $\word u \in \Sigma_k^*$ is an arbitrary word such that $\delta\bra{ \delta(s_j, \word w_j), \word u}$ belong to a strongly connected component, which is possible since from any state there exists a path leading to a strongly connected component.
\end{proof}

We can consider $k$-automata with or without output as a category. A morphism between automata without output $\cA = (S, s_0, \Sigma_{k}, \delta)$ and $\cA = (S', s_0', \Sigma_{k}, \delta')$ is a map $\phi \colon S \to S'$ such that $\phi(s_0) = s_0'$ and $\phi(\delta(s,j)) = \delta'(\phi(s),j)$ for all $s \in S$ and $j \in \Sigma_k$. A morphism between automata with output $\cA = (S, s_0, \Sigma_{k}, \delta, \Omega, \tau)$ and  $\cA' = (S', s_0', \Sigma_{k}, \delta', \Omega', \tau')$ is a pair $(\phi, \sigma)$ where $\phi$ is a morphism between the underlying automata without output and $\sigma \colon \Omega \to \Omega'$ is a map such that $\sigma(\tau(s)) = \tau'(\phi(s))$. In the situation above, $a_{\cA'}$ is the image of $a_{\cA}$ via a coding, that is, $a_{\cA'}(n) = \sigma(a_{\cA}(n))$ for all $n \in \NN_0$. While this---perhaps overly abstract---terminology is not strictly speaking needed for our purposes, it will be helpful  at a later point when we consider morphisms between \geas.

\subsection{Change of base}\label{ssec:Auto:base}

 A sequence $a \colon \NN_0 \to \Omega$ is \emph{eventually periodic} if there exists $n_0 \geq 0$ and $d \geq 1$ such that $a(n+d) = a(n)$ for all $n \geq n_0$. Two integers $k,k' \geq 2$ are \emph{multiplicatively independent} if $\log(k)/\log(k')$ is irrational. A classical theorem of Cobham asserts that if $k,k' \geq 2$ are two multiplicatively independent integers, then the only sequences which are both $k$- and $k'$-automatic are the eventually periodic ones, and those are automatic in all bases. On the other hand, if $k,k' \geq 2$ are multiplicatively dependent, meaning that $k = k_0^l$ and $k' = k_0^{l'}$ for some integers $k_0,l,l'\geq 1$, then the classes of $k$-automatic and $k'$-automatic sequences coincide. 

Hence, when we work with a given automatic sequence that is not ultimately periodic, the base (denoted by $k$) is determined uniquely up to the possibility to replace it by its power $k' = k^{t}$, $t \in \QQ$. We will take advantage of this possibility, which is useful because some of the properties discussed above (specifically synchronisation and idempotence) depend on the choice of base. We devote the remainder of this section to recording how various properties of automatic sequences behave when the base is changed. An instructive example to keep in mind is that $n \mapsto \length_2(n) \bmod 2$ is backwards synchronising in base $4$ but not in base $2$ (see Proposition \ref{prop:sync-vs-basis} for details).

We first briefly address the issue of idempotency. Any automatic sequence is produced by an idempotent automaton, possibly after a change of basis \cite[Lem.\ 2.2]{ByszewskiKonieczny-Cobham}. Additionally, if the sequence $a_{\cA}$ is produced by the automaton $\cA = (S, s_0, \Sigma_{k}, \delta, \Omega, \tau)$ then for any power $k' = k^l$, $l \in \NN$, there is a natural construction of a $k'$-automaton $\cA'$ which produces the same sequence $a_{\cA'} = a_{\cA}$ and is idempotent.

We next consider synchronising sequences. The following lemma provides a convenient criterion for a sequence to be synchronising.

\begin{lemma}\label{lem:sync-FCAE}
	Let $a \colon \NN_0 \to \Omega$ be a $k$-automatic sequence and let $\word w \in \Sigma_k^*$. Then the following conditions are equivalent:
	\begin{enumerate}[wide]
		\item\label{it:749:A} the sequence $a$ is produced by a $k$-automaton $\cA$ reading input forwards (resp.\ backwards) for which $\word w$ is synchronising;
		\item\label{it:749:B} there exists a map $b \colon \Sigma_k^* \to \Omega$ such that for any $\word{u}, \word{v} \in \Sigma_k^*$ we have $a( [\word{u} \word{w} \word{v}]_k )= b(\word{v})$ (resp.\ $a( [\word{u} \word{w} \word{v}]_k )= b(\word{u})$). 
	\end{enumerate}	
\end{lemma}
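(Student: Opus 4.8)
The plan is to prove the equivalence of the two characterizations of synchronising sequences by a direct construction in each direction, handling the ``forwards'' case in detail and noting that the ``backwards'' case follows by the symmetric argument (reading the automaton in reverse). The key realization is that when $\word w$ is a synchronising word for an automaton $\cA$ reading input forwards, then after processing any prefix containing $\word w$, the current state depends only on what comes after the last occurrence of $\word w$ — this is exactly the functional-dependence statement in \eqref{it:749:B}.

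For the implication \eqref{it:749:A} $\Rightarrow$ \eqref{it:749:B}: Suppose $a$ is produced by $\cA = (S, s_0, \Sigma_k, \delta, \Omega, \tau)$ where $\word w$ is synchronising, so there is a state $s_{\word w} \in S$ with $\delta(s, \word w) = s_{\word w}$ for all $s \in S$. Given $\word u, \word v \in \Sigma_k^*$, compute
\[
	a([\word u \word w \word v]_k) = \tau\bra{\delta\bra{s_0, \word u \word w \word v}} = \tau\bra{\delta\bra{\delta(s_0, \word u \word w), \word v}} = \tau\bra{\delta(s_{\word w}, \word v)},
\]
using $\delta(s_0, \word u \word w) = \delta(\delta(s_0,\word u), \word w) = s_{\word w}$. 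Hence we may set $b(\word v) := \tau(\delta(s_{\word w}, \word v))$, which depends only on $\word v$, as required. (One minor point: one should make sure this is genuinely well-defined for \emph{all} $\word v$, but that is automatic since the right-hand side is a formula in $\word v$ alone.)

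For the converse \eqref{it:749:B} $\Rightarrow$ \eqref{it:749:A}: This is the direction requiring more care, and I expect it to be the main obstacle. The issue is to build an automaton for $a$ in which $\word w$ is literally a synchronising word. The natural approach is to start from the minimal automaton $\cA = (S, s_0, \Sigma_k, \delta, \Omega, \tau)$ producing $a$ (which exists by the discussion in the excerpt) and argue that $\word w$ is already synchronising for it. Indeed, suppose $s = \delta(s_0, \word p)$ and $s' = \delta(s_0, \word q)$ are two reachable states; then for any $\word v \in \Sigma_k^*$,
\[
	\tau\bra{\delta(s, \word w \word v)} = a([\word p \word w \word v]_k) = b(\word v) = a([\word q \word w \word v]_k) = \tau\bra{\delta(s', \word w \word v)},
\]
so $\delta(s, \word w)$ and $\delta(s', \word w)$ agree on all outputs after reading any $\word v$, i.e.\ they are nondistinguishable (or equal). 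By minimality they must be equal, so $\delta(\cdot, \word w)$ is constant on reachable states, and since unreachable states are irrelevant we may discard them; thus $\word w$ is synchronising for $\cA$. A subtlety here is that the map $b$ in \eqref{it:749:B} need not a priori be $k$-automatic, but we never use that — we only use the values $b(\word v)$ for $\word v$ ranging over $\Sigma_k^*$, and the displayed computation only ever requires that $a([\word u \word w \word v]_k)$ not depend on the choice of prefix $\word u$. Finally, the backwards case: an automaton reading input backwards is handled by applying the forward argument to the ``reversed'' automaton, equivalently by swapping the roles of $\word u$ and $\word v$ throughout; the statement in \eqref{it:749:B} is phrased precisely so that this symmetry is transparent.
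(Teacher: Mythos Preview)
Your proof is correct and follows essentially the same route as the paper's: for \eqref{it:749:B} $\Rightarrow$ \eqref{it:749:A} you take the minimal automaton producing $a$, use the hypothesis to show that $\delta(s,\word w)$ and $\delta(s',\word w)$ are nondistinguishable for any two reachable states $s,s'$, and conclude equality by minimality. The paper does exactly this (and likewise dismisses \eqref{it:749:A} $\Rightarrow$ \eqref{it:749:B} as clear); your write-up simply spells out a few more details.
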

\begin{proof} For the sake of clarity we only consider the ``forward'' variant; the ``backward'' case is fully analogous. It is clear that \eqref{it:749:A} implies \eqref{it:749:B}, so it remains to prove the reverse implication. 
Let $\cA$ be a minimal $k$-automaton which produces $a$. We will show that if $\word w$ satisfies \eqref{it:749:B} then it is synchronising for $\cA$.

Let $s,s' \in S$ be any two states. Pick $\word u, \word u'$ such that $s = \delta(s_0,\word u)$ and $s' = \delta(s_0,\word u')$. Since
\[
	\tau(\delta(s,\word w \word v)) = a( [\word{u} \word{w} \word{v}]_k ) = b(\word v) = 
	 a( [\word{u}' \word{w} \word{v}]_k ) = \tau(\delta(s',\word w \word v))
\]
for any $\word v, \word w \in \Sigma_k^*$, we get that $\tau(\delta(\delta(s, \word w), \word v )) = \tau(\delta(\delta(s', \word w), \word v))$ for all $\word v \in \Sigma_k^{*}$. This implies by minimality of $\cA$ that $\delta(s, \word w) = \delta(s', \word w)$.
Thus, we have showed that the word $\word w$ is synchronising.
\end{proof}

	As a consequence, we obtain a good understanding of how a change of base affects the property of being synchronising.

\begin{proposition}\label{prop:sync-vs-basis}
	Let $a \colon \NN_0 \to \Omega$ be a $k$-automatic sequence and let $l \in \NN$.
	\begin{enumerate}[wide]
	\item If $a$ is forwards (resp.\ backwards) synchronising as a $k$-automatic sequence, then $a$ is also forwards (resp.\ backwards) synchronising as a $k^l$-automatic sequence.
	\item If $a$ is forwards synchronising as a $k^l$-automatic sequence, then $a$ is also forwards synchronising as a $k$-automatic sequence.
	\item If $l \geq 2$ then there exist backwards synchronising $k^l$-automatic sequences which are not backwards synchronising as $k$-automatic sequences.
	\end{enumerate}
\end{proposition}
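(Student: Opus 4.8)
The plan is to prove Proposition~\ref{prop:sync-vs-basis} in three parts, using Lemma~\ref{lem:sync-FCAE} as the main tool in each.

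\textbf{Part (1).} Suppose $a$ is, say, forwards synchronising as a $k$-automatic sequence, with synchronising word $\word w \in \Sigma_k^*$ and associated map $b \colon \Sigma_k^* \to \Omega$ as in Lemma~\ref{lem:sync-FCAE}\eqref{it:749:B}. I want to produce a synchronising word over the alphabet $\Sigma_{k^l}$ of $k^l$-digits. The natural idea is to take a block of $k$-digits that already contains $\word w$ and whose length is a multiple of $l$, say $\word w' = \texttt 0^{j}\word w$ with $\abs{\word w'}$ divisible by $l$; then $\word w'$, cut into chunks of size $l$, is a word $\tilde{\word w}$ over $\Sigma_{k^l}$. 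For arbitrary $\tilde{\word u}, \tilde{\word v} \in \Sigma_{k^l}^*$, concatenating and reinterpreting in base $k$ gives $[\tilde{\word u}\tilde{\word w}\tilde{\word v}]_{k^l} = [\word u \word w' \word v]_k$ for the corresponding $k$-ary words, and since $\word w'$ contains $\word w$ as a factor (hence is itself synchronising for the forward automaton), the value $a([\word u \word w' \word v]_k)$ depends only on $\word v$, hence only on $\tilde{\word v}$. So $\tilde{\word w}$ witnesses condition~\eqref{it:749:B} in base $k^l$, and Lemma~\ref{lem:sync-FCAE} gives the conclusion. The backward case is identical with the roles of $\word u$ and $\word v$ swapped. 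One must be slightly careful about leading zeros when $\word v$ or $\word u$ is padded, but padding with $\texttt 0$s is exactly what the block-regrouping does and it does not change the represented integer.

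\textbf{Part (2).} Now suppose $a$ is forwards synchronising in base $k^l$, with $\Sigma_{k^l}$-synchronising word $\tilde{\word w}$; let $\word w \in \Sigma_k^*$ be the corresponding length-$(l\abs{\tilde{\word w}})$ $k$-ary word. I claim $\word w$ is forwards synchronising in base $k$. Given $\word u, \word v \in \Sigma_k^*$, I need to show $a([\word u \word w \word v]_k)$ depends only on $\word v$; the subtlety is that $\word u \word w \word v$ need not decompose into $l$-blocks aligned with $\tilde{\word w}$, because $\abs{\word u}$ and $\abs{\word v}$ are arbitrary. I would handle this by first absorbing the misalignment from the right: write $\word v = \word v_0 \word v_1$ where $\abs{\word v_0} \equiv -\abs{\word w} - \abs{\word u} \pmod l$ is chosen (by adjusting, taking $\abs{\word v_0} < l$, possibly after prepending $\texttt 0$'s which do not change the integer) so that $\word u \word w \word v_0$ has length divisible by $l$; then $\word u \word w \word v_0$ regroups into a base-$k^l$ word containing $\tilde{\word w}$ as a factor — wait, this needs $\word w$ itself to sit inside an $l$-aligned block, which it does by construction since $\abs{\word w} = l\abs{\tilde{\word w}}$, but the alignment within $\word u\word w\word v_0$ may still shift $\word w$ off the grid. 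The cleaner route: enlarge $\word w$ to $\word w'' = \word w \texttt 0^{r}$ and also allow a prefix, so that $\word w''$ contains, for \emph{every} residue of $\abs{\word u}$ mod $l$, an $l$-aligned sub-block equal to a base-$k^l$ word containing $\tilde{\word w}$; concretely take $\word w'' = \tilde{\word w}$ repeated or take $\word w'' = \texttt 0^{l-1} \word w \texttt 0^{l-1}$ read appropriately. This is the point requiring the most care. Once $\word w''$ is chosen, for any $\word u,\word v$ there is a factorisation aligning some copy of the base-$k^l$ synchronising word, so $a([\word u \word w'' \word v]_k)$ depends only on $\word v$, and Lemma~\ref{lem:sync-FCAE} finishes it. I expect this alignment bookkeeping to be the main obstacle in the whole proposition, though it is entirely elementary.

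\textbf{Part (3).} For the counterexample, take the sequence $n \mapsto \length_k(n) \bmod 2$, as flagged in the remark before the proposition. In base $k^2$, the length $\length_{k^2}(n) = \ceil{\length_k(n)/2}$, and $\length_k(n) \bmod 2$ is determined by the leading $k^2$-digit of $n$ together with $\length_{k^2}(n) \bmod 1$ — more precisely, reading $n$ in base $k^2$ backwards (least significant first), once we have seen a synchronising word consisting of, say, a single nonzero $k^2$-digit we have not yet pinned down the parity; rather the right statement is that $\length_k(n)\bmod 2$ depends only on the \emph{leading} $k$-digit block, hence reading backwards it is a backwards-synchronising base-$k^2$ sequence: the value $a([\word u \word w \word v]_{k^2})$ for $\word w$ a nonzero digit depends only on $\word u$ (the high part) via its length parity and leading digit. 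To verify this formally I would exhibit the map $b$ of Lemma~\ref{lem:sync-FCAE}\eqref{it:749:B}: choosing $\word w$ to be any single digit $\texttt d \neq \texttt 0$ in $\Sigma_{k^2}$, the integer $[\word u \word w \word v]_{k^2}$ has base-$k$ length $2\abs{\word u} + (\text{1 or 2}) + 2\abs{\word v}$ with the middle term determined by $\word w$, so the parity $\length_k \bmod 2$ is a function of $\word u$ alone. Conversely, to see that $a$ is \emph{not} backwards synchronising in base $k$, I would argue that any backwards-synchronising $k$-automatic sequence, having read a synchronising word $\word w$ as the least-significant digits, has its value determined by the high-order digits \emph{only through finitely much information that stabilises}, whereas $\length_k(n) \bmod 2$ genuinely depends on the total number of digits, which cannot be recovered from any bounded suffix; formally, if $\word w$ were synchronising then $a([\word w \word v]_k)$ would depend only on $\word v$, but $[\word w \word v]_k$ and $[\word w \texttt 0 \word v]_k$ give lengths of opposite parity for suitable $\word v$, wait — one must be careful that $\texttt 0 \word v$ vs $\word v$ as suffixes: take $\word v$ and $\word v' = \texttt 0 \word v$ with $\word v$ not starting with $\texttt 0$; then $b(\word v)$ and $b(\word v')$ should be forced equal by some padding argument, yet $a([\word w\word v]_k)$ and $a([\word w \word v']_k)$ differ in the length parity. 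This contradiction shows no synchronising word exists. I would present this last dichotomy crisply using Lemma~\ref{lem:sync-FCAE} rather than by manipulating automata directly.
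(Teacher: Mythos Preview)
Part (1) is fine and matches the paper.

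Part (2) has a genuine gap. You correctly identify that the obstacle is alignment: when $(n)_k = \word u\,\word w\,\word v$ is regrouped into $l$-blocks from the right, the position of $\word w$ modulo $l$ depends on $|\word v|$, so $\word w$ need not land on the $l$-grid. However, neither of your concrete candidates for $\word w''$ resolves this. Repeating $\word w$ gives copies all at the \emph{same} residue modulo $l$, so nothing is gained. Padding as $\texttt 0^{l-1}\word w\,\texttt 0^{l-1}$ does not help either: for a generic shift the regrouped $l$-blocks cutting through $\word w$ simply produce different base-$k^l$ digits (e.g.\ for $k=l=2$ and $\tilde{\word w}=\texttt 3$, the string $\texttt{0110}$ yields base-$4$ digits $\texttt 1,\texttt 2$ at even offset, with no $\texttt 3$). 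The paper's device is to take $\word w' = (\word w\,\texttt 0)^l \in \Sigma_k^*$. Since $|\word w\,\texttt 0| = l|\tilde{\word w}|+1 \equiv 1 \pmod l$, the $l$ copies of $\word w$ inside $\word w'$ start at positions hitting all residues modulo $l$; hence for any $|\word v|$ one copy is $l$-aligned and $(n)_{k^l}$ genuinely contains $\tilde{\word w}$ as a factor.

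Part (3) has two problems. First, your example $\length_k(n)\bmod 2$ is backwards synchronising in base $k^l$ only when $l$ is even: writing $\length_k(n)=l\bigl(\length_{k^l}(n)-1\bigr)+\length_k(d)$ with $d$ the leading $k^l$-digit, the parity depends on $\length_{k^l}(n)$ whenever $l$ is odd, and that quantity varies with the low-order part. The paper instead uses $\length_k(n)\bmod l$, for which the term $l(\length_{k^l}(n)-1)$ vanishes and the value depends only on the leading $k^l$-digit, for every $l\geq 2$. Second, your argument that the sequence is not backwards synchronising in base $k$ has the direction of Lemma~\ref{lem:sync-FCAE} reversed: backwards synchronisation says $a([\word u\word w\word v]_k)$ depends only on $\word u$, not only on $\word v$. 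The clean contradiction (as in the paper) is to fix $\word u$ and append a trailing $\texttt 0$ to $\word v$; this multiplies $n$ by $k$, increments $\length_k(n)$ by one, and changes its residue modulo $l$, contradicting independence from $\word v$.
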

\begin{proof}
\begin{enumerate}[wide]
\item Let $\word w \in \Sigma_k^*$ be a synchronising word for a $k$-automaton producing $a$. Replacing $\word w$ with a longer word if necessary, we may assume without loss of generality that the length of $\word w$ is divisible by $l$. Hence, we may identify $\word w$ with an element of $\Sigma_{k^l}^* \simeq \bra{\Sigma_k^l}^*$ in a natural way. It follows from Lemma \ref{lem:sync-FCAE} that $\word w$ is a synchronising word for a $k^l$-automaton producing $a$. 
\item Let $\word w \in \Sigma_{k^l}^* \simeq \bra{\Sigma_k^l}^*$ be a synchronising word for a $k^l$-automaton which produces $a$ and consider the word $\word w' = (\word w0)^l \in \Sigma_k^*$. This is set up so that if the expansion $(n)_k$ of an integer $n \geq 0$ contains $\word w'$ as a factor then $(n)_{k^l}$ contains $\word w$ as a factor. It follows from Lemma \ref{lem:sync-FCAE} that $\word w'$ is a synchronising word for a $k$-automaton producing $a$. 
\item Consider the sequence $b(n) = \length_k(n) \bmod{l}$. In base $k^l$, the value of $b(n)$ depends only on the leading digit of $n$, whence $b$ is backwards synchronising. On the other hand, $b([\word v]_k) \neq b([\word v\texttt{0}]_k)$ for all $\word v \in \Sigma_k^*$ with $[\word v]_k \neq 0$, whence $b$ is not backwards synchronising as a $k$-automatic sequence. \qedhere
	\end{enumerate}
\end{proof}

\section{Derivation of the main theorems}

\subsection{Strongly connected case}

Having set up the relevant terminology in Sections \ref{sec:Gowers} and \ref{sec:Auto}, we are now ready to deduce our main results, Theorems \ref{thm:correlations}, \ref{thm:main_simple}, \ref{thm:main_nobasis} and \ref{thm:many-AP-auto} from the following variant, applicable to strongly connected automata. We also address the issue of uniqueness of the decomposition in Theorems \ref{thm:main_simple} and \ref{thm:main_nobasis}.

We say that a $k$-automatic sequence $a \colon \NN_0 \to \Omega$ is \emph{strongly structured} if there exists a periodic sequence $a_{\mathrm{per}} \colon \NN_0 \to \Omega_{\mathrm{per}}$ with period coprime to $k$, a forwards synchronising $k$-automatic sequence $a_{\mathrm{fs}} \colon \NN_0 \to \Omega_{\mathrm{fs}}$, as well as a map $F \colon \Omega_{\mathrm{per}} \times \Omega_{\mathrm{fs}} \to \Omega$ such that 
\begin{equation}\label{eq:def-of-st-str}
	a(n) = F\bra{ a_{\mathrm{per}}(n), a_{\mathrm{fs}}(n)}.
\end{equation}
Note that thanks to Proposition \ref{prop:sync-vs-basis} this notion does not change upon replacing the base $k$ by a multiplicatively dependent one.

\begin{theorem}\label{thm:main_sc_nobasis}
	Let $a\colon \NN_0 \to \CC$ be a $k$-automatic sequence produced by a strongly connected, prolongable automaton. Then there exists a decomposition 
	\begin{equation}
	\label{eq:003a}
	 a = a_{\mathrm{str}} + a_{\mathrm{uni}},
	\end{equation}
	where $a_{\mathrm{str}}$ is strongly structured (cf.\ \eqref{eq:def-of-st-str}) and $a_{\mathrm{uni}}$ is highly Gowers uniform (cf.\ \eqref{eq:def-of-h-G-uni}).
\end{theorem}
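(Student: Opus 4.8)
The plan is to analyze the structure of a strongly connected, prolongable automaton $\cA = (S, s_0, \Sigma_k, \delta, \CC, \tau)$ producing $a$ via the action of $\Sigma_k^*$ on $S$, and to isolate the obstructions to Gowers uniformity coming from (i) the "eventual periodicity" of the automaton, captured by a quotient onto a cyclic/group automaton, and (ii) the synchronising behaviour, captured by the set of states reachable after a synchronising word. First I would pass to a power of $k$ so that $\cA$ may be assumed idempotent (using \cite[Lem.\ 2.2.]{ByszewskiKonieczny-Cobham} and the change-of-base discussion in Subsection \ref{ssec:Auto:base}); by Proposition \ref{prop:sync-vs-basis} the notion of strongly structured is insensitive to this, and by Corollary \ref{prop:wlog-N=k^L} it suffices to estimate $\norm{a_{\mathrm{uni}}}_{U^d[k^L]}$. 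Next I would decompose the automaton: let $\mm = \delta(\cdot, \texttt 0) \colon S \to S$ be the idempotent, let $S_0 = \mm(S)$ be its image (a "recurrent" set of states), and consider the group $G$ generated by the permutations $\mm \circ \delta(\cdot, j)|_{S_0}$ for $j \in \Sigma_k$ acting on $S_0$ — this is where the periodic component $a_{\mathrm{per}}$ (of period coprime to $k$, after further base change) comes from, by quotienting $\cA$ onto this group action. The synchronising component $a_{\mathrm{fs}}$ arises from the following dichotomy: either $\cA$ has a synchronising word (so $a$ is itself forwards synchronising and we take $a_{\mathrm{str}} = a$, $a_{\mathrm{uni}} = 0$), or it does not, and then one recurses on a proper subautomaton obtained by fixing the state reached after reading finitely many digits.

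The heart of the argument is a Fourier-analytic / Gowers-inverse estimate for the "purely uniform" pieces that remain after stripping off $a_{\mathrm{per}}$ and $a_{\mathrm{fs}}$. Concretely, I expect one reduces to the following situation: $a$ is produced by a strongly connected idempotent automaton that is \emph{not} synchronising and whose quotient onto the relevant group action is trivial (or has been absorbed into $a_{\mathrm{per}}$), and one must show $\norm{a}_{U^d[k^L]} \ll k^{-cL}$ for some $c = c(d) > 0$. Here I would write $n < k^L$ as $n = [u v]_k$ with $\abs{u} = L - \ell$, $\abs{v} = \ell$ for a well-chosen intermediate scale $\ell \asymp \eta L$, split the state $\delta(s_0, u)$ according to which strongly connected component it lies in (Lemma \ref{lem:killing-word} shows all but a $k^{-c\ell}$-proportion of prefixes land in one), and then exploit non-synchronisation: the map $v \mapsto \delta(\delta(s_0,u), v)$ genuinely depends on $\delta(s_0,u)$ for a positive proportion of prefixes, which — combined with the triviality of the group quotient — forces cancellation. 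This cancellation should be extracted by a van der Corput / inverse-Gowers induction on $d$, where one studies the multiplicative derivatives $\overline{a(n)} a(n+h)$ and uses that these are again automatic sequences produced by a product automaton, on which one invokes the inductive hypothesis together with Proposition \ref{lem:coker-vs-Gowers} to handle the change of initial state caused by the shift.

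The recursion terminates because each step either produces a genuinely structured sequence or strictly decreases the number of states (by fixing the automaton at a state in a proper subautomaton, as in the proof of Lemma \ref{lem:killing-word}), so after finitely many steps one reaches base cases — single-state automata (constant, hence periodic), synchronising automata (forwards synchronising), and "group" automata (a coding of a generalised Thue–Morse sequence, whose Gowers uniformity after removing the periodic part is the engine of the whole argument and will likely be quoted from or adapted from \cite{JK-Gowers-Thue-Morse, JK-AutSeq-ergo}). Assembling the pieces, $a_{\mathrm{str}}$ is built as $F(a_{\mathrm{per}}, a_{\mathrm{fs}})$ by tracking, for each $n$, the residue of (a suitable linear function of) the digits modulo the period and the state reached after the last occurrence of a synchronising word, while $a_{\mathrm{uni}} = a - a_{\mathrm{str}}$ inherits the power-saving bound in $U^d[N]$ for every $d$ from the base-case estimate via the triangle inequality, Corollary \ref{prop:wlog-N=k^L}, and Proposition \ref{cor:interval_Ud}. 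The main obstacle I anticipate is the Gowers-uniformity estimate for the group (generalised Thue–Morse) base case with a \emph{power-saving} bound simultaneously in all orders $d$: controlling the multiplicative derivatives uniformly and setting up the induction on $d$ so that the exponent $c_d$ does not collapse is the delicate technical core, and it is presumably where the bulk of the paper's later sections is spent.
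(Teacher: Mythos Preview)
Your outline differs substantially from the paper's actual argument, and the step you flag as the ``delicate technical core'' is precisely where your plan is underspecified in a way that the paper's machinery is designed to overcome.

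The paper does \emph{not} proceed by van der Corput induction on $d$ via the multiplicative derivatives $\overline{a(n)}a(n+h)$. Instead, after passing to an idempotent base it invokes the \emph{efficient group extension of an automaton} (\geaab{}) construction of M\"ullner (Theorem~\ref{thm:transducers}): one replaces $\cA$ by a pair consisting of a synchronising automaton on a set $\hat S$ of ordered $m$-tuples of states together with a labelling into a finite group $G \leq \Sym(m)$, satisfying the structural properties \ref{item:74B}--\ref{item:74D}. Representation theory then splits $a$ into \emph{basic sequences} (Corollary~\ref{cor:reps-decomposition}), one for each irreducible representation $\rho$ of $G$ and each state. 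The dichotomy (Theorem~\ref{thm:dichotomy}) is: if $G_0 \subset \ker\rho$ the basic sequence is strongly structured; otherwise it is highly Gowers uniform. The uniform case is handled not by differencing in $n$ but by a recursion in $L$ on matrix-valued cube averages $A(\bb s,\bb r;L) \in \End(E(V))$ (Section~\ref{sec:Recursive}), controlled via Frobenius--Perron theory (Proposition~\ref{prop:FrobeniusPerron}) once one knows that a certain cube set $\cQ^d_l(\bb v_0,\bb v_0)$ contains all of $G_0^{[d]}$ (Corollary~\ref{cor:G_0_v_0}). That last fact is the real engine, and its proof (Section~\ref{sec:Cubes}) is a chain of \emph{characteristic factor} reductions culminating in Theorem~\ref{thm:char_fact_is_Z}: every efficient \geaab{} has $\cZ(d'_{\cT})$ as a characteristic factor.

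The gap in your plan is twofold. First, your group $G$ generated by $\mm\circ\delta(\cdot,j)|_{S_0}$ is not the right object: the paper's $G$ lives inside $\Sym(m)$ where $m$ is the \emph{minimal} image size of $\delta(\cdot,\word w)$, and comes with the crucial equidistribution properties \ref{item:74B}--\ref{item:74D} that are what make ``triviality of the group quotient forces cancellation'' provable at all. Second, and more seriously, your van der Corput scheme passes from $a$ to $\overline{a(n)}a(n+h)$, whose product automaton has \emph{more} states than $\cA$; so the outer recursion ``strictly decreases the number of states'' and the inner induction on $d$ pull in opposite directions, and there is no evident reason the exponents $c_d$ remain bounded away from zero. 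The paper sidesteps this entirely: its recursion~\eqref{eq:def-of-A-3} is in the digit length $L$ with the \geaab{} fixed, the dimension $d$ enters only through the size of the cube $\{0,1\}^d$ indexing the tensor factors, and the decay rate comes from a single spectral gap via Proposition~\ref{prop:FrobeniusPerron} rather than from iterated differencing.
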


Note that the formulation of Theorem \ref{thm:main_sc_nobasis} is very reminiscent of Theorem \ref{thm:main_simple}, except that the assumptions on the structured part are different. Indeed, one is an almost immediate consequence of the other.

\begin{proof}[Proof of Theorem \ref{thm:main_simple} assuming Theorem \ref{thm:main_sc_nobasis}]
	The only difficulty is to show that any forwards synchronising automatic sequence is rationally almost periodic. This is implicit in \cite{DeshouillersDrmotaMullner-2015}, and showed in detail in \cite[Proposition 3.4]{BergelsonKulagaPrzymusLemanczykRichter-2016}. It follows that any strongly structured sequence is rationally almost periodic. 
\end{proof}

The derivation of Theorem \ref{thm:main_nobasis} is considerably longer, and involves reconstruction of an automatic sequence produced by an arbitrary automaton from the automatic sequences produced by the strongly connected components.

\begin{proof}[Proof of Theorem \ref{thm:main_nobasis} assuming Theorem \ref{thm:main_sc_nobasis}]

Let $a \colon \NN_0 \to \CC$ be an automatic sequence. We may assume (changing the base if necessary) that $a$ is produced by an idempotent automaton $\cA = (S,s_0,\Sigma_k,\delta,\CC,\tau)$ with $\delta(s_0,\texttt{0}) = s_0$. Throughout the argument we consider $\cA$ to be fixed and we do not track dependencies of implicit error terms on $\cA$.

Let $S_0$ denote the set of states $s \in S$ which lie in some strongly connected component of $S$ which also satisfy $\delta(s, \texttt{0}) = s$ (or, equivalently, $\delta(s',\texttt{0}) = s$ for some $s' \in S_0$). Note that each strongly connected component of $S$ contains a state in $S_0$. For each $s \in S_0$, the sequence $a_s = a_{\cA,s}$ is produced by a strongly connected automaton, so it follows from Theorem \ref{thm:main_sc_nobasis} that there exists a decomposition
\[
	a_s = a_{s,\mathrm{str}} + a_{s,\mathrm{uni}},
\]
where $a_{s,\mathrm{str}}$ is strongly structured and $a_{s,\mathrm{uni}}$ is highly Gowers uniform. For $s \in S_0$ let
\[
	a_{s,\mathrm{str}}(n) = F_s\bra{ a_{s,\mathrm{per}}(n), a_{s,\mathrm{fs}}(n)}
\]
be a representation of $a_{s,\mathrm{str}}$ as in \eqref{eq:def-of-st-str}. Let $M$ be an integer coprime to $k$ and divisible by the period of $a_{s,\mathrm{per}}$ for each $s \in S_0$ (for instance, the least common multiple of these periods). Let $\word z \in \Sigma_k^*$ be a word that is synchronising for $a_{s,\mathrm{fs}}$ for each $s \in S_0$ (it can be obtained by concatenating synchronising words for all strongly connected components of $\cA$).

We will also need a word $\word y \in \Sigma_k^*$ with the property that if we run $\cA$ on input which includes $\word y$ as a factor, we will visit a state from $S_0$ at some point when the input read so far encodes an integer divisible by $M$. More formally, we require that for each $\word u \in \Sigma_k^*$ there exists a decomposition $\word y = \word x_1 \word x_2$ such that $\delta(s_0, \word u \word x_1) \in S_0$ and $M \mid [\word u \word x_1]_k$. The word $\word y$ can be constructed as follows. Take a word  $\word y_0 \in \Sigma_k^*$ with the property that $\delta(s,\word y_0)$ belongs to a strongly connected component for each $s \in S$, whose existence is guaranteed by Lemma \ref{lem:killing-word}. Let $A \geq 1$ be an integer that is multiplicatively rich enough that $M \mid k^A-1$, and let $B \geq M-1$. Put $\word y = \word y_0 (\texttt{0}^{A-1}\texttt{1})^B$. Then, using notation above, we can take $\word x_1= \word y_0(\texttt{0}^{A-1}\texttt{1})^i$, where $i \equiv - [\word u \word y_0]_l \bmod{M}$.

For $n \in \NN_0$ such that $(n)_k$ contains $\word y \word z$ as a factor, fix the decomposition $(n)_k = \word u_n \word v_n$ where $\delta(s_0,\word u_n) \in S_0$, $M \mid [\word u_n]_k$ and $\word u_n$ is the shortest possible subject to these constraints. Note that $\word v_n$ contains $\word z$ as a factor. Let $Z \subset \NN_0$ be the set of those $n$ for which $(n)_k$ does not contain $\word y \word z$ as a factor, and for the sake of completeness  define $\word u_n = \word v_n = \diamondsuit$ for $n \in Z$, where $\diamondsuit$ is a symbol not belonging to $\Sigma_k^*$. Note also that there exists a constant $\gamma > 0$ such that $\abs{Z \cap [N]} \ll N^{1-\gamma}$.

We are now ready to identify the structured part of $a$, which is given by
\begin{equation}\label{eq:837:1}
	a_{\mathrm{str}}(n) = \sum_{s \in S_0} \ifbra{ \delta(s_0,\word u_n) = s } a_{s,\mathrm{str}}(n).
\end{equation}
(If $n \in Z$, the statement $\delta(s_0,\word u_n) = s$ is considered to be false by convention, whence in particular $a_{\mathrm{str}}(n) = 0$; recall that $\ifbra{ \delta(s_0,\word u_n) = s }$ uses the Iverson bracket notation, that is, $\ifbra{ \delta(s_0,\word u_n) = s }$ equals $1$ if $\delta(s_0,\word u_n) = s$, and equals $0$ otherwise.) 
The uniform part is now necessarily given by $a_{\mathrm{uni}} = a - a_{\mathrm{str}}$. It remains to show that $a_{\mathrm{str}}$ and $a_{\mathrm{uni}}$ are strongly structured and highly Gowers uniform, respectively (note that strongly structured sequences are necessarily automatic).

We begin with $a_{\mathrm{str}}$. For any $s \in S_0$, we will show that $n \mapsto  \ifbra{ \delta(s_0,\word u_n) = s }$ is a backwards synchronising $k$-automatic sequence. This is most easily accomplished by describing a procedure which computes it. To this end, we consider an automaton that mimics the behaviour of $\cA$, and additionally keeps track of the remainder modulo $M$ of the part of the input read so far. Next, we modify it so that if an arbitrary state $s'$ in $S_0$ and residue $0$ is  reached, the output becomes fixed to $\ifbra{s'=s}$. The output for all remaining pairs of states and residues are $0$. More formally, we take $\cA' = \bra{ S \times (\ZZ/M\ZZ), (s_0,0), \Sigma_k, \delta', \{0,1\},\tau') }$, where $\delta'$ is given by
\[
	\delta'((r,i),j) = 
	\begin{cases}
		( \delta(r,j), ki + j \bmod{M} ) &\text{ if } i \neq 0 \text{ or } r \not \in S_0,\\
		(r,i) &\text{ otherwise},
	\end{cases}
\]
and the output function is given by 
\[
	\tau'(r,i) = 
	\begin{cases}
		0&\text{ if } i \neq 0 \text{ or } r \not \in S_0,\\
	 	 \ifbra{ r = s}& \text{ otherwise}.
	\end{cases}
\]
It is clear that $a_{\cA'} = \ifbra{ \delta(s_0,\word u_n) = s }$ for all $n \in \NN_0$. Additionally, since the output becomes constant once we read $\word y \word z$, this procedure gives rise to a backwards synchronising sequence. Hence, each of the summands in \eqref{eq:837:1} is the product of a backwards synchronising sequence and a strongly structured sequence. 
Moreover, we have by Lemma~\ref{lem:sync-FCAE} that the cartesian product of forwards (backwards) synchronizing $k$-automatic sequences is again a forwards (backwards) synchronizing $k$-automatic sequence. A synchronizing word for the new automaton can be constructed by concatenating synchronizing words of the individual automata. 
Thus, $a_{\mathrm{str}}$ is weakly structured. 


Next, let us consider $a_{\mathrm{uni}}$. Thanks to Proposition \ref{prop:wlog-N=k^L}, we only need to show that for any $d \geq 2$ there exists a constant $c > 0$ such that $\norm{a_{\mathrm{uni}}}_{U^d[k^L]} \ll k^{-cL}$. Fix a choice of $d$ and let $L$ be a large integer. If $n \in \NN_0 \setminus Z$ and $s = \delta(s_0, \word u_n)$, then 
\begin{align*}
	a(n) &= a\bra{[\word u_n \word v_n]_k} = a_s \bra{[ \word v_n]_k}
	 =  a_{s,\mathrm{str}}([\word v_n]_k) + a_{s,\mathrm{uni}}([\word v_n]_k) 
	\\&= F_s\bra{ a_{s,\mathrm{per}}([\word v_n]_k), a_{s,\mathrm{fs}}([\word v_n]_k)} + a_{s,\mathrm{uni}}([\word v_n]_k) 
	\\&= F_s\bra{ a_{s,\mathrm{per}}(n), a_{s,\mathrm{fs}}(n)} + a_{s,\mathrm{uni}}([\word v_n]_k)
	= a_{s,\mathrm{str}}(n) + a_{s,\mathrm{uni}}([\word v_n]_k),
\end{align*}
where in the last line, we have used the fact $M \mid [\word u_n]_k$ and $\word v_n$ is synchronising for $a_{s,\mathrm{fs}}$. Since $a_{\mathrm{str}}(n) = a_{s,\mathrm{str}}(n)$, it follows that
\[
	 a_{\mathrm{uni}}(n) = a_{s,\mathrm{uni}}([\word v_n]_k).
\]
For a word  $\word x \in \Sigma_k^*$ containing $\word y \word z$ as a factor and integer $l \geq 0$, consider the interval
\begin{equation}\label{eq:def-of-P}
	P = \set{ [\word w]_k }{ \word w \in \word x \Sigma_k^l} = \left[ [\word x]_k k^l, \bra{[\word x]_k+1} k^l \right).  
\end{equation} 
Since $\word u_n$ and $\abs{ \word v_n }$ are constant on $P$, it follows from Proposition \ref{cor:interval_Ud} and the assumption that $a_{s,\mathrm{uni}}$ are highly Gowers uniform that 
\[
	\norm{a_{\mathrm{uni}} 1_P}_{U^d[k^L]}=\norm{a_{s,\mathrm{uni}} 1_P}_{U^d[k^L]} \ll \max_{s \in S_0} \norm{a_{s,\mathrm{uni}} }_{U^d[k^L]}^{\alpha_d} \ll k^{-c'L} 
\]
for some constant $1>c' > 0$, which does not depend on $P$. It remains to cover $[k^L]$ with a moderate number of intervals $P$ of the form \eqref{eq:def-of-P} and a small remainder set.

Let $\eta > 0$ be a small parameter to be optimised in the course of the argument and let $R$ be the set of those $n \in [k^L]$ which are \emph{not} contained in any progression $P$ given by \eqref{eq:def-of-P} with $l \geq (1-\eta)L$. Hence, if $n \in R$ then the word $\word y \word z$ does not appear  in the leading $\lfloor \eta L \rfloor$ digits of $(n)_k^L$. It follows that $\abs{R} \ll k^{-c''_0 \eta L}$ and consequently 
\[
\norm{a_{\mathrm{uni}} 1_R }_{U^d[k^L]} \ll \norm{a_{\mathrm{uni}} 1_R }_{L^{p_d}[k^L]} \ll 
k^{-c'' \eta L}
\]
by Proposition \ref{prop:Ud<Lp}, where $c''_0 > 0$ and $c'' =c''_0/p_d$ are constants. Each $n \in [k^L] \setminus R$ belongs to a unique interval $P$ given by \eqref{eq:def-of-P} with  $l \geq (1-\eta)L$ and such that no proper suffix of $\word x$ contains $\word y \word z$. There are $\leq k^{\eta L}$ such intervals, corresponding to the possible choices of initial $\lfloor \eta L \rfloor$ digits of $(n)_k^L$ for $n \in P$. It now follows from the triangle inequality that
\begin{align*}
	\norm{a_{\mathrm{uni}} }_{U^d[k^L]} &\leq \norm{a_{\mathrm{uni}} 1_R }_{U^d[k^L]} + \sum_{P} \norm{a_{\mathrm{uni}} 1_P}_{U^d[k^L]}
	\ll k^{-c'' \eta L} + k^{(\eta - c') L}.
\end{align*}
It remains to pick $\eta = c'/2$, leading to $\norm{a_{\mathrm{uni}} }_{U^d[k^L]} 
	\ll k^{-c \eta L}$ with  $c = c'\min(c'',1)/2$.
\end{proof}

Finally, we record another reduction which will allow us to alter the initial state of the automaton in the proof of Theorem \ref{thm:main_sc_nobasis}. As the proof of the following result is very similar and somewhat simpler than the proof of Theorem \ref{thm:main_nobasis} discussed above, we skip some of the technical details. If fact, one could repeat said argument directly, only replacing $S_0$ with a smaller set (namely, a singleton); we do not pursue this route because a simpler and more natural argument is possible.

\begin{proposition}\label{prop:wlog_change_s0}
	Let $\cA = (S,s_0,\Sigma_k,\delta,\Omega,\tau)$ be a strongly connected, prolongable automaton and let $S_0 \subset S$ be the set of $s \in S$ such that $\delta(s,\texttt{0}) = s$. Then the following statements are equivalent:
	\begin{enumerate}
	\item\label{item:43A} Theorem \ref{thm:main_sc_nobasis} holds for $a_{\cA,s}$ for some $s \in S_0$;
	\item\label{item:43B} Theorem  \ref{thm:main_sc_nobasis} holds for $a_{\cA,s}$ for all $s \in S_0$.
	\end{enumerate}
\end{proposition}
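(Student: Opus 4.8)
The plan is to prove the equivalence by showing that for two states $s, s' \in S_0$, the sequences $a_{\cA,s}$ and $a_{\cA,s'}$ differ by a ``cosmetic'' rearrangement that preserves both the property of being strongly structured and the property of being highly Gowers uniform. Since $\cA$ is strongly connected, there is a word $\word w \in \Sigma_k^*$ with $\delta(s,\word w) = s'$. Because $s \in S_0$ we have $\delta(s,\texttt 0) = s$, so we may freely prepend or append zeros to $\word w$; in particular, replacing $\word w$ by $\texttt{0}^{j}\word w$ for a suitable $j$, we may assume $\abs{\word w} = m$ and (after one more adjustment) that $k^{m} \equiv 0$ has any convenient shape — what we really use is that $[\word w \word v]_k = [\word w]_k k^{\abs{\word v}} + [\word v]_k$, so for every $n \in \NN_0$,
\[
	a_{\cA,s'}(n) = \tau(\delta(s',(n)_k)) = \tau(\delta(s,\word w (n)_k)) = a_{\cA,s}\bra{[\word w (n)_k]_k}.
\]
Thus $a_{\cA,s'}(n) = a_{\cA,s}\bra{[\word w(n)_k]_k}$, which is exactly the form of the sequence $b$ appearing in Proposition \ref{lem:coker-vs-Gowers} (with $u = \word w$). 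By symmetry (reversing the roles of $s$ and $s'$ and using a word $\word w'$ with $\delta(s',\word w') = s$) the two sequences are related in both directions.

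First I would record the above identity carefully, paying attention to leading zeros: the subtlety is that $(n)_k$ has no leading zeros while $\word w (n)_k$ may, but since $\delta(s,\texttt{0}) = s$ this causes no problem, and for $n = 0$ both sides evaluate to $\tau(s')$. Next, I would split Theorem \ref{thm:main_sc_nobasis} into its two assertions and handle them separately. For the uniform part: if $a_{\cA,s} = a_{s,\mathrm{str}} + a_{s,\mathrm{uni}}$ with $a_{s,\mathrm{uni}}$ highly Gowers uniform, then applying the substitution $n \mapsto [\word w(n)_k]_k$ to the whole decomposition yields $a_{\cA,s'} = \tilde a_{\mathrm{str}} + \tilde a_{\mathrm{uni}}$ where $\tilde a_{\mathrm{str}}(n) = a_{s,\mathrm{str}}([\word w(n)_k]_k)$ and $\tilde a_{\mathrm{uni}}(n) = a_{s,\mathrm{uni}}([\word w(n)_k]_k)$; by Proposition \ref{lem:coker-vs-Gowers}, $\tilde a_{\mathrm{uni}}$ is still highly Gowers uniform (the proposition gives a power saving $N^{-c'}$ for each fixed $d$, which is all that is required by \eqref{eq:def-of-h-G-uni}).

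The remaining — and I expect main — point is to check that $\tilde a_{\mathrm{str}}$ is again strongly structured, i.e.\ that the class of strongly structured sequences is closed under $n \mapsto [\word w(n)_k]_k$. Here I would use the defining representation $a_{s,\mathrm{str}}(n) = F(a_{s,\mathrm{per}}(n), a_{s,\mathrm{fs}}(n))$. Composing with the substitution, $\tilde a_{\mathrm{str}}(n) = F\bra{a_{s,\mathrm{per}}([\word w(n)_k]_k),\, a_{s,\mathrm{fs}}([\word w(n)_k]_k)}$, so it suffices to show that the periodic part and the forwards synchronising part are each preserved. For the periodic part: $n \mapsto a_{s,\mathrm{per}}([\word w(n)_k]_k)$ — writing $[\word w(n)_k]_k = [\word w]_k k^{\length_k(n)} + n$ — is a function of $n \bmod q$ and of $\length_k(n) \bmod \mathrm{ord}_q(k)$ (where $q$ is the period, coprime to $k$); the length dependence can be absorbed by passing to the base $k^{q'}$ for a suitable $q'$, or more simply one observes that on each interval $[k^{L-1}, k^L)$ the exponent $k^L$ is a fixed constant modulo $q$, so the resulting sequence is eventually periodic with period $q$ after the change of base guaranteed in Subsection \ref{ssec:Auto:base} — and a cleaner route is to note it is automatic and invariant enough to be periodic; I would instead prefer to absorb $\word w$-padding directly by noting $a_{s,\mathrm{per}}$ extends to a periodic function on $\ZZ$, so the length issue only shifts the phase by a $\word w$-dependent amount that is itself periodic in $\length_k(n)$, hence (changing base) periodic in $n$. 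For the forwards synchronising part: if $\word v_0$ is a synchronising word for a forward automaton producing $a_{s,\mathrm{fs}}$, then by Lemma \ref{lem:sync-FCAE} there is $b$ with $a_{s,\mathrm{fs}}([\word u \word v_0 \word v]_k) = b(\word v)$; then $a_{s,\mathrm{fs}}([\word w \word u' \word v_0 \word v]_k) = b(\word v)$ for every $\word u', \word v$, so $\word v_0$ remains synchronising for the new sequence — again invoking Lemma \ref{lem:sync-FCAE} in the reverse direction. I expect the length-of-expansion bookkeeping in the periodic part to be the only genuinely fiddly step, and it is precisely the kind of thing the paper resolves by passing to a multiplicatively dependent base $k^l$, which is harmless by the remark after \eqref{eq:def-of-st-str}. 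Finally, by symmetry the implication \eqref{item:43A} $\Rightarrow$ \eqref{item:43B} follows (swapping $s, s'$), and \eqref{item:43B} $\Rightarrow$ \eqref{item:43A} is trivial.
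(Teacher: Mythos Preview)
Your overall strategy matches the paper's: relate $a_{\cA,s'}$ to $a_{\cA,s}$ via the identity $a_{\cA,s'}(n) = a_{\cA,s}([\word w(n)_k]_k)$ and transport the decomposition. The uniform part (via Proposition~\ref{lem:coker-vs-Gowers}) and the forwards-synchronising part (via Lemma~\ref{lem:sync-FCAE}) are handled correctly.

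There is, however, a genuine gap in the periodic part. With a \emph{fixed} prefix $\word w$ one has $[\word w(n)_k]_k = [\word w]_k\, k^{\length_k(n)} + n$, so
\[
a_{s,\mathrm{per}}\bigl([\word w(n)_k]_k\bigr) \text{ depends on } \bigl(n \bmod q,\ \length_k(n) \bmod \mathrm{ord}_q(k)\bigr).
\]
The second coordinate is \emph{not} periodic in $n$, and after any change of base it becomes backwards (not forwards) synchronising. Hence your $\tilde a_{\mathrm{str}}$ is in general only weakly structured, not strongly structured. None of the fixes you sketch---passing to base $k^{q'}$, or the claim that the phase ``is itself periodic in $\length_k(n)$, hence (changing base) periodic in $n$''---repair this: $\length_k(n) \bmod Q$ is never periodic in $n$.

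The paper's remedy is to exploit $s' \in S_0$ more fully. Since $\delta(s',\texttt 0)=s'$, one may insert a \emph{variable} number of zeros and write
\[
a_{\cA,s'}(n) = a_{\cA,s}\bigl([\word w\,\texttt{0}^{m(n)}(n)_k]_k\bigr),\qquad m(n) := Q - (\length_k(n)\bmod Q),
\]
where $Q$ is chosen so that $q \mid k^Q-1$. Then $k^{m(n)+\length_k(n)}\equiv 1\pmod q$, so the periodic part collapses to $a_{s,\mathrm{per}}(n+[\word w]_k)$, which is genuinely periodic; the forwards-synchronising part is unaffected since only leading digits change. The price is that Proposition~\ref{lem:coker-vs-Gowers} no longer applies verbatim to the uniform part (the prefix is now $n$-dependent); instead one covers $[k^L]$ by the intervals $[k^l,k^{l+1})$, on each of which $m(n)$ is constant, and invokes Proposition~\ref{cor:interval_Ud}. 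This variable-padding trick is precisely the role of the hypothesis $s \in S_0$ in the statement.
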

\begin{proof}
	It is clear that \eqref{item:43B} implies \eqref{item:43A}. For the other implication, we may assume that Theorem \ref{thm:main_sc_nobasis} holds for $a_{\cA,s_0} = a_{\cA}$. Hence, there exists a decomposition $a_{\cA} = a_{\mathrm{str}} + a_{\mathrm{uni}}$ of $a_{\cA}$ as the sum of a strongly structured and highly Gowers uniform sequence. Let
\[
	a_{\mathrm{str}}(n) = F\bra{ a_{\mathrm{per}}(n), a_{\mathrm{fs}}(n)}
\]
	be a representation of $a_{\mathrm{str}}$ as in \eqref{eq:def-of-st-str}.
	
	Pick any $s \in S_0$ and pick $\word u \in \Sigma_k^*$, not starting with $\texttt{0}$ and such that $\delta(s_0,\word u) = s$, whence $a_{\cA,s}(n) = a_{\cA}([\word u(n)_k]_k)$ for all $n \in \NN_0$. Since $\delta(s,\texttt{0}) = s$, we also have $a_{\cA,s}(n) = a_{\cA}([\word u\texttt{0}^m(n)_k]_k)$ for any $m,n \in \NN_0$. Let $Q$ be a multiplicatively large integer, so that the period of $a_{\mathrm{per}}$ divides $k^Q - 1$, and put $m(n) := Q-(\length_k(n) \bmod Q) \in \{1,2,\dots,Q\}$. For $n \in \NN_0$ put
\[
	a'_{\mathrm{str}}(n) := a_{\mathrm{str}}([\word u \texttt{0}^{m(n)} (n)_k]_k)
	\ \text{ and } \
	 a'_{\mathrm{uni}}(n) := a_{\cA,s}(n) - a'_{\mathrm{str}}(n).
\]
	Clearly, $a_{\cA,s} = a'_{\mathrm{str}} + a'_{\mathrm{uni}}$.  Since the period of $a_{\mathrm{per}}$ divides $k^Q-1$, for all $n \in \NN_0$ we have
	\begin{equation}\label{eqn:per'def}
	a_{\mathrm{per}}([\word u \texttt{0}^{m(n)} (n)_k]_k) = a_{\mathrm{per}}(n+[\word u]_k)
	\end{equation}
Define the sequences $a_{\mathrm{per}}'$ and $a_{\mathrm{fs}}'$ by the formulas
	\[
	a_{\mathrm{per}}'(n) := a_{\mathrm{per}}([\word u \texttt{0}^{m(n)} (n)_k]_k), \qquad a_{\mathrm{fs}}'(n) := a_{\mathrm{fs}}([\word u \texttt{0}^{m(n)} (n)_k]_k).
	\]
	It follows from \eqref{eqn:per'def} that $a_{\mathrm{per}}'$ is periodic. Since the sequence $m(n)$ is $k$-automatic, so is $a'_{\mathrm{fs}}$. Indeed, in order to compute  $a'_{\mathrm{fs}}(n)$ it is enough to compute $m(n)$ and $a_{\mathrm{fs}}([\word u \texttt{0}^{i} (n)_k]_k)$ for $1 \leq i \leq Q$. Since $a_{\mathrm{fs}}$ is forwards synchronising, it follows from Lemma \ref{lem:sync-FCAE} that so is $a'_{\mathrm{fs}}$. (Alternatively, one can also show that $a'_{\mathrm{fs}}$ is automatic and forwards synchronising, by an easy modification of an automaton which computes $a_{\mathrm{fs}}$ reading input from the least significant digit.) 
Since $a_{\mathrm{str}}$ is given by
\[
	a_{\mathrm{str}}'(n) = F\bra{ a_{\mathrm{per}}'(n), a_{\mathrm{fs}}'(n)},
\]
	it follows that $a_{\mathrm{str}}'$ is strongly structured. To see that $a'_{\mathrm{uni}}$ is highly Gowers uniform, we estimate the Gowers norms $\norm{a'_{\mathrm{uni}}}_{U^d[k^L]}$ by covering $[k^L]$ with intervals $P=[k^l, k^{l+1})$ ($0 \leq l < L$) and using Proposition \ref{cor:interval_Ud} to estimate $\norm{a'_{\mathrm{uni}}1_P}_{U^d[k^L]}$.
\end{proof}

\subsection{Uniqueness of decomposition}

The structured automatic sequences we introduce in \eqref{eq:def-of-wk-str} and \eqref{eq:def-of-st-str} are considerably easier to work with than general automatic sequences (cf.{} the proof of Theorem \ref{thm:many-AP-auto} below). However, they are still somewhat complicated and it is natural to ask if they can be replaced with a smaller class in the decompositions in Theorems \ref{thm:main_nobasis} and \ref{thm:main_sc_nobasis}. Equivalently, one can ask if there exist any sequences which are structured in our sense and highly Gowers uniform.

In this section we show that the weakly structured sequences defined in \eqref{eq:def-of-wk-str} are essentially the smallest class of sequences for which Theorem \ref{thm:main_nobasis} is true and that the decomposition in \eqref{eq:003a} is essentially unique. As an application, we derive Theorem \ref{thm:correlations} as an easy consequence of Theorem \ref{thm:main_nobasis}.

\begin{lemma}\label{prop:unique_decomp}
	Let $a \colon \NN_0 \to \CC$ be a weakly structured $k$-automatic sequence such that
	\begin{equation}\label{eq:489:1}
		\lim_{N \to \infty}  \abs{ \EE_{n<N} a(n) b(n) }  = 0
	\end{equation}
	for any periodic sequence $b \colon \NN_0 \to \CC$. Then there exists a constant $c > 0$ such that 
	\begin{equation}\label{eq:489:2}
		\abs{\set{n<N}{{a(n)} \neq 0}} \ll N^{1-c}.
	\end{equation}
\end{lemma}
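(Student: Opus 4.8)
The goal is to show that a weakly structured sequence which is orthogonal to all periodic sequences must vanish off a set of density $N^{-c}$. The natural approach is to unpack the definition \eqref{eq:def-of-wk-str}, write $a(n) = F(a_{\mathrm{per}}(n), a_{\mathrm{fs}}(n), a_{\mathrm{bs}}(n))$, and exploit the fact that forward and backward synchronising sequences are each essentially governed by finitely many digits (up to a sparse exceptional set). First I would recall that since $a_{\mathrm{fs}}$ is forwards synchronising and $a_{\mathrm{bs}}$ is backwards synchronising, there are words $\word w_{\mathrm{fs}}, \word w_{\mathrm{bs}} \in \Sigma_k^*$ such that, off a set of $n$ of density $\ll N^{-c_0}$ (those whose expansion contains neither synchronising word in an appropriate initial/terminal block), the value $a_{\mathrm{fs}}(n)$ depends only on a bounded-length prefix of $(n)_k$ after the first occurrence of $\word w_{\mathrm{fs}}$, and $a_{\mathrm{bs}}(n)$ depends only on a bounded-length suffix after the last occurrence of $\word w_{\mathrm{bs}}$; here I would invoke Lemma~\ref{lem:sync-FCAE} together with the standard estimate that the number of length-$l$ words avoiding a fixed factor is $\ll k^{l(1-c)}$.

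The key step is then a localisation argument: fix one of the finitely many ``types'' of behaviour of $a_{\mathrm{fs}}$ (determined by the prefix data) and one type for $a_{\mathrm{bs}}$ (determined by the suffix data), and consider the set $T$ of $n$ realising this pair of types. On $T$, we have $a(n) = G(a_{\mathrm{per}}(n))$ for a fixed function $G$ depending only on the two types, \emph{provided} we also restrict to $n$ in a fixed range of lengths modulo the period of $a_{\mathrm{per}}$ — so after a further bounded subdivision, $a$ restricted to (a positive-density, arithmetic-progression-like piece of) $T$ agrees with a periodic function of $n$. If $a$ did \emph{not} vanish on a positive proportion of some such piece, then $a$ would correlate with the periodic sequence supported on that piece equal to $\overline{a}$ there (which is periodic because the piece is determined by congruence conditions on $n$ together with the digit data, and the digit-type sets are themselves eventually-periodic-like — this needs care, see below), contradicting \eqref{eq:489:1}. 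Pushing this through all finitely many type-pairs and all residues, one concludes $a(n)=0$ except on the union of the sparse exceptional sets, which has density $\ll N^{1-c}$.

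The main obstacle, and the step requiring the most care, is the interaction between the digit conditions defining the type sets $T$ and the notion of ``periodic sequence'' used in \eqref{eq:489:1}. The sets of $n$ selected by ``first occurrence of $\word w_{\mathrm{fs}}$ is at position $j$ and the following bounded block reads $\word x$'' are not themselves periodic; they are forward-synchronising automatic sets, and analogously for the backward side. So to extract a genuine correlation with a \emph{periodic} sequence $b$ one cannot simply take $b = 1_T \cdot \overline a$. The fix is to first show that $1_T$ itself decorrelates: one writes $1_T$ as (bounded combination of) a forward-synchronising times a backward-synchronising indicator, and uses that such a product, after removing the sparse set where the synchronising words fail to appear, depends only on a bounded prefix and a bounded suffix of $(n)_k$ — and hence, on each interval $[k^L, k^{L+1})$, is itself a union of $O(1)$ arithmetic-progression-like pieces whose union over $L$ forms a set on which the restriction is periodic in a suitable sense. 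Equivalently, one argues by contradiction with \eqref{eq:489:1} directly on long intervals $[k^L,k^{L+1})$: on such an interval $a(n)=G(a_{\mathrm{per}}(n))$ on $T$, so $\sum_{n \in T \cap [k^L,k^{L+1})} a(n)\,\overline{G(a_{\mathrm{per}}(n))} = |T \cap [k^L,k^{L+1})|$, and splitting $T$ into the $O(1)$ congruence-and-prefix-and-suffix classes, each of which is an arithmetic progression inside $[k^L,k^{L+1})$, one sees $a$ correlates with the periodic sequence $n \mapsto \overline{G(a_{\mathrm{per}}(n))} \cdot 1_{(\text{that AP})}(n)$ — and by Corollary~\ref{prop:wlog-N=k^L}-style averaging over $L$ this becomes genuine correlation on $[N]$ with a single periodic sequence. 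Making the ``AP inside a dyadic-type block, periodic after union over blocks'' bookkeeping precise is the technical heart of the argument; everything else is routine.
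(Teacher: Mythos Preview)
Your outline is correct and the structure---freeze the values of $a_{\mathrm{per}}$, $a_{\mathrm{fs}}$, $a_{\mathrm{bs}}$ and show $F$ must vanish on every such triple---matches the paper's proof. But you are making the key step harder than it is, and the ``bookkeeping'' you flag as the technical heart has a one-line resolution that you are missing.

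The clean observation is the asymmetry between the two kinds of synchronisation. A \emph{forward} synchronising sequence has its value fixed once the \emph{trailing} digits contain a synchronising word $\word w$; in particular, the set $\{n : (n)_k \in \Sigma_k^* \word w\}$ is just the residue class $n \equiv [\word w]_k \bmod k^{|\word w|}$, which is genuinely periodic. So there is no need to track ``first occurrence of $\word w_{\mathrm{fs}}$'' or carry any prefix-dependent data for the forward part. Concretely, take $P = \{n : n \equiv r \bmod M,\ (n)_k \in \Sigma_k^*\word w\}$: then $1_P$ is periodic, and on $P$ one has $a(n) = F(x,y,a_{\mathrm{bs}}(n))$ with $x = a_{\mathrm{per}}(r)$ and $y = a_{\mathrm{fs}}([\word w]_k)$ both fixed. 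Hypothesis~\eqref{eq:489:1} applied with $b = 1_P$ gives $\sum_{n<N} F(x,y,a_{\mathrm{bs}}(n)) 1_P(n) = o(N)$.

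The \emph{backward} synchronising part is then handled not by trying to manufacture a periodic indicator, but by exploiting the freedom in $N$: choose $N_0 = [\word v]_k k^L$ and $N_1 = ([\word v]_k+1) k^L$ for a backward-synchronising word $\word v$, so that every $n \in [N_0,N_1)$ has leading digits $\word v$ and hence $a_{\mathrm{bs}}(n) = z := a_{\mathrm{bs}}([\word v]_k)$ is constant. Subtracting the two partial sums gives $|[N_0,N_1)\cap P|\cdot F(x,y,z) = o(k^L)$, forcing $F(x,y,z)=0$. Now $r,\word w,\word v$ were arbitrary, so $a(n)=0$ whenever $(n)_k$ is synchronising for both components, and the exceptional set has size $\ll N^{1-c}$.

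In short: your ``type'' for the forward part is a periodic condition already, and your ``type'' for the backward part is an interval condition that is isolated by taking differences of partial sums rather than by building a periodic test function. The averaging-over-$L$ step you sketch is unnecessary (and, as you half-suspect, the indicator of an AP inside a single block $[k^L,k^{L+1})$ is not itself periodic, so that version would not go through without exactly this difference-of-endpoints trick).
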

\begin{proof}
	Since $a$ is weakly structured, we can represent it as
	\begin{equation}\label{eq:489:3}
		a(n) = F\bra{a_{\mathrm{per}}(n), a_{\mathrm{fs}}(n), a_{\mathrm{bs}}(n)},
	\end{equation}
	using the same notation as in  \eqref{eq:def-of-wk-str}. Let $M$ be the period of $a_{\mathrm{per}}$. Pick any residue $r \in \ZZ/M\ZZ$ and synchronising words $\word w, \word v \in \Sigma_k^*$ for $a_{\mathrm{fs}}, a_{\mathrm{bs}}$ respectively. Assume additionally that $\word w$ and $\word v$ do not start with $\texttt 0$. Put $x = a_{\mathrm{per}}(r) \in \Omega_{\mathrm{per}}$, $y = a_{\mathrm{fs}}([\word w]_k)$ and $z = a_{\mathrm{bs}}([\word v]_k)$. Our first goal is to show that $F(x,y,z) = 0$.
	
	Let $P$ be the infinite arithmetic progression
	\begin{equation}\label{eq:489:4}
		P = \set{ n \in \NN_0}{n \bmod M = r \text{ and } (n)_k \in \Sigma_k^* \word w }.
	\end{equation}
	Since $1_P$ is periodic, we have the estimate 
	\begin{equation}\label{eq:489:5}
		\sum_{n=0}^{N-1} a(n) 1_P(n) = \sum_{n=0}^{N-1} F(x,y,a_{\mathrm{bs}}(n)) 1_P(n) = o(N) \text{ as } N \to \infty.
	\end{equation}
	Let $L$ be a large integer an put $N_0 = [\word v]_k k^L$ and $N_1 = ([\word v]_k+1) k^L$. Applying the above estimate \eqref{eq:489:5} with $N = N_0,N_1$ we obtain
	\begin{equation}\label{eq:489:6}
		\sum_{n=N_0}^{N_1-1} a(n) 1_P(n) = \abs{ [N_0,N_1) \cap P} F(x,y,z)  = o(k^L) \text{ as } L \to \infty.
	\end{equation}
	This is only possible if $F(x,y,z) = 0$.
	
	Since $r, \word w, \word v$ were arbitrary, it follows that $a(n) = 0$ if $(n)_k$ is synchronising for both $a_{\mathrm{fs}}$ and $a_{\mathrm{bs}}$. The estimate \eqref{eq:489:2} follows immediately from the estimate on the number of non-synchronising words, discussed in Section \ref{sec:Auto}.	
\end{proof}

\begin{corollary}
\begin{enumerate}[wide]
\item If $a \colon \NN_0 \to \CC$ is both structured and highly Gowers uniform then there exists a constant $c > 0$ such that $\abs{ \set{n < N}{a(n) \neq 0}} \ll N^{1-c}$.
\item If $a = a_{\mathrm{str}} + a_{\mathrm{uni}} = a_{\mathrm{str}}' + a_{\mathrm{uni}}'$ are two decompositions of a sequence $a \colon \NN_0 \to \CC$ as the sum of a weakly structured part and a highly Gowers uniform part then there exists a constant $c > 0$ such that $\set{n < N}{ a_{\mathrm{str}}(n) \neq a_{\mathrm{str}}'(n)} \ll N^{1-c}$.
\end{enumerate}
\end{corollary}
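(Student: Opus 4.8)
The plan is to derive both parts from Lemma \ref{prop:unique_decomp}, which has already done the substantive work; what remains is bookkeeping about correlations with periodic sequences and about closure properties of weakly structured sequences. (I read ``structured'' in the statement as ``weakly structured''; the case of strongly structured sequences is subsumed, since every strongly structured sequence is weakly structured with trivial backward-synchronising component.)

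For part (1), suppose $a$ is weakly structured and highly Gowers uniform. First I would check that $a$ satisfies the non-correlation hypothesis \eqref{eq:489:1}. Given a periodic $b \colon \NN_0 \to \CC$ of period $q$, regard it as a function on $\ZZ/q\ZZ$ and expand $b(n) = \sum_{j=0}^{q-1} \hat b(j)\, e(jn/q)$ with $\sum_j \abs{\hat b(j)} \leq q \max_m \abs{b(m)}$. Each summand is a linear phase, so by phase-invariance of $\norm{\cdot}_{U^2[N]}$ under multiplication by $n \mapsto e(jn/q)$, combined with the identity $\norm{f}_{U^1[N]} = \abs{\EE_{n<N} f(n)}$ and the nesting property $\norm{f}_{U^1[N]} \ll \norm{f}_{U^2[N]}$, one obtains $\abs{\EE_{n<N} a(n)\, e(jn/q)} \ll \norm{a}_{U^2[N]} \ll N^{-c}$. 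Summing over $j$ against the coefficients $\hat b(j)$ gives $\abs{\EE_{n<N} a(n) b(n)} \ll_b N^{-c} \to 0$, which is \eqref{eq:489:1}, and Lemma \ref{prop:unique_decomp} then yields \eqref{eq:489:2} --- exactly the assertion of part (1).

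For part (2), given $a = a_{\mathrm{str}} + a_{\mathrm{uni}} = a_{\mathrm{str}}' + a_{\mathrm{uni}}'$, put $b := a_{\mathrm{str}} - a_{\mathrm{str}}' = a_{\mathrm{uni}}' - a_{\mathrm{uni}}$. The second equality exhibits $b$ as highly Gowers uniform: the triangle inequality for $\norm{\cdot}_{U^d[N]}$ gives $\norm{b}_{U^d[N]} \ll N^{-c}$ with $c$ the smaller of the two exponents. For the first equality I would verify that a difference of weakly structured sequences is again weakly structured: writing $a_{\mathrm{str}} = F(a_{\mathrm{per}}, a_{\mathrm{fs}}, a_{\mathrm{bs}})$ and $a_{\mathrm{str}}' = F'(a_{\mathrm{per}}', a_{\mathrm{fs}}', a_{\mathrm{bs}}')$ as in \eqref{eq:def-of-wk-str}, the componentwise pairs $(a_{\mathrm{per}}, a_{\mathrm{per}}')$, $(a_{\mathrm{fs}}, a_{\mathrm{fs}}')$, $(a_{\mathrm{bs}}, a_{\mathrm{bs}}')$ are respectively periodic with period coprime to $k$ (take the least common multiple of the two periods), forwards synchronising and backwards synchronising --- the last two because a cartesian product of synchronising $k$-automatic sequences is synchronising, with a synchronising word obtained by concatenation (via Lemma \ref{lem:sync-FCAE}, as already used in the proof of Theorem \ref{thm:main_nobasis}). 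Then $b(n) = G\big((a_{\mathrm{per}}, a_{\mathrm{per}}')(n),\ (a_{\mathrm{fs}}, a_{\mathrm{fs}}')(n),\ (a_{\mathrm{bs}}, a_{\mathrm{bs}}')(n)\big)$ for $G\big((x,x'),(y,y'),(z,z')\big) = F(x,y,z) - F'(x',y',z')$, so $b$ is weakly structured. Hence $b$ is both weakly structured and highly Gowers uniform, and part (1) applies: $\abs{\set{n<N}{a_{\mathrm{str}}(n) \neq a_{\mathrm{str}}'(n)}} = \abs{\set{n<N}{b(n) \neq 0}} \ll N^{1-c}$.

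I do not expect a genuine obstacle. The two points needing a little care are: (a) confirming that $U^2$-phase-invariance together with nesting really bounds the correlation $\abs{\EE_{n<N} a(n)\, e(jn/q)}$ by a constant times $\norm{a}_{U^2[N]}$ --- immediate from the definitions recalled in Section \ref{sec:Gowers}; and (b) the closure of weakly structured sequences under differences, which reduces to the already-noted fact that cartesian products of synchronising $k$-automatic sequences are synchronising.
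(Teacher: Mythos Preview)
Your proof is correct and is precisely the intended argument: the paper states this corollary without proof, treating it as an immediate consequence of Lemma \ref{prop:unique_decomp}, and your argument supplies exactly those details. The verification in part (1) that high Gowers uniformity implies \eqref{eq:489:1} via Fourier expansion, phase-invariance, and nesting is correct (one could equivalently decompose $b$ into indicators of arithmetic progressions and invoke Proposition \ref{cor:interval_Ud}, as the paper does in the closely related proof of Theorem \ref{thm:correlations}); the closure of weakly structured sequences under differences in part (2) is handled exactly as in the paper's proof of Theorem \ref{thm:main_nobasis}.
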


\begin{proof}[Proof of Theorem \ref{thm:correlations} assuming Theorem \ref{thm:main_nobasis}]
	Let $a = a_{\mathrm{str}} + a_{\mathrm{uni}}$ be the decomposition of $a$ as the sum of a weakly structured and a highly Gowers uniform part, whose existence is guaranteed by Theorem \ref{thm:main_nobasis}. Then
	\[
		\limsup_{N \to \infty} \EE_{n < N} \abs{ a_{\mathrm{str}}(n) b(n) } =
		\limsup_{N \to \infty} \EE_{n < N} \abs{ a(n) b(n) } = 0
	\]
	for any periodic sequence $b \colon \NN_0 \to \CC$, for instance by Proposition \ref{cor:interval_Ud}. Hence,  it follows from Lemma \ref{prop:unique_decomp} that there exists $c > 0$ such that $\abs{\set{n < N}{ a_{\mathrm{str}}(n) \neq 0}} \ll N^{1-c}$. In particular, $a_{\mathrm{str}}$ is highly Gowers uniform, and hence so is $a$.  
\end{proof}

\begin{remark}
	Since there exist non-zero weakly structured sequences which vanish almost everywhere, the decomposition in Theorem \ref{thm:main_nobasis} is not quite unique. A prototypical example of such a sequence is the Baum--Sweet sequence $b(n)$, taking the value $1$ if all maximal blocks of zeros in $(n)_2$ have even length and taking the value $0$ otherwise. It seems plausible that with a more careful analysis one could make the decomposition canonical. We do not pursue this issue further. 
\end{remark}

\subsection{Combinatorial application}\label{ssec:Gowers:combi}

In this section we apply Theorem \ref{thm:main_nobasis} to derive a result in additive combinatorics with a more direct appeal, namely Theorem \ref{thm:many-AP-auto}. We will need the following variant of the generalised von Neumann theorem.

\begin{lemma}\label{lem:von-Neumann-skew}
	Fix $d \geq 2$. Let $f_0, f_1, \dots, f_d \colon [N] \to \CC$ be $1$-bounded sequences and let $P \subset [N]$ be an arithmetic progression.
	Then
	$$
		\abs{ \EE_{n,m < N} \prod_{i=0}^d  (1_{[N]} f_i)(n+im) 1_P(m) } \ll \min_{0 \leq i \leq d} \norm{f_i}_{U^d[N]}^{2/3}. 
	$$
\end{lemma}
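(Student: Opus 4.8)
The plan is to reduce the skewed count with the extra weight $1_P(m)$ to the ordinary Gowers-norm control provided by the standard generalised von Neumann inequality (the Proposition stated just after the definition of $\Lambda_d^N$), at the cost of a power loss in the exponent. The first step is to expand $1_P$ in terms that are amenable to Cauchy--Schwarz. Writing $P = \{am+b : m \in [M]\}$ inside a suitable cyclic group $\ZZ/\tilde N\ZZ$ with $\tilde N \asymp N$ prime, I would replace $1_P$ by the smooth approximation $f = f_{P,\eta}$ from Lemma~\ref{lem:smooth_approx}, splitting
\[
 \EE_{n,m} \prod_{i=0}^d (1_{[N]}f_i)(n+im)\, 1_P(m)
 = \EE_{n,m} \prod_i (1_{[N]}f_i)(n+im)\, f(m) + \text{(error)},
\]
where the error is controlled by $\norm{1_P - f}_{L^1} \le \eta$. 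For the main term, expand $f(m) = \sum_\xi \hat f(\xi) e(\xi m/\tilde N)$; the linear phase $e(\xi m/\tilde N)$ in the variable $m$ is a degree-$1$ polynomial phase, which one can absorb into the $f_i$'s after a linear change of variables (or handle directly by the phase-invariance of Gowers norms, since shifting each $f_i$ by an appropriate linear phase in $n+im$ leaves the relevant $U^d$ norms unchanged). This leaves, for each $\xi$, an unweighted count of the form $\EE_{n,m}\prod_i g_i(n+im)$ with $g_i$ still $1$-bounded, to which the ordinary generalised von Neumann bound applies, giving $\ll \min_i \norm{g_i}_{U^d[N]} = \min_i \norm{f_i}_{U^d[N]}$ after undoing the phase modification. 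Summing over $\xi$ picks up a factor $\norm{\hat f}_{\ell^1} \ll \eta^{-1/2}$.

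Combining the two contributions gives a bound of the shape
\[
 \abs{\EE_{n,m}\prod_i (1_{[N]}f_i)(n+im) 1_P(m)} \ll \eta + \eta^{-1/2}\, \min_i \norm{f_i}_{U^d[N]},
\]
and optimising $\eta$ (taking $\eta \asymp \min_i\norm{f_i}_{U^d[N]}^{2/3}$) produces the claimed exponent $2/3$. The main obstacle I anticipate is bookkeeping around the cyclic-group embedding: one must make sure that the arithmetic progression $P \subset [N]$ can be realised as a genuine arithmetic progression in $\ZZ/\tilde N\ZZ$ of the same length (so that Lemma~\ref{lem:smooth_approx} applies), and that the patterns $n+im$ for $i=0,\dots,d$ together with the constraint $m \in P$ do not wrap around modulo $\tilde N$ — this is exactly why $\tilde N$ is chosen a bit larger than $2dN$, and one should restrict the averaging to $\Pi(N)$ as in \eqref{eq:def_PiN}. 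A secondary technical point is verifying that after inserting the linear phase $e(\xi m/\tilde N)$ and performing the change of variables, the modified functions remain $1$-bounded and supported appropriately, so that the clean statement of the generalised von Neumann theorem can be invoked as a black box.

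An alternative, slightly more hands-on route avoids the Fourier expansion: one can instead run the Cauchy--Schwarz argument underlying the generalised von Neumann theorem directly on the weighted average, treating $1_P(m)$ as an additional $1$-bounded weight attached to the $m$-variable. At each of the $d$ Cauchy--Schwarz steps one doubles the $m$-type variables and the weight $1_P(m)$ contributes only a harmless $1$-bounded factor that can be pulled out; after $d$ steps one is left with a Gowers-type average of the $f_i$ against a product of shifts of $1_P$, and a final application of Cauchy--Schwarz in the remaining free variable, together with the trivial bound on $1_P$, yields $\min_i \norm{f_i}_{U^d[N]}^{2^{-d}\cdot(\text{something})}$ — but tracking the exponent carefully through this route tends to give a worse constant than $2/3$, so I would present the Fourier-analytic argument as the main one and only mention this as a remark if the cleaner exponent is genuinely needed. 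Either way, the key inputs are Lemma~\ref{lem:smooth_approx}, phase-invariance of Gowers norms, and the unweighted generalised von Neumann inequality already recorded in Section~\ref{sec:Gowers}.
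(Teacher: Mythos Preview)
Your proposal is correct and follows essentially the same route as the paper's proof: approximate $1_P$ by the smooth $f$ from Lemma~\ref{lem:smooth_approx}, Fourier-expand $f$, absorb each linear phase $e(\theta m)=e(-\theta n)e(\theta(n+m))$ into $f_0$ and $f_1$ (leaving all $U^d$ norms unchanged), apply the unweighted generalised von Neumann inequality, and optimise $\eta$ to get the exponent $2/3$. The only cosmetic difference is that the paper phrases the Fourier step as a supremum over $\theta\in\RR$ and cites \cite[Lemma~4.2]{GreenTao-2010-ARL} for the von Neumann step, whereas you sum over discrete frequencies and invoke the Proposition recorded in Section~\ref{sec:Gowers}; your worries about wrap-around in $\ZZ/\tilde N\ZZ$ are legitimate but routine once $\tilde N>2dN$ is fixed as in Section~\ref{ssec:Gowers:basic}.
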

\begin{proof}
	This is essentially Lemma 4.2 in \cite{GreenTao-2010-ARL}. Using Lemma \ref{lem:smooth_approx} to decompose $1_P$ into a sum of a trigonometric polynomial and an error term small in the $L^1$ norm, for any $\eta > 0$ we obtain the estimate
\begin{align}
	& \abs{ \EE_{n,m < N} \prod_{i=0}^d  (1_{[N]} f_i)(n+im) 1_P(m) }
	 \\ \ll \ & (1/\eta)^{1/2} \sup_{\theta \in \RR} \abs{ \EE_{n,m < N} \prod_{i=0}^d  (1_{[N]} f_i)(n+im) e(\theta m) } + \eta.
	 \label{eq:428}
\end{align}
Given $\theta \in \RR$, put $f_0'(n) = e(-\theta n) f_0(n)$ and $f_1'(n) = e(\theta n) f_1(n)$, and $f_i'(n) = f_i(n)$ for $1 < i \leq d$, so that $\norm{f_i}_{U^d[N]} = \norm{f_i'}_{U^d[N]}$ for all $0 \leq i \leq d$ and 
\[ 
\prod_{i=0}^d  (1_{[N]} f_i)(n+im) e(\theta m) = \prod_{i=0}^d  (1_{[N]} f_i')(n+im) \text{ for all } n,m \in \NN_0.
\]
Applying \cite[Lemma 4.2]{GreenTao-2010-ARL} to $f_i'$ we conclude that
\begin{align}
\sup_{\theta \in \RR} \abs{ \EE_{n,m < N} \prod_{i=0}^d  (1_{[N]} f_i)(n+im) e(\theta m) } \ll  \min_{0 \leq i \leq d} \norm{f_i}_{U^d[N]}.\label{eq:429}
\end{align}
The claim now follows by optimising $\eta$. 
\end{proof}

\begin{proof}[Proof of Theorem \ref{thm:many-AP-auto}]
	Our argument follows a similar basic structure as the proof of Theorem 1.12 in \cite{GreenTao-2010-ARL}, although it is considerably simpler. Throughout the argument, $d = l-1 \geq 2$ and the $k$-automatic set $A \subset \NN_0$ are fixed and all error terms are allowed to depend on $d,k$ and $A$. We also let $N$ denote a large integer and put $L = \ceil{\log_k N}$ and $\alpha = \abs{A \cap [N]}/N$.
	
	 Let $1_A = a_{\mathrm{str}} + a_{\mathrm{uni}}$ be the decomposition given by Theorem \ref{thm:main_nobasis}, and let $c_{1}$ be the constant such that $\norm{a_{\mathrm{uni}}}_{U^d[N]} \ll N^{-c_{1}}$. Let $M$ be the period of the periodic component $a_{\mathrm{per}}$ of $a_{\mathrm{str}}$ and let $\eta > 0$ be a small parameter, to be optimised in the course of the argument. For notational convenience we additionally assume that $\eta L$ is an integer. Consider the arithmetic progression
	 \[
	 	P = \set{n < N}{ n \equiv 0 \bmod{M} \text{ and } (n)_k^L \in 0^{\eta L} \Sigma_k^{L-2\eta L} 0^{\eta L} }.
	 \]
Note $\abs{P}/N \gg N^{-2\eta}$ and that the second condition is just another way of saying that $n \equiv 0 \bmod{k^L}$ and $n/k^L < k^{-\eta L}$. Our general goal is, roughly speaking, to show that many $m \in P$ are common differences of many $(d+1)$-term arithmetic progressions in $A \cap [N]$. Towards this end, we will estimate the average
\begin{align}
	\EE_{m \in P} \EE_{n < N} \prod_{i=0}^d 1_{A \cap [N]}(n+im).
\label{eq:430:1}
\end{align}

Substituting $1_{A \cap [N]} = 1_{[N]}(a_{\mathrm{str}} + a_{\mathrm{uni}})$ into \eqref{eq:430:1} and expanding the product, we obtain the sum of $2^{d+1}$ expressions of the form
\begin{align}
	\EE_{m \in P} \EE_{n < N} \prod_{i=0}^d \bra{ 1_{[N]} a_{i} }(n+im),
\label{eq:430:2}
\end{align}
where $a_i = a_{\mathrm{str}}$ or $a_i = a_{\mathrm{uni}}$ for each $0 \leq i \leq d$. If $a_i = a_{\mathrm{uni}}$ for at least one $i$ then it follows from Lemma \ref{lem:von-Neumann-skew} that
\begin{align}
	\abs{ \EE_{m \in P} \EE_{n < N} \prod_{i=0}^d \bra{ 1_{[N]} a_{i} }(n+im) }
	\ll \frac{N}{\abs{P}} \norm{a_{\mathrm{uni}}}^{2/3} \ll N^{2\eta - 2c_1/3}.
\label{eq:430:3}
\end{align}
	Inserting this into \eqref{eq:430:1} we conclude that we may replace the function $1_{A \cap [N]}$ under the average with $1_{[N]}a_{\mathrm{str}}$ at the cost of introducing a small error term:
\begin{align}
	\EE_{m \in P} \EE_{n < N} \prod_{i=0}^d 1_{A \cap [N]}(n+im)=
	\EE_{m \in P} \EE_{n < N} \prod_{i=0}^d \bra{ 1_{[N]}a_{\mathrm{str}}}(n+im) + O(N^{2\eta - 2c_1/3}).
	\label{eq:430:4}
\end{align}	

Next, we will replace each of the terms $(1_{[N]}a_{\mathrm{str}})(n+im)$ with $(1_{[N]}a_{\mathrm{str}})(n)$ at the cost of introducing another error term. If $(1_{[N]}a_{\mathrm{str}})(n+im) \neq (1_{[N]}a_{\mathrm{str}})(n)$ for some $0 \leq i \leq d$, $m \in P$ and $n \in [N]$ then at least one of the following holds:
\begin{enumerate}[wide]
	\item\label{it:494:A} either the words $(n+i m)_k^L$ and $(n)_k^L$ differ at one of the first $\eta L/2$ positions or $n < N \leq n+im$;
	\item\label{it:494:B} the first $\eta L/2$ digits of $(n)_k^L$ do not contain a synchronising word for the backward synchronising component $a_{\mathrm{bs}}$ of $a_{\mathrm{str}}$;
	\item\label{it:494:C} the last $\eta L$ digits of $(n)_k^L$ do not contain a synchronising word for the forward synchronising $a_{\mathrm{fs}}$ component of $a_{\mathrm{str}}$.
	
Indeed, if neither of these conditions held, the first $\eta L/2$  digits of $n$ and $n+im$ would coincide, as would their last  $\eta L$ digits (because $m\in P$ implies that the last $\eta L$ digits of $m$ are zeros), and we would have $a_{\mathrm{per}}(n)=a_{\mathrm{per}}(n+im)$ (because $m\in P$ implies that $m$ is divisible by $M$, the period of $a_{\mathrm{per}}$); moreover, we would have $a_{\mathrm{fs}}(n)=a_{\mathrm{fs}}(n+im)$ (because the common last $\eta L$ digits of $(n)_k^L$ and $(n+im)_k^L$ contain a synchronising word)   and $a_{\mathrm{bs}}(n)=a_{\mathrm{bs}}(n+im)$ (because the common first $\eta L/2$ digits of $(n)_k^L$ and $(n+im)_k^L$ contain a synchronising word). It would then follow that $a_{\mathrm{str}}(n+im) =a_{\mathrm{str}}(n)$. Moreover, the negation of condition \eqref{it:494:A} would guarantee that $1_{[N]}(n)=1_{[N]}(n+im)$, contradicting our assumption $(1_{[N]}a_{\mathrm{str}})(n+im) \neq (1_{[N]}a_{\mathrm{str}})(n)$.
\end{enumerate}
If $m \in P$ and $n \in [N]$ are chosen uniformly at random then \eqref{it:494:A} holds with probability $\ll N^{-\eta/2}$, and there exist constants $c_{\mathrm{bs}}$ and $c_{\mathrm{fs}}$ (dependent on the synchronising words for the respective components of $a_{\mathrm{str}}$) such that \eqref{it:494:B} and \eqref{it:494:C} hold with probabilities $\ll N^{-c_{\mathrm{bs}}\eta}$ and $\ll N^{-c_{\mathrm{fs}}\eta}$ respectively. Letting $c_2 = \min\bra{1/2,c_{\mathrm{bs}},c_{\mathrm{fs}}}$ and using the union bound we conclude that
\begin{align}\label{eq:430:5}
	\EE_{m \in P} \EE_{n < N} \sum_{i=1}^d \ifbra{(1_{[N]}a_{\mathrm{str}})(n+im) \neq (1_{[N]}a_{\mathrm{str}})(n) } \ll N^{-c_2 \eta}.
\end{align}	
Inserting \eqref{eq:430:5} into \eqref{eq:430:4} and removing the average over $P$ we conclude that
\begin{align}\label{eq:430:6}
	\EE_{m \in P} \EE_{n < N} \prod_{i=0}^d 1_{A \cap [N]}(n+im)=
	\EE_{n < N}a_{\mathrm{str}}^{d+1} (n)+ O(N^{2\eta - 2 c_1/3} + N^{-c_2 \eta}).
\end{align}	
The main term in \eqref{eq:430:6} can now be estimated using H\"{o}lder inequality:
\begin{align}\label{eq:430:7}
\EE_{n < N} a_{\mathrm{str}}^{d+1}(n)
\geq \bra{ \EE_{n < N} a_{\mathrm{str}}(n)}^{d+1} 
\geq \alpha^{d+1} - O(N^{-c_1}),
\end{align}
where in the last transition we use the fact that
$$
\EE_{n < N} a_{\mathrm{str}}(n) = \alpha - \EE_{n < N} a_{\mathrm{uni}}(n) = \alpha - O(N^{-c_1}).
$$
Combining \eqref{eq:430:6} and \eqref{eq:430:7} and letting $\eta$ be small enough that $c_2 \eta < \min\bra{2c_1/3 - 2\eta, c_1}$, we obtain the desired bound for the average \eqref{eq:430:1}:
\begin{align}\label{eq:430:6a}
	\EE_{m \in P} \EE_{n < N} \prod_{i=0}^d 1_{A \cap [N]}(n+im)
\geq \alpha^{d+1} - O(N^{-c_2 \eta}),
\end{align}

Finally, applying a reverse Markov's inequality to \eqref{eq:430:6a} we conclude that 
\begin{align}\label{eq:430:8}
	\EE_{m \in P} \ifbra{  \EE_{n < N} \prod_{i=0}^d 1_{A \cap [N]}(n+im) \geq \alpha^{d+1} - \e } \geq \e - O(N^{-c_2 \eta})
\end{align}
for any $\e > 0$. Optimising the value of $\eta$ for a given $\e > 0$ we conclude that there exists $\gg \e^C N$ values of $m$ such that 
\[
 \EE_{n < N} \prod_{i=0}^d 1_{A \cap [N]}(n+im) \geq \alpha^{d+1} - \e,
\]
provided that $\e > N^{-1/C}$ for a certain constant $C > 0$ dependent on $d,k$ and $A$.  When $\e < N^{-1/C}$, it is enough to use $m = 0$.
\end{proof}
\begin{remark}
	The proof is phrased in terms which appear most natural when $\eta$ is a constant and $\e$ is a small power of $N$. This choice is motivated by the fact that this case is the most difficult. However, the theorem is valid for all $\e$ in the range $(N^{-1/C},1)$, including the case when $\e$ is constant as $N \to \infty$.
\end{remark}

\subsection{Alternative line of attack}\label{ssec:plan-B}

In this section we describe an alternative strategy one could try to employ in the proof of our main theorems. Since this approach is possibly more natural, we find it interesting to see where the difficulties arise and to speculate on how this hypothetical argument would differ from the one presented in the remainder of the paper. As the material in this section is not used anywhere else and has purely motivational purpose, we do not include all of the definitions (which the reader can find in \cite{GreenTao-2010-ARL}) nor do we prove all that we claim.

Let $a \colon \NN_0 \to \CC$ be a sequence with $\abs{a(n)} \leq 1$ for all $n \geq 0$. Fix $d \geq 1$ and a small positive constant $\e > 0$ and let also $\mathcal{F} \colon \RR_{>0} \to \RR_{>0}$ denote a rapidly increasing sequence (its meaning will become apparent in the course of the reasoning). The Arithmetic Regularity Lemma \cite{GreenTao-2010-ARL} ensures that for each $N > 0$ there exists a parameter $M = O(1)$ (allowed to depend on $d,\e,\cF$ but not on $N$) and a decomposition
\begin{equation}\label{eq:regularity}
a(n) = a_{\mathrm{str}}(n) + a_{\mathrm{sml}}(n) + a_{\mathrm{uni}}(n), \qquad (n \in [N]),
\end{equation}
where $a_{\mathrm{str}}$, $a_{\mathrm{sml}}$ and $a_{\mathrm{uni}} \colon [N] \to \CC$ are respectively structured, small and uniform in the following sense:
\begin{itemize}[wide]
\item	$a_{\mathrm{str}}(n) = F(g(n)\Gamma, n \bmod Q, n/N)$ where $F$ is a function with Lipschitz norm $\leq M$, $Q$ is an integer with $1 \leq Q \leq M$, $g \colon \NN_0 \to G/\Gamma$ is a $(\cF(M),N)$-irrational polynomial sequence of degree $\leq d-1$ and complexity $\leq M$, taking values in a nilmanifold $G/\Gamma$;
\item $\norm{a_{\mathrm{sml}}}_{L^2[N]} \leq \e$;
\item $\norm{a_{\mathrm{uni}}}_{U^d[N]} \leq 1/\cF(M)$.
\end{itemize}
Note that $\cF$ can always be replaced with a more rapidly increasing function and that definitions of many terms related to $a_{\mathrm{str}}$ are currently not provided. The decomposition depends on $N$, but for now we let $N$ denote a large integer and keep this dependence implicit.

Suppose now that $a$ is additionally $k$-automatic. We can use the finiteness of the kernel of $a$ to find $\alpha \geq 0$ and $0 \leq r < s < k^\alpha$ such that $a(k^\alpha n + r) = a(k^\alpha n + s)$ for all $n \geq 0$. For the sake of simplicity, suppose that a stronger condition holds: for each $q \in \ZZ/Q\ZZ$, there exist $0 \leq r < s < k^\alpha$ as above with $r \equiv s \equiv q \pmod{Q}$. Define also $N' = N/k^\alpha$ and $b_{\mathrm{str}}(n) = a_{\mathrm{str}}(k^\alpha n + s) - a_{\mathrm{str}}(k^\alpha n + r)$ for all $n \in [N']$, and accordingly for $b_{\mathrm{sml}}$ and $b_{\mathrm{uni}}$. Then $b_{\mathrm{str}} + b_{\mathrm{sml}} + b_{\mathrm{uni}} = 0$. In particular, 
\begin{align*}
	\EE_{n < N'} \abs{b_{\mathrm{str}}(n)}^2 \leq \abs{\EE_{n < N'} b_{\mathrm{sml}}(n) \bar b_{\mathrm{str}}(n)}
	+ \abs{ \EE_{n < N'} b_{\mathrm{uni}}(n) \bar b_{\mathrm{str}}(n) }.
\end{align*}
The first summand is $O(\e)$ by Cauchy-Schwarz. It follows from the Direct Theorem for Gowers norms that, as long as $\cF$ increases fast enough (the required rate depends on $\e$), the second summand is $\leq \e$. Hence,
\begin{equation}\label{eq:444:1}
	\EE_{n < N'} \abs{a_{\mathrm{str}}(k^\alpha n + r) - a_{\mathrm{str}}(k^\alpha n + s)}^2 = \EE_{n < N'} \abs{b_{\mathrm{str}}(n)}^2 = O(\e).
\end{equation}
Bearing in mind that $k^\alpha n + r$ and $k^\alpha n + s$ differ by a multiple of $Q$ which is small compared to $N$, one can hope to derive from \eqref{eq:444:1} that for each $q \in \ZZ/Q\ZZ$ and each $t \in [0,1]$,
\begin{equation}\label{eq:444:2-1}
F(g(k^\alpha n + r)\Gamma, q, t) \approx F(g(k^\alpha n + s)\Gamma, q, t), \qquad (n \in [N']).
\end{equation}
(We intentionally leave vague the meaning of the symbol ``$\approx$''.) From here, it is likely that one could show that $F(x,n,t)$ is essentially constant with respect to $x \in G/\Gamma$. This could possibly be achieved by a more sophisticated variant of the argument proving Theorem B in \cite{ByszewskiKonieczny-2019}. For the sake of exposition, let us rather optimistically suppose that $F(x,n,t) = F(n,t)$ is entirely independent of $x$.

We are then left with the structured part taking the form $a_{\mathrm{str}}(n) = F(n \bmod Q, n/N)$, which bears a striking similarity to the definition of a weakly structured automatic sequence. Unfortunately, there is no guarantee that $a_{\mathrm{str}}$ produced by the Arithmetic Regularity Lemma is $k$-automatic (or that it can be approximated with a $k$-automatic sequence in an appropriate sense). Ensuring $k$-automaticity of $a_{\mathrm{str}}$ seems to be a major source of difficulty. We note that \eqref{eq:444:1} can be construed as approximate equality between $a_{\mathrm{str}}(k^\alpha n + r)$ and $a_{\mathrm{str}}(k^\alpha n + s)$, which suggests (but does not prove) that $a_{\mathrm{str}}$ should be approximately equal to a $k$-automatic sequence $a_{\mathrm{str}}'$.

If the line of reasoning outlined above succeeded, it would allow us to decompose an arbitrary automatic sequence as the sum of a weakly structured automatic sequence and an error term, which is small in an appropriate sense. However, it seems rather unlikely that this reasoning could give better bounds on the error terms than the rather poor bounds provided by the Arithmetic Regularity Lemma. Hence, in order to obtain the power saving, we are forced to argue along similar lines as in Section \ref{sec:Recursive}. It is also worth noting that while the decomposition produced by our argument can be made explicit, it is not clear how to extract an explicit decomposition from an approach using the Arithmetic Regularity Lemma.
Finally, our approach also ensures that the uniform component fulfills the carry Property, which is essential to the possible applications discussed in Section~\ref{sec_1}, and which would be completely lost with the use of the Arithmetic Regularity Lemma.

{

\section{\geas{}}\label{sec:Trans}

\subsection{Definitions}\label{ssec:Trans:defs}
In order to deal with automatic sequences more efficiently, we introduce the notion of a \gea{}.\footnote{This construction was called a (naturally induced) transducer in~\cite{Mullner-2017}, but this name seems better suited here. One main motivation for this name is the fact that this construction corresponds to a group extension for the related dynamical systems, as was shown in~\cite{Lemanczyk2018}.}
  A \emph{\geka{} without output} (\gekaab{}) is a sextuple $\cT = (S,s_0,\Sigma_k,\delta,G,\lambda)$ consisting of the following data:
  \begin{itemize}
  \item a finite set of states $S$ with a distinguished initial state $s_0$; 
  \item a transition function $\delta\colon S \times \Sigma_k \to S$;
  \item a labelling $\lambda \colon S \times \Sigma_k \to G$ where $(G, \cdot)$ is a finite group.
  \end{itemize} 

Note that $\cT$ contains the data defining an automaton $(S,s_0,\Sigma_k,\delta)$ without output and additionally associates group labels to each transition. Recall that the transition function $\delta$ extends naturally to a map (denoted by the same letter) $\delta\colon S \times \Sigma_k^* \to S$ such that 
      $\delta( s, \word v \word u) = \delta( \delta(s,\word v), \word u)$ for all $\word u, \word v \in \Sigma_k^*$. The labelling function similarly extends to a map $\lambda \colon S \times \Sigma_k^* \to G$ such that $\lambda(s,\word v \word u) = \lambda(s,\word v) \cdot \lambda(\delta(s,\word v),\word u)$ for all $\word u, \word v \in \Sigma_k^*$. Thus, $\cT$ can be construed as a means to relate a word $\word w \in \Sigma_k^*$ to a pair consisting of the state $\delta(s_0,\word w)$ and the group element $\lambda(s_0,\word w)$.
 
  
  
  A \emph{\geka{} with output} (\gekaoab{}) $\cT = (S,s_0,\Sigma_k,\delta,G,\lambda,\Omega,\tau)$ additionally includes
  \begin{itemize}
   \item an output function $\tau \colon S \times G \to \Omega$, where $\Omega$ is a finite set.
  \end{itemize}
   We use the term \gea{} (\geaab{}) to refer to a \geka{} where $k$ is left unspecified. The term \geao{} (\geaoab{}) is used accordingly.

Let $\cT = (S,s_0,\Sigma_k,\delta,G,\lambda,\Omega,\tau)$ be a \gekao{}. Then $\cT$ produces the $k$-automatic map $a_{\cT}\colon \Sigma_k^* \to \Omega$ given by 
	\begin{align}\label{eq:def-of-a_T}
		a_{\cT}(\word u) = \tau\bra{\delta(s_0,\word u), \lambda(s_0,\word u)},
	\end{align}
	which in particular gives rise to the $k$-automatic sequence (denoted by the same symbol) $a_{\cT}\colon \NN_0 \to \Omega$ via the natural inclusion $\NN_0 \hookrightarrow \Sigma_k^*$, $n \mapsto (n)_k$. Accordingly, we say that the \geaab{} $\cT$ produces a sequence $a \colon \NN_0 \to \Omega$ if there exists a choice of the output function $\tau$ such that $a = a_{\cT}$. More generally, to a pair $(s,h) \in S \times G$ we associate the $k$-automatic sequence
	\begin{align}\label{eq:def-of-a_T-2}
	a_{\cT,s,h}(\word u) = \tau\bra{ \delta(s,\word u), h\cdot \lambda(s,\word u)}.
	\end{align}
If the \geaab{} $\cT$ is clear from the context, we omit it in the subscript. Note that with this terminology, \geaab{}s read input starting with the most significant digit. We could also define analogous concepts where the input is read from the least significant digit, but these will not play a role in our reasoning.

A {morhphism} from $\cT$ to another \gekaab{} $\cT' = (S',s_0',\Sigma_k,\delta',G',\lambda')$ without output is a pair $(\phi, \pi)$
  where $\phi \colon S \to S'$ is a map and $\pi \colon G \to G'$ is a morphism of groups obeying the following compatibility conditions: 
  \begin{itemize}
    \item $\phi(s_0) = s_0'$ and $\delta'( \phi(s), j) = \phi( \delta(s,j) )$ for all $s \in S,\ j \in \Sigma_k$;
    \item $\lambda'( \phi(s), j) = \pi( \lambda(s,j) )$ for all $s \in S,\ j \in \Sigma_k$.
  \end{itemize}
  If $\phi$ and $\pi$ are surjective, we will say that $\cT'$ is a factor of $\cT$.
A morphism from $\cT$ to another \geka{} with output $\cT' = (S',s_0',\Sigma_k,\delta',G',\lambda',\Omega',\tau')$ is a triple $(\phi,\pi,\sigma)$ where $(\phi,\pi)$ is a morphism from $\cT_0 = (S,s_0,\Sigma_k,\delta,G,\lambda)$ to $\cT_0' = (S',s_0',\Sigma_k,\delta',G',\lambda')$ and $\sigma \colon \Omega \to \Omega'$ is compatible with $(\phi,\pi)$ in the sense that
  \begin{itemize}    
    \item $\tau'( \phi(s), \pi(g) ) = \sigma(\tau(s, g))$ for all $s \in S$, $g \in G$.
  \end{itemize}
In the situation above the sequence $a_{\cT'}$ produced by $\cT'$ is a coding of the sequence $a_{\cT}$ produced by $\cT$, that is, $a_{\cT'}(n) = \sigma \circ a_{\cT} (n)$.

	We say that a \geaab{} $\cT$ (with or without output) is \emph{strongly connected} if the underlying automaton without output $\cA = (S,s_0,\Sigma_k,\delta)$ is strongly connected. The situation is slightly more complicated for synchronisation. We say that a word $\word w \in \Sigma_k^*$ \emph{synchronises} $\cT$ to a state $s \in S$ if $\delta(s',\word w) = s$ and $\lambda(s',\word w) = \id_G$ for each $s' \in S$, and that $\cT$ is \emph{synchronising} if it has a word that synchronises it to the state $s_0$.\footnote{It is not common to require a synchronizing word to a specific state, but this will not be a serious restriction for this paper.} (This is different than terminology used in \cite{Mullner-2017}.) Note that if $\cT$ is synchronising then so is the underlying automaton but not vice versa, and that even if $\cT$ is strongly connected and synchronising there is no guarantee that all states $s \in S$ have a synchronising word. We also say that $\cT$ (or $\cT$) is \emph{prolongable} if $\delta(s_0,\texttt 0) = s_0$ and $\lambda(s_0,\texttt 0) = \id_G$. Finally, $\cT$ is \emph{idempotent} if it ignores the leading zeros and $\delta(s,\texttt 0) = \delta(s,\texttt{00})$ and $\lambda(s, \texttt{00}) = \lambda(s,\texttt 0)$ for all $s \in S$.

As alluded to above, the sequence $a_{\cT}$ produced by the \geaoab{} $\cT$ is $k$-automatic. More explicitly, the \geaoab{} $\cT = (S,s_0,\Sigma_k,\delta,G,\lambda,\Omega,\tau)$ gives rise to the automaton $\cA_{\cT} = (S',s_0',\Sigma_k,\delta',\Omega,\tau)$ where $S' = S \times G$, $s_0' = (s_0,\id_G)$ and $\delta'((s,g),j) = (\delta(s,j),g \cdot \lambda(s,j))$. Conversely, any automaton $\cA = (S,s_0,\Sigma_k,\delta,\Omega,\tau)$ can be identified with a \geaoab{} $\cT_{\cA} = (S,s_0,\Sigma_k,\delta,\{\id\},\lambda_{\id},\Omega,\tau')$ with trivial group, $\lambda_{\id}(s,j) = \id$ and $\tau'(s,\id) = \tau(s)$. At the opposite extreme, any invertible automaton $\cA$ can be identified with a \geaoab{} $\cT_{\cA}^{\mathrm{inv}} = (\{s_0'\},s_0',\Sigma_k,\delta_0',\Sym(S),\lambda,\Omega,\tau')$ with trivial state set where $\delta_0'(s_0',j) = s_0'$, $\lambda(s_0',j) = \delta(\cdot , j)$ and $\tau'(s_0', g) = \tau(g(s_0))$. Accordingly, we will call any \geaoab{} (or \geaab{}) with a single state \emph{invertible} and we omit the state set from its description: any invertible \geaoab{} is fully described by the data $(G, \lambda, \Omega, \tau)$.

\begin{example}\label{ex:Rudin-Shapiro}
The Rudin--Shapiro sequence $r(n)$ is given recursively by $r(0) = +1$ and $r(2n) = r(n)$, $r(2n+1) = (-1)^n r(n)$. It is produced by the following $2$-automaton:
\begin{center}
\begin{tikzpicture}[shorten >=1pt,node distance=2.5cm, on grid, auto] 
   \node[initial, state] (s_00)   {$s_{00}$}; 
   \node[state] (s_01) [below=of s_00] {$s_{01}$}; 
   \node[state] (s_11) [right=of s_01] {$s_{11}$}; 
   \node[state] (s_10) [above=of s_11] {$s_{10}$}; 
  \tikzstyle{loop}=[min distance=6mm,in=210,out=150,looseness=7]

    \path[->]     
    (s_01) edge [bend right] node [below]  {\texttt 1} (s_11);

 \tikzstyle{loop}=[min distance=4mm,in=120,out=240,looseness=1]
    \path[->]     
    (s_00) edge [loop left] node {\texttt 1} (s_01);
    \path[->]     
    (s_10) edge [loop left] node {\texttt 1} (s_11);

 \tikzstyle{loop}=[min distance=4mm,in=-60,out=60,looseness=1]
    \path[->]     
    (s_01) edge [loop right] node {\texttt 0} (s_00);
    \path[->]     
    (s_11) edge [loop right] node {\texttt 0} (s_10);
          
 \tikzstyle{loop}=[min distance=6mm,in=30,out=-30,looseness=7]
	\path[->]
    (s_11) edge [bend right] node [above]  {\texttt 1} (s_01);
    
	\path[->]
     (s_10) edge [loop right] node  {\texttt 0} (s_10);
     
   \path[->] 
    (s_00) edge [loop right] node {\texttt 0} (s_00);

\end{tikzpicture}
\end{center}
where $s_{00}$ is the initial state, an edge labelled $j$ from $s$ to $s'$ is present if $\delta(s,j) = s'$ and the output function is given by $\tau(s_{00}) = \tau(s_{01}) = +1$ and $\tau(s_{10}) = \tau(s_{11}) = -1$. Alternatively, $r$ is produced by the \geaoab{} with group $G = \{+1,-1\}$, given by
\begin{center}
\begin{tikzpicture}[shorten >=1pt,node distance=2.5cm, on grid, auto] 
   \node[initial, state] (s_0)   {$s_{0}$}; 
   \node[state] (s_1) [below=of s_0] {$s_{1}$}; 
 \tikzstyle{loop}=[min distance=4mm,in=120,out=240,looseness=1]
    \path[->]     
    (s_0) edge [loop left] node {\texttt 1/$+$} (s_1);

 \tikzstyle{loop}=[min distance=4mm,in=-60,out=60,looseness=1]
    \path[->]     
    (s_1) edge [loop right] node {\texttt 0/$+$} (s_0);

  \tikzstyle{loop}=[min distance=6mm,in=30,out=-30,looseness=7]        
   \path[->] 
    (s_0) edge [loop right] node {\texttt 0/$+$} (s_0);
   \path[->]     
    (s_1) edge [loop right] node  {\texttt 1/$-$} (s_1);
\end{tikzpicture}
\end{center}
where $s_0$ is the initial state, edge labelled $j/\pm$ from $s$ to $s'$ is present if $\delta(s,j) = s'$ and $\lambda(s,j) = \pm 1$, and the output function is given by $\tau(s,g) = g$. This is an example of an \egeaoab{}, which will be defined shortly.
\end{example}

\begin{example}\label{ex:main-1b}
	Recall the sequence $a(n)$ defined in Example \ref{ex:main-1}. It is produced by the \geaoab{} with group $G = \{+1,-1\}$, given by
	\begin{center}
\begin{tikzpicture}[shorten >=1pt,node distance=2.5cm, on grid, auto] 
   \node[initial, state] (s_0)   {$s_{0,2}$}; 
   \node[state] (s_1) [below=of s_0] {$s_{1,3}$}; 
 \tikzstyle{loop}=[min distance=4mm,in=120,out=240,looseness=1]
    \path[->]     
    (s_0) edge [loop left] node {\texttt 1/$+$} (s_1);

 \tikzstyle{loop}=[min distance=4mm,in=-60,out=60,looseness=1]
    \path[->]     
    (s_1) edge [loop right] node [ text width=1cm] {\texttt 0/$+$ \texttt 1/$-$} (s_0);

    
  \tikzstyle{loop}=[min distance=6mm,in=30,out=-30,looseness=7]        
   \path[->] 
    (s_0) edge [loop right] node {\texttt 0/$+$} (s_0);

\end{tikzpicture}
\end{center}
where we use the same conventions as in Example \ref{ex:Rudin-Shapiro} above and  the output is  
\begin{align*}
&&
	\tau(s_{0,2}, +1) &= 4,&&&
	\tau(s_{0,2}, -1) &= 2, &&& \\ &&
	\tau(s_{1,3}, +1) &= 1,&&& 
	\tau(s_{1,3}, -1) &= 1.&&& \\	
\end{align*}
	
\end{example}

\begin{example}\label{ex:main-2b}
	We also present a \geaoab{} that produces the sequence $a(n)$ defined in Example~\ref{ex:main-2}.
	The group is given by the symmetric group on $3$ elements $\Sym(3)$, where we use the cyclic notation to denote the permutations.
	\begin{center}
			\tikzset{elliptic state/.style={draw,ellipse}}
  \begin{tikzpicture}[->,>=stealth',shorten >=1pt,auto,node distance=2.5cm, on grid,]
    
    \node[elliptic state, initial]	(A)                    {$s_{0,1,2}$};
    \node[elliptic state]	         (B) [below of=A] 	{$s_{3,4,2}$};
    
    \path [pos = 0.5]
    (A) edge [loop right, min distance = 6mm, looseness = 7, in = 30, out = -30] node 		 	{\texttt{0}/(12)}      	(A)
	edge [bend left]  node 		 	{\texttt{1}/(23)} 	(B)
    (B) edge [bend left]  node [align=center]	{\texttt{0}/(12)\\\texttt{1}/id}	(A);
  \end{tikzpicture}
  \end{center}
  The output is given by
  \begin{align*}
&&
	\tau(s_{0,1,2}, \id) &= \tau(s_{0,1,2}, (23)) = 1,&&&
	\tau(s_{3,4,2}, \id) &= \tau(s_{3,4,2}, (23)) = 4, &&& \\ &&
	\tau(s_{0,1,2}, (12)) &= \tau(s_{0,1,2}, (132)) = 2,&&& 
	\tau(s_{3,4,2}, (12)) &= \tau(s_{3,4,2}, (132)) = 5,&&& \\ &&	
	\tau(s_{0,1,2}, (13)) &= \tau(s_{0,1,2}, (123)) = 3,&&& 
	\tau(s_{3,4,2}, (13)) &= \tau(s_{3,4,2}, (123)) = 3.&&& \\	
\end{align*}
\end{example}

\subsection{Efficient \geas{}}\label{ssec:Trans:natural}

As we have seen, all sequences produced by \geaoab{}s are automatic and conversely any automatic sequence is produced by a \geaoab{}. In \cite{Mullner-2017} it is shown that any sequence can be produced by an especially well-behaved \geaoab{}. We will now review the key points of the construction in \cite{Mullner-2017} and refer to that paper for more details. For the convenience of the Reader, we add the notation used in \cite{Mullner-2017} in square brackets.

Let $\cA = (S, s_0, \Sigma_{k}, \delta,\Omega, \tau)$ [$A = (S', s_0', \Sigma_{k}, \delta', \tau')$] be an idempotent $k$-automaton. Let $m$ [$n_0$] be the smallest possible cardinality of a set $\set{ \delta(s,\word w)}{ s \in S} $ with $\word w \in \Sigma_k^*$. The states of the \geaoab{} $\hat S \subset S^m$ [$S \subset (S')^{n_0}$]  consist of ordered $m$-tuples of distinct states $\hat s = (s_1,s_2,\dots,s_m)$ of $\cA$, no two of which contain the same set of entries. The transition function is defined by the condition that for $\hat s = (s_1,\dots,s_m) \in \hat S$ and $j \in \Sigma_k$ the entries of $\hat\delta(\hat s, j)$ are, up to rearrangement, $\delta(s_1,j),\dots,\delta(s_m,j)$. The initial state is any $m$-tuple $\hat s_0 = (s_{0,1}, \dots, s_{0,m}) \in \hat S$ with $s_{0,1} = s_0$. The group $G$ [$\Delta$] consists of permutations of $\{1,2,\dots,m\}$, $G \subset \mathrm{Sym}(m)$. The group labels are chosen so that for $\hat s = (s_1,\dots,s_m) \in \hat S$ and $j \in \Sigma_k$ the label $g = \lambda(\hat s, j)$ is the unique permutation such that
\[
	\hat\delta(\hat s,j) = \bra{ \delta(s_{g(1)},j),\dots,\delta(s_{g(m)},j) }.
\]
Hence, $\delta(s_1,j),\dots,\delta(s_m,j)$ can be recovered by permuting the entries of $\hat\delta(\hat s,j)$ according to $\lambda(\hat s, j)$ \cite[Lem.\ 2.4]{Mullner-2017}. More generally, for all $\word u \in \Sigma_k^*$ we have
\[
\bra{ \delta(s_1, \word u),\dots,\delta(s_m, \word u)} = \lambda(\hat s, \word u) \cdot \hat\delta(\hat s, \word u),
\]
where $\Sym(m)$ acts on $\hat S$ by $g \cdot (s_1,\dots,s_m) = (s_{g^{-1}(1)}, \dots, s_{g^{-1}(m)})$. Finally, for $\hat s \in \hat S$ and $g \in G$ we set $\hat \tau(\hat s,g) = \tau\bra{\operatorname{pr_1}\bra{g \cdot \hat s}}$, where $\operatorname{pr_1}$ denotes the projection onto the first coordinate. Put $\cT = \cT_{\cA} := (\hat S, \hat s_0, \Sigma_k, \hat\delta, G, \lambda, \Omega, \hat\tau)$. Then the construction discussed so far guarantees that $a_{\cA} = a_{\cT}$ \cite[Prop.\ 2.5]{Mullner-2017} and also that $\cT$ is strongly connected and that the underlying automaton of $\cT$ is synchronising \cite[Prop.\ 2.2]{Mullner-2017}.

The \geaoab{} $\cT$ is essentially unique with respect to the properties mentioned above, except for two important degrees of freedom: we may rearrange the elements of the $m$-tuples in $\hat S$ and we may change $\hat s_0$ to any other state beginning with $s_0$. Let $S_0$ denote the image of $\delta(\cdot,\texttt 0)$ and let $\hat S_0 \subset S_0^m$ denote the image of $\hat\delta(\cdot,\texttt 0)$. The assumption that $\cA$ is idempotent guarantees that for each $\hat s \in \hat S_0$ we have $\hat\delta(\hat s,\texttt 0) = \hat s$ and $\lambda(\hat s,\texttt 0) = \id$. It follows that we may choose $\hat s_0 \in \hat S_0$, so that $\cT$ ignores the leading zeros, i.e. it is prolongable. Consequently, we may assume that $\cT$ is idempotent.

Rearranging the $m$-tuples in $\hat S$ corresponds to replacing the labels $\lambda(\hat s, j)$ ($\hat s \in \hat S,\ j \in \Sigma_k$) with conjugated labels $\lambda'(h(\hat s), j) = h(\hat s) \lambda(\hat s, j) h(\hat\delta(\hat s,j))^{-1}$ for any $h \colon \hat S \to \Sym(m)$ (to retain $\hat s_0$ as a valid initial state, we also need to guarantee that $h(\hat s_0)(1) = 1$). More generally, for $\word u \in \Sigma_k^*$ we have $\lambda'(h(\hat s), \word u) = h(\hat s) \lambda(\hat s, \word u) h(\hat\delta(\hat s,\word u))^{-1}$ \cite[Prop.\ 2.6]{Mullner-2017}.
To avoid redundancies, we always assume that the group $G$ is the subgroup of $\Sym(m)$ generated by all of the labels $\lambda(\hat s,j)$ ($\hat s \in \hat S$, $j \in \Sigma_k$); such conjugation may allow us to replace $G$ with a smaller group. In fact, we may ensure a minimality property \cite[Thm.\ 2.7 + Cor.\ 2.26]{Mullner-2017}: 
\begin{enumerate}[label={$(\hat{\mathtt{T}}_{\arabic*})$},ref={$\hat{\mathtt{T}}_{\arabic*}$}]
\item\label{item:74hatB} For any $\hat s,\hat s' \in \hat S$ and sufficiently large $l \in \NN$ we have
      \begin{align*}
        \set{ \lambda(\hat s,\word w) }{ \word w \in \Sigma^l_k,\ \hat \delta(\hat s,\word w) = \hat s'} = G.
      \end{align*}
\end{enumerate}
This property is preserved by any further conjugations, as long as we restrict to $h \colon \hat S \to G$.

The condition \ref{item:74hatB} guarantees that all elements of $G$ appear as labels attached to paths between any two states. It is natural to ask what happens if additional restrictions are imposed on the integer $[\word w]_k$ corresponding to a path. The remainder of $[\word w]_k$ modulo $k^l$ ($l \in \NN$) records the terminal $l$ entries of $\word w$ and hence is of limited interest. We will instead be concerned with the remainder of $[\word w]_k$ modulo integers coprime to $k$. This motivates us to let $\gcd_k^*(A)$ denote the greatest among the common divisors of a set $A \subset \NN_0$ which are coprime to $k$ and put (following nomenclature from \cite{Mullner-2017})
\begin{equation}
	d' = d'_{\cT} = \gcd{}_k^* \set{[\word w]_k}{\word w \in \Sigma_k^*,\ \hat\delta(\hat s_0,\word w) = \hat s_0,\ \lambda(\hat s,\word w) = \id}.
\end{equation}
After applying further conjugations, we can find a normal subgroup $G_0 < G$ together with a group element $g_0 \in G$ such that \cite[Thm.\ 2.16 + Cor.\ 2.26]{Mullner-2017}: 
  \begin{enumerate}[label={$(\hat{\mathtt{T}}_{\arabic*})$},ref={$\hat{\mathtt{T}}_{\arabic*}$}] \setcounter{enumi}{1}
    \item\label{item:74hatC} For any $\hat s, \hat s' \in \hat S$ and $0 \leq r < d'$ it holds that
          \begin{align*}
            \set{\lambda(\hat s,\word w)}{  \word w \in \Sigma_k^*,\ \delta(\hat s,\word w) = \hat s',\  [\word w]_k \equiv r \bmod d'} = G_0 g_0^r = g_0^r G_0.
          \end{align*}
        \item\label{item:74hatD} For any $\hat s, \hat s'\in \hat S$, any $g \in G_0$ and any sufficiently large $l \in \NN$ it holds that
          \begin{align*}
             \gcd{}_k^* \set{[\word w]_k}{\word w \in \Sigma^l_k,\ \hat\delta(\hat s,\word w) = \hat s',\ \lambda(\hat s,\word w) = g} = d'.
          \end{align*}
  \end{enumerate}
  The properties listed above imply in particular that $G/G_0$ is a cyclic group of order $d'$ generated by $g_0$. We also mention that \cite{Mullner-2017} has a somewhat stronger variant of \ref{item:74hatD} which is not needed for our purposes.

Let $\word w$ be a word synchronising the underlying automaton of $\cT$ to $\hat s_0$. Prolonging $\word w$ if necessary we may assume without loss of generality that $d' \mid [\word w]_k$ and that $\word w$ begins with $\mathtt 0$. Repeating $\word w$ if necessary we may further assume that $\lambda(\hat s_0, \word w) = \id$. Conjugating by $h(\hat s) = \lambda^{-1}(\hat s, \word w) \in G_0$ we may finally assume that $\lambda(\hat s, \word w) = \id$ for all $\hat s \in \hat S$, and hence that the \geaoab{} $\cT$ is synchronising. Note that thanks to idempotence, for each $\hat s \in S$ we have $\lambda(\hat s, \mathtt 0) = \lambda(\hat s, \mathtt 0 \word w) = \lambda(\hat s, \word w) = \id_G$.

In broader generality, let us say that a \geaoab{} $\cT = (S, s_0, \Sigma_k, \delta, G, \lambda, \Omega,\tau)$ (not necessarily arising from the construction discussed above) is \emph{efficient} if it is strongly connected, idempotent, synchronising, $\lambda(s,\mathtt 0) = \id_G$ for all $s \in S$ and it satisfies the ``unhatted'' versions of the properties \ref{item:74hatB}, \ref{item:74hatC} and \ref{item:74hatD}, that is, there exist $d' = d'_{\cT}$, $g_0 \in G$ and $G_0 < G$ such that
\begin{enumerate}[label={$( {\mathtt{T}}_{\arabic*})$},ref={$ {\mathtt{T}}_{\arabic*}$}]
\item\label{item:74B} For any $ s, s' \in  S$ and sufficiently large $l \in \NN$ we have
   \begin{align*}
    \set{ \lambda(s,\word w) }{ \word w \in \Sigma^l_k,\  \delta( s,\word w) =  s'} = G.
   \end{align*}
  \item\label{item:74C} For any $ s,  s' \in  S$ and $0 \leq r < d'$ it holds that
     \begin{align*}
      \set{\lambda( s,\word w)}{ \word w \in \Sigma_k^*,\ \delta( s,\word w) =  s',\ [\word w]_k \equiv r \bmod d'} = G_0 g_0^r = g_0^r G_0.
     \end{align*}
    \item\label{item:74D} For any $ s,  s'\in  S$, any $g \in G_0$ and any sufficiently large $l \in \NN$ it holds that
     \begin{align*}
       \gcd{}_k^* \set{[\word w]_k}{\word w \in \Sigma^l_k,\ \delta( s,\word w) = s',\ \lambda( s,\word w) = g} = d'.
     \end{align*}
 \end{enumerate}
We let $\word w^{\cT}_0$ denote a synchoronising word for $\cT$.

{
 The above discussion can be summarised by the following theorem. We note that this theorem is essentially contained in \cite{Mullner-2017}, except for some of the reductions presented here. Additionally, \cite{Mullner-2017} contains a slightly stronger version of property \ref{item:74C} where $\word w$ is restricted to $\Sigma_k^l$ for large $l$, which can be derived from properties \ref{item:74B} and \ref{item:74C}.
} 

\begin{theorem}\label{thm:transducers}
	Let $\cA$ be a strongly connected idempotent automaton. Then there exists an \egeaoab{} $\cT$ which produces the same sequence: $a_{\cA} = a_{\cT}$.
\end{theorem}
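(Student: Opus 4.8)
The plan is to assemble the construction reviewed in the paragraphs preceding the statement, paying close attention to the order in which the successive reductions are performed. Starting from the strongly connected idempotent automaton $\cA = (S,s_0,\Sigma_k,\delta,\Omega,\tau)$, I would first form Müllner's \geaoab{} $\cT_\cA = (\hat S,\hat s_0,\Sigma_k,\hat\delta,G,\lambda,\Omega,\hat\tau)$: here $\hat S \subset S^m$ is the set of $m$-tuples of distinct states of $\cA$, no two of which have the same underlying set (with $m$ the least size of a set $\set{\delta(s,\word w)}{s\in S}$), the group $G\subset\Sym(m)$ is generated by the transition labels $\lambda(\hat s,j)$, and $\hat\tau(\hat s,g) = \tau\bra{\operatorname{pr}_1(g\cdot\hat s)}$. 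By \cite[Prop.\ 2.5]{Mullner-2017} one has $a_\cA = a_{\cT_\cA}$, and by \cite[Prop.\ 2.2]{Mullner-2017} the \geaoab{} $\cT_\cA$ is strongly connected and its underlying automaton is synchronising.

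Next I would carry out the following reductions in order, each accompanied by the corresponding adjustment of $\hat s_0$ and $\hat\tau$ so that the produced sequence is unchanged \cite[Prop.\ 2.6]{Mullner-2017}. \textbf{(a)} Since $\cA$ is idempotent, one has $\hat\delta(\hat s,\texttt{0}) = \hat s$ and $\lambda(\hat s,\texttt{0}) = \id$ for every $\hat s$ in the image $\hat S_0$ of $\hat\delta(\cdot,\texttt{0})$; choosing $\hat s_0 \in \hat S_0$ therefore makes $\cT_\cA$ prolongable, hence idempotent. \textbf{(b)} Conjugating the labels by maps $h\colon\hat S\to G$ with $h(\hat s_0)(1)=1$ (so as to keep $\hat s_0$ admissible and idempotence intact), one arranges property \ref{item:74hatB} \cite[Thm.\ 2.7, Cor.\ 2.26]{Mullner-2017}. \textbf{(c)} Further conjugations of the same type produce a normal subgroup $G_0\triangleleft G$ and an element $g_0\in G$ witnessing \ref{item:74hatC} and \ref{item:74hatD} \cite[Thm.\ 2.16, Cor.\ 2.26]{Mullner-2017}; these conjugations preserve \ref{item:74hatB}.

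It remains to make $\cT_\cA$ synchronising as a \geaoab{}. Take a word $\word w$ synchronising the underlying automaton to $\hat s_0$; prolonging $\word w$ we may assume it begins with $\texttt{0}$ and that $d'_{\cT_\cA}\mid[\word w]_k$, and replacing $\word w$ by a suitable power we may assume $\lambda(\hat s_0,\word w)=\id$. Property \ref{item:74hatC} with $r=0$ then forces $\lambda(\hat s,\word w)\in G_0$ for every $\hat s\in\hat S$, so we may conjugate one last time by $h(\hat s)=\lambda(\hat s,\word w)^{-1}\in G_0$; since $h$ takes values in $G_0$, properties \ref{item:74hatB}--\ref{item:74hatD} are untouched, and since $h(\hat s_0)=\id$ the initial state is retained. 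After this conjugation $\lambda(\hat s,\word w)=\id_G$ for all $\hat s$, so $\word w$ synchronises $\cT_\cA$ to $\hat s_0$; idempotence moreover gives $\lambda(\hat s,\texttt{0})=\lambda(\hat s,\texttt{0}\word w)=\lambda(\hat s,\word w)=\id_G$ for every $\hat s$. Renaming the data of the resulting \geaoab{} as $(S,s_0,\Sigma_k,\delta,G,\lambda,\Omega,\tau)$ turns \ref{item:74hatB}--\ref{item:74hatD} into \ref{item:74B}--\ref{item:74D}, so the final \geaoab{} $\cT$ is efficient and satisfies $a_\cA = a_\cT$.

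Essentially all of the substance is already contained in \cite{Mullner-2017}; the one point requiring care — and the main (indeed only) obstacle — is that the reductions are mutually compatible. This is exactly why the conjugations in step \textbf{(c)} must be by maps into $G$ (so as not to destroy \ref{item:74hatB}), why the final conjugation must be by a map into $G_0$ (so as not to destroy \ref{item:74hatC}, \ref{item:74hatD}), and why every conjugation, together with the choice in \textbf{(a)}, must fix the index $1$, so that $\hat s_0$ remains a legitimate initial state throughout.
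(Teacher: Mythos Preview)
Your proposal is correct and follows essentially the same route as the paper: the ``proof'' in the paper is precisely the discussion in Section~\ref{ssec:Trans:natural} preceding the theorem, which carries out exactly the reductions (a)--(c) followed by the final synchronising conjugation, in the same order and with the same justifications from \cite{Mullner-2017}. Your explicit bookkeeping of why each conjugation preserves the previously established properties (values in $G$ for \ref{item:74hatB}, values in $G_0$ for \ref{item:74hatC}--\ref{item:74hatD}, fixing index $1$ for the initial state) matches the paper's remarks and is, if anything, slightly more detailed than the paper's own exposition.
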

}


In analogy with Proposition \ref{prop:wlog_change_s0}, the veracity of  Theorem \ref{thm:main_sc_nobasis} is independent of the initial state of the \geao{}.

\begin{proposition}\label{prop:wlog-change-s_0-T}
	Let $\cT = (S,s_0,\Sigma_k,\delta,G,\lambda,\Omega,\tau)$ be an \egeaoab{} and let $S_0 \subset S$ denote the set of all states  $s \in S$ such that $\delta(s,\texttt 0) = s$ and $\lambda(s,\texttt 0) = \id_G$. Then the following conditions are equivalent.
	\begin{enumerate}
	\item\label{item:44A} Theorem \ref{thm:main_sc_nobasis} holds for $a_{\cT,s,h}$ for some $s \in S_0,\ h \in G$;
	\item\label{item:44B} Theorem \ref{thm:main_sc_nobasis} holds for $a_{\cT,s,h}$ for all $s \in S_0,\ h \in G$;
	\end{enumerate}
\end{proposition}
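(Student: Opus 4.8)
The plan is to reduce the statement to Proposition~\ref{prop:wlog_change_s0}, its analogue for ordinary automata, by passing from the \egeaoab{} $\cT$ to the automaton $\cA_{\cT} = \bra{S \times G, (s_0,\id_G), \Sigma_k, \delta', \Omega, \tau}$ with $\delta'((s,g),j) = (\delta(s,j), g\cdot\lambda(s,j))$ associated to $\cT$ in Section~\ref{ssec:Trans:defs}. Unwinding the definitions (and using the cocycle identities for $\delta$ and $\lambda$) one gets $a_{\cA_{\cT},(s,h)} = a_{\cT,s,h}$ for every $(s,h) \in S \times G$, exactly as in \eqref{eq:def-of-a_T-2}. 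Hence both conditions \eqref{item:44A} and \eqref{item:44B} are assertions about the sequences $a_{\cA_{\cT},(s,h)}$, and the proposition will follow once I check that $\cA_{\cT}$ satisfies the hypotheses of Proposition~\ref{prop:wlog_change_s0} and that the relevant set of states of $\cA_{\cT}$ is precisely $S_0 \times G$.

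First I would verify that $\cA_{\cT}$ is prolongable and strongly connected. Prolongability is immediate: $\delta'((s_0,\id_G),\texttt{0}) = (\delta(s_0,\texttt{0}),\lambda(s_0,\texttt{0})) = (s_0,\id_G)$ because $\cT$ is prolongable. Strong connectedness is the one place where efficiency of $\cT$ is genuinely needed: given states $(s,g),(s',g')$ of $\cA_{\cT}$, property~\ref{item:74B} provides, for every sufficiently large $l$, a word $\word w \in \Sigma_k^l$ with $\delta(s,\word w) = s'$ and $\lambda(s,\word w) = g^{-1}g'$, and then $\delta'((s,g),\word w) = (s',g')$. Next I would observe that $\delta'((s,g),\texttt{0}) = (\delta(s,\texttt{0}), g\lambda(s,\texttt{0}))$ equals $(s,g)$ if and only if $\delta(s,\texttt{0}) = s$ and $\lambda(s,\texttt{0}) = \id_G$ — the condition on $g$ being vacuous — so the set of states of $\cA_{\cT}$ fixed by the $\texttt{0}$-transition is exactly $S_0 \times G$.

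With these checks done, Proposition~\ref{prop:wlog_change_s0} applied to $\cA_{\cT}$ (with $S_0 \times G$ playing the role of $S_0$) says precisely that Theorem~\ref{thm:main_sc_nobasis} holds for $a_{\cA_{\cT},(s,h)} = a_{\cT,s,h}$ for some $(s,h) \in S_0 \times G$ if and only if it holds for all such pairs, which is the equivalence \eqref{item:44A}~$\Leftrightarrow$~\eqref{item:44B} claimed here. I do not anticipate a real obstacle: the only substantive input beyond the cited proposition is the promotion of strong connectedness of $\cT$ to strong connectedness of $\cA_{\cT}$ via property~\ref{item:74B}, and the rest is bookkeeping. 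Should one prefer not to cite Proposition~\ref{prop:wlog_change_s0} as a black box, the alternative is to transcribe its proof: pick (via~\ref{item:74B}) a word $\word p$ with $\delta(s,\word p) = s'$ and $\lambda(s,\word p) = h^{-1}h'$, use $s,s' \in S_0$ together with the cocycle identities to obtain $a_{\cT,s,h}\bra{[\word p\,\texttt{0}^{m}(n)_k]_k} = a_{\cT,s',h'}(n)$ for all $m,n$, and then run the same zero-padding argument with $m(n) = Q - (\length_k(n) \bmod Q)$ for a multiplicatively large $Q$, which makes the periodic component periodic and lets one control the uniform component on the intervals $[k^l,k^{l+1})$ through Proposition~\ref{cor:interval_Ud}.
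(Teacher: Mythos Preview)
Your proposal is correct and follows essentially the same route as the paper: pass to the associated automaton $\cA_{\cT}$, identify $a_{\cT,s,h}$ with $a_{\cA_{\cT},(s,h)}$, and invoke Proposition~\ref{prop:wlog_change_s0}. The paper's proof is more terse---it simply notes that property~\ref{item:74B} supplies a word $\word u$ with $a_{\cT,s,h}(n) = a_{\cT}([\word u(n)_k]_k)$ and then cites Proposition~\ref{prop:wlog_change_s0}---whereas you spell out explicitly that $\cA_{\cT}$ is strongly connected (via~\ref{item:74B}), prolongable, and that its set of $\texttt 0$-fixed states is exactly $S_0 \times G$; these are precisely the verifications the paper leaves implicit.
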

\begin{proof}
	Assume without loss of generality that Theorem \ref{thm:main_sc_nobasis} holds for $a_{\cT}$, and let $s \in S,\ h \in G$. It follows from condition \ref{item:74B} there exists $\word u \in \Sigma_k^*$ such that $a_{\cT,s,h}(n) = a_{\cT}([\word u(n)_k]_k)$. The claim now follows from Proposition \ref{prop:wlog_change_s0} applied to the automaton $\cA_{\cT}$ corresponding to $\cT$ discussed at the end of Section \ref{ssec:Trans:defs}.
\end{proof}

\subsection{Representation theory} 

Let $\cT = (S,s_0,\Sigma_k,\delta,G,\lambda,\Omega,\tau)$ be an \egeaoab{} (cf.\ Theorem~\ref{thm:transducers}) and $\cT_0 = (S,s_0,\Sigma_k,\delta,G,\lambda)$ be the underlying \geaab{}. In this section we use representation theory to separate the sequence $a_{\cT}$ produced by $\cT$ into simpler components, later shown to be either strongly structured or highly Gowers uniform.

We begin by reviewing some fundamental results from representation theory. A \emph{(unitary) representation} $\rho$ of the finite group $G$ is a homomorphism $\rho \colon G \rightarrow \mathrm{U}(V)$, where $\mathrm{U}(V)$ denotes the group of unitary automorphisms of a finitely dimensional complex vector space $V$ equipped with a scalar product. The representation $\rho$ is called \emph{irreducible} if there exists no non-trivial subspace $W \subsetneq V$ such that $\rho(g)W \subseteq W$ for all $g\in G$. Every representation uniquely decomposes as the direct sum of irreducible representations.

The representation $\rho$ induces a dual representation $\rho^*$ defined on the dual space $V^*$, given by $\rho^*(g)(\varphi) = \varphi \circ \rho(g^{-1})$. Note that any element $\varphi$ of $V^*$ can be represented as $\varphi = \varphi_v$, where $\varphi_v(u) = \left<u,v\right>$ for $v \in V$, and $V^*$ inherits from $V$ the scalar product given by the formula $\left<\varphi_v,\varphi_u\right> = \left< u,v \right>$. The representation $\rho^*$ is unitary with respect to this scalar product. For a given choice of orthonormal basis, the endomorphisms on $V$ can be identified with matrices and $V^*$ can be identified with $V$. Under this identification, $\rho^*(g)$ is simply the complex conjugate of $\rho(g)$.

There only exist finitely many equivalence classes of unitary irreducible representations of $G$ and the matrix coefficients of irreducible representations of $G$ span the space of all functions $f \colon G \to \CC$ (see e.g.\ \cite[Cor\ 2.13, Prop.\ 3.29]{FultonHarris-book}; the latter can also be seen as a special case of the Peter--Weyl theorem). Here, matrix coefficients of $\rho$ are maps $G \to \CC$ of the form $g \mapsto \left< u, \rho(g)v \right>$ for some $u,v \in V$. Hence, we have the following decomposition result.

\begin{lemma}\label{lem:reps_span}
Let $\cT$ be an \egea{}. The $\CC$-vector space of maps $G \to \CC$ is spanned by maps of the form $ \alpha \circ \rho$ where $\rho \colon G \to V$ is an irreducible unitary representation of $G$ and $\alpha$ is a linear map $\End(V) \to \CC$.	
\end{lemma}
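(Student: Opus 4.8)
The plan is to reduce Lemma~\ref{lem:reps_span} to the two standard facts about representations of finite groups that were recalled just above it: first, that every complex representation of $G$ decomposes as a direct sum of irreducibles, and second, that the matrix coefficients of the irreducible unitary representations span the space $\CC^G$ of all maps $G \to \CC$ (the cited corollary of Peter--Weyl, or \cite[Cor.\ 2.13, Prop.\ 3.29]{FultonHarris-book}). The only thing that needs checking is that the maps of the specific shape $\alpha \circ \rho$, with $\rho$ an irreducible unitary representation and $\alpha \colon \End(V) \to \CC$ linear, coincide (as a spanning set) with the span of the matrix coefficients.

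\begin{proof}
Let $W \subseteq \CC^G$ denote the $\CC$-linear span of all maps of the form $\alpha \circ \rho$, where $\rho \colon G \to \U(V)$ runs over irreducible unitary representations of $G$ and $\alpha \colon \End(V) \to \CC$ over $\CC$-linear functionals. We must show $W = \CC^G$.

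First, every matrix coefficient of an irreducible unitary representation lies in $W$. Indeed, if $\rho \colon G \to \U(V)$ is irreducible and $u, v \in V$, then the map $g \mapsto \left<u, \rho(g) v\right>$ can be written as $\alpha \circ \rho$ with $\alpha \colon \End(V) \to \CC$ the linear functional $T \mapsto \left<u, T v\right>$; hence it belongs to $W$. By the Peter--Weyl theorem (equivalently, \cite[Cor.\ 2.13, Prop.\ 3.29]{FultonHarris-book}), the matrix coefficients of the irreducible unitary representations of $G$ span $\CC^G$, and therefore $\CC^G \subseteq W$. Since $W \subseteq \CC^G$ trivially, we conclude $W = \CC^G$.
\end{proof}

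The only potential subtlety — and the one place worth a sentence of care — is matching the abstract functional $\alpha \colon \End(V)\to\CC$ in the statement with the concrete matrix coefficients $g \mapsto \left<u,\rho(g)v\right>$: this is immediate once one notes that for fixed $u$ the assignment $T \mapsto \left<u, Tv\right>$ is $\CC$-linear on $\End(V)$, so no genuine obstacle arises. (Conversely every $\alpha \circ \rho$ is a linear combination of such matrix coefficients, since the functionals $T\mapsto\left<e_i,Te_j\right>$ form a basis of $\End(V)^*$ for an orthonormal basis $(e_i)$ of $V$; but only the inclusion $\CC^G \subseteq W$ is needed for the lemma.) Thus the whole argument is a short bookkeeping step on top of the representation-theoretic facts already quoted, and there is no hard part.
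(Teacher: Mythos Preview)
Your proof is correct and is exactly the approach the paper takes: the paper does not give a separate proof, but simply remarks that matrix coefficients $g \mapsto \langle u, \rho(g)v\rangle$ span $\CC^G$ by Peter--Weyl and says ``Hence, we have the following decomposition result.'' You have just made explicit the one trivial observation that such a matrix coefficient is of the form $\alpha\circ\rho$ for the linear functional $\alpha(T)=\langle u,Tv\rangle$.
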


We will call $b \colon \NN_0 \to \CC$ a \emph{basic sequence produced by $\cT$} if it takes the form
\begin{equation}\label{eq:def-of-basic}
	b(n) =  \alpha \circ \rho(\lambda(s_0,(n)_k)) \ifbra{\delta(s_0,(n)_k) = s} \qquad (n \in \NN_0),
\end{equation}
where $\rho \colon G \to \U(V)$ is an irreducible unitary representation of $G$, $\alpha$ is a linear map $\End(V) \to \CC$, and $s \in S$ is a state. As a direct consequence of Lemma \ref{lem:reps_span} we have the following.

\begin{corollary}\label{cor:reps-decomposition}
Let $\cT$ be an \egea{}. The $\CC$-vector space of sequences $\NN_0 \to \CC$ produced by $\cT$ is spanned by basic sequences defined in \eqref{eq:def-of-basic}.
\end{corollary}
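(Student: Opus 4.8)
The plan is to deduce Corollary~\ref{cor:reps-decomposition} from Corollary~\ref{cor:reps-decomposition}'s sibling, Lemma~\ref{lem:reps_span}, essentially by pulling back the pointwise spanning statement for functions on $G \times S$ along the map $n \mapsto (\lambda(s_0,(n)_k), \delta(s_0,(n)_k))$. First I would recall that, by definition, a sequence $a \colon \NN_0 \to \CC$ produced by $\cT$ is of the form $a(n) = \tau(\delta(s_0,(n)_k), \lambda(s_0,(n)_k))$ for some output function $\tau \colon S \times G \to \CC$; indeed any function $\tau \colon S \times G \to \CC$ is an admissible output function (the finiteness of $\Omega$ plays no role when $\Omega = \CC$), so the $\CC$-vector space of sequences produced by $\cT$ is exactly the image of the linear map $\tau \mapsto (n \mapsto \tau(\delta(s_0,(n)_k),\lambda(s_0,(n)_k)))$ from $\CC^{S \times G}$ to $\CC^{\NN_0}$.

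Next I would observe that $\CC^{S \times G} = \bigoplus_{s \in S} \CC^{G}$, so $\tau$ decomposes as a sum over $s \in S$ of functions supported on $\{s\} \times G$, i.e.\ $\tau(s', g) = \sum_{s \in S} \ifbra{s' = s}\, \tau_s(g)$ where $\tau_s \colon G \to \CC$. By Lemma~\ref{lem:reps_span}, each $\tau_s$ is a $\CC$-linear combination of maps of the form $\alpha \circ \rho$ with $\rho$ an irreducible unitary representation of $G$ and $\alpha \colon \End(V) \to \CC$ linear. Substituting this into the formula for $a(n)$ and using linearity, $a(n)$ becomes a $\CC$-linear combination of expressions
\[
	\alpha \circ \rho(\lambda(s_0,(n)_k))\, \ifbra{\delta(s_0,(n)_k) = s},
\]
which is precisely the form of a basic sequence in \eqref{eq:def-of-basic}. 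This shows the space of sequences produced by $\cT$ is contained in the span of basic sequences; the reverse inclusion is immediate since each basic sequence is itself produced by $\cT$ (take $\tau(s',g) = \ifbra{s'=s}\,\alpha\circ\rho(g)$). Hence equality holds.

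I do not expect a genuine obstacle here: the proof is a routine unwinding of definitions plus one invocation of Lemma~\ref{lem:reps_span}. The only point requiring a modicum of care is the first one — making explicit that, because we work with $\CC$-valued (rather than $\Omega$-valued for finite $\Omega$) sequences, \emph{every} function $S \times G \to \CC$ is a legitimate output function, so that "sequences produced by $\cT$" really is a vector space coinciding with the image of the pullback map. Once that is said, the decomposition $\CC^{S \times G} = \bigoplus_s \CC^G$ and Lemma~\ref{lem:reps_span} finish the argument in a couple of lines.
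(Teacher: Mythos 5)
Your proof is correct and takes exactly the route the paper intends: the paper gives no explicit proof, stating only that the corollary holds ``as a direct consequence'' of Lemma~\ref{lem:reps_span}, and your argument is the routine unwinding of that remark (identify the space of produced sequences with the image of $\CC^{S\times G}$ under $\tau \mapsto a_\cT$, decompose $\CC^{S\times G} = \bigoplus_{s\in S}\CC^G$, and apply Lemma~\ref{lem:reps_span} to each summand). The point you flag about every $\tau\colon S\times G\to\CC$ being an admissible output function is the right thing to note, and is unproblematic since $S\times G$ is finite.
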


It follows that in order to prove Theorem \ref{thm:main_sc_nobasis} in full generality it is enough to prove it for basic sequences. There are two significantly different cases to consider, depending on the size of the kernel $\ker \rho = \set{ g \in G}{ \rho(g) = \id_V}$. Theorem \ref{thm:main_sc_nobasis} follows immediately from the following result combined with Theorem \ref{thm:transducers} and Corollary \ref{cor:reps-decomposition}.

\begin{theorem}\label{thm:dichotomy}
	Let $\cT$ be an \egea{} and let $b$ be a basic sequence given by \eqref{eq:def-of-basic}.
\begin{enumerate}[wide]
\item\label{item:383A} If $G_0 \subset \ker \rho$ then $b$ is strongly structured.
\item\label{item:383B} If $G_0 \not\subset \ker \rho$ then $b$ is highly Gowers uniform.
\end{enumerate}
\end{theorem}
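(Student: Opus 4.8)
The plan is to treat the two cases of Theorem~\ref{thm:dichotomy} by very different means. In case~\eqref{item:383A}, when $G_0 \subset \ker\rho$, the representation $\rho$ factors through the cyclic quotient $G/G_0 \cong \ZZ/d'\ZZ$, so $\rho$ is one-dimensional and $\rho(\lambda(s_0,(n)_k))$ is a $d'$-th root of unity depending only on the coset $\lambda(s_0,(n)_k)G_0$. By property~\ref{item:74C}, that coset is $g_0^r G_0$ where $r \equiv [(n)_k]_k \bmod d'$, i.e. it depends only on $n \bmod d'$; hence $\alpha\circ\rho(\lambda(s_0,(n)_k))$ is a periodic function of $n$ with period $d'$, which is coprime to $k$ by construction. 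It remains to handle the factor $\ifbra{\delta(s_0,(n)_k)=s}$. Since $\cT$ is synchronising (with synchronising word $\word w_0^{\cT}$), Lemma~\ref{lem:sync-FCAE} shows that $n \mapsto \ifbra{\delta(s_0,(n)_k)=s}$ is a forwards synchronising $k$-automatic sequence: once the input contains $\word w_0^{\cT}$, the state is determined, and all further transitions (reading the remaining, less significant digits) are deterministic, so the value depends only on the suffix after the last occurrence of $\word w_0^{\cT}$. Therefore $b(n) = F(a_{\mathrm{per}}(n), a_{\mathrm{fs}}(n))$ with $a_{\mathrm{per}}(n) = \rho(\lambda(s_0,(n)_k))$ (periodic, period coprime to $k$), $a_{\mathrm{fs}}(n) = \ifbra{\delta(s_0,(n)_k)=s}$ (forwards synchronising), and $F(x,y) = \alpha(x) y$; this is exactly the definition~\eqref{eq:def-of-st-str} of a strongly structured sequence.

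Case~\eqref{item:383B}, when $G_0 \not\subset \ker\rho$, is the substantive part, and I expect it to be the main obstacle. By Corollary~\ref{prop:wlog-N=k^L} it suffices to bound $\norm{b}_{U^d[k^L]}$ by $k^{-cL}$ for each fixed $d\ge 2$. I would argue recursively on $L$, exploiting the self-similar structure of $\cT$: splitting $[k^L]$ according to the leading block of digits, $b$ restricted to a short progression $P = \set{[\word x\word w]_k}{\word w\in\Sigma_k^{L-|\word x|}}$ is, up to the deterministic data $(\delta(s_0,\word x),\lambda(s_0,\word x))$, again a basic sequence of the form~\eqref{eq:def-of-basic} produced by $\cT$ but started from state $\delta(s_0,\word x)$ and with $\alpha$ replaced by $\alpha\circ\rho(\lambda(s_0,\word x)^{-1}\,\cdot\,)$ — which is still a nonzero linear functional composed with the same irreducible $\rho$, hence still has $G_0\not\subset\ker\rho$. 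The crux is to gain a fixed factor of decay at each recursion step. This is where properties~\ref{item:74B}, \ref{item:74C}, \ref{item:74D} enter: they guarantee that the labels $\lambda(s,\word w)$ along paths of a given length between any two states equidistribute over all of $G$ (respectively over the cosets of $G_0$, with the gcd condition ruling out hidden periodicity modulo integers coprime to $k$); combined with Schur orthogonality for the nontrivial representation $\rho$, the sum $\sum_{\word w}\rho(\lambda(s,\word w))$ over such paths cancels. One then needs to combine these cancellations across the $2^d$-fold Gowers average. Concretely I would expand $\norm{b}_{U^d[k^L]}^{2^d}$ as an average of products $\prod_{\vec\omega}\conjugate^{|\vec\omega|}b(1\vec\omega\cdot\vec n)$, split each argument $1\vec\omega\cdot\vec n$ into a high-order part (the first $\eta L$ digits, shared across $\vec\omega$ up to carries) and a low-order part, use the carry structure to argue that the high-order parts of the $2^d$ arguments agree up to a controlled discrepancy, factor out the common contribution, and then use the multiplicative cocycle identity $\lambda(s,\word u\word v)=\lambda(s,\word u)\lambda(\delta(s,\word u),\word v)$ together with the equidistribution/orthogonality to get cancellation in the remaining low-order average. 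This is essentially the strategy carried out in Section~\ref{sec:Recursive}, and the technical heart is bookkeeping the carries while preserving enough independence between the digit blocks to apply the representation-theoretic cancellation.

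A cleaner way to organise the recursion, which I would pursue in parallel, is to set up a system of functions $b_{s,\alpha}$ (one for each state $s$ and each functional $\alpha$ with $G_0\not\subset\ker(\rho)$ unchanged) and prove a matrix inequality: $\max_{s,\alpha}\norm{b_{s,\alpha}}_{U^d[k^{L}]}\le \theta\max_{s,\alpha}\norm{b_{s,\alpha}}_{U^d[k^{L-l}]}^{\beta}+ (\text{small error})$ for suitable fixed $l$, $\theta<1$, $\beta=\alpha_d>0$, where the small error accounts for the inputs whose leading $l$ digits fail to contain $\word w_0^{\cT}$ (these form a set of density $\ll k^{-cl}$, handled by Proposition~\ref{prop:Ud<Lp}), and the contraction factor $\theta$ comes from the equidistribution of labels. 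Iterating such an inequality yields $\norm{b}_{U^d[k^L]}\ll k^{-cL}$. The genuinely hard point is establishing the contraction with an \emph{absolute} constant $\theta<1$ independent of $L$: this requires that the cancellation from Schur orthogonality not be diluted by the Gowers averaging, which forces one to carry out the digit-splitting inside the $U^d$ product and to use property~\ref{item:74D} (the gcd being exactly $d'$) to ensure that there is no arithmetic obstruction to the sum over the relevant residues modulo $d'$ vanishing. I expect this interplay — between the combinatorial carry analysis, the Gowers-norm manipulation, and the representation-theoretic orthogonality — to be the main obstacle, and it is presumably what occupies the bulk of the remaining sections of the paper.
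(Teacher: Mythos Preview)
Your treatment of part~\eqref{item:383A} is correct and essentially identical to the paper's: property~\ref{item:74C} makes $n\mapsto\alpha\circ\rho(\lambda(s_0,(n)_k))$ periodic with period $d'$ (coprime to $k$), and synchronisation of the underlying automaton makes $n\mapsto\ifbra{\delta(s_0,(n)_k)=s}$ forwards synchronising, so $b$ is strongly structured.

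For part~\eqref{item:383B} your high-level strategy --- expand the Gowers power as an average, split each $1\vec\omega\cdot\vec n$ into high/low digit blocks via the cocycle identity, and seek cancellation from the representation --- is exactly the paper's. But the concrete recursion you propose has a genuine gap. An inequality of the shape
\[
\max_{s,\alpha}\norm{b_{s,\alpha}}_{U^d[k^L]}\le \theta\,\max_{s,\alpha}\norm{b_{s,\alpha}}_{U^d[k^{L-l}]}^{\alpha_d}+\text{(small)}
\]
with $\alpha_d<1$ does \emph{not} iterate to $k^{-cL}$: the map $x\mapsto\theta x^{\alpha_d}$ has the attracting fixed point $\theta^{1/(1-\alpha_d)}>0$, so you only get $\norm{b}_{U^d[k^L]}$ bounded by a constant, not decaying. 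The exponent $\alpha_d$ enters because you pass through Proposition~\ref{cor:interval_Ud} (restriction to progressions), which is lossy; combining that with a triangle inequality over $k^l$ blocks cannot produce contraction.

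The paper's fix is to avoid the Gowers norm in the recursion entirely and instead recurse on the \emph{un-normalised matrix-valued} averages $A(\bb v;L)\in\End(E(V))$ of~\eqref{eq:def-of-A}, indexed by $\bb v=(\bb s,\bb r)\in S^{[d]}\times R$ where $\bb r$ records the carry pattern across the $2^d$ vertices. These satisfy an \emph{exact linear} recursion $A(\bb v;L)=\sum_{\bb v'}A(\bb v';L-l)M(\bb v',\bb v;l)$, i.e.\ $A(L)=A(0)M(1)^L$ at the matrix level. One then bounds the nonnegative matrix $N(l)_{\bb v,\bb v'}:=\norm{M(l)_{\bb v,\bb v'}}$, which is dominated entrywise by the stochastic-type matrix $W(l)$, and applies a Frobenius--Perron argument (Proposition~\ref{prop:FrobeniusPerron}) to extract exponential decay from a single strict inequality $N(l_0)_{\bb v_0,\bb v_0}<W(l_0)_{\bb v_0,\bb v_0}$. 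That strict inequality is where the representation theory enters, via Proposition~\ref{prop:rho-averages}: since $G_0\not\subset\ker\rho$, one has $\sum_{\bb g\in G_0^{[d]}}\bbrho(\bb g)=0$, so the sum defining $M(l_0)_{\bb v_0,\bb v_0}$ genuinely cancels --- \emph{provided} one knows that all of $G_0^{[d]}$ actually occurs among the labels $\bblambda(\tilde e)$ of morphisms $\bb v_0\to\bb v_0$. That last point (Corollary~\ref{cor:G_0_v_0}, ultimately Theorem~\ref{thm:cubes}) is the real technical core, occupying all of Section~\ref{sec:Cubes}; it is not a direct consequence of \ref{item:74B}--\ref{item:74D} and Schur orthogonality, but requires the ``characteristic factor'' reduction down to the cyclic \geaab{} $\cZ(d'_{\cT})$. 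Your index set $(s,\alpha)$ is too coarse to see any of this: one needs the full cube of states $\bb s\in S^{[d]}$ \emph{and} the carry data $\bb r\in R$ to make the recursion close up.
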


One of the items above is relatively straightforward and we prove it now. The proof of the other one occupies the remainder of the paper.

\begin{proof}[Proof of Theorem \ref{thm:dichotomy}\eqref{item:383A}]
We use the same notation as in Theorem \ref{thm:transducers}. Since $\rho$ vanishes on $G_0$, it follows from property \ref{item:74C} that $\rho(\lambda(s,\word w)) = \rho(g_0^{[\word w]_k})$ for any $\word w \in \Sigma_k^*$. 
In particular, the sequence $n \mapsto \alpha \circ \rho\bra{ \lambda(s_0,(n)_k) }$ is periodic with period $d'$. Since the underlying automaton of $\cT$ is synchronising, so is the sequence $n \mapsto \ifbra{ \delta(s_0, (n)_k) = s }$. It follows that $b$ is the product of a periodic sequence and a synchronising sequence, whence $b$ is strongly structured.  
\end{proof}

\begin{example}\label{ex:main-1c}
Let $a,b,c$ be the sequences defined in Example \ref{ex:main-1}. Recall the corresponding \geaoab{} is introduced in Example \ref{ex:main-1b}. The group of the labels is $G = \{+1,-1\}$, and the corresponding group $G_0$ equals $G$. Note that $G$ has two irreducible representations: the trivial one $g \mapsto 1$, and the non-trivial one $g \mapsto g$. The trivial representation gives rise to the basic sequences $\frac{1+b}{2}$ and $\frac{1-b}{2}$, which are strongly structured. The non-trivial representations gives rise to the basic sequences $\frac{1+b}{2}c$ and $\frac{1-b}{2}c$, which are highly Gowers uniform. We have $a = 3 \frac{1+b}{2} + \frac{1-b}{2}+ \frac{1+b}{2}c$.
\end{example}

We close this section with a technical result which will play an important role in the proof of Theorem \ref{thm:dichotomy}\eqref{item:383B}. Given two representations $\rho \colon G \to \U(V)$ and $\sigma \colon H \to \U(W)$ we can consider their tensor product $\rho \otimes \sigma \colon G \times H \to \U(V \otimes W)$ which is uniquely determined by the property that $(\rho \otimes \sigma)(g,h)(v \otimes w) = \rho(g)(v) \otimes \sigma(h)(w)$ for all $v \in V,\ w \in W$. (Note that $V \otimes W$ carries a natural scalar product such that $\left< v \otimes w, v' \otimes w' \right>_{V \otimes W} = \left< v , v' \right>_V \left< w , w' \right>_W$, with respect to which $\rho \otimes \sigma$ is unitary.) In particular, for $D \geq 0$ we can define the $D$-fold tensor product $\rho^{\otimes D} \colon G^D \to \U(V^{\otimes D})$.

\begin{proposition}\label{prop:rho-averages}
	Let $\rho \colon G \to \mathrm{U}(V)$ be an irreducible representation of a group $G$ and let $G_0$ be a subgroup of $G$ such that $G_0 \not \subset \ker \rho$. Then for any $D \geq 1$ we have
	\begin{align}\label{eq:sum-not-trivial}
		\sum_{\bb g \in G_0^{D}} \rho^{\otimes D} (\bb g) = 0.
	\end{align}
\end{proposition}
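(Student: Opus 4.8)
The plan is to reduce to the case $D = 1$ and then run a standard averaging argument, the one subtle point being the use of normality of $G_0$. First I would note that, by multilinearity of the tensor product, the sum over $G_0^D$ factors as a tensor power:
\[
	\sum_{\bb g \in G_0^{D}} \rho^{\otimes D} (\bb g) = \sum_{g_1,\dots,g_D \in G_0} \rho(g_1) \otimes \cdots \otimes \rho(g_D) = \Bigl( \sum_{h \in G_0} \rho(h) \Bigr)^{\otimes D}.
\]
Since $A^{\otimes D} = 0$ whenever $A = 0$ and $D \geq 1$, it suffices to prove that the operator $A := \sum_{h \in G_0} \rho(h) \in \End(V)$ is zero.

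Next I would recognise $\frac{1}{\abs{G_0}} A$ as the standard averaging operator $P$, namely the orthogonal projection of $V$ onto the subspace $V^{G_0} := \set{ v \in V }{ \rho(h)v = v \text{ for all } h \in G_0 }$ of $G_0$-fixed vectors (the usual fact that $P^2 = P$ with image $V^{G_0}$; unitarity of $\rho$ moreover makes $P$ self-adjoint, though this is not needed). Thus it is enough to show $V^{G_0} = 0$. The key step is to check that $V^{G_0}$ is a $G$-subrepresentation of $V$, and this is exactly where the normality of $G_0$ (part of the efficient-GEA structure, cf.\ \ref{item:74C}) is used: for $v \in V^{G_0}$, $g \in G$ and $h \in G_0$ one has $g^{-1}hg \in G_0$, hence $\rho(h)\rho(g)v = \rho(g)\rho(g^{-1}hg)v = \rho(g)v$, so $\rho(g)v \in V^{G_0}$.

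Finally, since $\rho$ is irreducible, the subrepresentation $V^{G_0}$ is either $0$ or all of $V$; the latter would force $\rho(h) = \id_V$ for every $h \in G_0$, i.e.\ $G_0 \subseteq \ker \rho$, contrary to hypothesis. Hence $V^{G_0} = 0$, so $P = 0$, so $A = \abs{G_0}\, P = 0$, and therefore $\sum_{\bb g \in G_0^{D}} \rho^{\otimes D}(\bb g) = A^{\otimes D} = 0$. The only non-formal ingredient is the $G$-invariance of the fixed space, i.e.\ the normality of $G_0$, and it is genuinely needed: for a non-normal subgroup the identity can fail already for $D = 1$ (for instance for the standard two-dimensional representation of $\Sym(3)$ and the order-two subgroup generated by a transposition, where $A$ is a nonzero rank-one operator).
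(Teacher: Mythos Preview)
Your proof is correct and follows essentially the same approach as the paper: factor the tensor sum, reduce to showing the averaging projection $P = \frac{1}{|G_0|}\sum_{h\in G_0}\rho(h)$ vanishes, observe that its image $V^{G_0}$ is a $G$-subrepresentation by normality of $G_0$, and conclude by irreducibility. The only cosmetic difference is that the paper checks $G$-invariance of $V^{G_0}$ just for the generator $g_0$ (using that $G/G_0$ is cyclic in the ambient setting), whereas you do it for arbitrary $g\in G$; your explicit remark that normality is genuinely needed, with the $\Sym(3)$ counterexample, is a nice addition.
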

\begin{proof}
	By the definition of the tensor product we find
	\begin{align*}
		\sum_{\bb g \in G_0^D} \rho^{\otimes D} (\bb g) = \bigotimes_{\omega \in [D]}\rb{\sum_{g_\omega \in G_0} \rho(g_{\omega})}.
	\end{align*}
	Thus it is sufficient to show that
	\begin{align}\label{eq_sum_rho}
		P := \EE_{g \in G_0} \rho(g) = 0.
	\end{align}
	A standard computation shows that $\rho(h) P = P$ for each $h \in G_0$, whence in particular $P^2 = P$. It follows that $P$ is a projection onto the space $U < V$ consisting of the vectors $u \in V$ such that $\rho(g)u = u$ for all $g \in G_0$. Note that $U \subsetneq V$ because $G_0 \not \subset \ker \rho$. 
	
	We claim that $U$ is an invariant space for $\rho$. It will suffice to verify that $U$ is preserved by $\rho(g_0)$, meaning that $\rho(h) \rho(g_0) u = \rho(g_0) u$ for each $u \in U$ and each $h \in G_0$. Pick any $h$ and let $h' := g_0^{-1} h g_0 \in G_0$. Then, for each $u \in U$ we have
\[
	\rho(h) \rho(g_0) u = \rho(g_0) \rho(h') u = \rho(g_0)u.
\]
Since $\rho$ is irreducible, it follows that $U = \{0\}$ is trivial. Consequently, $P = 0$.
\end{proof}

\section{Recursive relations and the cube groupoid}\label{sec:Recursive}
\subsection{Introducing the Gowers-type averages}\label{ssec:Trans:intro}

The key idea behind our proof of Theorem \ref{thm:dichotomy}\eqref{item:383B} is to exploit recursive relations connecting $\norm{a}_{U^d[k^L]}$ with $\norm{a}_{U^d[k^{L-l}]}$ for $0 < l < L$. In fact, in order to find such relations we consider somewhat more general averages which we will shortly introduce. A similar idea, in a simpler form, was used in \cite{JK-Gowers-Thue-Morse}.

\textit{Throughout this section, $\cT = (S,s_0,\Sigma_k,\delta,G,\lambda, \Omega, \tau)$ denotes an \egeaoab{}, $d \geq 1$ denotes an integer, and $\rho \colon G \to \mathrm{U}(V)$ denotes an irreducible unitary representation. All error terms are allowed to depend on $d$ and $\cT$.}

In order to study Gowers norms of basic sequences, we need to define certain averages of linear operators obtained from the representation $\rho$ in a manner rather analogous as in the definition of Gowers norms, the key difference being that the tensor product replaces the product of scalars. We define the space (using the terminology of \cite[\S 2.2]{Tao-book}, we can construe it as a higher order Hilbert space)
\begin{equation}\label{eq:def-of-Erho}
E(V)=E^d(V):=\bigotimes_{\substack{\vec\omega\in\{0,1\}^d\\  |\vec\omega| \text{ even}}} V \otimes \bigotimes_{\substack{\vec\omega \in\{0,1\}^d\\ 	 |\vec\omega| \text{ odd}}}V^*.
\end{equation}
Recall that $E(V)$ has a natural scalar product; we let $\norm{\cdot}$ denote the corresponding norm on $E(V)$ and the operator norm on $\End(E(V))$.

 The representation $\rho $ of $ G $ on $V$ induces a representation $\bbrho$ of the group $G^{[d]}=\prod_{\vec\omega \in \{0,1\}^d} G$ on $E(V)$, given by the formula
\begin{equation}\label{eq:def-of-Erho}
\bbrho(\bb g):=  \bigotimes_{\vec\omega \in \{0,1\}^d} \mathscr{C}^{\abs{\vec\omega}}\rho(g_{\vec\omega}) = \bigotimes_{\substack{\vec\omega \in\{0,1\}^d\\  \abs{\vec\omega} \text{ even} }}\rho(g_{\vec\omega}) \otimes \bigotimes_{\substack{\vec\omega \in\{0,1\}^d\\  \abs{\vec\omega} \text{ odd}	}}\rho^*(g_{\vec\omega}),
\end{equation}
where $\bb g =(g_{\vec\omega})_{\vec\omega \in \{0,1\}^d}$ and $\mathscr{C} \rho = \rho^*$ denotes the dual representation ($\mathscr{C}^2 \rho = \rho$). This is nothing else than the external tensor product of copies of $\rho$ on $V$ and $\rho^*$ on $V^*$, and as such it is irreducible and unitary with respect to the induced scalar product on $E(V)$.

Using $\bb r$ as a shorthand for $(r_{\vec\omega})_{\vec\omega \in \{0,1\}^d}$, we consider the set
\begin{align*}
	R := \set{\bb r  \in \Z^{[d]} }{ \exists \vec t \in [0,1)^{d+1}\ \forall \vec\omega \in \{0,1\}^d  \ r_{\vec \omega} = \floor{1 \vec \omega \cdot \vec t} }.
\end{align*}
\begin{definition} For $\bb s = (s_{\vec\omega})_{\vec\omega \in \{0,1\}^d} \in S^{[d]}$, $\bb r = (r_{\vec\omega})_{\vec\omega \in \{0,1\}^d} \in R$ and $L \geq 0$ we define the averages $A(\bb s, \bb r;L)\in \End(E(V))$  by the formula
\begin{align}\label{eq:def-of-A}
	A(\bb s, \bb r ;L) &= \frac{1}{k^{(d+1)L}} 	\sum_{\vec n \in \Z^{d+1} }
	\prod_{\vec\omega \in \{0,1\}^d} \ifbra{ 1\vec\omega \cdot \vec n + r_{\vec\omega} \in [k^L]}
	\\ \nonumber & \times \prod_{\vec\omega \in \{0,1\}^d}	\ifbra{\delta(s_0,(1\vec\omega \cdot \vec n + r_{\vec\omega})_k) = s_{\vec\omega}}
	\\\nonumber & \times  \bigotimes_{\vec\omega \in \{0,1\}^d} \mathscr{C}^{\abs{\vec\omega}}\rho\bra{\lambda(s_0,(1\vec\omega \cdot \vec n + r_{\vec \omega})_k)}.
\end{align}
\end{definition} 

Let us now elucidate the connection between the averages \eqref{eq:def-of-A} and Gowers norms. For $s \in S$ we let $s^{[d]} = (s)_{\vec\omega \in \{0,1\}^d}$ denote the `constant' cube with copies of $s$ on each coordinate.

\begin{lemma}\label{lem:Us-vs-A}
Let $b$ be a basic sequence produced by $\cT$, written in the form \eqref{eq:def-of-basic} for some linear map $\alpha \colon \End(V) \to \CC$ and $s \in S$. Then
 \begin{equation}\label{eq:Us-vs-A-2}
 	\norm{b}_{U^d[k^L]} \ll \norm{ A(s^{[d]},\bb 0; L)}^{1/2^d},
\end{equation}	
where the implicit constant depends on $\alpha$.
\end{lemma}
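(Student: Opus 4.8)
The plan is to match the two sides of \eqref{eq:Us-vs-A-2} coordinate by coordinate against their definitions: applying a single linear functional, built out of $\alpha$, to the operator $A(s^{[d]},\bb0;L)$ should reproduce $\norm{b}_{U^d[k^L]}^{2^d}$ up to a normalising factor that is bounded above and below. First I would rewrite the Gowers norm on $[k^L]$ using the combinatorial formula \eqref{eq:def_Gowers_N2}, i.e.\ $\norm{b}_{U^d[k^L]}^{2^d}=\EE_{\vec n\in\Pi(k^L)}\prod_{\vec\omega\in\{0,1\}^d}\conjugate^{\abs{\vec\omega}}b(1\vec\omega\cdot\vec n)$ with $\Pi(k^L)$ as in \eqref{eq:def_PiN}. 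An elementary box count shows that $\abs{\Pi(k^L)}$ is bounded above and below by constant multiples (depending only on $d$) of $k^{(d+1)L}$: from $\vec\omega=\vec0$ one gets $n_0\in[0,k^L)$, from each standard basis vector $\vec\omega=e_i$ one gets $\abs{n_i}<k^L$, giving the upper bound $\abs{\Pi(k^L)}<2^d k^{(d+1)L}$; conversely, every $\vec n$ with $0\le n_i<k^L/(d+1)$ for all $i$ lies in $\Pi(k^L)$, giving the matching lower bound for $L$ large (the finitely many small $L$ will be handled separately below). Hence $\abs{\Pi(k^L)}/k^{(d+1)L}=\Theta(1)$.

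Next I would expand $b$ through its definition \eqref{eq:def-of-basic}: for each $\vec\omega$, since the state indicator is real, $\conjugate^{\abs{\vec\omega}}b(1\vec\omega\cdot\vec n)=\conjugate^{\abs{\vec\omega}}\bra{\alpha\bra{\rho\bra{\lambda(s_0,(1\vec\omega\cdot\vec n)_k)}}}\cdot\ifbra{\delta(s_0,(1\vec\omega\cdot\vec n)_k)=s}$. Let $\bbalpha\colon\End(E(V))\to\CC$ be the linear functional determined on pure tensors by $\bbalpha\bra{\bigotimes_{\vec\omega}X_{\vec\omega}}=\prod_{\vec\omega}\conjugate^{\abs{\vec\omega}}\alpha(X_{\vec\omega})$, where $\End(V^*)$ is identified with $\End(V)$ by complex conjugation as in the discussion preceding Lemma \ref{lem:reps_span}; this is the ``tensor power'' of $\alpha$ that mirrors the conjugation pattern built into $\bbrho$ and into \eqref{eq:def-of-A}. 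Then $\bbalpha\bra{\bigotimes_{\vec\omega}\conjugate^{\abs{\vec\omega}}\rho(\lambda(s_0,(1\vec\omega\cdot\vec n)_k))}=\prod_{\vec\omega}\conjugate^{\abs{\vec\omega}}\alpha(\rho(\lambda(s_0,(1\vec\omega\cdot\vec n)_k)))$, and in \eqref{eq:def-of-A} with $\bb r=\bb0$ the factors $\ifbra{1\vec\omega\cdot\vec n+r_{\vec\omega}\in[k^L]}$ are exactly $\ifbra{\vec n\in\Pi(k^L)}$, while with $\bb s=s^{[d]}$ the factors $\ifbra{\delta(s_0,(1\vec\omega\cdot\vec n)_k)=s_{\vec\omega}}$ become $\ifbra{\delta(s_0,(1\vec\omega\cdot\vec n)_k)=s}$. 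Applying $\bbalpha$ to \eqref{eq:def-of-A} therefore yields $\bbalpha\bra{A(s^{[d]},\bb0;L)}=k^{-(d+1)L}\sum_{\vec n\in\Pi(k^L)}\prod_{\vec\omega}\conjugate^{\abs{\vec\omega}}b(1\vec\omega\cdot\vec n)=\bra{\abs{\Pi(k^L)}/k^{(d+1)L}}\,\norm{b}_{U^d[k^L]}^{2^d}$.

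Finally, since $\End(E(V))$ is finite-dimensional, $\bbalpha$ is bounded with respect to the operator norm, $\abs{\bbalpha(T)}\le C\norm{T}$ with $C=C(\alpha)$ (recall $d$ is fixed); combining this with the previous identity and the bound on $\abs{\Pi(k^L)}/k^{(d+1)L}$ gives $\norm{b}_{U^d[k^L]}^{2^d}\ll\norm{A(s^{[d]},\bb0;L)}$, which is \eqref{eq:Us-vs-A-2} after extracting $2^d$-th roots. If $\norm{A(s^{[d]},\bb0;L)}=0$, the displayed identity forces $\norm{b}_{U^d[k^L]}=0$, so the estimate holds trivially, which also disposes of the finitely many small $L$ not covered by the box count. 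The only delicate point --- the ``main obstacle'', though it is bookkeeping rather than a genuine difficulty --- is checking that the pattern of complex conjugations $\conjugate^{\abs{\vec\omega}}$ and the placement of the state indicators in \eqref{eq:def-of-A} match, coordinate by coordinate over $\vec\omega\in\{0,1\}^d$, the corresponding factors of the Gowers product $\prod_{\vec\omega}\conjugate^{\abs{\vec\omega}}b(1\vec\omega\cdot\vec n)$; once $\bbalpha$ is defined to reflect exactly the tensor-and-conjugation structure of $\bbrho$, this matching is immediate and everything else is formal.
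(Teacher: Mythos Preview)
Your proof is correct and follows essentially the same approach as the paper's: define the tensor functional $\bbalpha$ from $\alpha$ with the appropriate conjugation pattern, verify the identity $\bbalpha\bigl(A(s^{[d]},\bb0;L)\bigr)=\bigl(\abs{\Pi(k^L)}/k^{(d+1)L}\bigr)\norm{b}_{U^d[k^L]}^{2^d}$, and conclude by boundedness of $\bbalpha$ on the finite-dimensional space $\End(E(V))$. You supply somewhat more detail than the paper (the explicit box count for $\abs{\Pi(k^L)}$ and the treatment of small $L$), but the argument is the same.
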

\begin{proof}
	Let $\alpha^* \colon \End(V^*) \to \CC$ denote the conjugate dual map given by the formula $\alpha^*(\psi^*)=\overline{\alpha(\psi)}$. For $\vec\omega \in \{0,1\}^d$ let $\alpha_{\vec\omega} := \alpha$ if $\abs{\vec\omega}$ is even and $\alpha_{\vec\omega} := \alpha^*$ if $\abs{\vec\omega}$ odd. Using the natural identification $$\End(E(V)) \cong \bigotimes_{\substack{\vec\omega\in\{0,1\}^d\\ |\vec\omega| \text{ even}}} \End(V) \otimes \bigotimes_{\substack{\vec\omega \in\{0,1\}^d\\ 	 |\vec\omega| \text{ odd}}}\End(V^*),$$ we define a linear map $\bbalpha \colon \End(E(V)) \to \CC$ by the formula $$\bbalpha \bra{\bigotimes_{\vec\omega \in \{0,1\}^d} \psi_{\vec\omega}} = \prod_{\vec\omega \in \{0,1\}^d} \alpha_{\vec\omega} (\psi_{\vec\omega}).$$
	
With these definitions, an elementary computation shows that 
 \begin{equation}\label{eq:Us-vs-A}
 	\norm{b}_{U^d[k^L]}^{2^d} = \frac{k^{(d+1)L}}{\Pi(k^L)} \bbalpha (A( s^{[d]},\bb 0; L)).
 \end{equation}
The factor $k^{(d+1)L}/\Pi(k^L)$, corresponding to the different normalisations used in \eqref{eq:def-of-A} and \eqref{eq:def_Gowers_N2}, has a finite limit as $L \to \infty$. Since $\bbalpha$ is linear, we have $\abs{\bbalpha(B)} \ll \norm{B}$ and \eqref{eq:Us-vs-A-2} follows.
\end{proof}

{
\begin{remark}
\begin{enumerate}[wide]
\item Generalising \eqref{eq:Us-vs-A}, the average $\bbalpha(A(\bb s, \bb r; L))$ can be construed (up to a multiplicative factor and a small error term) as the Gowers product of the $2^d$ functions $n \mapsto b(n + r_{\vec\omega})$ for all $\vec\omega \in\{0,1\}^d$.
\item 	As seen from the formulation of Lemma \ref{lem:Us-vs-A}, we are ultimately interested in the averages \eqref{eq:def-of-A} when $\bb r = \bb 0$. The non-zero values of $\bb r$ correspond to ancillary averages, which naturally appear in the course of the argument.
\item Note that for $\bb r = \bb 0$ the first product on the right hand side of \eqref{eq:def-of-A} simply encodes the condition that $\vec n \in \Pi(k^L)$. The normalising factor $k^{-(d+1)L}$ ensures that $A(\bb s, \bb r;L)$ remain bounded as $L \to \infty$.
\end{enumerate}
\end{remark}

Our next goal is to obtain a recursive relation for the averages given by \eqref{eq:def-of-A}. 
Note that any $\vec n \in \ZZ^{d+1}$ can be written  uniquely in the form $\vec n = k^l \vec m + \vec e$ where $\vec e \in [k^l]^{d+1}$ and $\vec m \in \ZZ^{d+1}$. Let $\bb v = (\bb s,\bb r) \in S^{[d]} \times R$ be arbitrary. Writing $\vec n$ as above in the definition of $A(\bb v; L)$, and letting $\bb s' \in S^{[d]}$ and $\bb r' \in \NN_0^{[d]}$ denote the `intermediate data', we obtain
\begin{align}\label{eq:def-of-A-2}
	A(\bb v;L) &= \frac{1}{k^{(d+1)L}} \sum_{\bb s'\in S^{[d]}} \sum_{\bb r' \in \NN_0^{[d]}} \sum_{\vec m \in \Z^{d+1} } \sum_{\vec e \in [k^l]^{d+1}} 
	\\ \nonumber & \phantom{\times} \prod_{\vec \omega \in \{0,1\}^d} \ifbra{ 1\vec\omega \cdot \vec m + r_{\vec\omega}' \in [k^{L-l}]} \cdot \ifbra{ \floor{ \frac{1\vec\omega \cdot \vec e + r_{\vec\omega}}{k^l }} = r_{\vec\omega}' }
	 \\ \nonumber & \times \prod_{\vec\omega \in \{0,1\}^d}  
	 \ifbra{\delta(s_0,(1\vec\omega \cdot \vec m + r_{\vec\omega}')_k) = s_{\vec\omega}'} \cdot  
	 \ifbra{\delta(s_{\vec\omega}',(1\vec\omega \cdot \vec e + r_{\vec\omega})_k^l) = s_{\vec\omega}}
	  \\ \nonumber & \times \bigotimes_{\vec\omega \in \{0,1\}^d} \mathscr{C}^{\abs{\vec\omega}}\rho\bra{\lambda(s_0,(1\vec\omega \cdot \vec m + r_{\vec\omega}' )_k)} \cdot
	 \mathscr{C}^{\abs{\vec\omega}}\rho\bra{\lambda(s_{\vec\omega}',(1\vec\omega \cdot \vec e + r_{\vec\omega} )_k^l)}.\end{align}	  
	 In this formula the term corresponding to $(\bb s', \bb r', \vec m, \vec{e}\,)$ vanishes unless $\bb r'$ belongs to $R$. Indeed, since $\bb r$ is in $R$, we can write  $r_{\vec\omega} = \lfloor 1\vec\omega \cdot \vec t\rfloor$ for some $\vec t \in [0,1)^{d+1}$, and then the corresponding term vanishes unless 
	\begin{align*}
		r'_{\vec \omega} &= \floor{\frac{1 \vec \omega \cdot \vec e + r_{\vec\omega}}{k^{l}}} =
		\floor{\frac{1 \vec \omega \cdot \vec e + \floor{1 \vec \omega \cdot \vec t}}{k^{l}}}
		= \floor{\frac{1 \vec \omega \cdot \vec e + 1 \vec \omega \cdot \vec t}{k^{l}}}
			= \lfloor 1 \vec \omega \cdot \vec{t'}\rfloor,
	\end{align*}
	where $\vec{t'} := (\vec e + \vec{t}\,)/k^{l} \in [0,1)^{d+1}$.
The key feature of formula \eqref{eq:def-of-A-2} is that the two inner sums over $\vec m$ and $\vec e$ can be separated, leading to
  
\begin{align}\label{eq:def-of-A-3}
	A(\bb v;L) &= \sum_{\bb v' \in S^{[d]} \times R} A(\bb v'; L-l) \cdot M(\bb v', \bb v;l),
\end{align}	  
where the expression   $M(\bb v', \bb v;l)$ is given for any $\bb v = (\bb s, \bb r)$ and $\bb v' = (\bb s', \bb r') $ in $S^{[d]} \times R$ by the formula
\begin{align}\label{eq:def-of-A-4}
	M(\bb v', \bb v;l) &= \frac{1}{k^{(d+1)l}} \sum_{\vec e \in [k^l]^{d+1}}  \ifbra{ \floor{ \frac{1\vec\omega \cdot \vec e + r_{\vec\omega}}{k^l }} = r_{\vec\omega}' }
	 \\ \nonumber & \times \prod_{\vec\omega \in \{0,1\}^d} 
	 \ifbra{\delta(s_{\vec\omega}',(1\vec\omega \cdot \vec e + r_{\vec\omega})_k^l) = s_{\vec\omega}}
	 \\ \nonumber & \times \bigotimes_{\vec\omega \in \{0,1\}^d} 
	 \mathscr{C}^{\abs{\vec\omega}} \rho\bra{\lambda(s_{\vec\omega}',(1\vec\omega \cdot \vec e + r_{\vec\omega} )_k^l)}.\end{align}	  
The form of the expression above is our main motivation for introducing in the next section the category $\cV$.

\subsection{The category $\cV^d(\cT)$}\label{ssec:Trans:cV}

To keep track of the data parametrising the averages defined above, we define the \emph{$d$-dimensional category $\cV  =\cV^d (\cT)$ associated to the \geaoab{} $\cT$} (or, strictly speaking, to the underlying \gea{} without output). The objects $\Ob_{\cV}$ of this category are the pairs $\bb v = (\bb s, \bb r) \in S^{[d]} \times R$. Since $R$ and $S$ are finite, there are only finitely many objects.
The morphisms of $\cV$ will help us keep track of the objects $\bb v' = (\bb s', \bb r')$ appearing in formul\ae\ \eqref{eq:def-of-A-3} and \eqref{eq:def-of-A-4}. These morphisms are parametrised by the tuples 
\[ 
(l,\vec e,\bb s', \bb r) \in \N_0 \times [k^l]^{d+1} \times S^{[d]} \times R=\Mor_{\cV}.
\] 
The tuple $(l,\vec e,\bb s', \bb r) $ describes an arrow from   $\bb v' = (\bb s', \bb r')$ to $\bb v = (\bb s, \bb r)$, where $\bb s = ( s_{\vec\omega})_{\vec\omega}$ and $\bb r' = ( r'_{\vec\omega})_{\vec\omega}$ are given by the formul\ae\ 
  
\begin{equation}\label{eq:deq-s-and-r'}
s_{\vec\omega} = \delta(s'_{\vec\omega},(1\vec\omega\cdot{\vec e} + r_{\vec\omega})_k^l) \quad 
\text{ and } \quad
r'_{\vec\omega}= \floor{ \frac{1\vec\omega\cdot{\vec e} +r_{\vec\omega}}{k^l}}.
\end{equation}
 We will denote this morphism by  $\tilde e = (l,\vec e\,) \colon \bb v' \to \bb v$. The number $\mathrm{deg}(\tilde e):=l$ is called the \emph{degree} of $\tilde e$. In order to define the composition of morphisms, we state the following lemma.
{

 \begin{lemma}\label{lem:catvcomp}
 If  $\tilde{e'} = (l',\vec{e'})$ is a morphism from $ \bb v''$ to $\bb v'$ and $\tilde{e} = (l,\vec{e}) $ is a morphism from $\bb v' $ to $\bb v$, then $\tilde{e''} = (l+l', k^{l} \vec{e'}+\vec {e}) $ is a morphism from $ \bb v'' $ to $ \bb v$.\end{lemma}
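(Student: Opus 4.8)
The plan is a direct unwinding of the definitions, carried out one coordinate $\vec\omega\in\{0,1\}^d$ at a time; no representation theory and no structural property of $\cT$ (nor anything about the set $R$) enters, since the composition law is purely combinatorial. Write $\bb v=(\bb s,\bb r)$, $\bb v'=(\bb s',\bb r')$, $\bb v''=(\bb s'',\bb r'')$ and set $x:=1\vec\omega\cdot\vec e+r_{\vec\omega}$, $y:=1\vec\omega\cdot\vec{e'}+r'_{\vec\omega}$, and $z:=1\vec\omega\cdot(k^l\vec{e'}+\vec e)+r_{\vec\omega}$. By \eqref{eq:deq-s-and-r'} the two hypotheses unpack to $s_{\vec\omega}=\delta(s'_{\vec\omega},(x)_k^l)$, $r'_{\vec\omega}=\floor{x/k^l}$, $s'_{\vec\omega}=\delta(s''_{\vec\omega},(y)_k^{l'})$, $r''_{\vec\omega}=\floor{y/k^{l'}}$. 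What must be checked, again via \eqref{eq:deq-s-and-r'} now applied to the tuple $(l+l',k^l\vec{e'}+\vec e,\bb s'',\bb r)$, is that $\floor{z/k^{l+l'}}=r''_{\vec\omega}$ and $\delta(s''_{\vec\omega},(z)_k^{l+l'})=s_{\vec\omega}$ for every $\vec\omega$; one also records the trivial fact that $k^l\vec{e'}+\vec e\in[k^{l+l'}]^{d+1}$, which is immediate from $\vec e\in[k^l]^{d+1}$ and $\vec{e'}\in[k^{l'}]^{d+1}$.

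The arithmetic heart is a two-step factorisation of $z$. First, linearity of the map $\vec n\mapsto 1\vec\omega\cdot\vec n$ gives $z=k^l(1\vec\omega\cdot\vec{e'})+x$; writing $x=k^l r'_{\vec\omega}+x_0$ with $x_0:=x\bmod k^l\in[0,k^l)$ this becomes $z=k^l(1\vec\omega\cdot\vec{e'}+r'_{\vec\omega})+x_0=k^l y+x_0$. Second, writing $y=k^{l'}r''_{\vec\omega}+y_0$ with $y_0:=y\bmod k^{l'}\in[0,k^{l'})$ yields $z=k^{l+l'}r''_{\vec\omega}+(k^l y_0+x_0)$. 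Since $0\le k^l y_0+x_0\le k^l(k^{l'}-1)+(k^l-1)=k^{l+l'}-1$, this is precisely the division of $z$ by $k^{l+l'}$ with remainder; hence $\floor{z/k^{l+l'}}=r''_{\vec\omega}$ — the first required identity — and $z\bmod k^{l+l'}=k^l y_0+x_0$.

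For the state identity, from $z\bmod k^{l+l'}=k^l y_0+x_0$ with $x_0<k^l$ and $y_0<k^{l'}$ (so that no carry occurs) one reads off that the length-$(l+l')$ suffix of the base-$k$ expansion of $z$ is the concatenation $(z)_k^{l+l'}=(y)_k^{l'}\,(x)_k^l$, the $l'$ more significant digits recording $y_0$ and the $l$ less significant ones recording $x_0$; here $(y)_k^{l'}=(y_0)_k^{l'}$ and $(x)_k^l=(x_0)_k^l$ because $(\cdot)_k^m$ depends only on its argument modulo $k^m$. Feeding this word into the transition function and using $\delta(s,uv)=\delta(\delta(s,u),v)$ together with the hypotheses gives $\delta(s''_{\vec\omega},(z)_k^{l+l'})=\delta(\delta(s''_{\vec\omega},(y)_k^{l'}),(x)_k^l)=\delta(s'_{\vec\omega},(x)_k^l)=s_{\vec\omega}$, as wanted; since $\vec\omega$ was arbitrary, $\tilde{e''}=(l+l',k^l\vec{e'}+\vec e)$ is a morphism from $\bb v''$ to $\bb v$.

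I expect the only genuine (if minor) obstacle to be the bookkeeping around the truncation operator $(\cdot)_k^m$: one must keep in mind that $1\vec\omega\cdot\vec e+r_{\vec\omega}$ may well exceed $k^l$, so that $(\cdot)_k^l$ has to be read as ``reduce modulo $k^l$, then write in $l$ digits'', and it is exactly this reduction — equivalently, the bound $x_0<k^l$ — that makes the concatenation identity $(z)_k^{l+l'}=(y)_k^{l'}(x)_k^l$ hold without carries. Once this is pinned down, each of the displayed equalities is a one-line verification.
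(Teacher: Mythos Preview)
Your proof is correct and follows essentially the same approach as the paper: both arguments rest on the concatenation identity $(z)_k^{l+l'}=(y)_k^{l'}(x)_k^l$ (the paper's equation \eqref{eq:558:1}) to handle the $\delta$-condition, together with the nested-floor computation $\lfloor (k^l y + x_0)/k^{l+l'}\rfloor = \lfloor y/k^{l'}\rfloor = r''_{\vec\omega}$ to handle the $\bb r''$-condition. You simply spell out in more detail why the concatenation holds (via explicit division with remainder and the observation that $(\cdot)_k^m$ only sees its argument modulo $k^m$), whereas the paper states it and moves on.
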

 \begin{proof}
 Using the same notation as above, for each $\vec\omega \in \{0,1\}^d$ we have the equality
\begin{equation}\label{eq:558:1}
(1\vec\omega\cdot\vec{e''}+ r_{\vec\omega}'')_k^{l+l'}= (1\vec\omega\cdot\vec{e'}+ r'_{\vec\omega})_k^{l'} (1\vec\omega\cdot\vec e+  r_{\vec\omega})_k^{l} 
\end{equation} 
which allows us to verify that
\begin{align*}
\delta\bra{s''_{\vec\omega},(1\vec\omega\cdot \vec {e''} + r_{\vec\omega}'')_k^{l''}}
&= \delta\bra{s''_{\vec\omega},(1\vec\omega\cdot\vec{e'}+ r'_{\vec\omega})_k^{l'} (1\vec\omega\cdot\vec e+  r_{\vec\omega})_k^{l}}
\\&= \delta\bra{s'_{\vec\omega},(1\vec\omega\cdot\vec e+  r_{\vec\omega})_k^{l}} = s_{\vec\omega},
\end{align*}
and by basic algebra we have
\begin{align*}
	r_{\vec\omega}'' = \floor{\frac{1\vec\omega \cdot \vec{e'} + r_{\vec\omega}'}{k^{l'}}} 
	&= \floor{\frac{1\vec\omega \cdot \vec{e'} + \floor{ \frac{1\omega\cdot{\vec e} +r_{\vec\omega}}{k^l}}}{k^{l'}}} 
	\\&= \floor{\frac{ k^l(1\vec\omega \cdot \vec{e'}) + 1\omega\cdot{\vec e} +r_{\vec\omega}}{k^{l'+l}}}
	= \floor{\frac{1\vec\omega \cdot \vec{e''} + r_{\vec\omega}}{k^{l''}}}. \qedhere
\end{align*}
 \end{proof}

Lemma \ref{lem:catvcomp} allows us to define the composition of two morphisms   $\tilde{e'} = (l',\vec{e'}) \colon \bb v'' \to \bb v'$ and $\tilde e = (l,\vec e\,) \colon \bb v' \to \bb v$ as $\tilde{e''} = \tilde{e'}\circ \tilde{e} := (l'',\vec{e''})= (l+l', k^l \vec{ e'}+\vec e) \colon \bb v'' \to \bb v.$ 
The composition is clearly associative, and for each object $\bb v$ the map $(0,\vec 0)\colon \bb v \to \bb v$ is the identity map. This shows that $\cV$ is indeed a category.

We let  $\Mor(\bb v',\bb v)$ denote the set of morphism from $\bb v'$ to $\bb v$. The degree induces an $\N_0$-valued gradation on this set, which means that $\Mor(\bb v',\bb v)$ decomposes into a disjoint union $\coprod_{l = 0}^{\infty} \Mor_l(\bb v',\bb v)$, where $\Mor_l(\bb v',\bb v)$ is the set of morphisms $\tilde e \colon \bb v' \to \bb v$ of degree $l$. The degree of the composition of two morphisms is equal to the sum of their degrees. A crucial property of the category $\cV$ is that morphisms can also be uniquely decomposed in the following sense.

\begin{lemma}
 Let $\tilde{ e''} \colon \bb v'' \to \bb v$ be a morphism and let $0\leq l' \leq \mathrm{deg}(\tilde{ e''})$ be an integer. Then there exist unique morphisms $\tilde e'$ and $\tilde e$ with  $\tilde{ e''} = \tilde{e'}  \circ \tilde {e}$ and $\mathrm{deg}(\tilde{ e'})=l'$.
\end{lemma}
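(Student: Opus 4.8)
The plan is to prove existence and uniqueness by explicitly extracting the low‑order digits from the morphism $\tilde e''$.

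\medskip

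\noindent\emph{Setup and notation.} Write $\tilde e'' = (l'', \vec e\,'') \colon \bb v'' \to \bb v$ with $l'' = \deg(\tilde e'')$ and $\bb v = (\bb s, \bb r)$, $\bb v'' = (\bb s'', \bb r'')$. Put $l = l'' - l'$. The key identity we shall exploit is \eqref{eq:558:1}: if $\tilde e'' = \tilde e' \circ \tilde e$ with $\tilde e = (l, \vec e\,)\colon \bb v' \to \bb v$ and $\tilde e' = (l', \vec e\,')\colon \bb v'' \to \bb v'$, then for every $\vec\omega \in \{0,1\}^d$,
\begin{equation*}
(1\vec\omega\cdot\vec e\,'' + r''_{\vec\omega})_k^{l''} = (1\vec\omega\cdot\vec e\,' + r'_{\vec\omega})_k^{l'}\,(1\vec\omega\cdot\vec e + r_{\vec\omega})_k^{l},
\end{equation*}
i.e.\ the base‑$k$ word of length $l''$ on the left splits as a prefix of length $l'$ followed by a suffix of length $l$. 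This shows that $\tilde e$ and $\tilde e'$, if they exist, are forced: the suffix determines $(1\vec\omega\cdot\vec e + r_{\vec\omega}) \bmod k^{l}$ hence $\vec e$ (since $\vec e \in [k^l]^{d+1}$ is recovered from the system $1\vec\omega\cdot\vec e \equiv c_{\vec\omega} \pmod{k^l}$, noting that the coordinates $e_0, e_1, \dots, e_d$ can be read off by taking $\vec\omega$ to range over $\vec 0$ and the standard basis vectors), and then $\bb r'$ and $\bb s'$ are determined by \eqref{eq:deq-s-and-r'}, after which $\vec e\,'$ is recovered the same way from the prefix. This handles uniqueness.

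\medskip

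\noindent\emph{Existence.} Conversely, define $\vec e \in [k^l]^{d+1}$ by setting, for each coordinate $i$, the digits of $e_i$ to be the last $l$ base‑$k$ digits of $1\vec\omega^{(i)}\cdot\vec e\,'' + r''_{\vec\omega^{(i)}}$ adjusted appropriately — concretely, one checks that $\vec e := (\vec e\,'' \bmod k^l)$ works, where the reduction is coordinatewise, because addition with carries respects the decomposition of words into prefix and suffix. Then set $r'_{\vec\omega} := \floor{(1\vec\omega\cdot\vec e + r_{\vec\omega})/k^l}$ and $s'_{\vec\omega} := \delta(s'_{\vec\omega}\dots$; more precisely $s'_{\vec\omega}$ is recovered by running the automaton: it is the state reached from $s''_{\vec\omega}$ after reading the length‑$l'$ prefix. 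One then has to verify that $\bb r' = (r'_{\vec\omega})_{\vec\omega}$ lies in $R$; this follows by the same computation already carried out in the paragraph after \eqref{eq:def-of-A-4} (writing $r_{\vec\omega} = \floor{1\vec\omega\cdot\vec t}$ and checking $r'_{\vec\omega} = \floor{1\vec\omega\cdot\vec t\,'}$ for a suitable $\vec t\,' \in [0,1)^{d+1}$). Finally set $\vec e\,' := \floor{\vec e\,''/k^l} \in [k^{l'}]^{d+1}$ (coordinatewise), giving $\tilde e' = (l', \vec e\,')$ and $\tilde e = (l, \vec e\,)$. A direct check using the digit‑decomposition identity and the defining formulas \eqref{eq:deq-s-and-r'} shows $\tilde e'\colon \bb v'' \to \bb v'$ and $\tilde e \colon \bb v' \to \bb v$ are morphisms and $\tilde e' \circ \tilde e = (l + l', k^l \vec e\,' + \vec e\,) = (l'', \vec e\,'') = \tilde e''$, using $k^l\floor{\vec e\,''/k^l} + (\vec e\,'' \bmod k^l) = \vec e\,''$.

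\medskip

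\noindent\emph{Main obstacle.} The routine but slightly delicate point is bookkeeping the carries: one must confirm that reducing $\vec e\,''$ modulo $k^l$ coordinatewise is compatible with the word‑splitting identity \eqref{eq:558:1}, i.e.\ that $(1\vec\omega\cdot\vec e\,'' + r''_{\vec\omega})_k^{l''}$ genuinely has its last $l$ digits equal to $(1\vec\omega\cdot(\vec e\,'' \bmod k^l) + r_{\vec\omega})_k^{l}$ and its first $l'$ digits determined by the quotient. This is exactly the content of \eqref{eq:558:1} read in the other direction, so it amounts to invoking Lemma~\ref{lem:catvcomp}'s internal computation rather than proving anything genuinely new; everything else is forced. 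Hence the lemma follows.
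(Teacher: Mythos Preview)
Your proof is correct and follows essentially the same route as the paper: write $\vec e\,'' = k^l \vec e\,' + \vec e$ with $\vec e \in [k^l]^{d+1}$ and $\vec e\,' \in [k^{l'}]^{d+1}$ (the unique such decomposition), define the intermediate object $\bb v'$ via \eqref{eq:deq-s-and-r'}, and invoke the computation from Lemma~\ref{lem:catvcomp} to verify that the two pieces compose correctly. Your write-up is more expansive---you explicitly check $\bb r' \in R$ and spell out the uniqueness via the digit-splitting identity---but the underlying argument is identical to the paper's.
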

\begin{proof}
Put $\bb v'' = (\bb s'', \bb r'')$, $\bb v = (\bb s, \bb r)$, $l=\deg(\tilde {e''})-l'$ and $\tilde{e''}=(l'+l'',\vec{e''})$. Then there exists a unique decompositon $\vec {e''} = k^{l} \vec{e'} + \vec{e}$, where $\vec{e'} \in [k^{l'}]^{d+1}$ and $\vec{e} \in [k^{l}]^{d+1}$.
Thus, we can define $\bb v' = (\bb s', \bb r')$ by the formul\ae
$$s'_{\vec \omega} := \delta(s''_{\vec \omega}, (1 \vec \omega \cdot \vec{e'} + r'_{\omega})_k^{l'}) \quad \text{ and }  \quad r'_{\vec\omega} := \floor{\frac{1 \vec \omega \cdot \vec{e} + r_{\vec \omega}}{k^{l'}}} .$$
A computation analogous to the one showing that composition of morphisms is well-defined shows that $(l' + l, \vec {e''}) = (l', \vec{e'}) \circ (l, \vec{e})$. Conversely, it is immediate that such a decomposition is unique.
\end{proof}

\begin{remark}
	As a particular case of \eqref{eq:def-of-A-3}, we can recover $A(\bb v; L)$ from  $ M(\bb v', \bb v;L)$. Indeed, it follows from  \eqref{eq:def-of-A-3} that
\begin{align}\label{eq:402:1}
	A(\bb v;L) &= \sum_{\bb v' \in S^{[d]} \times R} A(\bb v'; 0) \cdot M(\bb v', \bb v;L).
\end{align}	
	Recalling the definition of $A(\bb v'; 0)$ in \eqref{eq:def-of-A} we see that the only non-zero terms in the sum \eqref{eq:402:1} above correspond to objects of the form $\bb v' = (s_0^{[d]}, \bb r')$ where $\bb r' \in R$ is such that there exists $\vec n \in \ZZ^{d+1}$ with $r_{\vec\omega}' = 1\vec\omega \cdot \vec n$ for each $\vec\omega \in \{0,1\}^d$. Let $R' \subset R$ denote the set of all $\bb r'$ with the property just described and note that if $\bb r' \in R'$ then $A(s_0^{[d]},\bb r'; 0) = \id_{E(V)}$ is the identity map. It follows that  
\begin{align}\label{eq:402:2}
	A(\bb v;L) &= \sum_{\bb r' \in R'} M((s_0^{[d]},\bb r'), \bb v ;L).
\end{align}	 
We stress that $\bb 0 \in R'$, but as long as $d\geq 2$, $R'$ contains also other elements. For instance, when $d = 2$ the set $R$ consists of exactly the elements $(r_{00},r_{01},r_{10},r_{11})$ of the form $$(0,0,0,0),(0,0,0,1),(0,0,1,1),(0,1,0,1),(0,1,1,1),(0,1,1,2),$$ while $R'$ consists of elements of the form $$(0,0,0,0),(0,0,1,1),(0,1,0,1),(0,1,1,2).$$
\end{remark}

\subsection{The subcategory $\cU^d(\cT)$}\label{ssec:Trans:cU}

The object
\begin{equation}\label{eq:def-of-v0}
	\bb v_0 = \bb v_0^{\cT} = (s_0^{[d]}, 0^{[d]}) \in \Ob_{\cV}
\end{equation}
is called the \emph{base object}. In the recurrence formul\ae\ above the objects of particular importance are those which map to the base object. We define a (full) subcategory $\cU$ of $\cV$, whose objects are those among $\bb v \in \Ob_{\cV}$ for which $\Mor(\bb v, \bb v_0) \neq \emptyset$ and $ \Mor(\bb v_0, \bb v) \neq \emptyset$ (in fact, we will prove in Lemma~\ref{lem:Ob0-reachable} that the former condition is redundant), and whose morphisms are the same as those in $\cV$.

\begin{lemma}\label{lem:Ob0-reachable}
	There exists $l_0 \geq 0$ such that  $\Mor_l(\bb v, \bb v_0) \neq \emptyset$ for any $\bb v \in \Ob_{\cV}$ and any $l \geq l_0$.
\end{lemma}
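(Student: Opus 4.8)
The plan is to exhibit, for each object $\bb v=(\bb s,\bb r)\in\Ob_{\cV}$ and each sufficiently large $l$, an explicit morphism $\bb v\to\bb v_0$ of degree $l$, and then set $l_0$ equal to the largest of the resulting thresholds (a finite maximum, since $\Ob_{\cV}=S^{[d]}\times R$ is finite). Unwinding the definition of a morphism, a degree-$l$ morphism into $\bb v_0=(s_0^{[d]},0^{[d]})$ with source $\bb v$ is precisely a tuple $(l,\vec e,\bb s,0^{[d]})$ with $\vec e\in[k^l]^{d+1}$ such that, for every $\vec\omega\in\{0,1\}^d$: (i) $\floor{(1\vec\omega\cdot\vec e)/k^l}=r_{\vec\omega}$ (so that the source's carry equals $\bb r$); and (ii) $\delta(s_{\vec\omega},(1\vec\omega\cdot\vec e)_k^l)=s_0$ (so that the target's state is $s_0$). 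So the task reduces to constructing such an $\vec e$, and the idea is to split the base-$k$ digits of $\vec e$ into a high block that produces the prescribed carries and a low block that steers the automaton to $s_0$.

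For the high block I would use the hypothesis $\bb v\in\Ob_{\cV}$, which forces $\bb r\in R$, i.e.\ $r_{\vec\omega}=\floor{1\vec\omega\cdot\vec t}$ for some $\vec t\in[0,1)^{d+1}$. Replacing $\vec t$ by $\vec t+\varepsilon(1,\dots,1)$ for a small enough $\varepsilon>0$ — which is legitimate because the finitely many constraints $1\vec\omega\cdot\vec t<r_{\vec\omega}+1$ and $t_i<1$ are all strict, leaving room on the right — I may assume $r_{\vec\omega}<1\vec\omega\cdot\vec t<r_{\vec\omega}+1$ for all $\vec\omega$. Then for a large enough $L_0=L_0(\bb r)$ the coordinatewise truncation $\vec c:=\floor{k^{L_0}\vec t}\in\{0,\dots,k^{L_0}-1\}^{d+1}$ satisfies $r_{\vec\omega}k^{L_0}\le 1\vec\omega\cdot\vec c\le(r_{\vec\omega}+1)k^{L_0}-2$ for every $\vec\omega$, since $1\vec\omega\cdot\vec c$ differs from $k^{L_0}(1\vec\omega\cdot\vec t)$ by less than $d+1$ while $k^{L_0}(1\vec\omega\cdot\vec t)$ stays a distance $\gg k^{L_0}$ from both endpoints $r_{\vec\omega}k^{L_0}$ and $(r_{\vec\omega}+1)k^{L_0}$.

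For the low block I would use a synchronising word. Let $\word w:=\word w_0^{\cT}$, which we may take nonempty (by prolongability $\word w_0^{\cT}\texttt 0$ still synchronises $\cT$ to $s_0$), and put $w:=[\word w]_k$. For $l\ge L_0+\abs{\word w}+1$ set $\vec e:=(w,0,\dots,0)+k^{\,l-L_0}\vec c\in[k^l]^{d+1}$. Then $1\vec\omega\cdot\vec e=w+k^{\,l-L_0}(1\vec\omega\cdot\vec c)=r_{\vec\omega}k^l+\bigl(w+k^{\,l-L_0}(1\vec\omega\cdot\vec c-r_{\vec\omega}k^{L_0})\bigr)$, and the bounds on $1\vec\omega\cdot\vec c$ together with $w<k^{\,l-L_0}$ put the parenthesised term in $[0,k^l)$; this gives (i), and it identifies $(1\vec\omega\cdot\vec e)_k^l$ as the length-$l$ string whose bottom $l-L_0$ digits spell $(w)_k^{\,l-L_0}$. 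Feeding this string into the automaton from $s_{\vec\omega}$, we first traverse the top $L_0$ digits and reach some state, then read $(w)_k^{\,l-L_0}$, which (as $l-L_0>\abs{\word w}\ge\abs{(w)_k}$) begins with at least one $\texttt 0$, landing us in $\delta(S,\texttt 0)$ by idempotence, and then runs through $(w)_k$; since $\word w$ synchronises $\cT$ to $s_0$ and $\cT$ is idempotent, $\delta(s'',(w)_k)=s_0$ for every $s''\in\delta(S,\texttt 0)$, so we end at $s_0$, which is (ii). (When $w=0$ the same conclusion is immediate, since then $\delta(\cdot,\texttt 0)\equiv s_0$.) Hence $(l,\vec e,\bb s,0^{[d]})\in\Mor_l(\bb v,\bb v_0)$, and $l_0:=\max_{\bb v\in\Ob_{\cV}}\bigl(L_0(\bb r)+\abs{\word w_0^{\cT}}+1\bigr)$ works.

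The one place I expect to need genuine (if modest) care is the passage from "$\bb r\in R$" to a realiser $\vec t$ lying in the \emph{interior} of the realisability region; everything afterwards is bookkeeping with carries and with the synchronising word. As indicated, this is handled by the uniform shift $\vec t\mapsto\vec t+\varepsilon(1,\dots,1)$, which exploits the fact that all of the upper inequalities defining $R$ are strict, so that moving slightly in the all-positive direction cannot violate them while it does move $\vec t$ off any active lower constraint.
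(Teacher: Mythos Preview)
Your proof is correct and follows essentially the same strategy as the paper's: use the (semi-)openness of the set of realisers $\vec t$ for $\bb r$ to control the carry, and use the synchronising word $\word w_0^{\cT}$ to drive every coordinate into $s_0$. The paper organises this as a two-step composition (first a morphism $\bb v\to(\bb s',\bb 0)$ obtained from a rational realiser $\vec t=\vec e/k^l$, then a morphism $(\bb s',\bb 0)\to\bb v_0$ given by $\vec e=([\word w_0^{\cT}]_k,0,\dots,0)$), whereas you build a single morphism by stacking the two blocks in base $k$; your version also makes the openness argument and the role of idempotence in handling leading zeros of $\word w_0^{\cT}$ more explicit.
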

\begin{proof}
	We first consider objects of the form $\bb v = (\bb s, \bb 0)$. 
	Letting $e_0 = [\word w_0^{\cT}]_k$\footnote{We recall that $\bfw_0^{\cT}$ is a synchronizing word for $\cT$, i.e. for any $s \in S$ we have $\delta(s, \bfw_0^{\cT}) = s_0, \lambda(s, \bfw_0^{\cT}) = id$.} and $e_i = 0$ for $1 \leq i \leq d$ and taking sufficiently large $l$ we find a morphism   $\tilde e = (l, \vec e) \colon \bb v \to \bb v_0$.
	
	In the general case, since   $\Mor(\bb v, \bb v_0) \subset \Mor(\bb v, \bb v') \circ \Mor(\bb v', \bb v_0)$, it only remains to show that for each object $\bb v = (\bb s, \bb r) \in \Ob_{\cV}$ there exists some $\bb v' = (\bb s', \bb 0) \in \Ob_{\cV}$ such that   $\Mor(\bb v, \bb v') \neq \emptyset$.
	Since $\bb r \in R$, there exists a vector $\vec t \in [0,1)^{d+1}$ such that 
	\begin{align}\label{eq_t}
		r_{\vec \omega} = \lfloor 1 \vec \omega \cdot \vec t\rfloor \text{ for all } \vec \omega \in \{0,1\}^d.
	\end{align}
	It follows from piecewise continuity of the floor function that there exists an open set of $\vec t \in [0,1)^{d+1}$ that fulfill~\eqref{eq_t}. Hence, one can pick, for any sufficiently large $l \geq 0$, $\vec t $ of the form $\vec t = \vec e/k^l$, where $\vec e \in [k^l]^{d+1}$. 
	Choosing $s'_{\vec\omega} = \delta(s_{\vec\omega}, (1 \vec \omega \cdot \vec e)_k^l)$ finishes the proof.
\end{proof}

\begin{corollary}\label{cor:Ob0-is-closed}
Let $\bb v, \bb v' \in \Ob_{\cV}$. If $\Mor(\bb v, \bb v') \neq \emptyset$ and $\bb v \in \Ob_{\cU}$, then $\bb v' \in \Ob_{\cU}$.
\end{corollary}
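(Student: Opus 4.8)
The plan is to simply unwind the definition of $\Ob_{\cU}$: to prove $\bb v' \in \Ob_{\cU}$ we must exhibit a morphism $\bb v_0 \to \bb v'$ and a morphism $\bb v' \to \bb v_0$. Both are essentially immediate --- the first by composition, the second by the lemma just proved --- so the only thing to be careful about is the direction of composition in $\cV$.

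First I would establish $\Mor(\bb v_0, \bb v') \neq \emptyset$. Since $\bb v \in \Ob_{\cU}$, by definition $\Mor(\bb v_0, \bb v) \neq \emptyset$, so we may pick $\tilde f \in \Mor(\bb v_0, \bb v)$; by hypothesis $\Mor(\bb v, \bb v') \neq \emptyset$, so we may pick $\tilde g \in \Mor(\bb v, \bb v')$. Composing in $\cV$ as in Lemma \ref{lem:catvcomp} (recall the diagrammatic convention: for $\tilde f \colon \bb v_0 \to \bb v$ and $\tilde g \colon \bb v \to \bb v'$ one has $\tilde f \circ \tilde g \colon \bb v_0 \to \bb v'$) yields a morphism $\tilde f \circ \tilde g \in \Mor(\bb v_0, \bb v')$, so this hom-set is nonempty.

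Next I would deal with $\Mor(\bb v', \bb v_0) \neq \emptyset$. Here Lemma \ref{lem:Ob0-reachable} does all the work: it guarantees $\Mor_l(\bb v', \bb v_0) \neq \emptyset$ for every object $\bb v' \in \Ob_{\cV}$ and every sufficiently large $l$, and in particular $\Mor(\bb v', \bb v_0) \neq \emptyset$. (This is precisely the reason, noted after the definition of $\cU$, that the condition $\Mor(\bb v, \bb v_0) \neq \emptyset$ in the definition of $\Ob_{\cU}$ is redundant.) Combining the two paragraphs, $\bb v'$ satisfies both defining conditions of $\Ob_{\cU}$, so $\bb v' \in \Ob_{\cU}$.

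I do not expect any genuine obstacle in this argument; it is a formal consequence of Lemma \ref{lem:catvcomp} and Lemma \ref{lem:Ob0-reachable}. The only place where an error could creep in is mixing up the source and target when composing morphisms in $\cV$, since the category uses the convention $\tilde{e'} \circ \tilde{e} \colon \bb v'' \to \bb v$ for $\tilde{e'} \colon \bb v'' \to \bb v'$ and $\tilde{e} \colon \bb v' \to \bb v$; keeping this straight is what ensures the composite of a morphism \emph{into} $\bb v$ with one \emph{out of} $\bb v$ lands in $\Mor(\bb v_0, \bb v')$ as claimed.
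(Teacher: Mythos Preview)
Your proof is correct and is essentially identical to the paper's own argument: the paper also invokes Lemma~\ref{lem:Ob0-reachable} for $\Mor(\bb v',\bb v_0)\neq\emptyset$ and Lemma~\ref{lem:catvcomp} (writing $\Mor(\bb v_0,\bb v')\supset \Mor(\bb v_0,\bb v)\circ\Mor(\bb v,\bb v')$) for $\Mor(\bb v_0,\bb v')\neq\emptyset$. You have merely expanded the details and reversed the order of the two steps; your attention to the composition convention is apt but the reasoning matches exactly.
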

\begin{proof}
	  By Lemma \ref{lem:Ob0-reachable}, we have $\Mor(\bb v', \bb v_0) \neq \emptyset$. Moreover, we find by Lemma~\ref{lem:catvcomp}, that $\Mor(\bb v_0, \bb v') \supset \Mor(\bb v_0, \bb v) \circ \Mor(\bb v, \bb v') \neq \emptyset$.
\end{proof}

\begin{lemma}\label{lem:Ob0-const-cube}
	Let $s \in S$ and let $\bb v = (s^{[d]},\bb 0) \in \Ob_{\cV}$. Then $\bb v \in \Ob_{\cU}$.
\end{lemma}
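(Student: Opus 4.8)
The plan is to verify directly that $\bb v$ satisfies the defining conditions for membership in $\Ob_{\cU}$. One of these, $\Mor(\bb v, \bb v_0) \neq \emptyset$, is already furnished by Lemma~\ref{lem:Ob0-reachable}, so the only thing to prove is $\Mor(\bb v_0, \bb v) \neq \emptyset$, i.e.\ that there is a morphism in $\cV$ from the base object $\bb v_0 = (s_0^{[d]}, 0^{[d]})$ to $\bb v = (s^{[d]}, \bb 0)$.

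First I would use strong connectedness of $\cT$ (part of the definition of an \egeaab{}) to choose a word $\word u \in \Sigma_k^*$ with $\delta(s_0, \word u) = s$, and set $l = \abs{\word u}$ together with $\vec e = ([\word u]_k, 0, \dots, 0) \in [k^l]^{d+1}$. I then consider the morphism parametrised by the tuple $(l, \vec e, \bb s', \bb r)$ with $\bb s' = s_0^{[d]}$ and $\bb r = \bb 0$; by definition this is an arrow $\tilde e = (l, \vec e)$ whose target has $r$-part $\bb r = \bb 0$ and whose source has state-part $s_0^{[d]}$. The point is that the coordinates $e_1, \dots, e_d$ of $\vec e$ vanish, so $1\vec\omega \cdot \vec e = [\word u]_k$ for every $\vec\omega \in \{0,1\}^d$. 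Since $[\word u]_k < k^l$, formula~\eqref{eq:deq-s-and-r'} gives $r'_{\vec\omega} = \floor{[\word u]_k/k^l} = 0$, so the source object is $(s_0^{[d]}, 0^{[d]}) = \bb v_0$; and since $([\word u]_k)_k^l$ is just $\word u$ padded with leading zeros and $\cT$ is prolongable (so $\delta(s_0, \texttt 0) = s_0$), the same formula gives $s_{\vec\omega} = \delta(s_0, ([\word u]_k)_k^l) = \delta(s_0, \word u) = s$ for every $\vec\omega$, so the target object is $(s^{[d]}, \bb 0) = \bb v$.

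Hence $\tilde e \in \Mor(\bb v_0, \bb v)$, so $\Mor(\bb v_0, \bb v) \neq \emptyset$, and combining this with Lemma~\ref{lem:Ob0-reachable} we conclude $\bb v \in \Ob_{\cU}$. I do not anticipate any real obstacle: the argument rests only on strong connectedness (to get from $s_0$ to $s$), prolongability (to absorb the padding zeros), and the elementary observation that supporting $\vec e$ on its zeroth coordinate makes all the forms $1\vec\omega \cdot \vec e$ equal, which is precisely what pins the source down to the base object $\bb v_0$ rather than to some other object over $R$.
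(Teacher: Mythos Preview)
Your proof is correct and follows essentially the same route as the paper's: exhibit a morphism $\bb v_0 \to \bb v$ by placing a word taking $s_0$ to $s$ in the zeroth coordinate of $\vec e$ and zeros elsewhere, so that $1\vec\omega\cdot\vec e$ is constant in $\vec\omega$. The only differences are cosmetic: the paper invokes a synchronising word to $s$ (stronger than needed) and takes $l > |\word w| + \log_k d$, whereas you use strong connectedness and $l = |\word u|$; your choices are in fact slightly tighter, and your appeal to prolongability is harmless but unnecessary since $([\word u]_k)_k^l = \word u$ exactly when $l = |\word u|$.
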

\begin{proof}
	It is enough to show that $\bb v_0$ is reachable from $\bb v$. Let $\bb w \in \Sigma_k^*$ be a word synchronising the underlying automaton of $\cT$ to $s$. Let $e_0 = [\word w]_k$, $e_i = 0$ for $1 \leq i \leq d$ and let $l > \abs{\word w} + \log_k(d)$. Then we have the morphism   $\tilde e = (l, \vec e\,) \colon \bb v_0 \to \bb v$, as needed.
\end{proof}

\subsection{The cube groupoid}\label{ssec:Trans:Q}

By essentially the same argument as in \eqref{eq:def-of-A-3} we conclude that for any $\bb v, \bb v', \bb v'' \in S^{[d]} \times R$ we have
 \begin{align}\label{eq:def-of-A-6}
	M(\bb v,\bb v'';L) &= \sum_{\bb v' \in S^{[d]} \times R} M(\bb v, \bb v'; L-l) \cdot M(\bb v', \bb v'';l).
\end{align}	  
}

Regarding the group $G^{[d]}$ as a category with one object, we define the \emph{$d$-dimensional fundamental functor} $\bblambda=\bblambda^d_{\cT}\colon \cV^d(\cT) \to G^{[d]}$ as follows. All objects are mapped to the unique object of $G^{[d]}$ and an arrow $\tilde e=(l,\vec e) \colon \bb v =(\bb s,\bb r) \to \bb v' =(\bb s',\bb r')$ is mapped to   
\begin{equation}
{\bblambda}(\tilde e)
=\bra{\lambda_{\vec\omega}(\tilde e)}_{\vec\omega \in \{0,1\}^d}
=\bra{\lambda( s_{\vec\omega},(1\vec\omega \cdot \vec e +  r'_{\vec\omega})_k^l}_{\vec\omega \in \{0,1\}^d}.
\end{equation}
It follows from 
Lemma~\ref{lem:catvcomp}
that $\bblambda$ is indeed a functor. 

We are now ready to rewrite $M$ in a more convenient form:
\begin{equation}\label{eq:558:2}
	M(\bb v, \bb v'; l) = \sum_{\tilde e \in \Mor_l(\bb v, \bb v') } \bbrho(\bblambda(\tilde e)).
\end{equation}

In order to keep track of the terms appearing in \eqref{eq:558:2}, we introduce the families of cubes $\cQ^d_l$. For two objects $\bb v, \bb v' \in \Ob_{\cV}$ the \emph{cube family} $\cQ^d_l(\cT)(\bb v,\bb v')$ is defined to be the subset of $G^{[d]}$ given by
\begin{equation}\label{eq:def-of-Q}
\cQ^d_l(\cT)(\bb v,\bb v')= 
	\{ \bblambda(\tilde e) \mid \tilde e \in \Mor_l(\bb v,\bb v')\}.
\end{equation}

\subsection{Frobenius--Perron theory}

In this section we review some properties of nonnegative matrices and their spectra.
For a matrix $W$ we let $\varrho(W)$ denote its spectral radius. By Gelfand's formula, for any matrix norm $\norm{\cdot}$ we have
\begin{equation}\label{eq:Gelfand}
	\varrho(W) = \lim_{l \to \infty} \norm{W^l}^{1/l}.
\end{equation}

If $W,W'$ are two matrices of the same dimensions, then we say that $W \geq W'$ if the matrix $W-W'$ has nonnegative entries. Accordingly, $W > W'$ if $W-W'$ has strictly positive entries. In particular, $W$ has nonnegative entries if and only if $W \geq 0$.

 Let $W=(W_{ij})_{i,j\in I}$ be a nonnegative matrix with rows and columns indexed by a (finite) set  $I$. For $J\subset I$, we let $W[J]=(W_{ij})_{i,j\in J}$ denote the corresponding principal submatrix. We define a directed graph with the vertex set $I$ and with an arrow from $i\in I$ to $j\in I$ whenever $W_{ij}>0$. We say that $i\in I$ \emph{dominates} $j \in I$ if there is a directed path from $i$ to $j$\footnote{We note that $i$ always dominates itself via the empty path.}, and that $i$ and $j$ are \emph{equivalent} if they dominate each other. We refer to the equivalence classes of this relation as the \emph{classes} of $W$. We say that a class $J_1$ \emph{dominates} a class $J_2$ if $j_1$ dominates $j_2$ for some (equivalently, all) $j_1\in J_1$ and $j_2\in J_2$. This is a weak partial order on the set of classes. 

A nonnegative matrix $W$ is called \emph{irreducible} if it has only one class. The Frobenius--Perron theorem says that every  irreducible matrix has a real eigenvalue $\lambda$ equal to its spectral radius, its multiplicity is one, and there is a corresponding eigenvector with strictly positive entries \cite[Thm.\ I.4.1 \& I.4.3]{bookMinc}.
 For any nonempty subset $J\subset I$ we have $\varrho(W[J])\leq \varrho(W)$, and the inequality is strict if $W$ is irreducible and $J\neq I$ \cite[Cor.\ II.2.1 \& II.2.2]{bookMinc}.  We call a class $J\subset I$ \emph{basic} if $\varrho(W[J]) = \varrho(W)$, and \emph{nonbasic} otherwise.

\newcommand{\M}{N}

\begin{proposition}\label{prop:FrobeniusPerron}
 Let $W=(W_{ij})_{i,j\in I}$ be a nonnegative matrix such that the matrices $W^l$ are jointly bounded for all $l\geq 0$. Let $\M\leq W$ be a nonnegative matrix, and let $J \subset I$ be a \emph{basic} class of $W$ such that $\M[J] \neq W[J]$. Then there is a constant $\gamma<1$ such that for all $i\in I$ and $j\in J$ we have 
\begin{equation}\label{eq:34:00}
(\M^l)_{ij} \ll  \gamma^l \text{ as }  l\to \infty.
\end{equation}
\end{proposition}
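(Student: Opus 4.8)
The plan is to reduce to a single basic class by a suitable restriction, and then exploit strict domination of the spectral radius on proper sub-blocks together with Gelfand's formula. First I would set $\lambda = \varrho(W)$ and normalise; by hypothesis the powers $W^l$ are jointly bounded, so $\lambda \le 1$, and in fact we may as well work with $W/\lambda$ and assume $\lambda = 1$ (the boundedness hypothesis forces $\lambda \le 1$, and since $J$ is basic with $\varrho(W[J]) = \lambda$ and $W[J] \le W$ has bounded powers as a principal submatrix, we get $\lambda = 1$; if $\lambda<1$ there is nothing to prove because then $(N^l)_{ij}\le (W^l)_{ij}\to 0$ at a geometric rate already).

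Next, the key structural observation: because $J$ is a basic class of $W$, for any $i \in I$ and $j \in J$ a directed path in $W$ from $i$ to $j$, once it enters $J$, must stay within $J$ — indeed $J$ being a class means nothing outside $J$ is dominated by an element of $J$ while also dominating it, and a path from $j' \in J$ to some $i' \notin I\setminus\{\text{classes dominated by }J\}$... more carefully: let $I_{\ge J}$ denote the union of $J$ together with all classes strictly dominating $J$, so that the only way to reach $J$ is through $I_{\ge J}$. Restricting attention to indices that can reach $J$, I would decompose any walk contributing to $(N^l)_{ij}$ according to the last index before it enters $J$ and the time of entry. Concretely, write, for $i\in I$, $j\in J$,
\[
(N^l)_{ij} = \sum_{m=0}^{l} \ \sum_{\substack{i'\in I\setminus J}} (N[I\setminus J]^{\,m})_{i i'}\, N_{i' j''}\, (N[J]^{\,l-m-1})_{j'' j} \ +\ (N[J]^{\,l})_{ij},
\]
summing appropriately over the first index $j''\in J$ entered; here $N[J]^{l}$ denotes the contribution of walks staying in $J$ for all $l$ steps. (The bookkeeping is routine: one conditions on the first time the walk lands in $J$.)

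Now I would estimate each factor. For the block $N[J]$: since $N \le W$ and $N[J] \neq W[J]$, and $W[J]$ is irreducible with $\varrho(W[J]) = 1$ (it is a class of $W$, hence irreducible, and basic), the strict monotonicity of the Perron eigenvalue — which follows from the cited facts $\varrho(W'[J'])<\varrho(W'[J])$ for irreducible $W'$ and proper $J'\subsetneq J$, applied after noting $N[J]$ is dominated entrywise by $W[J]$ and differs from it somewhere — gives $\varrho(N[J]) =: \beta_J < 1$. Hence by Gelfand's formula $\|N[J]^{l}\| \ll (\beta_J + o(1))^{l}$, so there is $\gamma_J<1$ with $(N[J]^{l-m-1})_{j''j}\ll \gamma_J^{\,l-m-1}$. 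For the block $N[I\setminus J]$ with rows/columns in $I\setminus J$: the powers $W[I\setminus J]^{m}$ are bounded (principal submatrix of $W$), and I claim $\varrho(W[I\setminus J])\le 1$ with, moreover, every class $K$ of $W$ inside $I\setminus J$ that is basic being irrelevant here — actually the simplest route is to split $I\setminus J$ into its classes: on each basic class $K$ of $W$, the powers are merely bounded (not decaying), on each nonbasic class they decay geometrically, and critically, to travel from such a $K$ into $J$ one must pass through nonbasic classes only... hmm — this is where care is needed. The cleanest formulation: partition $I\setminus J$ into classes; any walk from $I\setminus J$ into $J$ contributing to $(N^l)_{ij}$ crosses at most $|I|$ distinct classes, and on the classes it lingers in, the relevant $N$-blocks have spectral radius $\le 1$; since only boundedly many "transitions between classes" occur and the final stretch inside $J$ contributes a factor $\gamma_J^{l-m}$, the total is $\ll \mathrm{poly}(l)\,\gamma_J^{\,l}$, which is $\ll \gamma^{l}$ for any $\gamma\in(\gamma_J,1)$.

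**The main obstacle** I expect is precisely the last estimate: showing that the portion of the walk spent outside $J$ cannot contribute a growing factor. The subtlety is that $W$ may have other basic classes (so $N[I\setminus J]$ need not be spectrally strictly contractive), and one must verify that a walk which revisits such a basic class many times before finally entering $J$ is impossible — or rather, that each visit to a basic class is a single "sojourn" of bounded spectral growth and there are only $O(1)$ sojourns. This follows from the partial-order structure on classes: a walk can only move down the partial order (from a class to one it dominates), hence visits each class in at most one contiguous block, so there are at most $|I|$ sojourns; within each sojourn on a class $K$ the $N[K]$-power is bounded (basic case) or geometrically small (nonbasic case), and the single final sojourn in $J$ supplies the decaying factor $\gamma_J^{\,l-O(1)}$. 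Assembling these bounds and absorbing the polynomial-in-$l$ loss into a slightly larger $\gamma<1$ completes the argument. $\qed$
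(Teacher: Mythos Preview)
Your path-decomposition strategy is sound in principle, and the estimate $\varrho(N[J])<1$ (entrywise strict monotonicity of the Perron root under $0\le N[J]\le W[J]$, $N[J]\ne W[J]$, $W[J]$ irreducible) is correct, even if the fact you cite is the submatrix version rather than the entrywise one. However, there is a genuine gap in the treatment of the walk outside $J$.

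You correctly observe that a walk visits each class in a single contiguous sojourn, and that a sojourn in a nonbasic class $K$ contributes a factor $\ll \beta_K^{m_K}$ for some $\beta_K<1$. The problem is the remaining case: a sojourn of length $m$ in a \emph{basic} class $K\ne J$ contributes only a bounded factor $O(1)$, not a decaying one. If such a $K$ dominates $J$, then for each $m$ there are walks spending $m$ steps in $K$ and $l-m-O(1)$ steps in $J$, giving a term of order $\gamma_J^{\,l-m}$; summing over $m$ yields $\sum_{m\le l} O(1)\cdot\gamma_J^{\,l-m}=O(1)$, with no decay at all. Your assertion that ``the single final sojourn in $J$ supplies the decaying factor $\gamma_J^{\,l-O(1)}$'' implicitly assumes the total time outside $J$ is $O(1)$, which is precisely what fails in this scenario.

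What is missing is the statement that \emph{no basic class of $W$ other than $J$ itself dominates $J$}. This is where the joint boundedness of the powers $W^l$ enters in an essential way (so far you have used it only to get $\varrho(W)\le 1$). The paper proves this as its Step~1: if basic classes $J_1\ne J_2$ with $J_1$ dominating $J_2$ had Perron vectors $x_1,x_2>0$, then iterating $U=(I+W+\cdots+W^{m-1})/m$ would give $U^l x_1 \ge x_1 + l\varepsilon x_2$, contradicting boundedness. Once this is established, every class strictly above $J$ is nonbasic, your sojourn bounds all become geometric, and the rest of your argument goes through (with the polynomial loss from summing over the $O(1)$ many class transitions absorbed into a slightly larger $\gamma$). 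You should insert this step explicitly; without it the proof is incomplete.
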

\begin{proof}
Let $V = \RR^I$ denote the vector space with basis $I$ equipped with the standard Euclidean norm. We identify matrices indexed by $I$ with linear maps on $V$ and let $\norm{A}$ denote the operator norm of a matrix $A$ (in fact, we could use any norm such that $0 \leq A_1 \leq A_2$ implies $\norm{A_1} \leq \norm{A_2}$).
For $J\subset I$ let $V[J]$ denote the vector subspace of $V$ with basis $J$.

By Gelfand's theorem, the spectral radius of $W$ can be computed as $\varrho(W) = \lim_{l\to \infty} \norm{W^l}^{1/l}.$
Since the matrices $W^l$ are jointly bounded, we have $\varrho(W)\leq 1$. Furthermore, if $\varrho(W)<1$, then there is some $\lambda<1$ such that $\norm{W^l} \leq \lambda^l$ for $l$ large enough, and hence all entries of $W^l$ (and a fortiori of $\M^l$) tend to zero at an exponential rate, proving the claim.
Thus, we may assume that $\varrho(W)=1$. 

\begin{step}\label{step1}
No two distinct basic blocks of $W$ dominate each other.
\end{step}
\begin{proof}
Let $J_1$ and $J_2$ be distinct basic blocks of $W$, and for the sake of contradiction suppose that $J_1$ dominates $J_2$. By Frobenius--Perron theorem applied to the matrices $W[J_1]$ and $W[J_2]$, there are vectors $x_1 \in V[J_1]$ and $x_2\in V[J_2]$ with $x_1,x_2>0$ and $W[J_1]x_1=x_1$, $W[J_2]x_2=x_2$. Since $J_1$ dominates $J_2$, there exists $m \geq 1$ such that any vertex $i \in J_1$ is connected to any vertex $j \in J_2$ by a path of length $< m$. Let $U := \frac{1}{m}(I+W+\dots+W^{m-1})$.
It follows (cf.\ \cite[Thm.\ I.2.1]{bookMinc}) for a sufficiently small value of $\e > 0$ that we have
\begin{equation}\label{eq:wform}
	U x_1 \geq x_1 + \varepsilon x_2, \qquad U x_2 \geq x_2.
\end{equation}
Iterating \eqref{eq:wform}, for any $l \geq 0$ we obtain
\begin{equation}\label{eq:wform-2}
U^l x_1 \geq x_1 + l \varepsilon x_2.
\end{equation}
On the other hand, powers of $U$ are jointly bounded because the powers of $W$ are jointly bounded, which yields a contradiction.
\end{proof}

Let $x\in V[J]$, $x>0$, be the eigenvector of $W[J]$ with eigenvalue $1$. Let $K$ be the union of all the classes of $W$ dominated by $J$ except for $J$ itself. By Step \ref{step1} all the classes in $K$ are nonbasic, and the subspace $V[K]$ is $W$-invariant. The spectral radius of the matrix $W[K]$ is equal to the maximum of the spectral radii of $W[J']$ taken over all the classes $J' \subset K$, and hence $\varrho(W[K])<1$. 

\begin{step}\label{step2}
We have $\M[J]^l x < W[J]^l x$ for all $l \geq \abs{J}$.
\end{step}
\begin{proof}
	As $\M[J] \neq W[J]$, there exist $i,i' \in J$ such that $\M[J]_{i,i'} < W[J]_{i,i'}$. Since $x > 0$, we have $(\M[J]^l x)_j < (W[J]^l x)_j$ for each $j \in J$ that is an endpoint of a path of length $l$ containing the arrow $i,i'$. As $W[J]$ is irreducible, such path exists for all $l \geq \abs{J}$.
\end{proof}

\begin{step}\label{step3}
We have $\norm{\M^l x} \ll  \gamma^l $ for some  $\gamma<1$ as $ l\to \infty$
\end{step}
\begin{proof}
Since $\varrho(W[K])<1$, it follows from Gelfand's theorem that for any sufficiently large $n$ we have  
\begin{equation}\label{eq:34:10}
\norm{\M[K]^n}\leq \norm{W[K]^n} < 1.
\end{equation}
By Step \ref{step2}, for any sufficiently large $n$ there exist $\lambda < 1$ and $v \in V[K]$ such that
\begin{equation}\label{eq:34:11}
	\M^n x \leq \lambda x + v.
\end{equation}
Pick $n$, $\lambda$ and $v$ such that \eqref{eq:34:10} and \eqref{eq:34:11} hold, and assume additionally that $\lambda$ is close enough to $1$ so that $\norm{\M[K]^n}\leq \lambda$, whence 
\begin{equation}\label{eq:34:12}
	\M^n v = \M[K]^n v  \leq \lambda v.
\end{equation}
Applying \eqref{eq:34:11} iteratively, for any $l \geq 0$ we obtain
\[
\M^{ln}x \leq \lambda^l x + l \lambda^{l-1} v.
\]
It follows that Step \ref{step3} holds with any $\gamma$ such that $ \gamma < \lambda^{1/n}$.
\end{proof}
Since $x > 0$ (as an element of $V[J]$) the claim \eqref{eq:34:00} follows immediately from Step \ref{step3}.
\end{proof}

\subsection{From recursion to uniformity}\label{ssec:Trans:R->U}

In Section \ref{sec:Cubes} we obtain a fairly complete description of the cubes $\cQ_l^d(\bb v, \bb v')$. The main conclusion is the following (for a more intuitively appealing equivalent formulation, see Theorem \ref{thm:char_fact_is_Z}).

\begin{theorem}\label{thm:cubes}
	There exist cubes $\bb g_{\bb v} \in G^{[d]}$, $\bb v \in \Ob_{\cU}$, and a threshold $l_0 \geq 0$ such that for each $l \geq l_0$ and each $\bb v, \bb v' \in \Ob_{\cU}$ we have 
	\[
		\cQ^d(\cT)(\bb v, \bb v') = \bb g_{\bb v}^{-1} G_0^{[d]}  \bb H \bb g_{\bb v'},
	\]
	where $\bb H < G^{[d]}$ is given by
	\[
		\bb H = \set{ \bra{g_0^{1\vec\omega \cdot \vec e}}_{\vec \omega \in \{0,1\}^d}}{ \vec e \in \NN_0^{d+1}}.
	\]
\end{theorem}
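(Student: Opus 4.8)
The goal is to describe the cube families $\cQ^d_l(\bb v,\bb v')$ for large $l$, and the natural approach is to combine the group-theoretic properties of efficient \geas{} (the ``unhatted'' \ref{item:74B}, \ref{item:74C}, \ref{item:74D}) with a functoriality argument inside the category $\cV^d(\cT)$. First I would note that by definition $\cQ^d_l(\bb v,\bb v') = \{\bblambda(\tilde e) : \tilde e \in \Mor_l(\bb v,\bb v')\}$, so the statement is really about the image of the fundamental functor $\bblambda^d_\cT$ restricted to morphisms of fixed (large) degree between objects of $\cU$. The key structural fact to extract from \ref{item:74B}--\ref{item:74D} is: for a single copy (i.e.\ $d=0$ in spirit), the set of labels $\lambda(s,\word w)$ over words $\word w \in \Sigma_k^l$ with $\delta(s,\word w)=s'$ and $[\word w]_k \equiv r \bmod d'$ is exactly $G_0 g_0^r$, once $l$ is large enough. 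Across the $2^d$ coordinates of a cube $\tilde e = (l,\vec e)$, the coordinate $\vec\omega$ uses the word $(1\vec\omega\cdot\vec e + r'_{\vec\omega})_k^l$; its residue mod $d'$ is determined by $1\vec\omega\cdot\vec e$ and $\bb r'$ (hence by $\bb r'$ up to the vector $\vec e$), while the free choice of the digits of $\vec e$ gives us a $G_0^{[d]}$ worth of freedom in each coordinate — but the coordinates are \emph{coupled} through $\vec e$.

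**Disentangling the coupling.** The main work is to show that the constraint linking coordinates is exactly captured by the group $\bb H = \{(g_0^{1\vec\omega\cdot\vec e})_{\vec\omega}: \vec e \in \NN_0^{d+1}\}$, and that apart from this the coordinates are independent up to $G_0^{[d]}$. For the ``$\supseteq$'' direction I would: (i) fix one reference morphism $\tilde e_\ast \in \Mor_l(\bb v,\bb v')$ (which exists for $l$ large by Lemmas~\ref{lem:Ob0-reachable} and~\ref{cor:Ob0-is-closed}, since $\bb v,\bb v'\in\Ob_\cU$), giving a reference cube that I can absorb into the definitions of $\bb g_{\bb v}, \bb g_{\bb v'}$; (ii) given $\vec e \in \NN_0^{d+1}$, realize the cube $(g_0^{1\vec\omega\cdot\vec e})_{\vec\omega}$ by prepending blocks of $\vec e$-many zeros-then-ones (using that $d' \mid k^A-1$ for suitable $A$, as in the construction of the word $\word y$ in the proof of Theorem~\ref{thm:main_nobasis}), which shifts residues coordinatewise by $1\vec\omega\cdot\vec e$ while keeping states fixed (by synchronisation/idempotence, $\lambda(s,\texttt 0)=\id$); (iii) then fill in each coordinate independently with an arbitrary element of $G_0$ using \ref{item:74C} with $r=0$ — the independence is legitimate because once the residues and endpoints are pinned, \ref{item:74D} (or a direct counting argument from \ref{item:74C}) lets us choose the digit-words in the $2^d$ coordinates separately as long as they have a common length $l$, and we have the freedom to pad all of them to a common large length. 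For ``$\subseteq$'' I would run the same residue bookkeeping in reverse: any $\tilde e\in\Mor_l(\bb v,\bb v')$ has, in coordinate $\vec\omega$, a label lying in $G_0 g_0^{1\vec\omega\cdot\vec e}\cdot(\text{contribution of }\bb r')$ by \ref{item:74C}, and reading this against the reference cube shows $\bblambda(\tilde e)\bblambda(\tilde e_\ast)^{-1} \in G_0^{[d]}\bb H$, whence the claimed form after choosing $\bb g_{\bb v}$ to encode $\tilde e_\ast$ suitably.

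**Consistency of the $\bb g_{\bb v}$.** A subtle point: the cubes $\bb g_{\bb v}$ must be chosen \emph{once and for all}, independently of which pair $(\bb v,\bb v')$ we look at, so that the formula $\cQ^d(\bb v,\bb v') = \bb g_{\bb v}^{-1} G_0^{[d]} \bb H\, \bb g_{\bb v'}$ is internally consistent (e.g.\ compatible with composition $\cQ_l(\bb v,\bb v'') \supseteq \cQ_{l'}(\bb v,\bb v')\cQ_{l-l'}(\bb v',\bb v'')$, cf.\ \eqref{eq:def-of-A-6}). I would fix a ``spanning tree'' of reference morphisms: pick a distinguished object (say $\bb v_0$) and, for each $\bb v\in\Ob_\cU$, fix morphisms $\tilde\alpha_{\bb v}\in\Mor(\bb v_0,\bb v)$ and $\tilde\beta_{\bb v}\in\Mor(\bb v,\bb v_0)$ (Lemma~\ref{lem:Ob0-reachable}), and set $\bb g_{\bb v} := \bblambda(\tilde\beta_{\bb v})$ or a suitable modification thereof. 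Then for a general morphism $\bb v \to \bb v'$, composing with $\tilde\alpha_{\bb v}$ and $\tilde\beta_{\bb v'}$ produces an endomorphism of $\bb v_0$, reducing everything to understanding $\cQ_l(\bb v_0,\bb v_0)$, which by \ref{item:74B}/\ref{item:74C} applied coordinatewise equals $G_0^{[d]}\bb H$ directly (no conjugation needed since all coordinates start and end at $s_0$ with residue $0$). The fact that $\bb H$ is genuinely a subgroup — closed under multiplication because $(g_0^{1\vec\omega\cdot\vec e})(g_0^{1\vec\omega\cdot\vec e'}) = g_0^{1\vec\omega\cdot(\vec e+\vec e')}$, and one needs $g_0^{d'}\in G_0$ to handle inverses, which follows from $G/G_0$ being cyclic of order $d'$ generated by $g_0$ — should be checked, and that $G_0^{[d]}$ is normalized appropriately so that $G_0^{[d]}\bb H$ is a subgroup too (here $G_0 \trianglelefteq G$ is used).

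**Main obstacle.** The hard part is the coordinatewise independence in the ``$\supseteq$'' direction: a priori the $2^d$ words $(1\vec\omega\cdot\vec e + r'_{\vec\omega})_k^l$ attached to a single morphism $\tilde e=(l,\vec e)$ are all built from the \emph{same} digit vector $\vec e$, so the labels in different coordinates are not obviously independent. Overcoming this requires showing that after fixing the ``low-order'' part of $\vec e$ (to set residues and endpoints correctly via a short common prefix), the ``high-order'' digits can be varied in each of the $d+1$ slots of $\vec e$ so as to independently sweep out $G_0$ in each of the $2^d$ coordinates of the cube. This is precisely where one must invoke the efficiency properties in their full strength — in particular \ref{item:74D}, which guarantees that within any fixed residue class and between any fixed pair of states, the integers $[\word w]_k$ over words $\word w$ of length $l$ with a \emph{prescribed} label $g\in G_0$ still have $\gcd_k^\ast$ equal to $d'$, hence are abundant enough to be chosen in all coordinates simultaneously. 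I expect the bookkeeping for this step — keeping track of lengths, residues, states and labels across all $2^d$ coordinates at once, and verifying that the length $l$ can be taken uniform and large — to be the most delicate portion of the argument; the rest is essentially assembling the group-theoretic pieces and checking the conjugation/normality compatibilities.
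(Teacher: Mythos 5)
Your proposal takes a genuinely different route from the paper's. You aim to compute $\cQ^d(\cT)(\bb v,\bb v')$ by direct construction of morphisms, using \ref{item:74B}--\ref{item:74D} coordinatewise. The paper instead proves the equivalent Theorem~\ref{thm:char_fact_is_Z} by developing a machinery of \emph{characteristic factors} (factors $\bar\cT$ of $\cT$ with $\cQ^d(\cT)=\bbpi^{-1}(\cQ^d(\bar\cT))$ for all $d$) and running a three-stage reduction chain: first to an \egeaab{} whose long words are all synchronising (Proposition~\ref{prop:reduce_to_str_sync}), then to an invertible \egeaab{} (Proposition~\ref{prop:reduce_to_invert}), and finally to the cyclic \geaab{} $\cZ(d'_{\cT})$ (Proposition~\ref{prop:char_fact_of_invertible} and Lemma~\ref{lem:Zm-basic}), whose cube groups are $\HK^d(\ZZ/d'\ZZ)$ and hence trivially described. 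The pivotal technical tool enabling each reduction is Lemma~\ref{lem:H-in-Q-reduction} (and its Corollary~\ref{cor:H-in-Q-reduction}): to show that $\cT/H$ is characteristic it suffices to exhibit, for each $d$ and each generator $h$ of $H$, a cube in $\cQ^d(\cT)$ with $h$ at a \emph{single} vertex $\vec\rho$ and the identity elsewhere; a double induction using the group structure of $\cQ^d(\cT)$ and the dimension-nesting of Lemma~\ref{lem:Q-dim-inclusion} then fills in all other placements.

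This is precisely the tool your proposal is missing, and without it your $\supseteq$ direction has a genuine gap. You correctly flag coordinatewise independence as the main obstacle --- all $2^d$ coordinate words of a morphism $\tilde e = (l,\vec e)$ are derived from the same vector $\vec e$, so the labels in different coordinates are coupled --- but your step (iii) asserts that \ref{item:74D} lets you ``choose the digit-words in the $2^d$ coordinates separately,'' and later that varying the high-order digits of the $d+1$ slots lets you ``independently sweep out $G_0$ in each of the $2^d$ coordinates.'' Neither claim holds at face value: changing $e_j$ alters the coordinate word at \emph{every} vertex $\vec\omega$ with $\omega_j=1$ simultaneously, so the $2^d$ labels are strongly entangled, and neither \ref{item:74D} nor a counting argument from \ref{item:74C} directly decouples them. (Compare the discussion in Section~\ref{ssec:Cubes:Host-Kra}: even for an invertible \geaab{} one only gets $\HK^d(G)\subseteq\cQ^d(\cT)$ for free, and $\HK^d(G)\subsetneq G^{[d]}$ as soon as $d\geq 2$.) The paper sidesteps the construction problem entirely: Lemma~\ref{lem:H-in-Q-reduction} reduces everything to constructing a single morphism per group element and dimension that produces a cube $\bb c^d_{\vec 1}(h)$, and the explicit $\vec e$'s in the proofs of Lemmas~\ref{lem:reduce_to_str_sync}, \ref{lem:inductive} and Proposition~\ref{prop:char_fact_of_invertible} are engineered to do exactly that. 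Unless you supply a substitute for this inductive bootstrapping --- or reproduce it --- the containment $G_0^{[d]}\bb H \subseteq \cQ^d(\cT)(\bb v_0,\bb v_0)$ does not follow from the direct argument you outline. Your $\subseteq$ direction and the choice-of-$\bb g_{\bb v}$ consistency argument are sound.
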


Presently, we show how the above result completes the derivation of our main theorems. We will need the following corollary.

\begin{corollary}\label{cor:G_0_v_0}
	There exists $l_0 \geq 0$ such that for all $l \geq l_0$ we have
	\begin{align}\label{eq:sum-not-trivial-2}
		G_0^{[d]} \subset \cQ^d_{l}(\bb v_0, \bb v_0).
	\end{align}
\end{corollary}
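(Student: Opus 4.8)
The plan is to obtain this as an essentially immediate consequence of Theorem \ref{thm:cubes}. First I would check that $\bb v_0 \in \Ob_{\cU}$: this is clear since the identity morphism $(0,\vec 0)$ lies in $\Mor_0(\bb v_0,\bb v_0)$ (and, as $\cT$ is prolongable and idempotent, so does every $(l,\vec 0)$ with $l \geq 0$), so both non-emptiness conditions in the definition of $\cU$ hold for $\bb v_0$. Let $l_0 \geq 0$ be the threshold supplied by Theorem \ref{thm:cubes}. Applying that theorem with $\bb v = \bb v' = \bb v_0$ gives, for every $l \geq l_0$,
\[
	\cQ^d_l(\bb v_0, \bb v_0) = \bb g_{\bb v_0}^{-1}\, G_0^{[d]}\, \bb H\, \bb g_{\bb v_0},
	\qquad
	\bb H = \set{\bra{g_0^{1\vec\omega \cdot \vec e}}_{\vec\omega \in \{0,1\}^d}}{\vec e \in \NN_0^{d+1}}.
\]

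The remaining step is a short algebraic simplification of the right-hand side. Recall that $G_0$ is a normal subgroup of $G$, so $G_0^{[d]} = \prod_{\vec\omega \in \{0,1\}^d} G_0$ is a normal subgroup of $G^{[d]}$ and is in particular preserved by conjugation by $\bb g_{\bb v_0}$. Writing $\bb g = \bb g_{\bb v_0}$ and inserting $\bb g\bb g^{-1}$, we get the set identity
\[
	\bb g^{-1}\, G_0^{[d]}\, \bb H\, \bb g
	= \bra{\bb g^{-1} G_0^{[d]} \bb g}\bra{\bb g^{-1} \bb H \bb g}
	= G_0^{[d]} \bra{\bb g^{-1} \bb H \bb g}.
\]
Taking $\vec e = \vec 0$ in the description of $\bb H$ produces the identity element of $G^{[d]}$, so $\id \in \bb H$, hence $\id \in \bb g^{-1}\bb H\bb g$, and therefore
\[
	G_0^{[d]} \subseteq G_0^{[d]} \bra{\bb g^{-1} \bb H \bb g} = \cQ^d_l(\bb v_0, \bb v_0),
\]
which is precisely the claimed inclusion \eqref{eq:sum-not-trivial-2}, valid for all $l \geq l_0$.

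I do not expect any genuine obstacle here: all the substantive work is already contained in Theorem \ref{thm:cubes}, and the deduction above is a couple of lines of elementary group theory. The only point worth a little care is not to silently assume that $\bb g_{\bb v_0}$ is trivial — one could instead renormalise the family $(\bb g_{\bb v})_{\bb v \in \Ob_{\cU}}$ from Theorem \ref{thm:cubes} so that $\bb g_{\bb v_0} = \id$, in which case $\cQ^d_l(\bb v_0,\bb v_0) = G_0^{[d]}\bb H$ contains $G_0^{[d]}$ at once, but appealing to normality of $G_0^{[d]}$ saves us from having to track such a normalisation. (If one wished to avoid invoking Theorem \ref{thm:cubes} altogether, the inclusion could also be proved directly by exhibiting, for each $\bb g \in G_0^{[d]}$ and each sufficiently large $l$, a morphism in $\Mor_l(\bb v_0,\bb v_0)$ whose $\bblambda$-image is $\bb g$, built from the synchronising word $\word w_0^{\cT}$ together with properties \ref{item:74C} and \ref{item:74D}; but this is really a fragment of the proof of Theorem \ref{thm:cubes} itself.)
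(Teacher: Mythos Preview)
Your argument is correct and is essentially identical to the paper's own proof, which also just invokes Theorem~\ref{thm:cubes} together with the two observations that $\id_G^{[d]} \in \bb H$ and that $G_0$ is normal in $G$. You have simply spelled out the one-line algebra in slightly more detail.
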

\begin{proof}
	Follows directly from the observation that $\id^{[d]}_G \in \bb H$ (where we use the notation from Theorem \ref{thm:cubes}) and $G_0$ is normal in $G$.
\end{proof}

\begin{proof}[Proof of Theorem \ref{thm:dichotomy}\eqref{item:383B}]
Recall that in \eqref{eq:Us-vs-A} we related the Gowers norms in question to the averages $A(\bb v; \bb L)$ with $\bb v \in \Ob_{\cV}$ taking the form $\bb v = (s^{[d]},\bb 0)$ and that by Lemma \ref{lem:Ob0-const-cube} the relevant cubes belong to $\Ob_{\cU}$. Hence, it will suffice to show that for any $\bb v \in \Ob_{\cU}$ we have the bound $\norm{A(\bb v; L)} \ll k^{-cL}$ for a positive constant $c > 0$. 

Let us write $A$ and $M$ (defined in \eqref{eq:def-of-A} and \eqref{eq:def-of-A-4} respectively) in the matrix forms: 
 
\[ 
A(L) = \big(A(\bb v; L)\big)_{\bb v \in \Ob_{\cV}} \text{ and  } M(L) = \big(M(\bb v, \bb v'; L)\big)_{\bb v,\bb v' \in \Ob_{\cV}};
\]
note 
that the entries of the matrices $A(L)$ and $M(L)$ are elements of $\End(E(V))$. This allows us to rewrite the recursive relations \eqref{eq:def-of-A-3} and \eqref{eq:def-of-A-6} as matrix multiplication:
\begin{align}\label{eq_recursion_A}
    A(l+l')  &= A(l)M(l'), &&& M(l+l') &= M(l)M(l'), &&& (l,l' \geq 0). 
  \end{align}
Consider also the real-valued matrices $N(L)$ and $W(L)$, of the same dimension as $M(L)$, given by
  \begin{align*}
    N(L)_{\bb v, \bb v'} &= \norm{ M(L)_{\bb v, \bb v'} }_2 =  \frac{1}{k^{(d+1)L}} \norm{\sum_{\tilde e \in \Mor_L(\bb v,\bb v')} \bbrho(\bblambda(\tilde e)) }_2\\
    W(L)_{\bb v, \bb v'} &= \frac{\abs{\Mor_L(\bb v,\bb v')}}{k^{(d+1)L}}.
  \end{align*}

Note that $0 \leq N(l) \leq W(l)$ for each $l \geq 0$ by a straightforward application of the triangle inequality and the fact that $\bbrho$ is unitary. Moreover, for reasons analogous to \eqref{eq_recursion_A} we also have
\begin{align}\label{eq_recursion_A-2}
	N(l+l') &\leq N(l)N(l') &&&
  W(l+l') = W(l)W(l'), &&& (l,l' \geq 0).
  \end{align}
As a consequence, $W(l) = W^l$, where $W := W(1)$.  
It also follows directly from how morphisms are defined that $W(l)_{\bb v, \bb v'} \leq 1$ for all $\bb v,\bb v' \in \Ob_{\cV}$ and $l \geq 0$. 

Let $l_0$ be the constant from Corollary \ref{cor:G_0_v_0}. Then, by Proposition \ref{prop:rho-averages} we have $N(l)_{\bb v_0,\bb v_0} \neq W(l)_{\bb v_0,\bb v_0}$ for all $l \geq l_0$. We are now in position to apply Proposition \ref{prop:FrobeniusPerron}, which implies that there exits $\gamma < 1$ such that for any $\bb v \in \cV$ and any $\bb u \in \cU$ we have
\begin{equation}\label{eq:778:1}
	N(l_0)^l_{\bb v, \bb u} \ll \gamma^{l/l_0}.
\end{equation}
Using with \eqref{eq_recursion_A-2}, \eqref{eq:778:1} can be strengthened to
\begin{equation}\label{eq:778:2}
	N(L)_{\bb v, \bb u}  \ll \gamma^L.
\end{equation}
Finally, using \eqref{eq_recursion_A} and the fact that all norms on finitely dimensional spaces are equivalent, for any $\bb u \in \Ob_{\cU}$ and $L \geq 0$ we conclude that 
\begin{equation}\label{eq:778:3}
	\norm{A(\bb u; L)} = \norm{ A(L)_{\bb u} } = \norm{ \sum_{\bb v \in \cV} A(0)_{\bb v} M(L)_{\bb v, \bb u} } 
	\ll \sum_{\bb v \in \cV} N(L)_{\bb v, \bb u} \ll \gamma^L.
\end{equation}
\end{proof}

\section{Cube groups}\label{sec:Cubes}

\subsection{Groupoid structure}\label{ssec:Cubes:groupoid}

We devote the remainder of this paper to proving Theorem \ref{thm:cubes}, which provides a description of the cube sets $\cQ^d_l(\bb v, \bb v')$.
In this section we record some basic relations between the $\cQ^d_l(\bb v, \bb v')$ for different $\bb v, \bb v' \in \Ob_{\cV}$. Our key intention here is to reduce the problem of describing $\cQ_l^d(\cT)(\bb v, \bb v')$ for arbitrary $\bb v, \bb v' \in \Ob_{\cU}$ to the special case when $\bb v = \bb v' = \bb v^{\cT}_0$.

\begin{lemma}\label{lem:Q_transitivity}
	Let $\cT$ be an \egeaab{} and let $\bb v, \bb v', \bb v'' \in \Ob_{\cV}$ and $l,l' \geq 0$. Then  
	\[
		\cQ^d_{l'}(\cT)(\bb v, \bb v') \cdot \cQ^d_l(\cT)(\bb v', \bb v'') \subseteq \cQ^d_{l+l'}(\cT)(\bb v, \bb v'').
	\] 
\end{lemma}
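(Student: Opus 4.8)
The statement is a compatibility between composition of morphisms in $\cV$ and the functor $\bblambda$ together with the definition \eqref{eq:def-of-Q} of the cube sets. The plan is to unwind the definitions: an element of $\cQ^d_{l'}(\cT)(\bb v, \bb v')$ is $\bblambda(\tilde e')$ for some $\tilde e' \in \Mor_{l'}(\bb v, \bb v')$, and an element of $\cQ^d_l(\cT)(\bb v', \bb v'')$ is $\bblambda(\tilde e)$ for some $\tilde e \in \Mor_l(\bb v', \bb v'')$. I would then form the composite morphism $\tilde e \circ \tilde e' \in \Mor_{l+l'}(\bb v, \bb v'')$, which is legitimate by Lemma~\ref{lem:catvcomp} and has degree $l + l'$ because the degree is additive under composition. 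Finally I would invoke functoriality of $\bblambda$ (stated in the paragraph introducing the fundamental functor, which says $\bblambda$ is a functor by Lemma~\ref{lem:catvcomp}) to conclude $\bblambda(\tilde e \circ \tilde e') = \bblambda(\tilde e') \cdot \bblambda(\tilde e)$ (or $\bblambda(\tilde e) \cdot \bblambda(\tilde e')$, depending on the composition convention), which lies in $\cQ^d_{l+l'}(\cT)(\bb v, \bb v'')$ by definition.

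Concretely, the proof would read roughly as follows. Fix $\bb h' \in \cQ^d_{l'}(\cT)(\bb v,\bb v')$ and $\bb h \in \cQ^d_l(\cT)(\bb v',\bb v'')$. By the definition \eqref{eq:def-of-Q} there exist morphisms $\tilde e' \colon \bb v \to \bb v'$ of degree $l'$ and $\tilde e \colon \bb v' \to \bb v''$ of degree $l$ with $\bblambda(\tilde e') = \bb h'$ and $\bblambda(\tilde e) = \bb h$. By Lemma~\ref{lem:catvcomp} the composite $\tilde e \circ \tilde e'$ is a morphism from $\bb v$ to $\bb v''$, and since $\deg$ is additive under composition it has degree $l + l'$, so $\tilde e \circ \tilde e' \in \Mor_{l+l'}(\bb v, \bb v'')$. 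Because $\bblambda$ is a functor, we have $\bblambda(\tilde e \circ \tilde e') = \bblambda(\tilde e) \cdot \bblambda(\tilde e') = \bb h \cdot \bb h'$ (respecting the composition order of $\cV$, which composes $\tilde e' \colon \bb v \to \bb v'$ with $\tilde e \colon \bb v' \to \bb v''$ as $\tilde e' \circ \tilde e$ in the paper's notation — one must match the source's convention here). Hence $\bb h' \cdot \bb h \in \cQ^d_{l+l'}(\cT)(\bb v, \bb v'')$ by \eqref{eq:def-of-Q}. As $\bb h'$ and $\bb h$ were arbitrary, the asserted inclusion follows.

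\textbf{Main obstacle.} There is no real mathematical difficulty here — the lemma is essentially a formal consequence of $\bblambda$ being a functor and of degree-additivity. The one point that requires care is \emph{bookkeeping of the composition convention}: the paper writes the composition of $\tilde e' \colon \bb v' \to \bb v'$ (wait — of $\tilde e' \colon \bb v'' \to \bb v'$) and $\tilde e \colon \bb v' \to \bb v$ as $\tilde e' \circ \tilde e \colon \bb v'' \to \bb v$, which is the reverse of the usual function-composition order, so one must be scrupulous about the order in which the cube group elements multiply and about which object plays the role of source and target. I would double-check against the explicit formula for $\bblambda(\tilde e)$ and against Lemma~\ref{lem:catvcomp} to make sure the product $\cQ^d_{l'}(\bb v,\bb v') \cdot \cQ^d_l(\bb v',\bb v'')$ is written in the order matching the paper's stated inclusion, and phrase the proof so that the functoriality identity $\bblambda(\tilde f \circ \tilde g) = \bblambda(\tilde f)\bblambda(\tilde g)$ lands exactly on the claimed containment. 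Beyond that, the argument is a two-line invocation of Lemma~\ref{lem:catvcomp} and functoriality.
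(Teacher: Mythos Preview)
Your proposal is correct and matches the paper's approach exactly: the paper's proof is the single sentence ``This is an immediate consequence of the fact that $\bblambda$ is a functor,'' which is precisely what you unwind. Your care about composition order is warranted but, as you note, purely bookkeeping; in the paper's convention $\tilde e' \circ \tilde e$ denotes the composite $\bb v \to \bb v' \to \bb v''$ and functoriality gives $\bblambda(\tilde e' \circ \tilde e) = \bblambda(\tilde e')\bblambda(\tilde e)$, landing exactly on the stated inclusion.
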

\begin{proof}
	This is an immediate consequence of the fact that $\bblambda$ is a functor.
\end{proof}

\begin{lemma}\label{lem:Q_limit}
	Let $\cT$ be an \egeaab{} and $\bb v, \bb v' \in \Ob_{\cU}$. Then the limit
	\begin{equation}\label{eq:def-of-Q-infty}
		 \cQ^d(\cT)(\bb v, \bb v') = \lim_{l \to \infty} \cQ^d_l(\cT)(\bb v, \bb v')
	\end{equation}
	exists. Moreover, there exist cubes $\bb g_{\bb v} \in G^{[d]}$ such that for any $\bb v, \bb v' \in \Ob_{\cU}$ the limit in \eqref{eq:def-of-Q-infty} is given by 
	\begin{equation}\label{eq:def-of-Q-infty-2}
	\cQ^d(\cT)(\bb v, \bb v') = \bb g_{\bb v}^{-1} \cdot \cQ^d(\cT)(\bb v_0^{\cT}, \bb v_0^{\cT}) \cdot \bb g_{\bb v'}.
	\end{equation}
\end{lemma}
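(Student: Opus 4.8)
## Proof plan for Lemma \ref{lem:Q_limit}

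The plan is to first establish the existence of the limit in \eqref{eq:def-of-Q-infty} for the diagonal case $\bb v = \bb v' = \bb v_0^{\cT}$, and then bootstrap to the general case using the groupoid relations from Lemma \ref{lem:Q_transitivity}. For the diagonal case, observe that by Lemma \ref{lem:catvcomp} (functoriality of $\bblambda$) we have $\cQ^d_{l'}(\bb v_0, \bb v_0) \cdot \cQ^d_l(\bb v_0, \bb v_0) \subseteq \cQ^d_{l+l'}(\bb v_0, \bb v_0)$, so the sets $\cQ^d_l(\bb v_0, \bb v_0)$ form a ``sub-semigroup-like'' increasing-under-multiplication family inside the finite group $G^{[d]}$. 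The key point is that $\id_G^{[d]} \in \cQ^d_l(\bb v_0, \bb v_0)$ for all sufficiently large $l$: indeed, $\cT$ is synchronising and prolongable, so the word $\word w_0^{\cT}$ (suitably repeated and padded with leading zeros to reach any large length $l$, using idempotence) gives a morphism $\tilde e \colon \bb v_0 \to \bb v_0$ of degree $l$ with $\bblambda(\tilde e) = \id_G^{[d]}$. Once $\id_G^{[d]}$ lies in $\cQ^d_l$ for $l \geq l_1$, monotonicity gives $\cQ^d_l(\bb v_0,\bb v_0) \subseteq \cQ^d_{l+1}(\bb v_0,\bb v_0)$ for $l \geq l_1$, so this is an increasing chain of subsets of the finite set $G^{[d]}$, hence eventually constant; call the stable value $\cQ^d(\cT)(\bb v_0,\bb v_0)$. (In fact one should check it is a subgroup: it is closed under multiplication for large $l$ by Lemma \ref{lem:Q_transitivity}, contains the identity, and a finite multiplicatively closed subset of a group containing the identity is a subgroup.)

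Next I would handle general $\bb v, \bb v' \in \Ob_{\cU}$. By definition of $\Ob_{\cU}$ and Lemma \ref{lem:Ob0-reachable}, both $\Mor(\bb v_0, \bb v)$ and $\Mor(\bb v, \bb v_0)$ are nonempty, and similarly for $\bb v'$. Fix once and for all morphisms witnessing reachability in both directions; applying $\bblambda$ produces cubes $\bb p_{\bb v} \in \cQ^d_{m_{\bb v}}(\bb v_0, \bb v)$ and $\bb q_{\bb v} \in \cQ^d_{n_{\bb v}}(\bb v, \bb v_0)$. Using Lemma \ref{lem:Q_transitivity} repeatedly, for any $l$ large enough we get the two-sided inclusions
\begin{equation}\label{eq:sandwich-Q}
\bb q_{\bb v} \cdot \cQ^d_l(\bb v, \bb v') \cdot \bb p_{\bb v'} \subseteq \cQ^d_{l + n_{\bb v} + m_{\bb v'}}(\bb v_0, \bb v_0) = \cQ^d(\cT)(\bb v_0,\bb v_0)
\end{equation}
and, going the other way through $\bb v$ and $\bb v'$,
\begin{equation}\label{eq:sandwich-Q2}
\bb p_{\bb v} \cdot \cQ^d(\cT)(\bb v_0,\bb v_0) \cdot \bb q_{\bb v'} = \bb p_{\bb v}\cdot \cQ^d_{l'}(\bb v_0,\bb v_0)\cdot \bb q_{\bb v'} \subseteq \cQ^d_{l' + m_{\bb v} + n_{\bb v'}}(\bb v, \bb v')
\end{equation}
for $l'$ large. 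From \eqref{eq:sandwich-Q} we get $\cQ^d_l(\bb v,\bb v') \subseteq \bb q_{\bb v}^{-1}\cdot \cQ^d(\cT)(\bb v_0,\bb v_0) \cdot \bb p_{\bb v'}^{-1}$ for all large $l$, a fixed upper bound; combined with the monotonicity coming from inserting identity-labelled loops at $\bb v$ and $\bb v'$ (available for large degree by the synchronising-word argument applied to words synchronising the underlying automaton of $\cT$ to the relevant states — here one uses Lemma \ref{lem:Ob0-const-cube}-type reasoning to see such loops exist for objects in $\Ob_{\cU}$), the sets $\cQ^d_l(\bb v,\bb v')$ are eventually increasing and bounded, hence eventually constant. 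This proves the limit exists. To pin down the value, feed \eqref{eq:sandwich-Q2} back in: taking $l$ beyond all thresholds, $\cQ^d(\cT)(\bb v,\bb v')$ contains $\bb p_{\bb v}\cdot \cQ^d(\cT)(\bb v_0,\bb v_0)\cdot \bb q_{\bb v'}$ and is contained in $\bb q_{\bb v}^{-1}\cdot \cQ^d(\cT)(\bb v_0,\bb v_0)\cdot\bb p_{\bb v'}^{-1}$; a short computation using that $\cQ^d(\cT)(\bb v_0,\bb v_0)$ is a group and that $\bb q_{\bb v}\bb p_{\bb v}, \bb q_{\bb v'}\bb p_{\bb v'}$ are loops at $\bb v_0$ (hence lie in $\cQ^d(\cT)(\bb v_0,\bb v_0)$ for an appropriate degree, so their classes are absorbed) forces equality and gives \eqref{eq:def-of-Q-infty-2} with $\bb g_{\bb v} := \bb q_{\bb v}^{-1}$ modulo an element of $\cQ^d(\cT)(\bb v_0,\bb v_0)$ — one checks the choice is consistent, i.e.\ $\bb g_{\bb v}^{-1}\cQ^d(\cT)(\bb v_0,\bb v_0) = \bb p_{\bb v}\cQ^d(\cT)(\bb v_0,\bb v_0)$.

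The main obstacle I anticipate is the bookkeeping needed to show that $\cQ^d_l(\bb v, \bb v')$ is genuinely \emph{eventually constant} rather than merely eventually trapped between two fixed sets: this requires producing, for each $\bb v\in\Ob_{\cU}$ and all large $l$, a degree-$l$ endomorphism of $\bb v$ in $\cV$ whose $\bblambda$-image is $\id_G^{[d]}$, so that $\cQ^d_l(\bb v,\bb v')\cdot\{\id_G^{[d]}\}\subseteq\cQ^d_{l+1}(\bb v,\bb v')$ and monotonicity kicks in. Such a morphism comes from a synchronising word for the underlying automaton of $\cT$ sending every state to $\bb s$ (the constant-$\bb s$ component of $\bb v$ after padding), together with idempotence to fix the residue coordinates $\bb r$; the subtlety is that the synchronising word of $\cT$ synchronises to $s_0$, not to an arbitrary state, so one must first route from $\bb v$ back to $\bb v_0$, loop, and route forward — which is exactly the content of the sandwich inclusions \eqref{eq:sandwich-Q}–\eqref{eq:sandwich-Q2} and is why the groupoid language is convenient. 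Everything else is a routine application of Lemma \ref{lem:Q_transitivity} and the finiteness of $G^{[d]}$.
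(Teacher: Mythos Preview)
Your approach via the sandwich inclusions from Lemma \ref{lem:Q_transitivity} is correct and is essentially the paper's argument, but you are making two steps harder than necessary.

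For the diagonal case $\bb v = \bb v' = \bb v_0$, there is no need to invoke the synchronising word: since $\cT$ is prolongable, the degree-$1$ morphism $(1,\vec 0)\colon \bb v_0 \to \bb v_0$ already has $\bblambda$-image $\id_G^{[d]}$, so $\id_G^{[d]} \in \cQ^d_1(\bb v_0,\bb v_0)$ and monotonicity holds from $l=1$ onwards.

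More importantly, your ``main obstacle'' --- proving eventual monotonicity of $\cQ^d_l(\bb v,\bb v')$ for general $\bb v,\bb v'$ via identity-labelled loops at $\bb v$ --- is unnecessary, and your own worry that such loops need not exist at an arbitrary $\bb v$ is well-founded. The paper bypasses this entirely with a cardinality comparison: your first sandwich inclusion gives $\abs{\cQ^d_l(\bb v,\bb v')} \leq \abs{\cQ^d(\cT)(\bb v_0,\bb v_0)}$ for all large $l$, and your second gives the reverse inequality for all large $l$. Hence the first inclusion is an \emph{equality} for all large $l$; since its right-hand side $\bb q_{\bb v}^{-1}\,\cQ^d(\cT)(\bb v_0,\bb v_0)\,\bb p_{\bb v'}^{-1}$ is independent of $l$, the limit exists immediately and no separate monotonicity argument is needed. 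The remaining step, reconciling the left and right coset representatives into a single family $(\bb g_{\bb v})_{\bb v}$, is exactly the loop argument you already sketch: $\bb q_{\bb v}\bb p_{\bb v}$ labels a loop at $\bb v_0$, hence lies in the group $\cQ^d(\cT)(\bb v_0,\bb v_0)$ and is absorbed.
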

\begin{remark}
	Since $\cQ^d_l(\cT)(\bb v, \bb v')$ are finite, \eqref{eq:def-of-Q-infty} is just a shorthand for the statement that there exists $l_0 = l_0(\cT,\bb v, \bb v') \geq 0$ and a set $\cQ^d(\cT)(\bb v, \bb v')$ such that $\cQ^d_l(\cT)(\bb v, \bb v') = \cQ^d(\cT)(\bb v, \bb v')$ for all $l \geq l_0$.
\end{remark}
\begin{proof}
	Note first that $\cQ^d_1(\cT)(\bb v_0^{\cT}, \bb v_0^{\cT}) \neq 0$ contains the identity cube $\id_G^{[d]}$, arising from the morphism $(1,\vec 0) \colon \bb v_0^{\cT} \to \bb v_0^{\cT}$. It follows from Lemma \ref{lem:Q_transitivity} that the sequence $\cQ^d_l(\cT)(\bb v_0^{\cT}, \bb v_0^{\cT})$ is increasing in the sense that $\cQ^d_{l}(\cT)(\bb v_0^{\cT}, \bb v_0^{\cT}) \subseteq \cQ^d_{l+1}(\cT)(\bb v_0^{\cT}, \bb v_0^{\cT})$ for each $l \geq 0$. Since the ambient space $G^{[d]}$ is finite, it follows that the sequence $\cQ^d_l(\cT)(\bb v_0^{\cT}, \bb v_0^{\cT})$ needs to stabilise, and in particular the limit \eqref{eq:def-of-Q-infty} exists for $\bb v = \bb v' = \bb v_0^{\cT}$. 
	
	It follows from Lemma \ref{lem:Q_limit} that for any $m,m',l \geq 0$ we have the inclusion 
	\[
	\cQ^d_{m'}(\cT)(\bb v_0^{\cT}, \bb v) \cdot \cQ^d_l(\cT)(\bb v, \bb v') \cdot \cQ^d_m(\cT)(\bb v', \bb v_0^{\cT}) \subseteq \cQ^d_{m+m'+l}(\cT)(\bb v_0^{\cT}, \bb v_0^{\cT}).
	\]
	Since there exist morphisms $\bb v_0^{\cT} \to \bb v, \bb v' \to \bb v_0^{\cT}$, there exist $m,m' \geq 0$ and $\bb g_{\bb v}, \tilde{\bb g}_{\bb v'}$ (any elements of $\cQ^d_m(\cT)(\bb v_0^{\cT}, \bb v)$ and $\cQ^d_{m'}(\cT)(\bb v', \bb v_0^{\cT})^{-1}$ respectively) such that for all $l \geq 0$ we have  
	\[
	\bb g_{\bb v} \cdot \cQ^d_l(\cT)(\bb v, \bb v') \cdot \tilde{\bb g}_{\bb v'}^{-1} \subseteq \cQ^d_{m+m'+l}(\cT)(\bb v_0^{\cT}, \bb v_0^{\cT}).
	\]
	We thus conclude that if $l \geq 0$ is sufficiently large then 
	\begin{equation}\label{eq:574:1}
	 \cQ^d_l(\cT)(\bb v, \bb v')  \subseteq 
	 \bb g_{\bb v}^{-1} \cdot \cQ^d(\cT)(\bb v_0^{\cT}, \bb v_0^{\cT}) \cdot \tilde{\bb g}_{\bb v'}.
	\end{equation}
	Reasoning in a fully analogous manner (with pairs $(\bb v, \bb v')$ and $(\bb v_0^{\cT}, \bb v_0^{\cT})$ swapped), for sufficiently large $l$ we obtain the reverse inclusion  
	\begin{equation}\label{eq:574:2}
	 \cQ^d(\cT)(\bb v_0^{\cT}, \bb v_0^{\cT})  \subseteq 
	 \bb h_{\bb v}^{-1} \cdot \cQ^d_{l}(\cT)(\bb v, \bb v')\cdot \tilde{\bb h}_{\bb v'},
	\end{equation}
	for some cubes $\bb h_{\bb v}, \tilde{\bb h}_{\bb v'} \in G^{[d]}$. Comparing cardinalities we conclude that both \eqref{eq:574:1} and \eqref{eq:574:2} are in fact equalities. Hence, the limit  \eqref{eq:def-of-Q-infty} exists for all $\bb v, \bb v' \in \Ob_{\cU}$ and  
	\begin{equation}\label{eq:574:1a}
	 \cQ^d(\cT)(\bb v, \bb v')  = 
	 \bb g_{\bb v}^{-1} \cdot \cQ^d(\cT)(\bb v_0^{\cT}, \bb v_0^{\cT}) \cdot \tilde{\bb g}_{\bb v'}.
	\end{equation}
	
	Note that $\bb g_{\bb v}$ and $\tilde{\bb g}_{\bb v}$ are determined up to multiplication on the left 
	 by an element of $\cQ^d(\cT)(\bb v_0^{\cT}, \bb v_0^{\cT})$ and we may take $\bb g_{\bb v_0^{\cT}} = \tilde{\bb g}_{\bb v_0^{\cT}}  = \id_G^{[d]}$. Hence, $\cQ^d(\cT)(\bb v_0^{\cT}, \bb v_0^{\cT})$ is a group. It now follows from Lemma \ref{lem:Q_transitivity} that
	 $\cQ^d(\cT)(\bb v_0^{\cT}, \bb v) \cdot \cQ^d(\cT)(\bb v, \bb v_0^{\cT}) \subseteq \cQ^d(\cT)(\bb v_0^{\cT}, \bb v_0^{\cT})$, or equivalently
	\begin{equation}\label{eq:574:1b}
	  \cQ^d(\cT)(\bb v_0^{\cT}, \bb v_0^{\cT}) \cdot \tilde{\bb g}_{\bb v} \bb g_{\bb v}^{-1} \cdot \cQ^d(\cT)(\bb v_0^{\cT}, \bb v_0^{\cT})
	  \subseteq  \cQ^d(\cT)(\bb v_0^{\cT}, \bb v_0^{\cT}),
	\end{equation}
meaning that $\tilde{\bb g}_{\bb v} \bb g_{\bb v}^{-1} \in \cQ^d(\cT)(\bb v_0^{\cT}, \bb v_0^{\cT})$. Hence, we may take $\tilde{\bb g}_{\bb v} = \bb g_{\bb v}$, since we can multiply $\tilde{\bb g}_{\bb v}$ from the left with $(\tilde{\bb g}_{\bb v} \bb g_{\bb v}^{-1})^{-1} \in \cQ^d(\cT)(\bb v_0^{\cT}, \bb v_0^{\cT})$. 
\end{proof}

As a consequence of Lemma \ref{lem:Q_limit}, the sets $\cQ^d(\cT)(\bb v, \bb v')$ for $\bb v, \bb v' \in \Ob_{\cU}$ form a groupoid, in the sense that we have the following variant of Lemma \ref{lem:Q_transitivity}.

\begin{corollary}\label{cor:Q_transitivity}
	Let $\cT$ be an \egeaab{} and let $\bb v, \bb v', \bb v'' \in \Ob_{\cU}$. Then 
	\[
		\cQ^d(\cT)(\bb v, \bb v') \cdot \cQ^d(\cT)(\bb v', \bb v'') = \cQ^d(\cT)(\bb v, \bb v'').
	\] 
\end{corollary}

In particular, in order to understand all of the sets $\cQ^d(\bb v, \bb v')$ (up to conjugation) it will suffice to understand one of them. This motivates us to put
\begin{equation}\label{eq:def-of-Q-no-l}
	\cQ^d(\cT) = \cQ^d(\cT)(\bb v_0^{\cT}, \bb v_0^{\cT}).
\end{equation}

We also mention that the sets $\cQ^d(\cT)$ are easy to describe for small values of $d$. 

\begin{lemma}\label{lem:Q_small_d}
	Let $\cT$ be an \egeaab{} and $d \in \{0,1\}$. Then
	\[
		\cQ^d(\cT) = G^{[d]}.
	\]
\end{lemma}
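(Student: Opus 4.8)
The plan is to verify the claim $\cQ^d(\cT)=G^{[d]}$ directly for $d\in\{0,1\}$ by exhibiting enough morphisms in $\Mor_l(\bb v_0^{\cT},\bb v_0^{\cT})$, for large $l$, whose fundamental labels $\bblambda(\tilde e)$ exhaust $G^{[d]}$. Recall that $G^{[0]}$ is simply $G$ (indexed by the single element of $\{0,1\}^0$) and $G^{[1]}=G\times G$ (indexed by the two elements $\vec\omega=0$ and $\vec\omega=1$). By Lemma~\ref{lem:Q_limit} the set $\cQ^d(\cT)=\cQ^d(\cT)(\bb v_0^{\cT},\bb v_0^{\cT})$ is a group and it is the stabilised value of the increasing sequence $\cQ^d_l(\cT)(\bb v_0^{\cT},\bb v_0^{\cT})$; so it suffices to show that every element of $G^{[d]}$ appears as some $\bblambda(\tilde e)$ for a morphism $\tilde e\colon\bb v_0^{\cT}\to\bb v_0^{\cT}$ of sufficiently large degree.

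For $d=0$ this is essentially property~\ref{item:74B} of an efficient \geaab{}: the base object is $\bb v_0=(s_0,0)$, a morphism $\tilde e=(l,e)\colon \bb v_0\to\bb v_0$ is a digit string of length $l$ taking $s_0$ back to $s_0$ with $\lfloor e/k^l\rfloor = 0$ (automatic since $e\in[k^l]$), and its label is $\lambda(s_0,(e)_k^l)$. Property~\ref{item:74B} applied with $s=s'=s_0$ says precisely that $\{\lambda(s_0,\word w)\mid \word w\in\Sigma_k^l,\ \delta(s_0,\word w)=s_0\}=G$ for all sufficiently large $l$, which gives $\cQ^0(\cT)=G=G^{[0]}$.

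For $d=1$ I would argue as follows. A morphism $\tilde e=(l,(e_0,e_1))\colon \bb v_0\to\bb v_0$ has label $\bblambda(\tilde e)=\bigl(\lambda(s_0,(e_0)_k^l),\ \lambda(s_0,(e_0+e_1)_k^l)\bigr)$, subject to the constraints coming from \eqref{eq:deq-s-and-r'}: $\delta(s_0,(e_0)_k^l)=s_0$, $\delta(s_0,(e_0+e_1)_k^l)=s_0$, and the resulting $\bb r'$ being $\bb 0$, i.e.\ $e_0<k^l$ and $e_0+e_1<k^l$. So I need to produce, for any prescribed pair $(g_0,g_1)\in G\times G$, integers $0\le e_0\le e_0+e_1<k^l$ with $\delta(s_0,(e_0)_k^l)=\delta(s_0,(e_0+e_1)_k^l)=s_0$, $\lambda(s_0,(e_0)_k^l)=g_0$ and $\lambda(s_0,(e_0+e_1)_k^l)=g_1$. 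The key device is the synchronising word $\word w_0^{\cT}$: prepend a long block of the form (many copies of $\word w_0^{\cT}$) to reset the state to $s_0$ and the label to $\id_G$ on every coordinate independently of what comes before, and use property~\ref{item:74B} to realise $g_0$ (resp.\ $g_1$) as $\lambda(s_0,\cdot)$ along a path of prescribed length from $s_0$ to $s_0$. Concretely, pick $m$ large, let $W=[(\word w_0^{\cT})^m]_k$ padded to some length $p$ so that running from $s_0$ or from the intermediate states yields $s_0$ with label $\id$; then choose words $\word u_0,\word u_1\in\Sigma_k^l$ with $\delta(s_0,\word u_i)=s_0$, $\lambda(s_0,\word u_i)=g_i$, and set $e_0=[\word w_0^{\cT}$-padding$\ \word u_0]_k$, $e_1=[\cdots\word u_1]_k-e_0$, choosing the lengths and leading-zero padding so that $0\le e_1$ and $e_0+e_1<k^l$; here one exploits that since $\word u_0$ can be taken to start with $\texttt 0$'s after the synchronising block (as $\lambda(s,\texttt 0)=\id_G$ and $\delta(s,\texttt 0)=s$ for all $s\in S$, by idempotence and $\lambda(\cdot,\mathtt 0)=\id_G$), one has freedom to make $(e_0)_k^l$ and $(e_0+e_1)_k^l$ differ only in a controlled low-order block. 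Combining, $\bblambda(\tilde e)=(g_0,g_1)$, so $\cQ^1_l(\cT)(\bb v_0,\bb v_0)\supseteq G\times G$ for large $l$; the reverse inclusion is trivial, giving $\cQ^1(\cT)=G^{[1]}$.

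The main obstacle is purely bookkeeping in the $d=1$ case: arranging the carries so that the two linear combinations $1\vec\omega\cdot\vec e+r_{\vec\omega}$ (here $e_0$ and $e_0+e_1$) both stay in $[k^l]$ and both read, as base-$k$ strings of length $l$, paths from $s_0$ to $s_0$ with the desired labels, while keeping $e_1\ge 0$. Idempotence, prolongability, and the existence of the synchronising word $\word w_0^{\cT}$ (with $\lambda(\cdot,\word w_0^{\cT})=\id_G$ on all states) are exactly the tools that make this flexible: one separates the construction into a synchronising prefix that kills all prior history on both coordinates, a middle segment realising $g_0$ and $g_1$ via property~\ref{item:74B}, and leading-zero padding to align lengths. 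I expect this to reduce, after choosing $l$ large enough, to the observation that $\Sigma_k^l$ contains enough words of each type, which is exactly property~\ref{item:74B}; so the proof is short once the indexing is set up, and I would phrase it by citing \ref{item:74B} together with the definition \eqref{eq:deq-s-and-r'} of morphisms in $\cV$ and the groupoid stabilisation from Lemma~\ref{lem:Q_limit}.
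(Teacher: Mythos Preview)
Your treatment of $d=0$ is correct and matches the paper: it is exactly property~\ref{item:74B} applied with $s=s'=s_0$.

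For $d=1$ you are working much harder than necessary, and the sketch has a small error and an unfilled gap. The error: you write ``$\delta(s,\texttt 0)=s$ for all $s\in S$, by idempotence'', but idempotence only gives $\delta(s,\texttt{00})=\delta(s,\texttt 0)$; the fixed-point statement holds for $s_0$ (prolongability) and for states in the image of $\delta(\cdot,\texttt 0)$, not for arbitrary $s$. In your construction this happens not to matter, since after the synchronising block you are at $s_0$, but the justification is wrong. The gap: the ``bookkeeping'' you flag as the main obstacle (arranging $e_1\ge 0$ while hitting prescribed labels on both coordinates) is never actually carried out; you only assert that padding gives enough freedom.

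The paper's one-line proof almost certainly intends the following much shorter argument, which you set up but then abandon. You already invoked Lemma~\ref{lem:Q_limit} to say $\cQ^1(\cT)$ is a group; use it. Taking $e_0=0$ and choosing $e_1\in[k^l]$ via \ref{item:74B} gives every cube $(\id_G,g)$; taking $e_1=0$ and choosing $e_0$ via \ref{item:74B} gives every diagonal cube $(g,g)$. These two families generate $G\times G=G^{[1]}$, so $\cQ^1(\cT)=G^{[1]}$. No synchronising words, no carry bookkeeping, no padding. Your direct-construction route can be made to work (e.g.\ prepend $\texttt 0$ to the word realising $g_0$ and a nonzero digit to the one realising $g_1$, using \ref{item:74B} from the appropriate intermediate state), but it buys nothing over the two-line group argument.
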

\begin{proof}
	Immediate consequence of the definition of $\cQ^d(\cT)$ and property \ref{item:74B}.
\end{proof}

\subsection{Characteristic factors}\label{ssec:Cubes:factors}

A morphism between \geaab{}  $\cT$ and $\bar\cT$ given by $(\phi,\pi)$ is a \emph{factor map} if both $\phi \colon S \to \bar S$ and $\pi \colon G \to \bar G$ are surjective. In this case, $\bar\cT$ is a \emph{factor} of $\cT$. The group homomorphism $\pi$ induces a projection map $\bbpi \colon G^{[d]} \to \bar G^{[d]}$. As $\bblambda$ is a functor, $\bbpi(\cQ^d(\cT)) \subset \cQ^d(\bar\cT)$ for all $d \geq 0$. In fact, for large $l \geq 0$ we have the following commutative diagram:
\begin{center}
\begin{tikzcd}
\Mor_{l}(\bb v_0, \bb v_0) \arrow[r, "\id", hook] \arrow[d, "\bblambda", two heads]
& \Mor_{l}(\bar{\bb v}_0, \bar{\bb v}_0)  \arrow[d, "\bblambda", two heads] \\
\cQ^d_l(\cT) \arrow[r, "\bbpi"]
& \cQ^d_l(\bar\cT)
\end{tikzcd}
\end{center}
The map labelled $\id$ takes the morphism $(l,\vec e)\colon \bb v_0 \to \bb v_0$ to morphism given by the same data $(l,\vec e)\colon \bar{\bb v}_0 \to \bar{\bb v}_0$. We will say that the factor $\bar\cT$ of $\cT$ is \emph{characteristic} if for each $d \geq 0$ we have the equality $\cQ^d(\cT) = \bbpi^{-1}\bra{\cQ^d(\bar\cT)}$. Note that if $\bar\cT$ is a characteristic factor of $\cT$ then the cube groups $\cQ^d(\cT)$ are entirely described in terms of the simpler cube groups $\cQ^d(\bar\cT)$. It is also easy to verify that if $\bar \cT$ is a characteristic factor of $\cT$ then any characteristic factor of $\bar\cT$ is also a characteristic factor of $\cT$.

For instance, a \geaab{} is always its own factor, which is always characteristic. A possibly even more trivial\footnote{no pun intended} example of a factor is the trivial \geaab{} $\cT_{\textrm{triv}}$ with a single state, trivial group, and the other data defined in the only possible way. In fact, $\cT_{\textrm{triv}}$ is the terminal object, meaning that it is a factor of any \geaab{} . The trivial \geaab{} is a characteristic factor of $\cT$ if and only if $\cQ^d(\cT) = G^{[d]}$ for all $d \geq 0$.

%
%

\begin{lemma}\label{lem:trans_fact_is_natural}
	Let $\cT$ be an \egeaab{}  and let $(\phi,\pi)$ be a factor map from $\cT$ to $\bar \cT$. If $\ker \pi \subset G_0$ then $\bar \cT$ is an \egeaab{} and $d'_{\cT} = d'_{\bar{\cT}}$.
\end{lemma}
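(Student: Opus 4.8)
The plan is to verify each of the \emph{\egeaab{}} axioms for $\bar\cT$ by transporting the corresponding property of $\cT$ through the surjective factor map $(\phi,\pi)$, and then to compare the two invariants $d'_{\cT}$ and $d'_{\bar\cT}$. First I would record the easy structural facts: since $\phi$ is a surjective morphism of automata, strong connectedness of the underlying automaton of $\cT$ passes to that of $\bar\cT$, and since $\phi(s_0)=\bar s_0$, $\pi(\id_G)=\id_{\bar G}$, prolongability and the condition $\lambda(s,\texttt 0)=\id_G$ (for all $s$) push forward to $\bar\lambda(\bar s,\texttt 0)=\id_{\bar G}$ for all $\bar s\in\bar S$ by surjectivity of $\phi$. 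Idempotence of $\bar\cT$ follows the same way: $\bar\delta(\bar s,\texttt{00})=\bar\delta(\bar s,\texttt 0)$ and $\bar\lambda(\bar s,\texttt{00})=\bar\lambda(\bar s,\texttt 0)$ are images under $(\phi,\pi)$ of the corresponding identities for $\cT$, again using that $\phi$ is onto. Synchronisation: a word $\word w^{\cT}_0$ synchronising $\cT$ to $s_0$ satisfies $\delta(s,\word w^{\cT}_0)=s_0$ and $\lambda(s,\word w^{\cT}_0)=\id_G$ for all $s$; applying $\phi$ and $\pi$ and using surjectivity of $\phi$ shows the same word synchronises $\bar\cT$ to $\bar s_0$.

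Next I would handle properties \ref{item:74B}, \ref{item:74C}, \ref{item:74D}, which is where the hypothesis $\ker\pi\subset G_0$ enters. The natural candidates are $\bar G_0:=\pi(G_0)$, $\bar g_0:=\pi(g_0)$, and $d'_{\bar\cT}:=d'_{\cT}$. For \ref{item:74B}: given $\bar s,\bar s'\in\bar S$ and large $l$, pick preimages $s,s'$; by \ref{item:74B} for $\cT$ the labels $\lambda(s,\word w)$ over words $\word w\in\Sigma_k^l$ with $\delta(s,\word w)=s'$ exhaust $G$, hence their images exhaust $\pi(G)=\bar G$; but every word with $\bar\delta(\bar s,\word w)=\bar s'$ arises this way, so equality holds (using that the set of $\word w$ with $\bar\delta(\bar s,\word w)=\bar s'$ contains the set with $\delta(s,\word w)=s'$, and the labels of the former project into $\bar G$ automatically). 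For \ref{item:74C}: the image of $\set{\lambda(s,\word w)}{\delta(s,\word w)=s',\ [\word w]_k\equiv r}=G_0g_0^r$ under $\pi$ is $\bar G_0\bar g_0^r$; conversely, any word over $\Sigma_k^*$ with $\bar\delta(\bar s,\word w)=\bar s'$ and $[\word w]_k\equiv r\bmod d'_{\cT}$ gives, via its $\cT$-label, an element of $G_0g_0^{r}$ once we also track the $\cT$-state — here one must argue that the fibre over $\bar s'$ is exactly $\phi^{-1}(\bar s')$ and sum over the finitely many preimages $s'$, all of which are reachable from $s$ by strong connectedness, to see that \emph{all} of $G_0g_0^r$ (hence its image $\bar G_0\bar g_0^r$) is attained. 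Normality of $\bar G_0$ in $\bar G$ is immediate since $G_0\triangleleft G$ and $\pi$ is onto.

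The crux is property \ref{item:74D} together with the claimed equality $d'_{\cT}=d'_{\bar\cT}$; these two are really the same point. By definition $d'_{\bar\cT}=\gcd^*_k\set{[\word w]_k}{\bar\delta(\bar s_0,\word w)=\bar s_0,\ \bar\lambda(\bar s_0,\word w)=\id_{\bar G}}$, and the analogous set for $\cT$ (with $\id_G$ in place of $\id_{\bar G}$) is contained in the one for $\bar\cT$, since $\bar\lambda(\bar s_0,\word w)=\pi(\lambda(s_0,\word w))$ and $\phi(\delta(s_0,\word w))=\bar\delta(\bar s_0,\word w)$; this gives $d'_{\bar\cT}\mid d'_{\cT}$. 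For the reverse, one uses $\ker\pi\subset G_0$: if $\bar\lambda(\bar s_0,\word w)=\id_{\bar G}$ then $\lambda(s_0,\word w)\in\ker\pi\subset G_0$, so by \ref{item:74C} applied to $\cT$ (with $\bb s=\bb s'=s_0$, using that $\word w\word w^{\cT}_0$ keeps us at $s_0$ while adjusting the label into $G_0g_0^r$) we get $[\word w]_k\equiv 0\bmod d'_{\cT}$ — more carefully, property \ref{item:74C} forces $g_0^{[\word w]_k}\in G_0$ whenever $\lambda(s_0,\word w)\in G_0$ and $\delta(s_0,\word w)=s_0$, and since $G/G_0$ is cyclic of order $d'_{\cT}$ generated by $g_0$, this means $d'_{\cT}\mid[\word w]_k$. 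Hence every element of the defining set for $d'_{\bar\cT}$ is divisible by $d'_{\cT}$, giving $d'_{\cT}\mid d'_{\bar\cT}$ and therefore $d'_{\cT}=d'_{\bar\cT}$. Property \ref{item:74D} for $\bar\cT$ with $\bar G_0,\bar g_0,d'_{\bar\cT}=d'_{\cT}$ then follows: one direction of the $\gcd^*_k$ equality is monotonicity (the $\cT$-set injects into the $\bar\cT$-set once $g\in\bar G_0$ is lifted to $G_0$ using $\ker\pi\subset G_0$, giving $\gcd^*_k\le d'_{\cT}$ for $\bar\cT$, i.e.\ $\ge$ in the divisibility ordering is backwards, so I would phrase it as: the $\bar\cT$-gcd divides $d'_{\cT}$), and the other from the observation that $d'=d'_{\bar\cT}$ divides every $[\word w]_k$ in the $\bar\cT$-set by the same cyclic-quotient argument as above. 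I expect the main obstacle to be the bookkeeping in \ref{item:74D}/$d'$: one must be careful that lifting a label $g\in\bar G_0$ along $\pi$ to an element of $G_0$ (legitimate precisely because $\ker\pi\subset G_0$) can be done \emph{compatibly} with fixing the endpoints $\bar s,\bar s'$, which requires combining strong connectedness, the synchronising word, and property \ref{item:74C} of $\cT$ rather than \ref{item:74D} alone.
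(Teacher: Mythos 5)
Your proof is correct and takes essentially the same route as the paper: push each efficiency axiom forward through $(\phi,\pi)$ with $\bar G_0:=\pi(G_0)$, $\bar g_0:=\pi(g_0)$, $d'_{\bar\cT}:=d'_{\cT}$, and use $\ker\pi\subset G_0$ precisely where you do, namely to guarantee that $\pi^{-1}(\bar g)\subset G_0$ for $\bar g\in\bar G_0$ so that \ref{item:74D} of $\cT$ can be applied to each fibre. The paper folds the equality $d'_{\cT}=d'_{\bar\cT}$ directly into the verification of \ref{item:74D} (the $\gcd^*_k$ of a union of sets each with $\gcd^*_k=d'$ is again $d'$), whereas you argue the two divisibilities separately via the cyclic-quotient observation $g_0^r\in G_0\Rightarrow d'\mid r$; both are fine and essentially equivalent.
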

\begin{proof}
We verify each of the defining properties of an \egeaab{} in turn. It is clear that $\bar\cT$ is strongly connected and that $\bar\cT$ is synchronising; in fact, if $\word w \in \Sigma_k^*$ is synchronising to the state $s \in S$ for $\cT$ then $\word w$ is also synchronising to the state $\phi(s) \in \bar S$ for $\bar\cT$. 
We also find that $\bar\cT$ is idempotent and $\bar{\lambda}(\bar s, \mathtt{0}) = \id$ for all $\bar s \in \bar S$.
Put also $\bar G_0 = \pi(G_0)$ and $\bar g_0 = \pi(g_0)$.
	
	For \ref{item:74B}, let $\bar s, \bar s' \in \bar S$ and let $s \in \phi^{-1}(\bar s)$ and $s' \in \phi^{-1}(\bar s')$. Then
	\begin{align*}
		\set{ \bar \lambda(\bar s,\word w) }{ \word w \in \Sigma_k^l, \ \bar \delta(\bar s) = \bar{s}'} &\supseteq
		\set{ \pi\bra{ \lambda(s,\word w)} }{ \word w \in \Sigma_k^l, \ \delta(s) = {s}'} = \bar G,
	\end{align*}
	and the reverse inclusion is automatic.
	
For  \ref{item:74C}, let. Let $\bar s, \bar s' \in \bar S$ and $s \in \phi^{-1}(\bar s)$. Then
\begin{align*}
	&\set{ \bar\lambda(\bar s, \word w)}{ \word w \in \Sigma_k^*,\ \bar\delta(\bar s, \word w) = \bar s', [\word w]_k \equiv r \bmod d'}
	\\ = &\bigcup_{s' \in \phi^{-1}(\bar s')} 
	\set{ \pi( \lambda(s, \word w)) }{ \word w \in \Sigma_k^*,\ \delta(s, \word w) = s', [\word w]_k \equiv r \bmod d'}
	\\ = &\bigcup_{s' \in \phi^{-1}(\bar s')} \pi(g_0^r G_0) = \bar g_0^r \bar G_0.
\end{align*}

For \ref{item:74D}, let $\bar s, \bar s' \in \bar S$, $\bar g \in \bar G_0$, let $s \in \phi^{-1}(\bar s)$.
Then 
\begin{align*}
	&\gcd{}_k^* \bra{\set{[\word w]_k}{\word w \in \Sigma_k^l,\ \bar\delta(\bar s,\word w) = \bar s',\ \bar\lambda(\bar s,\word w) = \bar g}}
	\\ = &\gcd{}_k^*\bra{
	\bigcup_{g \in \pi^{-1}(\bar g)} \bigcup_{s' \in \phi^{-1}(\bar s')}
	\set{[\word w]_k}{\word w \in \Sigma_k^l,\ \delta(s,\word w) = s',\ \lambda(s,\word w) = g}
	}
	\\ = &\gcd{}_k^*\bra{	
	\set{
	c(g,s')}{
	{g \in \pi^{-1}(\bar g)},\ {s' \in \phi^{-1}(\bar s')}
	}
	} = d',	
\end{align*}
where $c(g,s')$ is, thanks to \ref{item:74D} for $\cT$, given by
\[
c(g,s') = \gcd{}_k^*\bra{\set{[\word w]_k}{\word w \in \Sigma_k^l,\ \delta(s,\word w) = s',\ \lambda(s,\word w) = g}} = d'.	\qedhere
\]
\end{proof}

\subsection{Group quotients}\label{ssec:Cubes:grp_quot}

Let $\cT = (S, s_0, \Sigma_k, \delta, G,  {\lambda})$ be a \geaab{} . One of the basic ways to construct a factor of $\cT$ is to leave the state set unaltered and replace $G$ with a quotient group. 
More precisely, for a normal subgroup $H < G$, we can consider the quotient \geaab{} without output $\cT/H = (S, s_0, \Sigma_k, \delta, G/H,  \bar{\lambda})$ with the same underlying automaton and group labels given by $\bar \lambda(s,j) = \overline {\lambda(s,j)} \in G/H$ for $s \in S$, $j \in \Sigma_k$. Thus defined \geaab{} is a factor of $\cT$, with the factor map given by $(\id_S,\pi)$, where $\pi \colon G \to G/H$ is the quotient map.
The purpose of this section is to identify an easily verifiable criterion ensuring that the factor $\cT/H$ is characteristic. As a convenient byproduct, this will allow us to mostly suppress the dependency on the dimension $d$ from now on.

In fact, it is not hard to identify the maximal normal subgroup of $G$ such that the corresponding factor is characteristic. Let $H < G$ be normal and let
$\pi \colon G \to G/H$ denote the quotient map. For any $d \geq 0$, the map $\bbpi \colon \cQ^d(\cT) \to \cQ^d(\cT/H)$ is surjective and for any $\bb g \in \cQ^d(\cT)$ we have 
\( \bbpi^{-1}(\bbpi(\bb g)) = \bb g H^{[d]}. \)
It follows that $\cT/H$ is characteristic if and only if $H^{[d]} \subset \cQ^d(\cT)$. In particular, if $\cT/H$ is characteristic then $\cQ^d(\cT)$ contains all cubes with an element of $h$ at one vertex and $\id_G$ elsewhere. In order to have convenient access to such cubes, for $g \in G$ and $\vec\sigma \in \{0,1\}^d$ put
\begin{equation}
\label{eq:def-of-c-cube}
	\bb c^d_{\vec\sigma}(h) = \bra{h^{\ifbra{\vec\omega = \vec\sigma}}}_{\vec\omega \in \{0,1\}^d} = \bra{c_{\vec\omega}}_{\vec\omega \in \{0,1\}^d} \text{ where } c_{\vec\omega} = 
\begin{cases}	
	 h & \text{ if }  \vec\omega = \vec\sigma,\\
	 \id_G & \text{ if } \vec\omega \neq \vec\sigma.
\end{cases}
\end{equation}
We also use the shorthand $\vec 1 = (1,1,\dots,1) \in \{0,1\}^d$, where $d$ will always be clear from the context.
This motivates us to define
\begin{equation}
	\label{eq:def-of-K}
	K = K(\cT) = \set{ h \in G }{ \bb c_{\vec\sigma}^d(h) \in \cQ^d(\cT) \text{ for all } d \geq 0 \text{ and } \vec\sigma \in \{0,1\}^d}.
\end{equation}
Since $\bb c_{\vec\sigma}^d \colon G \to G^{[d]}$ is a group homomorphism for each $d \geq 0$  and $\vec\sigma \in \{0,1\}^d$, $K$ is a group. 
As any cube can be written as a product of cubes with a single non-identity entry, the condition $H^{[d]} \subset \cQ^d(\cT)$ for all $d \geq 0$ holds if and only if $H < K$. If $\cT$ is an \egea{} then \eqref{eq:def-of-K} and \ref{item:74C} guarantee that $K < G_0$.

\begin{proposition}\label{prop:grp_quot_is_char}
	Let $\cT$ be an \egeaab{} and let $H < G$ be a normal subgroup. Then the following conditions are equivalent:
\begin{enumerate}
\item $\cT/H$ is a characteristic;
\item $H < K(\cT)$.
\end{enumerate}
\end{proposition}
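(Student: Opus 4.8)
The plan is to prove the two implications of Proposition~\ref{prop:grp_quot_is_char} separately, using the discussion that immediately precedes the statement.

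\textbf{From (2) to (1).} Suppose $H < K(\cT)$. Fix $d \geq 0$; we must show $\cT/H$ is characteristic, i.e.\ $\cQ^d(\cT) = \bbpi^{-1}\bra{\cQ^d(\cT/H)}$, where $\bbpi\colon G^{[d]} \to (G/H)^{[d]}$ is induced by the quotient map $\pi \colon G \to G/H$. The inclusion $\cQ^d(\cT) \subset \bbpi^{-1}\bra{\cQ^d(\cT/H)}$ always holds because $\bblambda$ is a functor, as noted before the statement. For the reverse inclusion, we use that $\bbpi$ restricted to $\cQ^d(\cT)$ surjects onto $\cQ^d(\cT/H)$ and that, for any $\bb g \in \cQ^d(\cT)$, the fibre of $\bbpi$ through $\bbpi(\bb g)$ equals $\bb g H^{[d]}$ (this is exactly the observation recorded in the paragraph preceding the proposition). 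Hence it suffices to check $H^{[d]} \subset \cQ^d(\cT)$: then $\bbpi^{-1}\bra{\cQ^d(\cT/H)}$ is covered by cosets $\bb g H^{[d]}$ with $\bb g \in \cQ^d(\cT)$, each of which lies in $\cQ^d(\cT)$ because $\cQ^d(\cT)$ is a group (Lemma~\ref{lem:Q_limit} together with Corollary~\ref{cor:Q_transitivity} applied with $\bb v = \bb v' = \bb v_0^{\cT}$, cf.\ \eqref{eq:def-of-Q-no-l}). To see $H^{[d]} \subset \cQ^d(\cT)$: any element of $H^{[d]}$ is a finite product of the cubes $\bb c_{\vec\sigma}^d(h)$ with $h \in H$ and $\vec\sigma \in \{0,1\}^d$ (one absorbs the entries one vertex at a time), each of which lies in $\cQ^d(\cT)$ by the definition \eqref{eq:def-of-K} of $K$ and the hypothesis $H < K(\cT)$, and $\cQ^d(\cT)$ is closed under products.

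\textbf{From (1) to (2).} Conversely, suppose $\cT/H$ is characteristic. Fix $h \in H$, $d \geq 0$ and $\vec\sigma \in \{0,1\}^d$; we must show $\bb c_{\vec\sigma}^d(h) \in \cQ^d(\cT)$, which then gives $h \in K(\cT)$ and hence $H < K(\cT)$. The cube $\bb c_{\vec\sigma}^d(h)$ has all entries in $H$ (one entry is $h$, the rest are $\id_G \in H$), so $\bbpi\bra{\bb c_{\vec\sigma}^d(h)}$ is the identity cube of $(G/H)^{[d]}$. The identity cube always lies in $\cQ^d(\cT/H)$ (it is $\bblambda$ of the morphism $(l,\vec 0)\colon \bar{\bb v}_0 \to \bar{\bb v}_0$, cf.\ the proof of Lemma~\ref{lem:Q_limit}). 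Since $\cT/H$ is characteristic, $\cQ^d(\cT) = \bbpi^{-1}\bra{\cQ^d(\cT/H)}$, so $\bb c_{\vec\sigma}^d(h) \in \bbpi^{-1}\bra{\cQ^d(\cT/H)} = \cQ^d(\cT)$, as required.

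The argument is essentially a bookkeeping exercise built on the paragraph of observations stated just before the proposition, so there is no serious obstacle; the one point that needs a little care is the verification that every element of $H^{[d]}$ decomposes as a product of single-vertex cubes $\bb c_{\vec\sigma}^d(h)$ with $h \in H$ — this is immediate since the maps $\bb c_{\vec\sigma}^d$ are group homomorphisms and together their images generate $H^{[d]}$ — and the fact, used in both directions, that $\cQ^d(\cT) = \cQ^d(\cT)(\bb v_0^{\cT},\bb v_0^{\cT})$ is genuinely a group, which is Lemma~\ref{lem:Q_limit}.
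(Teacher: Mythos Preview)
Your proof is correct and follows exactly the same route as the paper: the paper's proof reads ``Immediate consequence of the above discussion,'' and that discussion establishes precisely the two equivalences you spell out (that $\cT/H$ is characteristic iff $H^{[d]}\subset\cQ^d(\cT)$ for all $d$, via surjectivity of $\bbpi$ and the fibre description, and that $H^{[d]}\subset\cQ^d(\cT)$ for all $d$ iff $H<K$, via the single-vertex-cube decomposition). You have simply written out the details that the paper leaves implicit.
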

\begin{proof}
	Immediate consequence of the above discussion.
\end{proof}

We devote the remainder of this section to obtaining a description of $K$ that is easier to work with.
Fix a value of $d \geq 0$ for now, and let $\cT$ be a \geaab{}. For each $1 \leq j \leq d+1$, there is a natural projection $\pi_j \colon \{0,1\}^{d+1} \to \{0,1\}^{d}$ which discards the $j$-th coordinate, that is,
\[ \pi_j( \omega_1,\omega_2,\dots,\omega_{j-1},\omega_{j},\omega_{j+1},\dots \omega_{d+1} ) = (\omega_1,\dots,\omega_{j-1},\omega_{j+1},\dots,\omega_{d+1}) \] Accordingly, for each $1 \leq j \leq d+1$, we have the embedding $\iota_j \colon G^{[d]} \to G^{[d+1]}$ which copies the entries along the $j$-th coordinate, that is,
\[
\iota_j(\bb g) = \bra{ g_{\pi_j(\vec\omega)} }_{\vec \omega \in \{0,1\}^{d+1}}.
\]

\begin{lemma}\label{lem:Q-dim-inclusion}
	Let $1 \leq j \leq d+1$ and let $\cT$ be an \egeaab{}. Then
	\begin{equation}\label{eq:315:1}
		\iota_j \bra{ \cQ^d(\cT) } \subset  \cQ^{d+1}(\cT).
	\end{equation}
\end{lemma}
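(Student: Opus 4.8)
The plan is to unwind the definitions and to realise the map $\iota_j$ at the level of morphisms of $\cV^{d+1}(\cT)$ via the elementary operation of inserting a zero increment into the defining data of a morphism.

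First I would recall that, by \eqref{eq:def-of-Q-no-l} together with the stabilisation established inside the proof of Lemma \ref{lem:Q_limit}, for all sufficiently large $l$ one has $\cQ^d(\cT)=\cQ^d_l(\cT)(\bb v_0^{\cT},\bb v_0^{\cT})$, which consists of the cubes $\bblambda(\tilde e)$ as $\tilde e=(l,\vec e)$ ranges over the degree-$l$ self-morphisms of the base object $\bb v_0^{\cT}=(s_0^{[d]},0^{[d]})$. Since the residue datum of $\bb v_0^{\cT}$ is trivial, such a $\tilde e$ is precisely a tuple $\vec e=(e_0,\dots,e_d)\in[k^l]^{d+1}$ satisfying, for every $\vec\omega\in\{0,1\}^d$, the conditions $0\le 1\vec\omega\cdot\vec e<k^l$ and $\delta(s_0,(1\vec\omega\cdot\vec e)_k^l)=s_0$ (the first says the source residue datum is again $0^{[d]}$, the second that the target state datum is again $s_0^{[d]}$), and then $\bblambda(\tilde e)_{\vec\omega}=\lambda(s_0,(1\vec\omega\cdot\vec e)_k^l)$.

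Next, given such a $\tilde e$ and a fixed index $1\le j\le d+1$, I would set $\vec e':=(e_0,e_1,\dots,e_{j-1},0,e_j,\dots,e_d)\in\ZZ^{d+2}$, that is, insert a zero increment into the $j$-th direction slot. Splitting the sum defining $1\vec\tau\cdot\vec e'$ at its $j$-th coordinate yields the identity of integers $1\vec\tau\cdot\vec e'=1\pi_j(\vec\tau)\cdot\vec e$ for every $\vec\tau\in\{0,1\}^{d+1}$. Since $\pi_j$ maps $\{0,1\}^{d+1}$ onto $\{0,1\}^d$, this identity transports every morphism constraint: the entries of $\vec e'$ again lie in $[k^l]$, one has $0\le 1\vec\tau\cdot\vec e'<k^l$, and $\delta(s_0,(1\vec\tau\cdot\vec e')_k^l)=\delta(s_0,(1\pi_j(\vec\tau)\cdot\vec e)_k^l)=s_0$; hence $\tilde e':=(l,\vec e')$ is a degree-$l$ self-morphism of $\bb v_0^{\cT}$ in $\cV^{d+1}(\cT)$. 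Finally $\bblambda(\tilde e')_{\vec\tau}=\lambda(s_0,(1\vec\tau\cdot\vec e')_k^l)=\lambda(s_0,(1\pi_j(\vec\tau)\cdot\vec e)_k^l)=\bblambda(\tilde e)_{\pi_j(\vec\tau)}$, which is exactly the $\vec\tau$-th coordinate of $\iota_j(\bblambda(\tilde e))$; thus $\iota_j(\bblambda(\tilde e))\in\cQ^{d+1}_l(\cT)(\bb v_0^{\cT},\bb v_0^{\cT})$. Choosing $l$ large enough that both $\cQ^d_\bullet$ and $\cQ^{d+1}_\bullet$ have stabilised at the base object then gives $\iota_j(\cQ^d(\cT))\subseteq\cQ^{d+1}(\cT)$.

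I do not expect a genuine obstacle here: the argument is a direct unwinding of definitions. The only two points that need some care are the index bookkeeping for $\pi_j$ (making sure the inserted zero lands in the correct slot so that $1\vec\tau\cdot\vec e'=1\pi_j(\vec\tau)\cdot\vec e$ holds on the nose) and the fact that the equality of $\cQ_l$ with its limit $\cQ$ holds only for $l$ past a threshold, so one must fix a single $l$ beyond the thresholds for both dimension $d$ and dimension $d+1$.
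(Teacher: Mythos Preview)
Your proof is correct and follows essentially the same approach as the paper: insert a zero at the $j$-th slot of $\vec e$ to obtain a morphism $\bb v_0^{\cT}\to\bb v_0^{\cT}$ in $\cV^{d+1}(\cT)$ whose $\bblambda$-image is $\iota_j(\bblambda(\tilde e))$. You have spelled out a few more details (the identity $1\vec\tau\cdot\vec e'=1\pi_j(\vec\tau)\cdot\vec e$ and the stabilisation threshold for both dimensions), but the underlying idea is identical.
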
 
\begin{proof}
	Let $\tilde e = (l,\vec e) \colon \bb v_0^{\cT} \to \bb v_0^{\cT}$ be a morphism in $\cV^d(\cT)$, and let $\bb g = \bblambda(\tilde e)$ be an element of $\cQ^d(\cT)$. Then there is a corresponding morphism $\tilde f = (l,\vec f) \colon \bb v_0^{\cT} \to \bb v_0^{\cT}$ in $\cV^{d+1}(\cT)$ obtained by inserting $0$ in $\vec e$ at $j$-th coordinate, that is,
	\[
		(f_0,f_1,\dots,f_{j-1},f_j,f_{j+1},\dots,f_{d+1}) = (e_0,e_1, \dots,e_{j-1},0,e_j,\dots, e_{d}).
	\]
	It follows directly from the definition of $\bblambda$ that $\bblambda(\tilde f) = \iota_j(\bblambda(\tilde e))$. Since $\tilde e$ was arbitrary, \eqref{eq:315:1} follows.
\end{proof}

\begin{corollary}
	Let $\cT$ be an \egeaab{}. Then $g^{[d]} \in \cQ^d(\cT)$ for all $d \geq 0$ and $g \in G$. Moreover, the group $K$ is normal in $G$ and contained in $G_0$.
\end{corollary}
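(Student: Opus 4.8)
The plan is to deduce all three claims from Lemma~\ref{lem:Q-dim-inclusion} together with the previously established facts about $\cQ^0(\cT)$ and $K(\cT)$.

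First, to see that $g^{[d]} \in \cQ^d(\cT)$ for all $d \geq 0$ and $g \in G$, I would argue by induction on $d$. For the base case $d = 0$ (and also $d=1$), Lemma~\ref{lem:Q_small_d} gives $\cQ^d(\cT) = G^{[d]}$, so every constant cube lies in $\cQ^d(\cT)$; in particular $g^{[0]} = g \in G = \cQ^0(\cT)$. For the inductive step, suppose $g^{[d]} \in \cQ^d(\cT)$. Applying $\iota_{d+1} \colon G^{[d]} \to G^{[d+1]}$ and using Lemma~\ref{lem:Q-dim-inclusion}, we get $\iota_{d+1}(g^{[d]}) \in \cQ^{d+1}(\cT)$. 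But $\iota_{d+1}$ copies the entries of a cube along the new coordinate, and a constant cube stays constant under this operation, so $\iota_{d+1}(g^{[d]}) = g^{[d+1]}$. This gives $g^{[d+1]} \in \cQ^{d+1}(\cT)$, completing the induction.

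Next, for the normality of $K$ in $G$: recall $K = \set{ h \in G }{ \bb c_{\vec\sigma}^d(h) \in \cQ^d(\cT) \text{ for all } d \geq 0,\ \vec\sigma \in \{0,1\}^d}$. We already noted $K$ is a subgroup. To check normality, fix $h \in K$ and $g \in G$, and consider $ghg^{-1}$. We must show $\bb c_{\vec\sigma}^d(ghg^{-1}) \in \cQ^d(\cT)$ for every $d$ and $\vec\sigma$. By the first part, the constant cube $g^{[d]}$ and its inverse $(g^{-1})^{[d]} = (g^{[d]})^{-1}$ both lie in $\cQ^d(\cT)$. Since $\cQ^d(\cT) = \cQ^d(\cT)(\bb v_0^{\cT}, \bb v_0^{\cT})$ is a group (Lemma~\ref{lem:Q_limit}) and $\bb c_{\vec\sigma}^d(h) \in \cQ^d(\cT)$, the product $g^{[d]} \cdot \bb c_{\vec\sigma}^d(h) \cdot (g^{-1})^{[d]}$ lies in $\cQ^d(\cT)$. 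Carrying out the coordinatewise multiplication, at the vertex $\vec\sigma$ this product has entry $g h g^{-1}$ and at every other vertex it has entry $g \cdot \id_G \cdot g^{-1} = \id_G$; that is, $g^{[d]} \cdot \bb c_{\vec\sigma}^d(h) \cdot (g^{-1})^{[d]} = \bb c_{\vec\sigma}^d(ghg^{-1})$. Hence $\bb c_{\vec\sigma}^d(ghg^{-1}) \in \cQ^d(\cT)$ for all $d,\vec\sigma$, so $ghg^{-1} \in K$, proving $K \triangleleft G$.

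Finally, $K < G_0$ is immediate from the remark already recorded in the excerpt: when $\cT$ is an \egeaab{}, equation~\eqref{eq:def-of-K} together with property~\ref{item:74C} forces $K < G_0$. (Concretely: taking $d = 1$ and $\vec\sigma = (1)$, the cube $\bb c^1_{(1)}(h) = (\id_G, h)$ lies in $\cQ^1(\cT)$, which by the definition of $\cQ^1$ via $\bblambda$ and property~\ref{item:74C} with $r$ chosen appropriately constrains $h$ to lie in $G_0$.) I do not expect any genuine obstacle here — the whole statement is a formal consequence of Lemma~\ref{lem:Q-dim-inclusion}, the group structure of $\cQ^d(\cT)$, and Lemma~\ref{lem:Q_small_d}; the only mildly delicate point is correctly bookkeeping how $\iota_j$ acts on constant cubes and how coordinatewise products interact with the single-entry cubes $\bb c_{\vec\sigma}^d$.
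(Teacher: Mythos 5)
Your argument for the first two claims is correct and is essentially what the paper does, just with the steps spelled out: the paper's terse ``follows from Lemma~\ref{lem:Q_small_d}'' indeed hides the inductive lifting via $\iota_j$ from Lemma~\ref{lem:Q-dim-inclusion}, and your identity $g^{[d]}\bb c_{\vec\sigma}^d(h)(g^{[d]})^{-1}=\bb c_{\vec\sigma}^d(ghg^{-1})$ is just a rearrangement of the one the paper displays. The only issue is in your parenthetical sketch for $K<G_0$: taking $d=1$ cannot give any constraint, because Lemma~\ref{lem:Q_small_d} says $\cQ^1(\cT)=G^{[1]}$ outright, so $\bb c^1_{(1)}(h)\in\cQ^1(\cT)$ for every $h\in G$, not just $h\in G_0$. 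The constraint only appears from $d\geq 2$ onward: if $(l,\vec e)$ is a morphism $\bb v_0^{\cT}\to\bb v_0^{\cT}$ realizing $\bb c_{\vec\sigma}^d(h)$, then by property~\ref{item:74C} the identity entries at $\vec\omega\neq\vec\sigma$ force $1\vec\omega\cdot\vec e\equiv 0\bmod d'$; when $d\geq 2$ these $2^d-1$ congruences determine each $e_j\bmod d'$ and hence force $1\vec\sigma\cdot\vec e\equiv 0\bmod d'$ as well, giving $h\in G_0 g_0^0=G_0$. (For $d=1$ one only learns $e_0\equiv 0$ and $e_1$ is unconstrained.) Your high-level appeal to the remark in the paper that ``\eqref{eq:def-of-K} and \ref{item:74C} guarantee $K<G_0$'' is fine; just swap $d=1$ for $d=2$ in the concrete justification.
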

\begin{proof}
	The first statement follows from Lemma \ref{lem:Q_small_d}. The second one follows, since
\[
\bb c_{\vec \sigma}^d(g h g^{-1})g^{[d]} = g^{[d]} \bb c_{\vec \sigma}^d(h) \text{ for all } d \geq 0,\ \sigma \in \{0,1\}^d \text{ and } g,h \in G. \qedhere
\] 
\end{proof}

\begin{lemma}\label{lem:H-in-Q-reduction}
	Let $\cT$ be an \egeaab{} and let $h \in G$. Suppose that for each $d \geq 0$ there exists $\vec\rho = \vec\rho(d) \in \{0,1\}^d$ such that $\bb c_{\vec\rho}^d(h) \in \cQ^d(\cT)$. Then $h \in K$.
\end{lemma}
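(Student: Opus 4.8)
The plan is to show that the apparently weaker hypothesis — that for each $d$ there exists \emph{one} vertex $\vec\rho(d) \in \{0,1\}^d$ with $\bb c_{\vec\rho(d)}^d(h) \in \cQ^d(\cT)$ — already forces $\bb c_{\vec\sigma}^d(h) \in \cQ^d(\cT)$ for \emph{all} $d$ and \emph{all} $\vec\sigma \in \{0,1\}^d$, which is exactly the definition of $h \in K$ in \eqref{eq:def-of-K}. The two tools available are the dimension-raising inclusion $\iota_j(\cQ^d(\cT)) \subset \cQ^{d+1}(\cT)$ from Lemma~\ref{lem:Q-dim-inclusion}, and the groupoid/group structure of $\cQ^d(\cT) = \cQ^d(\cT)(\bb v_0^{\cT},\bb v_0^{\cT})$ established in Lemma~\ref{lem:Q_limit} (so in particular $\cQ^d(\cT)$ is closed under products and inverses). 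First I would record the easy symmetry observation: since $\cQ^d(\cT)$ is a group, it is closed under inversion, and a cube of the form $\bb c_{\vec\sigma}^d(h)$ has inverse $\bb c_{\vec\sigma}^d(h^{-1})$, so it suffices to move the non-identity entry around to an arbitrary vertex $\vec\sigma$.

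The mechanism for moving the non-identity entry is as follows. Suppose $\bb c_{\vec\rho}^d(h) \in \cQ^d(\cT)$ for some $\vec\rho \in \{0,1\}^d$. Apply $\iota_j$ for a suitable coordinate $j$ to obtain $\iota_j(\bb c_{\vec\rho}^d(h)) \in \cQ^{d+1}(\cT)$; this is a $(d+1)$-dimensional cube whose non-identity entries are exactly the vertices $\vec\omega \in \{0,1\}^{d+1}$ with $\pi_j(\vec\omega) = \vec\rho$, i.e.\ the two vertices obtained from $\vec\rho$ by inserting a $\mathtt 0$ or $\mathtt 1$ in position $j$, both carrying the label $h$. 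Multiplying two such doubled cubes (coming from different choices of the "source" configuration, or using the inclusions $\iota_j$ applied to both $\bb c_{\vec\rho}^d(h)$ and $\bb c_{\vec\rho}^d(h^{-1})$) we can cancel one of the two $h$'s and isolate a single $h$ at a chosen vertex of a $(d+1)$-dimensional cube. Then I would dualize: a single-vertex cube $\bb c_{\vec\sigma}^{d+1}(h)$ for an \emph{arbitrary} $\vec\sigma$ — not just those lying over $\vec\rho$ — is obtained by first noting $g^{[d+1]} \in \cQ^{d+1}(\cT)$ for all $g \in G$ (Lemma~\ref{lem:Q_small_d} plus $\iota_j$, as recorded in the corollary preceding the statement), and that $\cQ^{d+1}(\cT)$ is a group, so one can conjugate and translate to move the distinguished vertex. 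Concretely: given that some $\bb c_{\vec\tau}^{d}(h) \in \cQ^{d}(\cT)$, I claim $\bb c_{\vec\sigma}^{d}(h) \in \cQ^{d}(\cT)$ for all $\vec\sigma$; to see this one uses that the symmetry group of the discrete cube $\{0,1\}^d$ (coordinate permutations and coordinate flips $\omega_i \mapsto 1 - \omega_i$) acts on $\cQ^d(\cT)$ — coordinate permutations via relabelling the $\vec n$-variables, and this action is transitive on vertices — but since coordinate flips may not obviously preserve $\cQ^d(\cT)$, it is safer to argue by the product trick above: combine $\iota_j$-images for several $j$ together with the full-constant cubes $g^{[d]}$ to manufacture $\bb c_{\vec\sigma}^d(h)$ from $\bb c_{\vec\tau}^d(h)$.

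So the skeleton is: (1) reduce, using the group property of $\cQ^d(\cT)$, to producing a single $\bb c_{\vec\sigma}^d(h)$ at one vertex for every $d$ (not necessarily the given $\vec\rho(d)$); (2) use $\iota_j$ (Lemma~\ref{lem:Q-dim-inclusion}) to pass from a single-vertex cube in dimension $d$ to a cube in dimension $d+1$ whose non-identity entries form a "column" over the old vertex, both equal to $h$; (3) multiply such column cubes — for different coordinates $j$, and using inverses $\bb c_{\vec\rho}^d(h^{-1})$ — to cancel all but one $h$, landing a single-vertex cube $\bb c_{\vec\sigma}^{d+1}(h) \in \cQ^{d+1}(\cT)$ at a vertex $\vec\sigma$ of our choosing; (4) iterate downward/upward to reach every dimension and, within each dimension, every vertex, concluding $h \in K$ from \eqref{eq:def-of-K}. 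I expect the main obstacle to be step (3): one must check that the cancellation is actually achievable with the cubes known to be in $\cQ^d(\cT)$ — i.e.\ that by multiplying columns over $\vec\rho$ (for various insertion positions $j$) and the constant cubes one really can reach the single-vertex cube at an arbitrary target vertex $\vec\sigma$, rather than only at vertices in some proper subgroup-coset of configurations. This amounts to a small linear-algebra-over-$\mathbb F_2$ bookkeeping: viewing which subsets of $\{0,1\}^d$ carry $h$ as vectors in $\mathbb F_2^{\{0,1\}^d}$, one needs the columns $\{\pi_j^{-1}(\vec\rho)\}_j$ together with the all-ones vector to span a subspace containing every standard basis vector — which they do by a straightforward induction on $d$, since from a single point in dimension $d$ one gets a two-point column in dimension $d{+}1$, and differences of columns over adjacent base points give two-point "edges," whose $\mathbb F_2$-span together with the all-ones vector is everything. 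Once that combinatorial lemma is in place the rest is routine manipulation inside the groups $\cQ^d(\cT)$.
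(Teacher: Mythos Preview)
Your outline has the right ingredients --- the group structure of $\cQ^d(\cT)$, the dimension-raising maps $\iota_j$, and an induction on $d$ --- but step~(3) contains a genuine gap. The $\mathbb F_2$ claim is false: the edge (``column'') vectors $e_{\vec\tau}+e_{\vec\tau'}$ together with the all-ones vector span only the even-weight subspace of $\mathbb F_2^{\{0,1\}^{d+1}}$, never a standard basis vector. The same obstruction persists in the actual group setting: every product of edge cubes $\iota_j(\bb c_{\vec\sigma}^d(h^{\pm 1}))$ and constant cubes $g^{[d+1]}$ has the property that the product of its entries (computed in the abelian subgroup $\langle h\rangle$) lies in $\langle h^2\rangle$, whereas a single-vertex cube has entry-product $h$. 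So when $h$ has even order your building blocks \emph{cannot} produce any $\bb c_{\vec\sigma}^{d+1}(h)$.

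What you are missing is precisely the hypothesis in dimension $d+1$: you must feed $\bb c_{\vec\rho(d+1)}^{d+1}(h)\in\cQ^{d+1}(\cT)$ back into the inductive step to break the parity. Once you do, the argument is immediate: if by induction all $\bb c_{\vec\sigma}^d(h)\in\cQ^d(\cT)$, then all edge cubes lie in $\cQ^{d+1}(\cT)$ via $\iota_j$, and multiplying an edge cube along $(\vec\tau,\vec\tau')$ by $\bb c_{\vec\tau}^{d+1}(h)^{-1}$ yields $\bb c_{\vec\tau'}^{d+1}(h)$; starting from $\vec\rho(d+1)$ you walk to every vertex. This is essentially a re-packaging of the paper's proof, which performs the same walk as a double induction (on $d$ and then on the Hamming distance $|\{i:\sigma_i\neq\rho_i\}|$), using upper-face cubes from $\iota_j(\{\mathrm{id},h\}^{[d-1]})$ in place of your edge cubes; the hypothesis cube at $\vec\rho(d)$ is exactly the base case of that inner induction.
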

\begin{proof}
	We need to show that $\bb c_{\vec\sigma}^d(h) \in \cQ^d(\cT)$ for each $d \geq 0$ and $\vec\sigma \in \{0,1\}^d$. We proceed by double induction, first on $d$ and then on $\abs{\set{i \leq d}{\sigma_i \neq \rho_i}}$, where $\vec\rho = \vec\rho(d)$. The cases $d = 0$ and $\vec\sigma = \vec\rho$ are clear.
	
	Suppose now that $d \geq 1$ and $\vec\sigma \neq \vec\rho$. For the sake of notational convenience, assume further that $\vec\rho = \vec{1}$; one can easily reduce to this case by reflecting along relevant axes. 
By inductive assumption (with respect to $\vec\sigma$), $\cQ^d(\cT)$ contains $\bb c^d_{\vec\omega}(h)$ for all $\vec\omega \in \{0,1\}^d$ with $\abs{\vec\omega} > \abs{\vec\sigma}$. Moreover, by inductive assumption (with respect to $d$) and as $\cQ^{d-1}(\cT)$ is a group, we have $\{\id, h\}^{[d-1]} \subseteq \cQ^{d-1}(\cT)$.
	Consider the product 
	\[ 
	\bb g = \prod_{\vec\omega \geq \vec\sigma} c_{\vec\omega}^d(h)
	 = \bra{g_{\vec\omega}}_{\vec\omega \in \{0,1\}^d} \text{ where } g_{\vec\omega} =
	 \begin{cases}
	  h &\text{if } \vec\omega \geq \vec\sigma, \\
	  \id_G &\text{otherwise},
	 \end{cases}
	\]
	where the order on $\{0,1\}^d$ is defined coordinatewise, meaning that $\vec\omega \geq \vec\sigma$ if and only if $\omega_j \geq \sigma_j$ for all $1 \leq j \leq d$. 
It follows from Lemma \ref{lem:Q-dim-inclusion} that $\bb g \in \cQ^d(\cT)$. In fact $\bb g \in \iota_j(\{\id, h\}^{[d-1]}) \subseteq \iota_j(\cQ^{d-1}(\cT))$ for each $1 \leq j \leq d$ such that $\sigma_j = 0$. It remains to notice that all terms in the product defining $\bb g$, except for $\bb c_{\vec\sigma}^d(h)$, are independently known to belong to $\cQ^d(\cT)$.
\end{proof}

The following reformulation of Lemma \ref{lem:H-in-Q-reduction} above will often be convenient.

\begin{corollary}\label{cor:H-in-Q-reduction}
	Let $\cT$ be an \egeaab{} and let $g,h \in G$. Suppose that for each $d \geq 0$, the group $\cQ^d(\cT)$ contains a cube with $h$ on one coordinate and $g$ on all the remaining $2^d-1$ coordinates. Then $g \equiv h \bmod{K}$.
\end{corollary}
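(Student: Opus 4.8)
The statement to be proved is Corollary~\ref{cor:H-in-Q-reduction}: if for each $d\geq 0$ the group $\cQ^d(\cT)$ contains a cube with $h$ at one vertex and $g$ at all remaining $2^d-1$ vertices, then $g\equiv h\bmod K$. The plan is to reduce this to Lemma~\ref{lem:H-in-Q-reduction} by a ``change of origin'' in the cube group. First I would observe that $g^{[d]}\in\cQ^d(\cT)$ for every $d$ and every $g\in G$ (this is exactly the content of the first assertion in the corollary following Lemma~\ref{lem:Q-dim-inclusion}, which followed from Lemma~\ref{lem:Q_small_d} together with $\iota_j$). Since $\cQ^d(\cT)$ is a group (Lemma~\ref{lem:Q_limit}, via \eqref{eq:def-of-Q-no-l}), it is closed under multiplication by $g^{[d]}$ and by inverses.

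Now suppose, for a fixed $d$, that $\bb q\in\cQ^d(\cT)$ is a cube with $h$ on some vertex $\vec\sigma\in\{0,1\}^d$ and $g$ on all other vertices; write $t := hg^{-1}\in G$. Then
\[
	\bb q\cdot (g^{[d]})^{-1} = \bb c^d_{\vec\sigma}(h)\,\bb c^d_{\vec\sigma}(g)^{-1}\cdot\bra{\text{(product over $\vec\omega\neq\vec\sigma$ of $g\cdot g^{-1}=\id$)}},
\]
but it is cleaner to argue directly: the cube $\bb q\,(g^{[d]})^{-1}$ has entry $hg^{-1}=t$ at $\vec\sigma$ and entry $gg^{-1}=\id_G$ at every $\vec\omega\neq\vec\sigma$, i.e.\ $\bb q\,(g^{[d]})^{-1}=\bb c^d_{\vec\sigma}(t)$. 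Hence $\bb c^d_{\vec\sigma}(t)\in\cQ^d(\cT)$. Since this holds for every $d\geq 0$ (with the vertex $\vec\sigma=\vec\sigma(d)$ possibly depending on $d$), the hypothesis of Lemma~\ref{lem:H-in-Q-reduction} is satisfied for the element $t=hg^{-1}$: for each $d$ there is some $\vec\rho=\vec\rho(d)\in\{0,1\}^d$ with $\bb c^d_{\vec\rho}(t)\in\cQ^d(\cT)$. Therefore $t=hg^{-1}\in K$, which is precisely the assertion $g\equiv h\bmod K$ (recalling $K\triangleleft G$, so left and right cosets agree and $hg^{-1}\in K$ iff $g^{-1}h\in K$ iff $gK=hK$).

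The argument is entirely formal once the right reductions are identified, so I do not anticipate a genuine obstacle; the only point requiring a little care is bookkeeping of which entry of the product cube lands where and that multiplication in $G^{[d]}$ is performed coordinatewise, so that $\bb q\cdot(g^{[d]})^{-1}$ indeed has the claimed entries. A secondary subtlety to state explicitly is the passage between ``$hg^{-1}\in K$'' and ``$g\equiv h\bmod K$'': this uses normality of $K$ in $G$, established in the Corollary following Lemma~\ref{lem:Q-dim-inclusion}. With these remarks in place the proof is a two-line invocation of Lemma~\ref{lem:H-in-Q-reduction}.
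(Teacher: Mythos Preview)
Your proposal is correct and is exactly the reformulation the paper has in mind: the paper states this corollary immediately after Lemma~\ref{lem:H-in-Q-reduction} as a convenient restatement and gives no separate proof, so the intended argument is precisely to multiply the hypothesis cube by $(g^{[d]})^{-1}\in\cQ^d(\cT)$ to obtain $\bb c^d_{\vec\sigma}(hg^{-1})\in\cQ^d(\cT)$ and then invoke Lemma~\ref{lem:H-in-Q-reduction}. Your care with the coordinatewise multiplication and with the use of normality of $K$ is appropriate, and nothing further is needed.
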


We are now ready to state the criterion for characteristicity of the quotient \geaab{} in terms of the generating set.

\begin{corollary}\label{cor:grp_quot_is_char}
	Let $\cT$ be an \egeaab{}, let $X \subset G$ be any set and put $H := \abra{X}^{G}$ be the normal closure of $X$. Suppose that for each $h \in X$ and $d \geq 0$ there exists $\vec\rho \in \{0,1\}^d$ such that $\bb c_{\vec \rho}^d(h) \in \cQ^d(\cT)$. Then the factor $\cT/H$ is characteristic.
\end{corollary}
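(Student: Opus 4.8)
The plan is to reduce the statement to the single membership $H < K(\cT)$ and then quote Proposition \ref{prop:grp_quot_is_char}. First I would recall that $K = K(\cT)$ is a normal subgroup of $G$ (this is recorded in the corollary immediately preceding Lemma \ref{lem:H-in-Q-reduction}). Consequently, once we know $X \subset K$, the normal closure $H = \abra{X}^{G}$ is automatically contained in $K$, since $K$ is a normal subgroup containing the generating set $X$. So the whole argument comes down to verifying that every $h \in X$ lies in $K$.

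Next I would fix an arbitrary $h \in X$. The hypothesis of the corollary says precisely that for each $d \geq 0$ there exists $\vec\rho = \vec\rho(d) \in \{0,1\}^d$ with $\bb c_{\vec\rho}^d(h) \in \cQ^d(\cT)$. This is exactly the hypothesis of Lemma \ref{lem:H-in-Q-reduction}, whose conclusion is $h \in K$. Since $h \in X$ was arbitrary, we obtain $X \subset K$, hence $H = \abra{X}^G \subset K$ by normality of $K$. Proposition \ref{prop:grp_quot_is_char} then gives that $\cT/H$ is characteristic, which is the claim.

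There is no real obstacle: this corollary is a clean packaging of Lemma \ref{lem:H-in-Q-reduction} (which carries the genuine content — upgrading the existence of a single ``basis vertex'' $\vec\rho$ for which $\bb c_{\vec\rho}^d(h)\in\cQ^d(\cT)$ to membership for \emph{all} $\vec\sigma$, and hence to $h \in K$), together with the normality of $K$ in $G$ and Proposition \ref{prop:grp_quot_is_char}. The only minor point to be careful about is to phrase the passage from $X \subset K$ to $H \subset K$ via the normality of $K$ rather than trying to argue directly with the normal closure; but that normality has already been established, so nothing new is required.
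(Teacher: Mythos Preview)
Your argument is correct and matches the paper's intended reasoning: the corollary is stated without proof precisely because it follows immediately from Lemma~\ref{lem:H-in-Q-reduction} (giving $X\subset K$), the normality of $K$ in $G$, and Proposition~\ref{prop:grp_quot_is_char}. There is nothing to add.
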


\subsection{State space reduction}\label{ssec:Cubes:state_quot}

In this section we consider another basic way of constructing factor maps, namely by removing redundancies in the set of states. Ultimately, we will reduce the number of states to $1$ by repeatedly applying Proposition \ref{prop:grp_quot_is_char} (which simplifies the group structure and hence makes some pairs of states equivalent) and Proposition \ref{prop:state_red_is_char} below (which identifies equivalent states, leading to a smaller \geaab{}).
The following example shows the kind of redundancy we have in mind.

\begin{example}
Consider the base-$3$ analogue of the Rudin--Shapiro sequence, given by the following \geaab{} with $G = \{+1,-1\}$ and output function $\tau(s,g) = g$ (cf.\ Example \ref{ex:Rudin-Shapiro}).
\begin{center}
\begin{tikzpicture}[shorten >=1pt,node distance=2cm, on grid, auto] 
   \node[initial, state] (s_0)   {$s_{0}$}; 
   \node (s_x) [below = of s_0] {}; 
   \node[state] (s_1) [below left =of s_x] {$s_{1}$}; 
   \node[state] (s_2) [below right=of s_x] {$s_{2}$}; 
   
 \tikzstyle{loop}=[min distance=4mm,in=120,out=210,looseness=1]
    \path[->]     
    (s_0) edge [loop left] node {\texttt 1/$+$} (s_1);

 \tikzstyle{loop}=[min distance=4mm,in=-115,out=90,looseness=.5]
    \path[->]     
    (s_1) edge [left] node {\texttt 0/$+$} (s_0);

 \tikzstyle{loop}=[min distance=4mm,in=60,out=-30,looseness=1]
    \path[->]     
    (s_0) edge [loop right] node {\texttt 2/$+$} (s_2);

 \tikzstyle{loop}=[min distance=4mm,in=-65,out=90,looseness=.5]
    \path[->]     
    (s_2) edge [right] node {\texttt 0/$+$} (s_0);

  \tikzstyle{loop}=[min distance=6mm,in=210,out=-210,looseness=7]        
   \path[->]     
    (s_1) edge [loop left] node  {\texttt 1/$-$} (s_1);
    
    \tikzstyle{loop}=[min distance=6mm,in=30,out=-30,looseness=7]        
   \path[->] 
    (s_2) edge [loop right] node  {\texttt 2/$-$} (s_2);
    
   \tikzstyle{loop}=[min distance=6mm,in=20,out=-20,looseness=7]        
   \path[->] 
    (s_0) edge [loop right] node  {\texttt 0/$+$} (s_0);

   \tikzstyle{loop}=[min distance=6mm,in=20,out=160,looseness=.5]     
   \path[->] (s_2) edge [loop above] node {\texttt 1/$-$} (s_1);
    
    \tikzstyle{loop}=[min distance=6mm,in=-160,out=-20,looseness=.5]  
   \path[->] (s_1) edge [loop below] node {\texttt 2/$-$} (s_2); 
\end{tikzpicture}
\end{center}
The states $s_1$ and $s_2$ serve the same purpose and can be identified, leading to a smaller \geaab{}:
\begin{center}
\begin{tikzpicture}[shorten >=1pt,node distance=2.5cm, on grid, auto] 
   \node[initial, state] (s_0)   {$s_{0}$}; 
   \node[state] (s_1) [below=of s_0] {$s_{*}$}; 
 \tikzstyle{loop}=[min distance=4mm,in=120,out=240,looseness=1]
    \path[->]     
    (s_0) edge [loop left] node {\texttt 1,\texttt 2/$+$} (s_1);

 \tikzstyle{loop}=[min distance=4mm,in=-60,out=60,looseness=1]
    \path[->]     
    (s_1) edge [loop right] node {\texttt 0/$+$} (s_0);

  \tikzstyle{loop}=[min distance=6mm,in=30,out=-30,looseness=7]        
   \path[->] 
    (s_0) edge [loop right] node {\texttt 0/$+$} (s_0);
   \path[->]     
    (s_1) edge [loop right] node  {\texttt 1,\texttt 2/$-$} (s_1);
\end{tikzpicture}
\end{center}
\end{example}

Motivated by the example above, for a \geaab{} $\cT$ we consider the equivalence relation $\sim$ of $S$, where $s \sim s'$ if and only if $\lambda(s,\word u) = \lambda(s',\word u)$ for all $\word u \in \Sigma_k^*$. Equivalently, $\sim$ is the minimal equivalence relation such that $s \sim s'$ implies that $\lambda(s,j) = \lambda(s',j)$ and $\delta(s,j) \sim \delta(s',j)$ for all $j \in \Sigma_k$. We define the reduced \geaab{} $\cT_{\mathrm{red}} = (\bar S,\bar s_0, \Sigma_k, \bar\delta, \bar{\lambda}, G)$, where $\bar S = S/{\sim}$, $\bar \delta(\bar s, j) = \overline{\delta(s,j)}$ and $\bar \lambda(\bar s, j) = \lambda(s,j)$ for all $s \in S$, $j \in \Sigma_k$. There is a natural factor map $\cT \to \bar\cT$ given by $(\phi,\id_G)$ where $\phi \colon S \to S/{\sim}$ takes $s \in S$ to its equivalence class. Note that if $\cT$ is natural, then Lemma \ref{lem:trans_fact_is_natural} guarantees that so is $\cT_{\mathrm{red}}$.

\begin{proposition}\label{prop:state_red_is_char}
	Let $\cT$ be an \egeaab{}. Then the factor $\cT_{\mathrm{red}}$ is characteristic.
\end{proposition}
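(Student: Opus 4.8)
The plan is to unwind the definitions so that the statement becomes a concrete claim about the cube sets $\cQ^d$, and then to use the synchronising word of $\cT$ to ``close up'' loops. Since the factor map $(\phi,\id_G)\colon\cT\to\cT_{\mathrm{red}}$ has the identity as its group component, the induced map $\bbpi\colon G^{[d]}\to G^{[d]}$ is the identity for every $d$, so $\cT_{\mathrm{red}}$ being characteristic is exactly the assertion $\cQ^d(\cT)=\cQ^d(\cT_{\mathrm{red}})$ for all $d\ge 0$. I would first record that $\cT_{\mathrm{red}}$ is again an \egeaab{} --- this is Lemma~\ref{lem:trans_fact_is_natural} applied to $(\phi,\id_G)$, whose kernel $\{\id_G\}$ is trivially contained in $G_0$ --- so that $\cQ^d(\cT_{\mathrm{red}})$ is a well-defined group by Lemma~\ref{lem:Q_limit}, and both $\cQ^d(\cT)=\cQ^d_l(\cT)$ and $\cQ^d(\cT_{\mathrm{red}})=\cQ^d_l(\cT_{\mathrm{red}})$ hold for all sufficiently large $l$.

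Next I would spell out the relevant morphism sets explicitly. A morphism of $\cV^d(\cT)$ from $\bb v_0^{\cT}$ to itself is a pair $(l,\vec e)$ with $\vec e=(e_0,\dots,e_d)\in[k^l]^{d+1}$ such that, writing $w_{\vec\omega}:=(1\vec\omega\cdot\vec e)_k^l$, one has $1\vec\omega\cdot\vec e<k^l$ (this is the condition that the source object keeps $\bb r$-coordinate $0^{[d]}$) and $\delta(s_0,w_{\vec\omega})=s_0$ for all $\vec\omega\in\{0,1\}^d$; its image under $\bblambda$ is $\bra{\lambda(s_0,w_{\vec\omega})}_{\vec\omega}$. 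The description for $\cT_{\mathrm{red}}$ is identical, except that $\delta(s_0,w_{\vec\omega})=s_0$ is relaxed to $\delta(s_0,w_{\vec\omega})\sim s_0$ and $\lambda$ is replaced by $\bar\lambda$; but $\bar\lambda(\bar s_0,\word u)=\lambda(s_0,\word u)$ for every word $\word u$, so the labels are computed by the same formula. The inclusion $\cQ^d_l(\cT)\subseteq\cQ^d_l(\cT_{\mathrm{red}})$ is then immediate, and letting $l\to\infty$ gives $\cQ^d(\cT)\subseteq\cQ^d(\cT_{\mathrm{red}})$.

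For the reverse inclusion the key tool is the synchronising word $\word w^{\cT}_0$ of $\cT$, for which $\delta(s,\word w^{\cT}_0)=s_0$ and $\lambda(s,\word w^{\cT}_0)=\id_G$ for \emph{every} $s\in S$; set $N:=|\word w^{\cT}_0|$. Given $\bb g=(g_{\vec\omega})_{\vec\omega}\in\cQ^d(\cT_{\mathrm{red}})$, I would realise it by a morphism $(l,\vec e)$ of $\cV^d(\cT_{\mathrm{red}})$ from the base object to itself with $l$ large, write $w_{\vec\omega}=(1\vec\omega\cdot\vec e)_k^l$ and $s_{\vec\omega}=\delta(s_0,w_{\vec\omega})$, so that $s_{\vec\omega}\sim s_0$ and $\lambda(s_0,w_{\vec\omega})=g_{\vec\omega}$, and then define $\vec e'$ by $e_0':=k^Ne_0+[\word w^{\cT}_0]_k$ and $e_i':=k^Ne_i$ for $1\le i\le d$. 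Then $1\vec\omega\cdot\vec e'=k^N(1\vec\omega\cdot\vec e)+[\word w^{\cT}_0]_k<k^{l+N}$, so $\vec e'\in[k^{l+N}]^{d+1}$ and the padded expansion $(1\vec\omega\cdot\vec e')_k^{l+N}$ is the concatenation $w_{\vec\omega}\word w^{\cT}_0$. Since $\delta(s_0,w_{\vec\omega}\word w^{\cT}_0)=\delta(s_{\vec\omega},\word w^{\cT}_0)=s_0$, the pair $(l+N,\vec e')$ is a morphism $\bb v_0^{\cT}\to\bb v_0^{\cT}$ in $\cV^d(\cT)$, and by the concatenation rule $\lambda(s_0,\word v\word u)=\lambda(s_0,\word v)\,\lambda(\delta(s_0,\word v),\word u)$ its $\bblambda$-image equals
\[
\bra{\lambda(s_0,w_{\vec\omega})\,\lambda(s_{\vec\omega},\word w^{\cT}_0)}_{\vec\omega}=\bra{g_{\vec\omega}\cdot\id_G}_{\vec\omega}=\bb g .
\]
Hence $\bb g\in\cQ^d_{l+N}(\cT)=\cQ^d(\cT)$, and since $\bb g$ was arbitrary, $\cQ^d(\cT_{\mathrm{red}})\subseteq\cQ^d(\cT)$; combined with the previous inclusion this yields $\cQ^d(\cT)=\cQ^d(\cT_{\mathrm{red}})$ for all $d$, completing the argument.

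I do not expect a real obstacle here: the proof is essentially bookkeeping about how digit strings compose in the category $\cV^d$. The two points deserving care are verifying that $(l+N,\vec e')$ is a morphism of the right type --- i.e. checking the inequality $1\vec\omega\cdot\vec e'<k^{l+N}$ and the equality $(1\vec\omega\cdot\vec e')_k^{l+N}=w_{\vec\omega}\word w^{\cT}_0$ of padded expansions --- and noting that appending a word of the fixed length $N$ only shifts the degree by the constant $N$, so the passage between the pre-stabilisation sets $\cQ^d_l$ and their limits $\cQ^d$ causes no trouble. It is also worth observing that none of the finer features of $\word w^{\cT}_0$ from the definition of an efficient GEA (that it may be assumed to start with $\texttt 0$, or that $d'\mid[\word w^{\cT}_0]_k$) are needed --- only the bare synchronisation property $\delta(\cdot,\word w^{\cT}_0)\equiv s_0$, $\lambda(\cdot,\word w^{\cT}_0)\equiv\id_G$.
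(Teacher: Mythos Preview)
Your argument is correct and rests on the same idea as the paper's: the synchronising word $\word w_0^{\cT}$ sends every state to $s_0$ with trivial label, so appending it to a path that ends in the $\sim$-class of $s_0$ produces a genuine $\bb v_0^{\cT}\to\bb v_0^{\cT}$ morphism in $\cT$ without altering $\bblambda$. The paper packages this slightly differently: it first writes
\[
\cQ^d(\cT_{\mathrm{red}})=\bigcup_{\bb s,\bb s'\in S_0^{[d]}}\cQ^d(\cT)\bigl((\bb s,\bb 0),(\bb s',\bb 0)\bigr),
\]
then uses the morphism $\tilde f=(l,([\word w_0^{\cT}]_k,0,\dots,0))\colon(\bb s,\bb 0)\to\bb v_0^{\cT}$ with $\bblambda(\tilde f)=\id_G^{[d]}$ to conclude via Lemma~\ref{lem:Q_limit} that one may take $\bb g_{(\bb s,\bb 0)}=\id_G^{[d]}$ and hence each set in the union already equals $\cQ^d(\cT)$. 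Your version bypasses this groupoid detour by appending $\word w_0^{\cT}$ directly at the level of a single morphism (the explicit shift $e_0'=k^Ne_0+[\word w_0^{\cT}]_k$, $e_i'=k^Ne_i$); this is more hands-on and self-contained, while the paper's route reuses the general machinery of Lemma~\ref{lem:Q_limit}. The two are equivalent in substance.
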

\begin{proof}
	Pick any $d \geq 0$. Let $S_0 = \set{s \in S}{s \sim s_0}$ be the equivalence class of $s_0$. Any morphism $\tilde e = (l,\vec e) \colon \bar{\bb v}_0 \to \bar{\bb v}_0$ in $\cT_{\mathrm{red}}$ can be lifted to a morphism $(l,\vec e) \colon (\bb s,0) \to (\bb s',0)$ in $\cT$, where $\bb s, \bb s' \in S_0^{[d]}$. Conversely, any morphism $(l,\vec e) \colon (\bb s,0) \to (\bb s',0)$ in $\cT$ with $\bb s, \bb s' \in S_0^{[d]}$ gives rise to the corresponding morphism $(l,\vec e) \colon \bar{\bb v}_0 \to \bar{\bb v}_0$. Hence,
\begin{equation}\label{eq:297:1}
	\cQ^d(\cT_{\mathrm{red}}) = \bigcup_{\bb s, \bb s' \in S_0^{[d]}} \cQ^d(\cT)((\bb s,0), (\bb s',0)).
\end{equation}
	
	Let $l$ be a large integer and let $\vec f = ([\word w_0^{\cT}]_k,0,\dots,0) \in \NN_0^{d+1}$\footnote{We recall that $\bfw_0^{\cT}$ is a synchronizing word for $\cT$, i.e. for any $s \in S$ we have $\delta(s, \bfw_0^{\cT}) = s_0, \lambda(s, \bfw_0^{\cT}) = id$.}. Then $1\vec\omega \cdot \vec f = [\word w_0^{\cT}]_k$ for each $\vec\omega \in \{0,1\}^d$, whence we have the morphism $\tilde f = (l, \vec f) \colon  (\bb s, 0)\to \bb v_0^{\cT}$ with $\bblambda(\tilde f) = \id_G^{[d]}$ for any $\bb s \in S_0^{[d]}$. It follows from Lemma \ref{lem:Q_limit}, that we can take $g_{(\bb s, \bb 0)} = \id_G^{[d]}$, and said Lemma guarantees that $\cQ^d(\cT)\bra{(\bb s, \bb 0),(\bb s', \bb 0)} = \cQ^d(\cT)$ for all $\bb s, \bb s' \in S_0^{[d]}$. Inserting this into \eqref{eq:297:1} we conclude that 	$\cQ^d(\cT_{\mathrm{red}}) = \cQ^d(\cT)$, meaning that $\cT_{\mathrm{red}}$ is a characteristic factor of $\cT$.
\end{proof}

\subsection{Host--Kra cube groups}\label{ssec:Cubes:Host-Kra}

The groups $\cQ^d(\cT)$ can be viewed as distant analogues of Host--Kra cube groups, originating from the work of these two authors in ergodic theory \cite{HostKra-2005,HostKra-2008} (the name, in turn, originates from \cite{GreenTao-2010-Ann}).

Let $G$ be a group and let $d \geq 0$. The Host--Kra cube group $\HK^d(G)$ is the subgroup of $G^{[d]}$ generated by the \emph{upper face cubes} $\bra{g^{\ifbra{\omega_j = 1}}}_{\vec\omega \in \{0,1\}^d}$ where $1 \leq j \leq d$ and $g \in G$. If $G$ is abelian then $\HK^d(G)$ consists of the cubes $\bb g = (g_{\vec\omega})_{\vec\omega \in \{0,1\}^d}$ where $g_{\vec\omega} = h_0\prod_{j=1}^d h_j^{\omega_j}$ for some sequence $h_0,h_1,\dots,h_d \in G$. In general, let $G = G_0 = G_1 \supseteq G_2 \supseteq \dots$ be the lower central series of $G$, where for each $i \geq 1$ the group $G_{i+1}$ is generated by the commutators $ghg^{-1}h^{-1}$ with $g \in G_i$, $h \in G$. Let also $\vec\sigma_1,\vec\sigma_2,\dots,\vec\sigma_{2^d}$ be an ordering of $\{0,1\}^d$ consistent with inclusion in the sense that if $\vec\sigma_i \leq \vec\sigma_j$ (coordinatewise) then $i \leq j$. Then $\HK^d(G)$ consists precisely of the cubes which can be written as $\bb g_1 \bb g_2 \dots \bb g_{2^d}$ where for each $j$ there exists $g_j \in G_{\abs{\vec\sigma_j}}$ such that $\bb g_j = \bra{g_{j,\vec\omega}}_{\vec\omega \in \{0,1\}^d}$ and $g_{j,\vec\omega} = g_{j}$ if $\vec\omega \geq \vec\sigma_j$ (coordinatewise) and $g_{j,\vec\omega} = \id_G$ otherwise. The Host--Kra cube groups are usually considered for nilpotent groups $G$, that is, groups such that $G_{s+1} = \{\id_G\}$ for some $s \in \NN$, called the step of $G$. (In fact, one can consider the Host--Kra cube groups corresponding to filtrations other than the lower central series, but these are not relevant to the discussion at hand.)

Let $\cT$ be an invertible \egeaab{} given by $(\Sigma_k,G,\lambda)$. Then a direct inspection of the definition shows that $\cQ^d(\cT)$ consists of all the cubes of the form $\bra{ \lambda\bra{( 1\vec\omega \cdot \vec e)_k}}_{\vec\omega \in \{0,1\}^d}$ where $\vec e \in \NN_0^k$. In particular, letting $e_i = 0$ for $i \neq j$ and taking $e_j \in \NN_0$ such that $\lambda((e_j)_k) = g$ (whose existence is guaranteed by \ref{item:74B}) we conclude that $\cQ^d(\cT)$ contains the upper face cube corresponding to any $g \in G$ and $1 \leq j \leq d$. Hence,
\begin{equation}\label{eq:731:1}
	\cQ^d(\cT) \supseteq \HK^d(G).
\end{equation}
In fact, the cube $\bra{ \lambda\bra{(1\vec\omega \cdot \vec e)_k }}_{\vec\omega \in \{0,1\}^d}$ belongs to $\HK^d(G)$ if $\vec e \in \NN_0^{d+1}$ has non-overlapping digits in the sense that for each $m$ there is at most one $j$ such that the $m$-th digit of $(e_j)_k$ is non-zero. Since the cube groups $\HK^d(G)$ are relatively easy to describe, especially in the abelian case, one can view the indices $[\cQ^d(\cT):\HK^d(G)]$ ($d \geq 0$) as a measure of complexity of $\cT$. We will ultimately reduce to the case when $\cQ^d(\cT) = \HK^d(G)$.

As alluded to above, the inclusion in \eqref{eq:731:1} can be strict. For instance, one can show that $\cQ^2(\cT) = \HK^2(G)$ if and only if $\lambda((e_0)_k)\lambda((e_0+e_1+e_2)_k) \equiv \lambda((e_0+e_1)_k)\lambda((e_0+e_2)_k) \bmod G_2$ for all $e_0,e_1,e_2 \in \NN_0$.

	Suppose now, more generally, that $\cQ^d(\cT) = \HK^d(G)$ for all $d \geq 0$. Put $G_\infty := \lim_{n \to \infty} G_n$. It follows from Lemma \ref{lem:H-in-Q-reduction} that $K(\cT) = G_\infty$. If $G$ is nilpotent then $K(\cT) = \{\id_G\}$ is trivial and consequently $\cT$ has no proper characteristic factors. If $G$ is not nilpotent then the factor $\cT/G_{\infty}$ is characteristic, and one can check that $\cQ^d(\cT/G_{\infty}) = \HK^d(G/G_{\infty})$. In particular, iterating this reasoning we see that if $\cQ^d(\cT) = \HK^d(G)$ then $\cT$ has a characteristic factor given by $(\Sigma_k, \bar G, \bar\lambda)$ where $G$ is a nilpotent group. In fact, this is only possible if $G$ is a cyclic group, as shown by the following lemma. Since its importance is purely as a motivation and we do not use it in the proof of our main results, we only provide a sketch of the proof.

\begin{lemma}\label{lem:Q=HK_implies_abelian}
	Let $\cT$ be an invertible \egeaab{} given by $(\Sigma_k, G, \lambda)$. Assume further that $G$ is nilpotent and $\cQ^d(\cT) = \HK^d(G)$ for all $d \geq 0$. Then $G$ is a subgroup of $\ZZ/(k-1)\ZZ$ and $\lambda((n)_k) = \lambda(1)^n$ for all $n \in \Sigma_k$.	
\end{lemma}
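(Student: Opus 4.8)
The plan is to show that already the $d=2$ case of the hypothesis, $\cQ^2(\cT)=\HK^2(G)$, together with the nilpotency of $G$, forces the full conclusion; the higher-dimensional instances will not be needed. Write $\cT=(\Sigma_k,G,\lambda)$ with $\lambda\colon\Sigma_k^*\to G$ the labelling, and set $\Lambda(n):=\lambda((n)_k)\in G$ for $n\in\NN_0$. Since $\cT$ is invertible (one state) and prolongable we have $\lambda(0)=\id_G$, so $\Lambda(0)=\id_G$ and $\Lambda$ ignores leading zeros; moreover $G=\langle\lambda(j):j\in\Sigma_k\rangle$ by property \ref{item:74B}; and for an invertible \egeaab{} one has the explicit description $\cQ^d(\cT)=\{\,(\Lambda(1\vec\omega\cdot\vec e))_{\vec\omega\in\{0,1\}^d}:\vec e\in\NN_0^{d+1}\,\}$ recalled in Subsection~\ref{ssec:Cubes:Host-Kra}.

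First I would unwind what the inclusion $\cQ^2(\cT)\subseteq\HK^2(G)$ (a consequence of the hypothesis) tells us. From the description of Host--Kra cube groups in Subsection~\ref{ssec:Cubes:Host-Kra}, a short computation shows that a cube $(p,q,r,s)\in G^{[2]}$ (vertices ordered $00,01,10,11$) lies in $\HK^2(G)$ exactly when $ps\equiv qr\pmod{G_2}$. Feeding the cubes of $\cQ^2(\cT)$ into this criterion yields
\[
\Lambda(e_0)\,\Lambda(e_0+e_1+e_2)\equiv\Lambda(e_0+e_1)\,\Lambda(e_0+e_2)\pmod{G_2}\qquad(e_0,e_1,e_2\in\NN_0),
\]
which is precisely the condition already quoted in the text. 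Setting $e_0=0$ and using $\Lambda(0)=\id_G$, this says that the reduced map $\bar\Lambda\colon\NN_0\to G/G_2$, $\bar\Lambda(n)=\Lambda(n)G_2$, is additive; hence $\bar\Lambda(n)=\bar\Lambda(1)^n$ for all $n$. Since $G/G_2$ is generated by $\{\bar\Lambda(j):j\in\Sigma_k\}=\langle\bar\Lambda(1)\rangle$, the abelianisation $G/[G,G]=G/G_2$ is cyclic.

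I would then invoke the standard fact that a finite nilpotent group with cyclic abelianisation is cyclic: in a finite nilpotent group every maximal subgroup is normal of prime index, so $[G,G]\subseteq\Phi(G)$, whence $G/\Phi(G)$ is a cyclic quotient of $G/[G,G]$ and therefore $G$ is $1$-generated. In particular $G$ is abelian, so $G_2=\{\id_G\}$ and the congruence above becomes the genuine relation $\Lambda(m+n)=\Lambda(m)\Lambda(n)$; consequently $\Lambda(n)=\Lambda(1)^n$ for all $n\in\NN_0$, and in particular $\lambda((n)_k)=\lambda(1)^n$ for $n\in\Sigma_k$. Finally, as $(k)_k$ is the word $\mathtt{1}\mathtt{0}$ we get $\Lambda(k)=\lambda(1)\lambda(0)=\lambda(1)$, so $\lambda(1)^{k-1}=\id_G$; together with $G=\langle\lambda(j):j\in\Sigma_k\rangle=\langle\lambda(1)\rangle$ this exhibits $G$ as a cyclic group of order dividing $k-1$, i.e.\ isomorphic to a subgroup of $\ZZ/(k-1)\ZZ$.

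All of these steps are short. The two points needing a little care are the explicit identification of $\HK^2(G)$ for a possibly non-abelian $G$ in the first step — so that the correct congruence modulo $G_2$ is extracted — and the use of the nilpotency hypothesis through the cyclic-abelianisation lemma, which is the conceptual linchpin. Neither constitutes a serious obstacle, which is consistent with the paper's decision to present only a sketch.
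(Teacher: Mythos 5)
Your proof is correct and takes a genuinely different---and, in fact, more economical---route than the one sketched in the paper. The paper represents $\lambda$ as a polynomial sequence $\lambda(n)=g_0 g_1^n g_2^{\binom{n}{2}}\cdots g_s^{\binom{n}{s}}$ adapted to the lower central series (citing \cite{GreenTao-2012} and \cite[Lem.~A.1]{GreenTao-2010-ARL}), and then runs a two-layer induction to kill $g_2,\dots,g_s$: it first pins down the orders of the $g_r$ modulo $G_{r+1}$ using the digit identity $\lambda(\sum_{l\in I}k^l)=\lambda(1)^{\abs{I}}$ and the alternating product over $I\subset J$, and then invokes the dilation invariance $\lambda(nk^l)=\lambda(n)$ together with uniqueness of the Taylor coefficients. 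Your argument bypasses all of this: you use only the $d=2$ instance of the hypothesis, unwind $\HK^2(G)$ as the congruence $\Lambda(e_0)\Lambda(e_0+e_1+e_2)\equiv\Lambda(e_0+e_1)\Lambda(e_0+e_2)\pmod{G_2}$ (which the paper already records), set $e_0=0$ to see that $\bar\Lambda\colon\NN_0\to G/G_2$ is a homomorphism, deduce that $G^{\mathrm{ab}}$ is cyclic, and then let the Frattini argument (finite nilpotent with cyclic abelianisation implies cyclic) do the heavy lifting. Once $G$ is abelian, $G_2$ vanishes, $\Lambda$ is a genuine homomorphism, and the identity $\Lambda(k)=\Lambda(1)$ gives $\lambda(1)^{k-1}=\id_G$. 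Your approach is preferable: it is self-contained, avoids the polynomial-sequence machinery, uses the nilpotency hypothesis at precisely one conceptually transparent point, and in fact establishes the stronger statement that the hypothesis for $d=2$ alone suffices.
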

\begin{proof}[Sketch of a proof]
	Let $s$ be the step of $G$ so that $G_{s+1} = \{\id_G\}$, and for ease of writing identify $\lambda$ with a map $\lambda \colon \NN_0 \to G$.
	Since $\bblambda = \lambda^{[d]}$ maps parallelepipeds of the form $\bra{ 1\vec\omega \cdot \vec e}_{\vec\omega \in \{0,1\}^d}$ for $\vec e \in \NN_0^{d+1}$ to $\cQ^d(\cT) = \HK^d(G)$, the sequence $\lambda$ is a polynomial with respect to the lower central series (see e.g.\ \cite[Def.{} 1.8 and Prop.{} 6.5 ]{GreenTao-2012} for the relevant definition of a polynomial sequence). It follows \cite[Lem.{} A.1]{GreenTao-2010-ARL} that there exist $g_i \in G_i$ for $0 \leq i \leq s$ such that 
	\begin{equation}\label{eq:373:0}
		\lambda(n) = g_0 g_1^n g_2^{\binom{n}{2}} \dots g_s^{\binom{n}{s}}, \qquad (n \in \NN_0).
	\end{equation}
	Moreover, $g_i$ are uniquely determined by the sequence $\lambda$.  Note also that $g_0 = \id_G$ since $\lambda(0) = \id_G$. We will show that $g_i = \id_G$ for all $i \geq 2$. In fact, we will show by induction on $r$ that $g_2,g_3,\dots,g_r \in G_{r+1}$ for each $r \geq 1$ (the case $r = 1$ being vacuously true). 
	
	Pick $r \geq 2$ and assume that $g_2,g_3,\dots,g_r \in G_{r}$. We will work modulo $G_{r+1}$, which means that (the projections of) all of $g_1,g_2,\dots,g_r$ commute: $g_ig_j G_{r+1} = g_j g_i G_{r+1}$. It follows directly from how the sequence $\lambda$ is computed by $\cT$ that for any $m \geq 0$ and any  $I \subset \NN_0$ with $\abs{I} = m$ we have
	\begin{equation}\label{eq:373:1}
		\lambda\bra{\textstyle\sum_{l\in I} k^l} = \lambda([\texttt{10}^{j_1}\texttt{10}^{j_2}\dots\texttt{10}^{j_l}]_k) = \lambda(1)^m = g_1^m,
	\end{equation}
	for some $j_1,\dots,j_r\geq 0$.	Let $J = \{l_1,\dots,l_r\}$ be any set of cardinality $\abs{J} = r$.
	Substituting \eqref{eq:373:0} in \eqref{eq:373:1} and taking the oscillating product over all subsets $I \subset J$ we conclude that 
	 \begin{equation}\label{eq:373:2}
		g_r^{k^{l_1} k^{l_2} \cdot \dots \cdot k^{l_r}} \equiv \prod_{I \subset J} \lambda\bra{\sum_{l \in I}k^l}^{(-1)^{\abs{I}}} \equiv \id_G \pmod{G_{r+1}}, 
	\end{equation}
	meaning that the order of $g_r $ in $G/G_{r+1}$ divides a power of $k$: $g_r^{k^{L_r}} \in G_{r+1}$ for some $L_r \geq 0$. (Equation \eqref{eq:373:2} can be verified by a direct computation, relying on the fact that the finite difference operator reduces the degree of any polynomial by $1$.)
	Reasoning inductively, we show that for each $j = r-1,r-2,\dots,2$ there exists $L_{j} \geq 0$ such that $g_{j}^{k^{L_j}} \in G_{r+1}$: towards this end, it is enough to repeat the same computation as above with $\abs{J} = j$ and $\min J \geq \max(L_{j+1},\dots,L_{r})$. In particular, there exists $L_* \geq 0$ such that for all $n \geq 0$ divisible by $L_*$ we have
	\begin{equation}\label{eq:373:3c}
		\lambda\bra{n} = g_1^n g_2^{\binom{n}{2}} \dots g_s^{\binom{n}{s}} \equiv g_1^n \bmod{G_{r+1}}.
	\end{equation}
	
	Next, recall that from how $\lambda$ is computed by $\cT$ it also follows that $\lambda$ is invariant under dilation by $k$ in the sense that for any $n \geq 0$ and any $l \geq 0$ we have
	\begin{equation}\label{eq:373:3}
		\lambda\bra{n k^l} = \lambda(n).
	\end{equation}
	Taking $l \geq L_*$ and combining \eqref{eq:373:0}, \eqref{eq:373:3c} and \eqref{eq:373:3}, for any $n \geq 0$ we obtain
	\begin{equation}\label{eq:373:0a}
		g_1^{k^ln} \equiv \lambda(k^l n) = \lambda(n)= g_1^n g_2^{\binom{n}{2}} \dots g_s^{\binom{n}{s}} \bmod{G_{r+1}}.
	\end{equation}
	Since the representation of the sequence $\lambda$ in the form \eqref{eq:373:0} is unique, it follows that $g_r \equiv g_{r-1} \equiv \dots \equiv g_2 \equiv \id_{G} \bmod{G_{r+1}}$, which finishes this part of the argument. 
	
	We have now shown that $g_2=g_3=\dots = g_s = \id_G$. It remains to notice that since $g_1^k = \lambda(k) = \lambda(1) = g_1$ and $\lambda \colon \NN_0 \to G$ is surjective, the group $G$ is cyclic and $\abs{G} \mid k-1$.
\end{proof}

As suggested by the above lemma, \geas{} which arise from cyclic groups will play an important role in our considerations. Let $k \geq 2$ denote the basis, which we view as fixed. For $m \geq 1$ define the invertible \geaab{}
\begin{equation}\label{eq:def-of-Z(m)}
	\cZ(m) := \bra{\Sigma_k,\ZZ/m\ZZ, \lambda_m }, \qquad \lambda_m \colon \Sigma_k \ni j \mapsto j \bmod{m} \in \ZZ/m\ZZ.
\end{equation}
We will primarily be interested in the case when $m \mid k-1$.

\begin{lemma}\label{lem:Zm-basic}
Fix $k \geq 2$ and let $m,m' \geq 1$ and let $\cT$ be an \egeka{}.
\begin{enumerate}[wide] 
\item If $m \mid k-1$ then the \geaab{} $\cZ(m)$ is efficient, $\lambda_m(\word u) = [\word u]_k \bmod m$ for all $\word u \in \Sigma_k^*$, and $\cQ^d(\cZ(m)) = \HK^d(\ZZ/m\ZZ)$.
\item If $m,m' \mid k-1$ then $\cZ(m)$ is a factor of $\cZ(m')$ if and only if $m \mid m'$. The factor is not characteristic unless $m = m'$. 
\item If $m \mid k-1$ then $\cZ(m)$ is a factor of $\cT$ if and only if $m \mid d'_{\cT}$.
\item If $m \mid k-1$ and $\cZ(m)$ is a characteristic factor of $\cT$ then $m = d'_{\cT}$.
\end{enumerate}
\end{lemma}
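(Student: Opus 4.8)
\textbf{Plan for the proof of Lemma~\ref{lem:Zm-basic}.}

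The four statements build on each other, so the plan is to prove them in order, using the earlier parts as lemmas for the later ones. For part~(1), fix $m \mid k-1$ and verify directly that $\cZ(m) = (\Sigma_k, \ZZ/m\ZZ, \lambda_m)$ is efficient. Strong connectedness, idempotence, synchronisation (the empty word, or $\texttt{0}$, synchronises) and $\lambda_m(\texttt{0}) = 0$ are immediate for an invertible \geaab{} with a single state. The identity $\lambda_m(\word u) = [\word u]_k \bmod m$ follows by induction on $|\word u|$ from the cocycle property of $\lambda$ and the congruence $k \equiv 1 \pmod m$: appending a digit $j$ multiplies $[\word u]_k$ by $k$ and adds $j$, so modulo $m$ it just adds $j \bmod m = \lambda_m(j)$. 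With this in hand, $\lambda_m((n)_k)$ runs over all residues modulo $m$, and since $m \mid k-1$ the relevant $d'$ parameter is $m$ and $G_0 = G = \ZZ/m\ZZ$ (the group is abelian, so the trivial computation of $\gcd_k^*$ of the appropriate integer sets gives $d' = m$), verifying \ref{item:74B}, \ref{item:74C}, \ref{item:74D} with $g_0 = 1$, $G_0 = \ZZ/m\ZZ$. The equality $\cQ^d(\cZ(m)) = \HK^d(\ZZ/m\ZZ)$ then follows from the discussion preceding Lemma~\ref{lem:Q=HK_implies_abelian}: one inclusion is \eqref{eq:731:1}, and for the reverse one uses that $\cQ^d$ of an invertible \geaab{} consists of cubes $\bra{\lambda_m((1\vec\omega\cdot\vec e)_k)}_{\vec\omega} = \bra{1\vec\omega\cdot\vec e \bmod m}_{\vec\omega}$ for $\vec e \in \NN_0^{d+1}$, and these are exactly the elements of $\HK^d(\ZZ/m\ZZ)$ in the abelian description (with $h_0 = e_0$, $h_j = e_j$).

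For part~(2), a factor map $\cZ(m') \to \cZ(m)$ between single-state \geaab{}s is just a surjective group homomorphism $\pi \colon \ZZ/m'\ZZ \to \ZZ/m\ZZ$ compatible with the labels, i.e.\ $\pi(j \bmod m') = j \bmod m$ for all $j \in \Sigma_k$; since the digits $\texttt{0}, \texttt{1}$ generate, this forces $\pi$ to be reduction mod $m$, which exists and is surjective iff $m \mid m'$. For characteristicity, by Proposition~\ref{prop:grp_quot_is_char} the factor $\cZ(m')/H \to \cZ(m)$ (where $H = m\ZZ/m'\ZZ = \ker\pi$) is characteristic iff $H < K(\cZ(m'))$; but by part~(1) and Lemma~\ref{lem:H-in-Q-reduction}, $K(\cZ(m')) = \bigcap_n (m'\ZZ/m'\ZZ \text{-stuff})$, and since $\cQ^d(\cZ(m')) = \HK^d(\ZZ/m'\ZZ)$ the group $K$ equals $\lim_n (\ZZ/m'\ZZ)_n = \{0\}$ because $\ZZ/m'\ZZ$ is abelian hence nilpotent of step $1$. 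Thus $\cZ(m')$ has no proper characteristic factors, so the factor is characteristic only when $H = \{0\}$, i.e.\ $m = m'$.

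For part~(3), a factor map $\cT \to \cZ(m)$ is a pair $(\phi, \pi)$ with $\pi \colon G \to \ZZ/m\ZZ$ surjective and $\pi(\lambda(s,j)) = j \bmod m$ for all $s,j$; extending, $\pi(\lambda(s,\word w)) = [\word w]_k \bmod m$. Applying this to a word $\word w$ with $\delta(\hat s_0, \word w) = \hat s_0$, $\lambda(\hat s_0,\word w) = \id$ shows $m \mid [\word w]_k$ for every such $\word w$, hence $m \mid d'_{\cT} = \gcd_k^*\{\dots\}$ (using that $m \mid k-1$ so $m$ is coprime to $k$). Conversely, if $m \mid d'_{\cT}$, use property \ref{item:74C}: the map sending $\lambda(s_0, \word w)$ to $[\word w]_k \bmod m$ is well-defined on $G$ (since $\lambda(s_0,\word w) \in g_0^{[\word w]_k}G_0$ and $G/G_0$ is cyclic of order $d'$, and $m \mid d'$), a surjective homomorphism $\pi \colon G \to \ZZ/m\ZZ$ factoring through $G/G_0$, and together with the constant map $\phi \colon S \to \{*\}$ gives the desired factor map. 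Finally, part~(4) combines (1) and (3): if $\cZ(m)$ is a characteristic factor of $\cT$, then by (3) we have $m \mid d'_{\cT}$, and $d'_{\cZ(m)} = m$ by (1); it remains to show $d'$ is preserved under characteristic factors, i.e.\ $m = d'_{\cT}$. This is exactly the statement of Lemma~\ref{lem:trans_fact_is_natural} in the direction where the kernel of $\pi$ lies in $G_0$ (which holds here since $\ker\pi = \pi^{-1}(\text{something in } G_0)$ and $K < G_0$, or more directly because $\cZ(m)$ being characteristic forces $\ker\pi < K(\cT) < G_0$). I expect part~(4), specifically the claim that characteristicity forces $\ker \pi < G_0$ so that Lemma~\ref{lem:trans_fact_is_natural} applies and yields $d'_{\cT} = d'_{\cZ(m)} = m$, to be the main subtlety — everything else is bookkeeping with the cocycle identity and the abelian description of Host--Kra groups.
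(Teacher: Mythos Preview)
Your approach to parts (1)--(3) matches the paper's: direct verification, elementary group theory, and the cocycle identity combined with property \ref{item:74C}. One small slip in (1): you write $d' = m$ together with $G_0 = G = \ZZ/m\ZZ$, but these are mutually inconsistent since $|G/G_0| = d'$; the correct data for $\cZ(m)$ are $d'_{\cZ(m)} = m$, $G_0 = \{0\}$, $g_0 = 1 \bmod m$. This does not affect anything downstream, and in particular your use of $d'_{\cZ(m)} = m$ in part~(4) is correct.

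Part (4) is where you genuinely diverge from the paper, and your route works. You observe that characteristicity of $\cZ(m)$ forces $\ker\pi \subset K(\cT)$ (for $h \in \ker\pi$, each cube $\bb c_{\vec\sigma}^d(h)$ projects to the identity in $(\ZZ/m\ZZ)^{[d]}$, hence lies in $\cQ^d(\cT)$ by characteristicity), and since $K(\cT) \subset G_0$ always, Lemma~\ref{lem:trans_fact_is_natural} yields $d'_{\cT} = d'_{\cZ(m)} = m$ directly. The paper instead runs a counting argument on $\cQ^2$: setting $p := |\cQ^2(\cT)|/|G|^4$, the existence of the factor $\cZ(d'_{\cT})$ gives $p \leq 1/d'_{\cT}$, while characteristicity of $\cZ(m)$ gives $p = 1/m$, so $m \geq d'_{\cT}$; combined with $m \mid d'_{\cT}$ from (3) this yields equality. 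Your argument is more structural and reuses Lemma~\ref{lem:trans_fact_is_natural} cleanly; the paper's is more self-contained and pins the obstruction down at the concrete level $d = 2$.
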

\begin{proof}
\begin{enumerate}[wide]
\item Each of the defining properties of an \egeaab{} can be verified directly (we take $d_0' = 1$ and $G_0 = G$).
\item This easily follows from the fact that $\ZZ/m\ZZ$ is a subgroup of $\ZZ/m'\ZZ$ if and only if $m \mid m'$.
\item Suppose first that $\cZ(m)$ is a factor of $\cT$ and the factor map is given by $(\phi,\pi)$. Then for any $\word w \in \Sigma_k^*$ with $\delta(s_0,\word w) = s_0$ and $\lambda(s_0,\word w) = \id_G$ we have 
\[ 0 = \pi(\id_G) = \lambda_{m}(\word w) = [\word w]_k \bmod m. \]
Hence, by property \ref{item:74C}, $m \mid d'$. In the opposite direction, property \ref{item:74C} guarantees that $\cZ(d')$ is a factor of $\cT$, with the group homomorphism given by $g_0^r h \mapsto r \bmod d'$ for all $h \in G_0$, $0 \leq r < d'$. It remains to notice that if $m \mid d'$ then $\cZ(m)$ is a factor of $\cZ(d')$. 
\item We already know that $m \mid d'_{\cT}$ so it remains to show that $m \geq d'_{\cT}$. Consider the probability $p$ that a random cube $\bb g \in G^{[2]}$ belongs to $\cQ^d(\cT)$. On one hand, since $\cZ(d'_{\cT})$ is a factor of $\cT$, we have $p \leq 1/d_{\cT}'$ (three coordinates of $\bb g$ determine the projection of the fourth to $\ZZ/d'_{\cT}\ZZ$). On the other hand, since $\cZ(m)$ is characteristic, we have $p = 1/m$. It follows that $m \geq d'_{\cT}$. 
\qedhere
\end{enumerate}
\end{proof}

We are now ready to reformulate our description of the cube groups $\cQ^d(\cT)$ in Theorem \eqref{thm:cubes} in a more succinct way using the language of characteristic factors. Equivalence of the said theorem and the following result is easily seen once one unwinds the definitions.

\begin{theorem}\label{thm:char_fact_is_Z} 
	Let $\cT$ be an \egeaab{}. Then $\cZ(d'_{\cT})$ is a characteristic factor of $\cT$.
\end{theorem}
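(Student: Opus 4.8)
The plan is to deduce the statement from Theorem~\ref{thm:cubes}, to which it is equivalent once the definitions are unwound. Write $d' := d'_{\cT}$ (recall $d' \mid k-1$ in the standing setup), so that by Lemma~\ref{lem:Zm-basic}(1),(3) the \geaab{} $\cZ(d')$ is efficient and is a factor of $\cT$. Fix a factor map $(\phi,\pi)$ witnessing this: $\phi$ sends every state to the unique state of $\cZ(d')$, and $\pi\colon G \to \ZZ/d'\ZZ$ is the quotient by $G_0$ followed by the isomorphism $g_0^r h \mapsto r$ (for $h \in G_0$) coming from property~\ref{item:74C}, so $\ker\pi = G_0$ and $\bar g_0 := \pi(g_0)$ generates $\ZZ/d'\ZZ$. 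I must show $\cQ^d(\cT) = \bbpi^{-1}\bra{\cQ^d(\cZ(d'))}$ for every $d \ge 0$, where $\bbpi\colon G^{[d]} \to (\ZZ/d'\ZZ)^{[d]}$ applies $\pi$ coordinatewise.

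First I would record both cube groups explicitly. By Lemma~\ref{lem:Zm-basic}(1), $\cQ^d(\cZ(d')) = \HK^d(\ZZ/d'\ZZ)$, which—the group being abelian—consists exactly of the cubes $\bra{h_0 + \sum_{j=1}^d \omega_j h_j}_{\vec\omega \in \{0,1\}^d}$ with $h_0,\dots,h_d \in \ZZ/d'\ZZ$. By Theorem~\ref{thm:cubes} applied with $\bb v = \bb v' = \bb v_0^{\cT}$ (so that one may take $\bb g_{\bb v_0^{\cT}} = \id_G^{[d]}$), together with the definition \eqref{eq:def-of-Q-no-l}, we get $\cQ^d(\cT) = G_0^{[d]}\,\bb H$ with $\bb H = \set{\bra{g_0^{1\vec\omega \cdot \vec e}}_{\vec\omega}}{\vec e \in \NN_0^{d+1}}$. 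Taking $\vec e = \vec 0$ shows $G_0^{[d]} \subseteq \cQ^d(\cT)$, i.e. $\ker\bbpi \subseteq \cQ^d(\cT)$, hence $\cQ^d(\cT) \cap \ker\bbpi = \ker\bbpi = G_0^{[d]}$.

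Next I would compute the image $\bbpi\bra{\cQ^d(\cT)}$. Since $G_0 = \ker\pi$, the factor $G_0^{[d]}$ dies, and $\bbpi\bra{\cQ^d(\cT)} = \bbpi(\bb H) = \set{\bra{(1\vec\omega \cdot \vec e)\,\bar g_0}_{\vec\omega}}{\vec e \in \NN_0^{d+1}}$. Setting $h_j := e_j \bar g_0$ and using that $\bar g_0$ generates $\ZZ/d'\ZZ$ (so letting each $e_j$ range over $\NN_0$ already realizes every element of $\ZZ/d'\ZZ$), this set equals $\set{\bra{h_0 + \sum_{j=1}^d \omega_j h_j}_{\vec\omega}}{h_0,\dots,h_d \in \ZZ/d'\ZZ} = \HK^d(\ZZ/d'\ZZ) = \cQ^d(\cZ(d'))$. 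Thus $\cQ^d(\cT)$ is a subgroup of $G^{[d]}$ that contains $\ker\bbpi$ and maps onto $\cQ^d(\cZ(d'))$ under $\bbpi$, which forces $\cQ^d(\cT) = \bbpi^{-1}\bra{\cQ^d(\cZ(d'))}$. As $d$ was arbitrary, $\cZ(d')$ is a characteristic factor of $\cT$.

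The one genuine obstacle is therefore Theorem~\ref{thm:cubes} itself, whose proof occupies the remainder of the paper. If one insisted on proving the present theorem without it, the natural alternative is to iterate the two structural reductions—state reduction (Proposition~\ref{prop:state_red_is_char}) and quotienting by subgroups of $K(\cT)$ (Proposition~\ref{prop:grp_quot_is_char}, Corollary~\ref{cor:grp_quot_is_char}), each of which yields a characteristic factor—until one reaches a minimal characteristic factor $\bar\cT$ with $K(\bar\cT) = \{\id_G\}$ admitting no further state reduction; Lemma~\ref{lem:trans_fact_is_natural} keeps $\bar\cT$ efficient with $d'_{\bar\cT} = d'_{\cT}$, and, characteristic factors composing, it would then suffice to identify $\bar\cT$ with $\cZ(d'_{\cT})$ via Lemma~\ref{lem:Q=HK_implies_abelian} and Lemma~\ref{lem:Zm-basic}(4). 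The hard part of that route would be showing that after the reductions no ``extra'' cubes survive, i.e. that $\cQ^d(\bar\cT)$ is the Host--Kra group of its group—which is once more exactly the content of Theorem~\ref{thm:cubes}.
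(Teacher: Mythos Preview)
Your unwinding of the equivalence between Theorem~\ref{thm:char_fact_is_Z} and Theorem~\ref{thm:cubes} is correct (the one small gap---that $\bb g_{\bb v_0^{\cT}}$ can be taken to be $\id_G^{[d]}$---is harmless, since $G_0^{[d]}\bb H$ is normal in $G^{[d]}$ because $G/G_0$ is abelian). However, in the paper the logical dependence runs the \emph{other} way: Theorem~\ref{thm:cubes} is stated first, Theorem~\ref{thm:char_fact_is_Z} is introduced as an equivalent reformulation, and the actual proof (labelled ``Proof of Theorem~\ref{thm:cubes}'' in \S6.8) establishes Theorem~\ref{thm:char_fact_is_Z} directly. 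So your primary argument, while valid, is the trivial direction of an equivalence and does not stand on its own.

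The paper's proof is \emph{not} the abstract ``iterate until $K(\bar\cT)=\{\id\}$'' scheme you sketch as an alternative; rather, it chains three concrete reductions, each producing a characteristic factor via an explicit construction showing that a specific normal subgroup lies in $K(\cT)$:
\begin{enumerate}
\item Proposition~\ref{prop:reduce_to_str_sync}: pass to a factor in which every sufficiently long word synchronises the underlying automaton;
\item Proposition~\ref{prop:reduce_to_invert}: from there, pass to an \emph{invertible} (single-state) factor;
\item Proposition~\ref{prop:char_fact_of_invertible}: for an invertible \egeaab{}, pass to a factor of the form $\cZ(m)$ with $m\mid k-1$.
\end{enumerate}
Then Lemma~\ref{lem:Zm-basic}(4) forces $m=d'_{\cT}$. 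None of these steps invokes Theorem~\ref{thm:cubes}; each is proved by exhibiting concrete morphisms $\tilde e\colon \bb v_0^{\cT}\to\bb v_0^{\cT}$ whose labels witness membership of the relevant generators in $K(\cT)$ via Corollary~\ref{cor:H-in-Q-reduction}. Your pessimistic conclusion that the alternative ``is once more exactly the content of Theorem~\ref{thm:cubes}'' reflects not an intrinsic circularity but rather that you did not locate these three intermediate targets (strong synchronisation, invertibility, cyclicity); with them in hand, the argument is self-contained.
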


\subsection{Strong synchronisation}\label{ssec:Cubes:str_sync}

Recall that \egeaab{} are built on automata that are synchronising. A stronger synchronisation property is enjoyed, for example, by the \geaab{} producing the Rudin--Shapiro sequence discussed in Example \ref{ex:Rudin-Shapiro}: \emph{all} sufficiently long words are synchronising for the underlying automaton (in fact, all nonempty words have this property). In this section we show that, passing to a characteristic factor, we can ensure this stronger synchronisation property for the underlying automata in general.

Let $\cT$ be a \geaab{}. For the purposes of this section, we will say that a pair of states $s,s' \in S$ is \emph{mistakable} if for every length $l$ there exists a word $\word u \in \Sigma^*_k$ with $\abs{\word u} \geq l$ and two states $r,r' \in S$ such that $\delta(r, \word u) = s$ and $\delta(r', \word u) = s'$. Note that in this situation $\word u$ cannot be a synchronising word for the underlying automaton unless $s = s'$. We will also say that the pair $s,s' \in S$ is \emph{strongly mistakable} if there exists a nonempty word $\word w \in \Sigma_k^*\setminus \{\epsilon\}$ such that $\delta(s, \word w) = s$ and $\delta(s', \word w) = s'$, while $\lambda(s,\word w) = \lambda(s',\word w) = \id_G$. As the terminology suggests, if $s,s'$ are strongly mistakable then they are also mistakable (we may take $\word u = \word w^l$ and $r = s$, $r' = s'$). The following lemma elucidates the connection between mistakable states and synchronisation.



\begin{lemma}\label{lem:mist_FCAE}
	Let $\cT$ be a natural tranducer and let $\cA$ be the underlying automaton. Then the following properties are equivalent: 
	\begin{enumerate}
	\item\label{item:98A} There exists a pair of distinct mistakable states $s, s' \in S$.
	\item\label{item:98B} There exists a pair of distinct strongly mistakable states $s, s' \in S$.
	\item\label{item:98C} There exist infinitely many words in $\Sigma_k^*$ which are not synchronising for $\cA$.
	\end{enumerate}
\end{lemma}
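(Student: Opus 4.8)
The plan is to prove the three conditions equivalent via the cycle $\eqref{item:98C} \Rightarrow \eqref{item:98A} \Rightarrow \eqref{item:98B} \Rightarrow \eqref{item:98C}$. The last implication is essentially immediate: if $s \neq s'$ are strongly mistakable witnessed by a nonempty word $\word w$ with $\delta(s,\word w)=s$ and $\delta(s',\word w)=s'$, then for every $l$ the word $\word w^l$ satisfies $\delta(s,\word w^l)=s \neq s' = \delta(s',\word w^l)$, so $\word w^l$ is not synchronising for $\cA$; since $\word w \neq \epsilon$, the words $\word w^l$ ($l \geq 1$) are pairwise distinct, giving infinitely many non-synchronising words.

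For $\eqref{item:98C} \Rightarrow \eqref{item:98A}$, suppose there are infinitely many non-synchronising words. For a word $\word u$, let $F(\word u) = \set{\delta(r,\word u)}{r \in S}$ be the set of states reachable by reading $\word u$ from an arbitrary state. A word $\word u$ is synchronising for $\cA$ exactly when $\abs{F(\word u)} = 1$, and $F(\word u \word v) \subseteq F(\word v)$, so the property of being non-synchronising is inherited by all suffixes; hence there are non-synchronising words of every length. Consider the set $\mathcal{P}$ of pairs $(s,s') \in S \times S$ with $s \neq s'$ such that for arbitrarily long $\word u$ there exist $r,r'$ with $\delta(r,\word u)=s$, $\delta(r',\word u)=s'$ — this is exactly the set of mistakable pairs. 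I need to show $\mathcal{P} \neq \emptyset$. Pick for each $l$ a non-synchronising word $\word u_l$ of length $\geq l$; then $\abs{F(\word u_l)} \geq 2$, so there are two distinct states $s_l, s'_l \in F(\word u_l)$. Since $S \times S$ is finite, some pair $(s,s')$ with $s \neq s'$ occurs as $(s_l,s'_l)$ for infinitely many $l$ (after possibly passing to a subsequence and relabelling), and this pair is mistakable by construction; here one uses that a suffix of a word witnessing mistakability for $(s,s')$ of length $\geq l$ still witnesses it for all lengths $\leq l$, so "arbitrarily long" witnesses suffice.

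The substantive implication is $\eqref{item:98A} \Rightarrow \eqref{item:98B}$. Given a mistakable pair $(s_0^{\mathrm{pair}}, s_1^{\mathrm{pair}})$ — I will write $(p,q)$ — I want to produce a nonempty word $\word w$ fixing both $p$ and $q$ with trivial label. The idea is to iterate on pairs. Consider the diagonal-style automaton on the set $S^2$ (or rather the subset of pairs reachable in the relevant sense), with transition $(s,s') \mapsto (\delta(s,j),\delta(s',j))$. For a mistakable pair $(p,q)$, by definition for every $l$ there is a word $\word u$ of length $\geq l$ and states $r,r'$ with $(\delta(r,\word u),\delta(r',\word u)) = (p,q)$; since $S^2$ is finite, by a pigeonhole/pumping argument applied to the sequence of states $(\delta(r,\word u_{\leq i}),\delta(r',\word u_{\leq i}))$ as $i$ ranges over prefixes of a sufficiently long such $\word u$, one finds a factorisation $\word u = \word x \word w' \word z$ with $\word w'$ nonempty such that reading $\word w'$ takes some pair $(p',q')$ back to itself, i.e. $\delta(p',\word w')=p'$, $\delta(q',\word w')=q'$, and $\delta(p',\word z \cdots) $ leads (via the rest) to $(p,q)$; more carefully, one loops on the pair $(p,q)$ itself after first reaching it, using that $(p,q)$ is reachable as $(\delta(r,\word u),\delta(r',\word u))$ for arbitrarily long $\word u$, so repeating a pumping argument on the walk $(\delta(r,\cdot),\delta(r',\cdot))$ one obtains a nonempty word $\word w'$ with $\delta(p,\word w')=p$ and $\delta(q,\word w')=q$ (and $p \neq q$). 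This is the key obstacle: carefully extracting a cycle through the pair $(p,q)$ in $S^2$ from mere mistakability. Once $\word w'$ is obtained with $\delta(p,\word w')=p$, $\delta(q,\word w')=q$, one still needs the labels to be trivial. Here I would invoke the structure of an \egeaab{}: by property \ref{item:74B} (applied with $s = s' = p$, respectively $s = s' = q$), for any $g$ in $G$ and sufficiently large $l$ there are words of length $l$ looping at $p$ with label $g$; combining this with the cycle word $\word w'$, one replaces $\word w'$ by a power $\word w'^N$ (which still loops at both $p$ and $q$) for which $\lambda(p,\word w'^N)$ and $\lambda(q,\word w'^N)$ both equal $\id_G$ — using that $G$ is finite, so the labels along powers of a fixed loop are eventually periodic and hit the identity. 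Taking $\word w = \word w'^N$ shows $(p,q)$ is strongly mistakable. (One caveat: \ref{item:74B} is stated for the "hatted" or efficient \geaab{}; since the lemma says "natural transducer" and references \egeaab{} structure, I assume the ambient hypothesis that $\cT$ is efficient, so that \ref{item:74B}--\ref{item:74D} are available; if instead only synchronisation is assumed, one argues directly using that labels along the loop $\word w'$ are periodic under concatenation in the finite group $G$, so $\lambda(p,\word w'^N) = \lambda(p,\word w')^N$ after first arranging $\word w'$ loops at $p$, hence equals $\id_G$ for $N$ a multiple of the order of $\lambda(p,\word w')$, and similarly at $q$ by replacing $N$ with a common multiple.)
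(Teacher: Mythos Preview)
Your overall strategy matches the paper's: pigeonhole on the product walk in $S^2$ to find a repeated pair, then raise the resulting loop word to a suitable power so both labels become $\id_G$. Two points deserve correction.

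First, in the implication \eqref{item:98A} $\Rightarrow$ \eqref{item:98B} you initially (and correctly) observe that pumping a long witness word $\word u$ yields a nonempty $\word w'$ with $\delta(p',\word w')=p'$ and $\delta(q',\word w')=q'$ for \emph{some} pair $(p',q')$ along the trajectory. You then try to upgrade this to a loop at the original pair $(p,q)$ itself, calling this ``the key obstacle''. It is not an obstacle at all: the lemma only asks for the \emph{existence} of a strongly mistakable pair, so $(p',q')$ already suffices. Note that $p' \neq q'$ automatically, since equality at any step of the product walk would persist to the end and contradict $p \neq q$. Your attempted upgrade is in fact unjustified: mistakability of $(p,q)$ only guarantees arbitrarily long walks \emph{ending} at $(p,q)$, which forces a cycle somewhere upstream but not necessarily through $(p,q)$. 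The paper's proof simply takes whichever pair $(t,t')$ pigeonhole hands you.

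Second, the detour through property \ref{item:74B} is unnecessary, as you half-realise in your parenthetical. Since $\delta(p',\word w') = p'$, one has $\lambda(p',(\word w')^N) = \lambda(p',\word w')^N$, and likewise at $q'$; taking $N = \abs{G}$ (or any common multiple of the two orders) kills both labels. This is exactly what the paper does with $\word w = \word v^{\abs{G}}$.
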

\begin{proof}
	As any pair of strongly mistakable states is mistakable, \eqref{item:98B} implies \eqref{item:98A}. Moreover, as we have remarked above, \eqref{item:98A} implies \eqref{item:98C}. 
	
	In the reverse direction, \eqref{item:98C} implies \eqref{item:98A}: indeed, if \eqref{item:98C} holds, then there exist infinitely many words $\word u_i \in \Sigma_k^*$ ($i \in \NN$) with corresponding quadruples $r_i,r'_i,s_i,s'_i \in S$ such that $s_i \neq s_i'$ and $\delta(r_i,\word u_i) = s_i,\ \delta(r_i',\word u_i) = s_i'$. Any pair $s,s' \in S$ such that $s = s_i$ and $s' = s_i'$ for infinitely many values of $i$ is mistakable, so \eqref{item:98A} holds.

	It remains to show that \eqref{item:98A} implies \eqref{item:98B}. By definition, it follows from \eqref{item:98A} that there exists a word $\word u = u_1 u_2 \dots u_l \in \Sigma^{*}_k$ with $\abs{\word u} = l \geq \abs{S}^2$ and states $r,r',s,s' \in S$ with $s \neq s'$ such that 
  $\delta(r,\word u) = s$ and $\delta(r', \word u) = s'$. For $0 \leq i \leq l$, let $s_i$ and $s_i'$ be the states reached form $r$ and $r'$ respectively after reading the first $i$ digits of $\word u$. More precisely, $s_i,s_i'$ are given by $s_0 = r$, $s_0' = r'$ and $s_{i} = \delta(s_{i-1},u_i)$, $s_{i}' = \delta(s_{i-1}',u_i)$ for all $1 \leq i \leq l$. Note that since $s_l \neq s_l'$ we have more generally $s_i \neq s_i'$ for all $0 \leq i \leq l$. By the pigdeonhole principle, there exists a pair of indices $0 \leq i < j \leq l$ and a pair of states $t,t'$ such that $s_i = s_j = t$ and $s_i' = s_j' = t'$. Put $\word v = u_{i+1} u_{i+2} \dots u_{j}$ so that $\delta(t,\word v) = t$ and $\delta(t',\word v) = t'$. Finally, put $\word w = \word v^{\abs G}$ so that $\delta(t,\word w) = t$ and $\delta(t',\word w) = t'$ and by the Lagrange's theorem we have $\lambda(t, \word w) = \lambda(t, \word v)^{\abs{G}} = \id_G$ and likewise $\lambda(t', \word w) = \id_G$. It follows that $t,t'$ are strongly mistakable.
\end{proof}

\begin{proposition}\label{prop:reduce_to_str_sync}
  Let $\cT$ be an \egeaab{}.
  Then $\cT$ has a characteristic factor $\bar \cT$ such that every sufficiently long word is synchronizing for the underlying automaton.
\end{proposition}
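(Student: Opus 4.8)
The plan is to show that the obstruction to the strong synchronisation property—namely the existence of a pair of distinct strongly mistakable states—can always be removed by passing to a characteristic factor, and to iterate. Concretely, suppose that $\cT$ does not already have the desired property. By Lemma \ref{lem:mist_FCAE}, there is a pair of distinct strongly mistakable states $s, s' \in S$, that is, a nonempty word $\word w \in \Sigma_k^* \setminus \{\epsilon\}$ with $\delta(s, \word w) = s$, $\delta(s', \word w) = s'$ and $\lambda(s,\word w) = \lambda(s',\word w) = \id_G$. The key observation is that such a word forces a relation among the cube groups. Running the two "threads" starting at $s$ and $s'$ in parallel and inserting copies of $\word w$, we can produce, for every $d \geq 0$, a cube in $\cQ^d(\cT)$ (suitably conjugated to pass between base objects via Lemma \ref{lem:Q_limit}) in which one vertex carries the label $\lambda(s', \word u)$ while the remaining vertices all carry $\lambda(s, \word u)$ for an arbitrary word $\word u$; more precisely, exploiting strong connectivity and property \ref{item:74B} we can arrange for the "shift" between $s$ and $s'$ to be realised purely inside the label group, yielding cubes with a single nontrivial coordinate $h := \lambda(s,\word u)^{-1}\lambda(s',\word u)$ for words $\word u$ ranging over a set whose labels generate some nontrivial subgroup. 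Then Corollary \ref{cor:H-in-Q-reduction} (applied with $g = \id_G$) shows that this $h$ lies in $K(\cT)$, the group defined in \eqref{eq:def-of-K}.

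The next step is to quotient out. Let $X \subset G$ be the set of all such elements $h$ obtained as $s,s'$ range over strongly mistakable pairs and $\word u$ over $\Sigma_k^*$, and let $H = \abra{X}^G$ be its normal closure. By the discussion around \eqref{eq:def-of-K} (and since $K(\cT)$ is normal), $H < K(\cT)$, so Proposition \ref{prop:grp_quot_is_char} (equivalently Corollary \ref{cor:grp_quot_is_char}) guarantees that $\cT/H$ is a characteristic factor of $\cT$. Moreover $H < K(\cT) < G_0$, so by Lemma \ref{lem:trans_fact_is_natural} the factor $\cT/H$ is again an \egeaab{} (with the same $d'$). Now in $\cT/H$ the states $s$ and $s'$ (before state reduction) satisfy $\bar\lambda(\bar s, \word u) = \bar\lambda(\bar s', \word u)$ for all $\word u$, hence are equivalent under the relation $\sim$ of Section \ref{ssec:Cubes:state_quot}; applying $(\cdot)_{\mathrm{red}}$ and invoking Proposition \ref{prop:state_red_is_char} yields a characteristic factor $\cT'$ (characteristic factors compose, as noted in Section \ref{ssec:Cubes:factors}) with strictly fewer states than $\cT$. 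Since the number of states is a positive integer, iterating this construction must terminate; when it does, the resulting characteristic factor $\bar\cT$ has no pair of distinct strongly mistakable states, so by Lemma \ref{lem:mist_FCAE} only finitely many words fail to be synchronising for its underlying automaton, which is precisely the assertion that every sufficiently long word is synchronising.

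I expect the main obstacle to be the first paragraph: carefully extracting, from a single strongly mistakable pair $(s,s')$ together with property \ref{item:74B} (and the groupoid relations of Lemma \ref{lem:Q_limit} and Corollary \ref{cor:Q_transitivity}), cubes in $\cQ^d(\cT)$ that have exactly the right shape—one coordinate equal to $h$ and the rest equal to a common value $g$—so that Corollary \ref{cor:H-in-Q-reduction} applies and places $h$ (or the coset $gK$) inside $K(\cT)$. One must be mindful that $\word w$ corresponds to some integer $[\word w]_k$ whose residue modulo $d'$ matters, but since we are free to prolong $\word w$ (repeating it $d'$ times, say, and using $\lambda(s,\word w^{d'}) = \id_G$ still) we may assume $d' \mid [\word w]_k$, which keeps everything inside $G_0$ and compatible with \ref{item:74C}. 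Once this algebraic input is in place, the quotient/reduce/iterate mechanism is routine given the machinery already developed in Sections \ref{ssec:Cubes:grp_quot} and \ref{ssec:Cubes:state_quot}.
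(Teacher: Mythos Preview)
Your proposal is correct and follows essentially the same approach as the paper: define $H$ as the normal closure of the elements $\lambda(s,\word u)^{-1}\lambda(s',\word u)$ over strongly mistakable pairs, show $H < K(\cT)$ via Corollary \ref{cor:H-in-Q-reduction}, then alternate quotienting by $H$ with state reduction and iterate until no distinct strongly mistakable pair remains. The paper packages the step you flag as the main obstacle into a separate Lemma \ref{lem:reduce_to_str_sync}, where the required cube in $\cQ^d(\cT)$ is built explicitly: using words $\word v_0,\word v_1$ of equal length with $\delta(s,\word v_1)=s$, $\delta(s',\word v_1)=s'$, $\delta(s,\word v_0)=\delta(s',\word v_0)=s'$ and trivial labels, one sets $e_0 = [\word w \word v_0^d \word u]_k$ and $e_j = ([\word v_1]_k - [\word v_0]_k)k^{(d-j)m + |\word u|}$, so that $(1\vec\omega\cdot\vec e)_k = \word w\word v_{\omega_1}\cdots\word v_{\omega_d}\word u$ and the resulting cube has $\lambda(s,\word u)$ at $\vec\omega=\vec 1$ and $\lambda(s',\word u)$ elsewhere. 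Your groupoid-conjugation idea and the remark about arranging $d' \mid [\word w]_k$ are not needed once this direct construction is in hand.
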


The proof of Proposition \ref{prop:reduce_to_str_sync} proceeds by iterating the following lemma.

\begin{lemma}\label{lem:reduce_to_str_sync}
  Let $\cT$ be an \egeaab{} and let $H < G$ be given by
  	\begin{equation}
  	\label{eq:590-def-H}
  	H = \abra{ \lambda(s, \word u)^{-1} \lambda(s', \word u) : s \text{ and } s' \text{ are strongly mistakable}, \word u \in \Sigma_k^*}^{G}.
  	\end{equation}
 	Then $\bar \cT/H$ is a characteristic factor of $\cT$.
\end{lemma}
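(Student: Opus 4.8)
The plan is to apply Corollary~\ref{cor:grp_quot_is_char}. That reduces the problem to showing that for each generator $h = \lambda(s,\word u)^{-1}\lambda(s',\word u)$ of $H$ (where $s,s'$ are strongly mistakable and $\word u \in \Sigma_k^*$) and each $d \geq 0$ there exists $\vec\rho \in \{0,1\}^d$ such that the cube $\bb c_{\vec\rho}^d(h)$ lies in $\cQ^d(\cT)$. Since $\cQ^d(\cT) = \cQ^d(\cT)(\bb v_0^{\cT},\bb v_0^{\cT})$ and by Lemma~\ref{lem:Q_limit} and Lemma~\ref{lem:Ob0-reachable} all the cube sets $\cQ^d(\cT)(\bb v,\bb v')$ for $\bb v,\bb v'\in\Ob_{\cU}$ are conjugates of $\cQ^d(\cT)$, it will be enough to exhibit, for suitable $\bb u, \bb u' \in \Ob_{\cU}$, a morphism $\tilde e \colon \bb u \to \bb u'$ whose fundamental label $\bblambda(\tilde e)$ is $h$ on one coordinate and $\id_G$ elsewhere; conjugating back by the cubes $\bb g_{\bb u}, \bb g_{\bb u'}$ from Lemma~\ref{lem:Q_limit} then places a cube of the form $\bb c_{\vec\rho}^d(\text{conjugate of }h)$ into $\cQ^d(\cT)$, and by Corollary~\ref{cor:H-in-Q-reduction} this is enough to conclude $h \in K(\cT)$ (which is normal, so the whole normal closure $H$ lies in $K$).

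The construction of such a morphism is where the strong mistakability is used. By definition there is a nonempty word $\word w$ with $\delta(s,\word w) = s$, $\delta(s',\word w) = s'$ and $\lambda(s,\word w) = \lambda(s',\word w) = \id_G$. Fix $d$ and pick $\vec\rho = \vec 1 \in \{0,1\}^d$. I would build a morphism in $\cV^d(\cT)$ of the shape $\tilde e = (l,\vec e)$ whose underlying integer displacements $1\vec\omega\cdot\vec e$ are engineered so that on the vertex $\vec\omega = \vec 1$ the word read is (up to padding by copies of $\word w$ and leading zeros) a word realising the label $\lambda(s,\word u)^{-1}\lambda(s',\word u)$ — concretely, one wants to read a long run of $\word w$'s, then a segment producing $\lambda(\cdot,\word u)$ from $s$, appropriately compensated; on every other vertex $\vec\omega \neq \vec 1$ one wants the word read to consist only of synchronising material and copies of $\word w$, so its label is $\id_G$. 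The combinatorial point is that the difference coordinates $e_0,\dots,e_d$ must be chosen so that the carry patterns align: each $1\vec\omega\cdot\vec e$ for $\vec\omega\neq\vec 1$ falls into a "safe" prefix/suffix regime while only $\vec\omega=\vec 1$ sees the relevant block. Here one exploits that $\word w$ can be repeated arbitrarily many times (absorbing both the state discrepancy, since $\delta(s,\word w^m)=s$, $\delta(s',\word w^m)=s'$, and the group contribution, since $\lambda(\cdot,\word w^m)=\id$), and that $\cT$ is strongly connected and synchronising so we can route from $\bb v_0^{\cT}$ to $\bb u = (\bb s,\bb 0)$ with $s_{\vec 1}=s$, $s_{\vec 1}'=s'$ (or rather to the pair $(\bb s,\bb 0),(\bb s',\bb 0)$), and back again with trivial labels using $\word w_0^{\cT}$.

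I would organise it as follows. First note $H < G$ is normal by construction, so it suffices (Corollary~\ref{cor:grp_quot_is_char}) to verify the single-coordinate cube condition for each generating word-pair data $(s,s',\word u)$. Second, reduce to the pair $\bb v_0^{\cT}$: using Lemma~\ref{lem:Q_limit} and Corollary~\ref{cor:Q_transitivity}, membership $\bb c_{\vec\rho}^d(h)\in\cQ^d(\cT)$ is equivalent (after the fixed conjugation by $\bb g_{\bb u},\bb g_{\bb u'}$, which are independent of $d$-internal structure) to the existence of a morphism $(\bb s,\bb 0)\to(\bb s',\bb 0)$ in $\cV^d$ with the prescribed fundamental label; here I would take $\bb s = \bb s' = $ the constant cube at the appropriate state and adjust via the mistakable witnesses so that only the $\vec 1$-vertex accumulates $h$. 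Third, carry out the explicit choice of $(l,\vec e)$: pad $\word w$-blocks so that at vertices $\vec\omega\ne\vec1$ the relevant integer's base-$k$ expansion decomposes into a synchronising prefix, a $\word w^{m}$-middle, and a trailing synchronising tail; at $\vec\omega=\vec1$ insert the differential segment. Finally invoke Corollary~\ref{cor:H-in-Q-reduction}/Lemma~\ref{lem:H-in-Q-reduction} to pass from the single $\vec\rho$ to all $\vec\sigma$, hence $h\in K(\cT)$, hence $H < K(\cT)$, so by Proposition~\ref{prop:grp_quot_is_char} the factor $\cT/H$ is characteristic; that it is still efficient is Lemma~\ref{lem:trans_fact_is_natural} since $\ker\pi = H < K(\cT) < G_0$.

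The main obstacle I anticipate is the bookkeeping in step three: precisely choosing the displacement vector $\vec e$ (and length $l$) so that the carries in the $d+1$ base-$k$ additions $1\vec\omega\cdot\vec e$ do not contaminate the non-$\vec1$ vertices — i.e.\ ensuring the block $\word w^m$ and the differential segment land in disjoint digit-ranges for all $2^d$ vertices simultaneously. This is a delicate but essentially elementary combinatorial lemma about base-$k$ arithmetic of cube-displacements, and once set up correctly (using that $\word w^m$ and the synchronising word $\word w_0^{\cT}$ both act as identities on the group and stabilise the relevant states) the rest is formal. A secondary subtlety is checking that the objects involved actually lie in $\Ob_{\cU}$, which follows from Lemma~\ref{lem:Ob0-const-cube} and Corollary~\ref{cor:Ob0-is-closed}.
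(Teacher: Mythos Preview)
Your high-level strategy is right and matches the paper: reduce to showing $H \leq K(\cT)$ and invoke Proposition~\ref{prop:grp_quot_is_char}. Two points, however, need correction.

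\textbf{The conjugation step is not valid.} You propose to find $\bb c^d_{\vec\rho}(h) \in \cQ^d(\cT)(\bb u,\bb u')$ for some $\bb u,\bb u' \in \Ob_{\cU}$ and then transport it into $\cQ^d(\cT)$ via Lemma~\ref{lem:Q_limit}. But that lemma only gives $\bb g_{\bb u}\,\bb c^d_{\vec\rho}(h)\,\bb g_{\bb u'}^{-1} \in \cQ^d(\cT)$, and there is no reason for this product to be a single-coordinate cube (or even a cube with one distinguished coordinate and all others equal), since $\bb g_{\bb u},\bb g_{\bb u'}$ are arbitrary elements of $G^{[d]}$. The paper sidesteps this entirely by constructing the morphism directly from $\bb v_0^{\cT}$ to $\bb v_0^{\cT}$, so no transport is needed.

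\textbf{The construction of $\vec e$ is missing the key idea.} You have only the word $\word w$ witnessing strong mistakability (fixing both $s$ and $s'$ with trivial label). With that alone you cannot arrange for the $2^d$ linear forms $1\vec\omega\cdot\vec e$ to distinguish $\vec\omega=\vec 1$ from all other vertices---there are only $d{+}1$ parameters. The paper introduces a \emph{second} word $\word v_0$ (built from $\word w_0^{\cT}$ followed by a path to $s'$) which sends \emph{both} $s$ and $s'$ to $s'$ with trivial label, and pads so that $|\word v_0|=|\word v_1|$ where $\word v_1$ is the mistakability witness. Setting $e_0 = [\word w\,\word v_0^d\,\word u]_k$ and $e_j = ([\word v_1]_k-[\word v_0]_k)k^{(d-j)|\word v_0|+|\word u|}$ gives $(1\vec\omega\cdot\vec e)_k^l = \word w\,\word v_{\omega_1}\cdots\word v_{\omega_d}\,\word u$; the state after the $\word v$-block is $s$ precisely when $\vec\omega=\vec 1$ and $s'$ otherwise. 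Reading the final $\word u$ then yields the cube with $\lambda(s,\word u)$ at $\vec 1$ and $\lambda(s',\word u)$ at all other vertices, lying in $\cQ^d(\cT)$. Corollary~\ref{cor:H-in-Q-reduction} (two values, not $h$ and $\id_G$) finishes the job. Your sketch never produces the collapsing word $\word v_0$, and without it the carry-bookkeeping you worry about cannot be made to work.
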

\begin{proof}

  	  	

Recall from Section \ref{ssec:Cubes:grp_quot} that it will suffice to verify that $H < K = K(\cT)$. Let $h$ be one of the generators of $H$ in \eqref{eq:590-def-H}. Pick a pair of strongly mistakable states $s,s' \in S$ and a word $\word u \in \Sigma_k^*$ such that $h = \lambda(s, \word u)^{-1} \lambda(s', \word u)$. Replacing $\word u$ with $\word u \word w_0^{\cT}$, where $\bfw_0^{\cT}$ is a synchronizing word of $\cT$, we may assume without loss of generality that $\word u$ synchronises  the underlying automaton to $s_0$, so in particular $\delta(s,\word u) = \delta(s',\word u) = s_0$.
	
	In order to construct the relevant morphism $(l,\vec e) \colon \bb v_0^{\cT} \to \bb v_0^{\cT}$, we first need to specify several auxiliary words with certain helpful properties, described by the diagram below. Let $\word w$ be a word such that $\delta(s_0, \word w) = s$ and $\lambda(s_0,\word w) = \id_G$, whose existence is guaranteed by property \ref{item:74B}. Let $\word v_1$ be a word such that $\delta(s, \word v_1) = s$, $\delta(s', \word v_1) = s'$, and $\lambda(s,\word v_1) = \lambda(s',\word v_1) = \id_G$, which exists because $s,s'$ are strongly mistakable. Lastly, let $\word v_0$ be a word such that $\delta(s, \word v_0) = \delta(s', \word v_0) = s'$ and $\lambda(s', \word v_0) = \lambda(s', \word v_0) = \id_G$. One can obtain such a word by concatenating $\word w_0^{\cT}$ with a word taking $s_0$ to $s'$ with identity group label, whose existence is guaranteed by property \ref{item:74B}.
	\begin{center}
\begin{tikzpicture}[scale=.5,shorten >=1pt,node distance=2cm, on grid, auto] 
   \node[state] (s_00)   {$s_{0}$}; 
   \node[state] (s_10) [right=of s_00] {$s$}; 
   \node[state] (s_11) [below=of s_10] {$s'$}; 

	\path[->]
    (s_00) edge [left] node [above]  {$\word w/\id_G$} (s_10);

 \tikzstyle{loop}=[min distance=6mm,in=30,out=-30,looseness=7]
   \path[->] 
    (s_10) edge [loop right] node {$\word v_1/\id_G$} (s_10);

 \tikzstyle{loop}=[min distance=6mm,in=30,out=-30,looseness=7]
   \path[->] 
    (s_11) edge [loop right] node {$\word v_1/\id_G$} (s_11);

 \tikzstyle{loop}=[min distance=6mm,in=210,out=-210,looseness=7]
   \path[->] 
    (s_11) edge [loop left] node {$\word v_0/\id_G$} (s_11);
   \path[->] 
    (s_00) edge [loop left] node {$\texttt 0/\id_G$} (s_00);

 \tikzstyle{loop}=[min distance=4mm,in=120,out=240,looseness=1]
    \path[->]     
    (s_10) edge [left] node {$\word v_0/\id_G$} (s_11);

\end{tikzpicture}
\end{center}	
\newcommand{\m}{m}
\newcommand{\vDelta}{\word v}
We may additionally assume that the words ${\word v_1}$ and ${\word v_0}$ have the same length $\m$; otherwise we can replace them with $\word v_0^{\abs{ \word v_1}}$ and $\word v_1^{\abs{\word v_2}}$ respectively. Note that $\word v_0 \neq \word v_1$ since $s \neq s'$. Assume for concreteness that $[\word v_0]_k < [\word v_1]_k$; the argument in the case $[\word v_0]_k > [\word v_1]_k$ is analogous. Let $\vDelta = \bra{[\word v_1]_k - [\word v_0]_k}_k^{\m}$ be the result of subtracting $\word v_0$ from $\word v_1$. Put also $l = \abs{\word w} + d \m + \abs{\word u}$. We are now ready to define the coordinates $e_i$, which are given by
\begin{align*}
	e_0 &= [ \word w \underbrace{ \word v_0 \word v_0 \dots \word v_0}_{\text{$d$ times}} \word u ]_k; 
	& e_j &= [ \vDelta \underbrace{\texttt 0^{\m} \texttt 0^{\m} \dots \texttt 0^{\m}}_{\text{$d-j$ times}} \texttt 0^{\abs{\word u}} ]_k \quad (0 < j \leq d).
\end{align*}
This definition is set up so that for each $\vec \omega \in \{0,1\}^d$ we have
\begin{align*}
	1 \vec\omega \cdot \vec e &= [  \word w  \word v_{\omega_1}  \word v_{\omega_2} \dots  \word v_{\omega_d}  \word u ]_k.
\end{align*}
Since $\word u$ synchronises the underlying automaton of $\cT$ to $s_0$ and $1 \vec\omega \cdot \vec e < k^l$ for each $\vec \omega \in \{0,1\}^d$, it follows directly from \eqref{eq:deq-s-and-r'} that we have a morphism $\tilde e = (l,\vec e) \colon \bb v_0^{\cT} \to \bb v_0^{\cT}$, and so $\bblambda(\tilde e) \in \cQ^d(\cT)$. Our next step is to compute $\bblambda(\tilde e)$.

It follows directly from the properties of $\word w, \word v_0$ and $\word v_1$ listed above that
$$
	\delta(s_0, \word w \word v_{ \omega_1} \word v_{ \omega_2} \dots \word v_{ \omega_{j} }) = 
	\begin{cases}
		s, & \text{ if } \omega_1 = \omega_2 = \dots = \omega_j  = 1, \\
		s', &	\text{ otherwise.}
	\end{cases}
$$
for any $\vec\omega \in \{0,1\}^d$ and $0 \leq j \leq d$ (the case $j=0$ corresponds to $\delta(s_0,  \word w) = s$). Hence, for any $\vec \omega \in \{0,1\}^d$ different from $\vec 1$ we have
\begin{align*}
	\lambda(s_0, (1\omega \cdot \vec e)_k^l ) &=
	\lambda(s_0, \word w) \lambda(s, \word v_1)^{j-1} \lambda(s, \word  v_0) \lambda(s', \word v_{\omega_{j+1}}) \dots \lambda(s', \word v_{\omega_{d}}) \lambda(s', \word u)
	\\&= \lambda(s',\word u),
\end{align*}
where $j$ is the first index with $\omega_j = 0$.
For $\vec \omega = \vec 1$ we obtain a similar formula, which simplifies to
\begin{align*}
	\lambda(s_0, (\vec 1 \cdot \vec e)_k^{l} ) &= \lambda(s, \word u).
\end{align*}
Since $d \geq 0$ was arbitrary, it follows from Corollary \ref{cor:H-in-Q-reduction} that $\lambda(s, \word u) \equiv \lambda(s', \word u) \bmod{K}$, and consequently $H < K$, as needed. 
\end{proof}

\begin{proof}[Proof of Proposition \ref{prop:reduce_to_str_sync}]

Let $\cT' := (\cT/H)_{\mathrm{red}}$, where $H = H(\cT)$ is given by \eqref{eq:590-def-H}. Recall that $\cT'$ is efficient by Lemma \ref{lem:trans_fact_is_natural}. Note that either 
\begin{enumerate}[wide]
\item\label{it:str_sync:A} $\cT'$ is a proper factor of $\cT$; or
\item\label{it:str_sync:B} all sufficiently long words synchronise the underlying automaton of $\cT$.
\end{enumerate}
Indeed, if \eqref{it:str_sync:B} does not hold then it follows from Lemma \ref{lem:mist_FCAE} that there exists a pair of distinct strongly mistakable states $s,s' \in S$. The definition of $H$ guarantees that the images of those states in $\cT/H$ give rise to the same label maps: $\bar\lambda(s,\word u) = \bar\lambda(s',\word u)$ for all $\word u \in \Sigma_k^*$. It follows that $s$ and $s'$ are mapped to the same state in $(\cT/H)_{\mathrm{red}}$. In particular, $(\cT/H)_{\mathrm{red}}$ has strictly fewer states than $\cT$.

Iterating the construction described above, we obtain a sequence of characteristic factors 
\[
 \cT' \to \cT'' \to \dots \to \cT^{(n)} \to  \cT^{(n+1)} \to \dots, \]
where $\cT^{(n+1)} = \bra{\cT^{(n)}}' = \bra{\cT^{(n)}/H(\cT^{(n)})}_{\mathrm{red}}$ for each $n \geq 0$. 
  Since all objects under consideration are finite, this sequence needs to stabilise at some point, meaning that there exists $n \geq 0$ such that $\cT^{(n)} = \cT^{(n+1)} = \dots := \bar\cT$. Since $\bar \cT' = \bar \cT$, it follows from the discussion above that all sufficiently long words are synchronising for the underlying automaton of $\bar \cT $. By Lemma \ref{lem:reduce_to_str_sync}, $\bar \cT$ is a characteristic factor of $\cT$.
\end{proof}

\begin{example}\label{ex:str_sycn}
Consider the \geaab{} described by the following diagram, where $g,h \in G$ are two distinct group elements.
	\begin{center}
\begin{tikzpicture}[scale=.5,shorten >=1pt,node distance=3cm, on grid, auto] 
	\node (s_00) {};
   \node[state] (s_0) [left= of s_00]  {$s_{0}$}; 
   \node[state] (s_1) [above right= of s_00] {$s_1$}; 
   \node[state] (s_2) [below right= of s_00] {$s_2$}; 

	\path[->]
    (s_0) edge [left] node [below right]  {$\texttt 1/\id$} (s_1);
   	\path[->]
    (s_0) edge [left] node [above right]  {$\texttt 2/\id$} (s_2);

 \tikzstyle{loop}=[min distance=6mm,in=30,out=-30,looseness=7]
   \path[->] 
    (s_1) edge [loop right] node {$\texttt 1/g$} (s_1);

 \tikzstyle{loop}=[min distance=6mm,in=30,out=-30,looseness=7]
   \path[->] 
    (s_2) edge [loop right] node {$\texttt 1/h$} (s_2);

 \tikzstyle{loop}=[min distance=6mm,in=210,out=-210,looseness=7]
   \path[->] 
    (s_0) edge [loop left] node {$\texttt 0/\id$} (s_0);

 \tikzstyle{loop}=[min distance=4mm,in=120,out=240,looseness=1]
    \path[->]     
    (s_1.south east) edge [right] node {$\texttt 2/\id$} (s_2.north east);
    \path[->]     
    (s_2.north west) edge [left] node {$\texttt 2/\id$} (s_1.south west);
    
 \tikzstyle{loop}=[in=-90,out=210,looseness=.5]
    \path[->]     
    (s_2) edge [loop below]  node {$0/\id$} (s_0);

 \tikzstyle{loop}=[in=90,out=-210,looseness=.5]
    \path[->]     
    (s_1) edge [loop above]  node {$\texttt 0/\id$} (s_0);
\end{tikzpicture}
\end{center}	
The word $0$ is synchronising for the \geaab{} and no word in $\{1,2\}^*$ is synchronising for the underlying automaton. The states $s_1$ and $s_2$ are strongly mistakable and the loops are given by $1^m$ where $m$ is any common multiple of the orders of $g$ and $h$. The group $H$ in Lemma \ref{lem:reduce_to_str_sync} is generated by $gh^{-1}$ and its conjugates, and the \geaab{} $\cT' = \bar \cT$ in the proof of Proposition \ref{prop:reduce_to_str_sync} is obtained by collapsing $s_1$ and $s_2$  into a single state. 
\end{example}

\subsection{Invertible factors}

In this section we further reduce the number of states of the \geaab{} under consideration. In fact, we show that it is enough to consider \geaab{} with just a single state. Recall that such \geaab{}s with one states are called invertible.

\begin{proposition}\label{prop:reduce_to_invert}
  Let $\cT$ be an \egeaab{} such that all sufficiently long words are synchronising for the underlying automaton. Then $\cT$ has an invertible characteristic factor.
\end{proposition}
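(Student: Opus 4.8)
The goal is to pass from an efficient GEA $\cT$ whose underlying automaton is \emph{strongly synchronising} (all sufficiently long words synchronise) to an invertible characteristic factor — i.e.\ one with a single state. Following the strategy of the previous two subsections, the plan is to produce the invertible factor as the end-product of a sequence of characteristic-factor reductions, where at each step we either eliminate a state by identifying states that have become equivalent, or quotient the group by a normal subgroup $H<K(\cT)$ so that more states become equivalent. So the core is, as in Proposition~\ref{prop:reduce_to_str_sync}, a single lemma: if $\cT$ has more than one state, there is a normal $H<K(\cT)$ such that $(\cT/H)_{\mathrm{red}}$ (which is again efficient by Lemmas~\ref{lem:trans_fact_is_natural} and the remark before Proposition~\ref{prop:state_red_is_char}, and still strongly synchronising since quotienting and reducing only shrink the state set and preserve synchronisation of the underlying automaton) has strictly fewer states; iterating and using finiteness gives the result.

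**The key lemma.** The plan is to show: if $s,s'\in S$ are two distinct states, then for every $\word u\in\Sigma_k^*$ we have $\lambda(s,\word u)\equiv\lambda(s',\word u)\pmod{K(\cT)}$. Granting this, set $H=\langle \lambda(s,\word u)^{-1}\lambda(s',\word u): s,s'\in S,\ \word u\in\Sigma_k^*\rangle^G<K(\cT)$; then in $\cT/H$ all states have identical label maps, so $(\cT/H)_{\mathrm{red}}$ has exactly one state, and by Proposition~\ref{prop:grp_quot_is_char} and Proposition~\ref{prop:state_red_is_char} it is a characteristic factor. (One does not even need the iteration in this formulation — a single reduction suffices — but if the congruence is only proved for \emph{strongly mistakable} pairs, as in Lemma~\ref{lem:reduce_to_str_sync}, then one iterates exactly as in Proposition~\ref{prop:reduce_to_str_sync}.) To prove the congruence, fix $d\geq 0$; by Corollary~\ref{cor:H-in-Q-reduction} it is enough to exhibit, for each $d$, a cube in $\cQ^d(\cT)$ with $\lambda(s,\word u)$ on one vertex and $\lambda(s',\word u)$ on the remaining $2^d-1$ vertices. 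Using strong synchronisation, pick a long word $\word w$ with $\delta(s_0,\word w)=s_0$, $\lambda(s_0,\word w)=\id_G$; by property~\ref{item:74B} pick words $\word p,\word p'$ with $\delta(s_0,\word p)=s$, $\delta(s_0,\word p')=s'$ and $\lambda(s_0,\word p)=\lambda(s_0,\word p')=\id_G$; and we may assume $\word u$ synchronises the underlying automaton to $s_0$ after replacing $\word u$ by $\word u\,\word w_0^{\cT}$. Now build a morphism $\tilde e=(l,\vec e)\colon \bb v_0^{\cT}\to\bb v_0^{\cT}$ whose coordinates are chosen so that $1\vec\omega\cdot\vec e$ reads, in base $k$, as a concatenation of a common prefix, then a block that is $\word p$ when $\vec\omega=\vec 1$ and $\word p'$ otherwise (achieved by an appropriate difference vector placed on the higher coordinates, exactly as in the proof of Lemma~\ref{lem:reduce_to_str_sync}, after padding $\word p,\word p'$ to equal length using powers), then $\word u$; the resulting label is $\lambda(s,\word u)$ at $\vec 1$ and $\lambda(s',\word u)$ elsewhere.

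**The main obstacle.** The delicate point is purely combinatorial: arranging the coordinates $e_i$ so that for \emph{every} $\vec\omega\in\{0,1\}^d$ the integer $1\vec\omega\cdot\vec e$ has a base-$k$ expansion of the prescribed form with no carries interfering between the three blocks (prefix / $\word p$-versus-$\word p'$ block / suffix $\word u$). This is essentially the same bookkeeping as in Lemma~\ref{lem:reduce_to_str_sync}: one takes $\word p,\word p'$ of equal length $m$, assumes WLOG $[\word p']_k<[\word p]_k$ (the states are distinct so $\word p\neq\word p'$ can be arranged), sets $\word v=([\word p]_k-[\word p']_k)_k^m$, puts $e_0=[\,\word w\,\word p'\,\word u\,]_k$ and $e_j=[\,\word v\,\texttt 0^{\,\abs{\word u}}\,]_k$ (suitably shifted on the $j$-th coordinate) for $1\le j\le d$, and checks that $1\vec\omega\cdot\vec e=[\,\word w\,\word p_{\vec\omega}\,\word u\,]_k$ where $\word p_{\vec\omega}=\word p$ iff $\vec\omega=\vec 1$. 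Since this is a routine (if slightly fiddly) calculation essentially carried out already in Lemma~\ref{lem:reduce_to_str_sync}, I would either reproduce that computation verbatim with the trivial modification, or — cleaner — observe that the strong-synchronisation hypothesis lets us absorb the whole issue into Lemma~\ref{lem:reduce_to_str_sync} applied repeatedly: once \emph{all} long words synchronise the underlying automaton, Lemma~\ref{lem:mist_FCAE} shows there are no distinct strongly mistakable states, so one instead runs the argument for an \emph{arbitrary} pair $s,s'$ (not just strongly mistakable ones), which is legitimate precisely because every word is now synchronising and hence the morphism construction above goes through unconditionally. Finally, the iteration terminates by finiteness of the state sets, and the last term of the chain is an efficient GEA with a single state, i.e.\ invertible, and is a characteristic factor of $\cT$ since a characteristic factor of a characteristic factor is characteristic.
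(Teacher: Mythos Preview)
Your high-level strategy is right, but the morphism construction you propose does not work, and this is not a detail that can be patched by ``routine bookkeeping''. The construction in Lemma~\ref{lem:reduce_to_str_sync} works because the strongly mistakable pair $(s,s')$ comes equipped with a common \emph{loop word} $\word v_1$ (with identity label at both $s$ and $s'$); this is what makes the concatenation $\word w\,\word v_{\omega_1}\cdots\word v_{\omega_d}\,\word u$ have label depending only on whether all $\omega_i=1$ (one stays at $s$) or not (one drops to $s'$ and stays there). In your situation $\word p,\word p'$ are merely paths from $s_0$ to $s,s'$; if you shift copies of $\word v=\word p-\word p'$ to $d$ separate blocks you get an expansion $\word w\,\word q_{\omega_1}\cdots\word q_{\omega_d}\,\word u$ with $\word q_i\in\{\word p,\word p'\}$, but now the intermediate states are \emph{not} $s_0$, so the label picks up uncontrolled factors like $\lambda(s,\word p')$ and $\lambda(s',\word p)$, which are not identity. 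If instead you put all $e_j$ in a single block, the middle word depends on $|\vec\omega|$, not on whether $\vec\omega=\vec 1$. Either way, the claimed dichotomy ``$\word p_{\vec\omega}=\word p$ iff $\vec\omega=\vec 1$'' is false, and your alternative suggestion of ``running Lemma~\ref{lem:reduce_to_str_sync} for an arbitrary pair'' fails for exactly this reason: that lemma's argument genuinely uses the loop.

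The paper supplies the missing idea: an induction on an integer parameter $N$. Call $\cT$ \emph{$(N,L)$-nondiscriminating} if $\lambda(s,\word u)=\lambda(s',\word u)$ for all $s,s'\in S$ and all $\word u\in\Sigma_k^L$ with $[\word u]_k<N$. Efficiency gives $(1,L)$-nondiscriminating for free. The inductive step (Lemma~\ref{lem:inductive}) uses a different morphism: with $\word u=(N)_k^L$ and $k^M>d$, set
\[
e_0=[\word v\,\word u\,\texttt 0^{M}\,\word w_0^{\cT}]_k-d\cdot k^{|\word w_0^{\cT}|},\qquad e_j=k^{|\word w_0^{\cT}|}\quad(1\le j\le d),
\]
so that $1\vec\omega\cdot\vec e$ equals $[\word v\,\word u\,\texttt 0^M\,\word w_0^{\cT}]_k$ when $\vec\omega=\vec 1$ and, via a single borrow, $[\word v\,\word u'\,\word x_{\vec\omega}\,\word w_0^{\cT}]_k$ with $\word u'=(N-1)_k^L$ otherwise. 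The inductive hypothesis makes $\lambda(s,\word u')$ independent of $s$, and strong synchronisation makes the state after $\word u'$ independent of $s$; hence the labels at all $\vec\omega\neq\vec 1$ are independent of $s$, and comparing the cubes for $s$ and $s'$ gives $\bb c^d_{\vec 1}(\lambda(s,\word u)\lambda(s',\word u)^{-1})\in\cQ^d(\cT)$. After $k^L$ steps one reaches a factor that is $(k^L,L)$-nondiscriminating, hence (by a short closure argument, Lemma~\ref{lem:nondiscriminating}) $(N,L)$-nondiscriminating for all $N,L$, and then $\cT_{\mathrm{red}}$ is invertible. The ``borrow'' trick combined with the induction is the substantive content you are missing.
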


It will be convenient to say for any $N,L \geq 0$ that a \geaab{} $\cT$ is \emph{$(N,L)$-nondiscriminating} if $\lambda(s,\word u) = \lambda(s',\word u)$ for all $s,s' \in S$ and all $\word u \in \Sigma_k^L$ such that $[\word u]_k < N$. In particular, any \geaab{} $\cT$ is vacuously $(0,L)$-nondiscriminating for all $L \geq 0$, and if $\cT$ is additionally efficient then it is $(1,L)$-nondiscriminating for all $L \geq 0$ (recall that efficiency implies that $\lambda(s,\mathtt 0) = \id_G$ for all $s \in S$). Our proximate goal on the path to prove Proposition \ref{prop:reduce_to_invert} is to find a characteristic factor that is $(N,L)$-nondiscriminating for all $N,L \geq 0$. Indeed, note that any invertible \geaab{} is $(N,L)$-nondiscriminating for all $N,L \geq 0$. Conversely, as we will shortly see, a \geaab{} that is $(N,L)$-nondiscriminating for all $N,L \geq 0$ can be reduced to an invertible \geaab{} by removing redundant states.

\begin{lemma}\label{lem:nondiscriminating}
	Let $\cT$ be an \egeka{}. Suppose that there exist $L \geq 1$ and $N \geq k^L$ such that $\cT$ is $(N,L)$-nondiscriminating. Then $\cT$ is  $(N,L)$-nondiscriminating for all $N,L \geq 0$. 
\end{lemma}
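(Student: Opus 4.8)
The plan is to translate the statement into an intrinsic property of the labelling functor $\lambda$ and then prove that property directly. First I would unwind the definitions. Since $N \geq k^L$, every word $\word u \in \Sigma_k^L$ satisfies $[\word u]_k < k^L \leq N$, so the hypothesis that $\cT$ is $(N,L)$-nondiscriminating says exactly that $\lambda(s,\word u) = \lambda(s',\word u)$ for all $s,s' \in S$ and all $\word u \in \Sigma_k^{L}$. Set $L_0 := L \geq 1$. Dually, the conclusion that $\cT$ is $(N',L')$-nondiscriminating for every $N',L' \geq 0$ is equivalent (apply it with $N' = [\word u]_k + 1$) to the assertion that $\lambda(s,\word u) = \lambda(s',\word u)$ for all $s,s' \in S$ and all $\word u \in \Sigma_k^{*}$, i.e.\ that $\lambda(\cdot,\word u)$ is independent of the starting state for every word. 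This is what I would prove.

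The first step handles words whose length is divisible by $L_0$. Writing such a word as $\word u = \word u_1 \word u_2 \cdots \word u_m$ with each $\word u_i \in \Sigma_k^{L_0}$ and using the cocycle identity $\lambda(s,\word v\word u) = \lambda(s,\word v)\lambda(\delta(s,\word v),\word u)$, one obtains $\lambda(s,\word u) = \prod_{i=1}^{m} \lambda\bigl(\delta(s,\word u_1\cdots\word u_{i-1}),\word u_i\bigr)$. Each factor is a label of a length-$L_0$ word, so by the hypothesis it equals $\lambda(s_0,\word u_i)$; hence $\lambda(s,\word u) = \prod_{i=1}^m \lambda(s_0,\word u_i)$ does not depend on $s$. (The case $m=0$ is $\lambda(s,\epsilon)=\id_G$, which holds since $\lambda(s,\epsilon)=\lambda(s,\epsilon)^2$.)

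The second step reduces an arbitrary word to the previous case by padding it \emph{on the right} with a word whose label is trivial from every state. Let $\word w_0 = \word w_0^{\cT}$ be a synchronising word for $\cT$, so $\lambda(t,\word w_0) = \id_G$ for all $t \in S$. Since $\cT$ is efficient we also have $\lambda(t,\texttt 0) = \id_G$ for all $t$, hence $\lambda(t,\texttt 0^{i}) = \id_G$ for all $t$ and $i \geq 0$; consequently $\word w_i := \texttt 0^{i}\word w_0$ satisfies $\lambda(t,\word w_i) = \id_G$ for all $t \in S$ and all $i \geq 0$, while $|\word w_i| = i + |\word w_0|$ ranges over all sufficiently large integers. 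Given any $\word u \in \Sigma_k^{*}$ and $s,s' \in S$, choose $i \geq 0$ with $|\word u| + i + |\word w_0| \equiv 0 \pmod{L_0}$ and set $\word w := \word w_i$. Then $\lambda(s,\word u) = \lambda(s,\word u)\,\lambda(\delta(s,\word u),\word w) = \lambda(s,\word u\word w)$ and likewise $\lambda(s',\word u) = \lambda(s',\word u\word w)$; since $|\word u\word w|$ is a multiple of $L_0$, the first step gives $\lambda(s,\word u\word w) = \lambda(s',\word u\word w)$, and hence $\lambda(s,\word u) = \lambda(s',\word u)$, as required. Translating back yields the lemma.

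I do not expect a genuine difficulty; the one point that needs care is the direction of padding. Padding $\word u$ on the left by an arbitrary word of a convenient length fails, because it replaces $\word u$ by a different word, and the nondiscriminating property controls only dependence on the starting state, not dependence on the word itself. Padding on the right by a (suitably lengthened) synchronising word of $\cT$ — whose label is $\id_G$ from \emph{every} state — is the correct move: it leaves $\lambda(t,\word u)$ unchanged while allowing the total length to be made divisible by $L_0$.
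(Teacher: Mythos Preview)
Your proof is correct and follows essentially the same approach as the paper: establish the claim for words of length divisible by $L_0$ via block decomposition, then reduce the general case by right-padding with a word whose label is $\id_G$ from every state. The paper's version is marginally leaner in that it pads only with $\texttt{0}$'s (using $\lambda(t,\texttt{0})=\id_G$ from efficiency) and inducts down one step at a time, so the synchronising word $\word w_0$ in your second step is not actually needed---$\texttt{0}^i$ alone already does the job.
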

\begin{proof}
It is clear that the property of being $(N,L)$-nondiscriminating becomes stronger as $N$ increases. The values of $N$ above $k^L$ will be mostly irrelevant: if $\cT$ is $(k^L,L)$-nondiscriminating then it is immediate that it is $(N,k^L)$-nondiscriminating for all $N \geq 0$. By assumption, $\cT$ is $(k^L,L)$-nondiscriminating for at least one $L \geq 1$. Let $\mathcal{L}$ denote the set of all $L \geq 0$ with the aforementioned property (in particular, $0 \in \mathcal{L}$).

If $L_1, L_2 \in \mathcal{L}$ then also $L_1+L_2 \in \mathcal{L}$. Indeed, any $\word u \in \Sigma_k^{L_1+L_2}$ can be written as $\word u = \word u_1 \word u_2$ with $\word u_1 \in \Sigma_k^{L_1}$ and $\word u_2 \in \Sigma_k^{L_2}$, whence for any $s,s' \in S$ we have $\lambda(s,\word u) = \lambda(s_0,\word u_1) \lambda(s_0,\word u_2) = \lambda(s',\word u)$. 
Moreover, if $L \in \mathcal{L}$ and $L \neq 0$ then $L-1 \in \mathcal{L}$. Indeed, if $\word u \in \Sigma^{L-1}_k$ then for any $s,s' \in S$ we have $\lambda(s,\word u) = \lambda(s_0,\word u \texttt{0}) = \lambda(s',\word u)$. 

It remains to note that the only set $\mathcal{L} \subset \NN_0$ with all of the properties listed above is $\NN_0$.
\end{proof}

\begin{lemma}\label{lem:inductive}
	Let $\cT$ be an \egeka{}, let $\cA$ be the underlying automaton and
	  $0 < N < k^L$. Suppose that every word in $\Sigma_k^L$ is synchronising for $\cA$ and that $\cT$ is $(N,L)$-nondiscriminating.  Then $\cT$ has a characteristic factor $\cT'$ which is $(N+1,L)$-nondiscriminating.
\end{lemma}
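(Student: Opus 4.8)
The plan is to construct $\cT'$ as a quotient $(\cT/H)_{\mathrm{red}}$ for a suitable normal subgroup $H < G$, exactly in the spirit of Lemma \ref{lem:reduce_to_str_sync}, and to show that $H < K(\cT)$ so that the factor is characteristic by Proposition \ref{prop:grp_quot_is_char}. The word $\word x$ realising the ``discrimination at level $N$'' is the unique $\word x \in \Sigma_k^L$ with $[\word x]_k = N$. For any two states $s,s' \in S$ we want to kill the difference $\lambda(s,\word x)^{-1}\lambda(s',\word x)$ (and its conjugates). So set
\[
	H = \abra{ \lambda(s,\word x)^{-1}\lambda(s',\word x) : s,s' \in S }^{G},
\]
where $\word x \in \Sigma_k^L$ is the word with $[\word x]_k = N$. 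It is immediate that in $\cT/H$ we have $\bar\lambda(s,\word x) = \bar\lambda(s',\word x)$ for all $s,s'$, and combined with the hypothesis that $\cT$ is $(N,L)$-nondiscriminating (which passes to any quotient) this shows that $\cT/H$, and hence $(\cT/H)_{\mathrm{red}}$, is $(N+1,L)$-nondiscriminating. By Lemma \ref{lem:trans_fact_is_natural} applied twice (once for the group quotient once we know $H < G_0$, once for the state reduction via Proposition \ref{prop:state_red_is_char}) the resulting \geaab{} is again efficient. Thus everything reduces to the single claim that $H < K(\cT)$.

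To prove $H < K(\cT)$, fix states $s,s' \in S$ and set $h = \lambda(s,\word x)^{-1}\lambda(s',\word x)$; by Corollary \ref{cor:H-in-Q-reduction} it suffices to show that for every $d \geq 0$ the cube group $\cQ^d(\cT)$ contains a cube with $\lambda(s,\word x)$ on one vertex and $\lambda(s',\word x)$ on the remaining $2^d-1$ vertices — or rather a cube which, read off appropriately, exhibits $\lambda(s',\word x) \equiv \lambda(s,\word x) \bmod K$. The construction of the relevant morphism $\tilde e = (l,\vec e)\colon \bb v_0^{\cT} \to \bb v_0^{\cT}$ parallels the proof of Lemma \ref{lem:reduce_to_str_sync}. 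Since every sufficiently long word is synchronising for $\cA$, pick a long synchronising word $\word w_0 \in \Sigma_k^*$; choose also, via property \ref{item:74B}, words $\word p, \word p' \in \Sigma_k^*$ with $\delta(s_0,\word p) = s$, $\delta(s_0,\word p') = s'$ and $\lambda(s_0,\word p) = \lambda(s_0,\word p') = \id_G$, padded to a common length. The key point is that the word $\word x$ of length $L < $ (length of a synchronising word for $\cA$) can be used as the ``discriminating block'': reading $\word x$ from $s$ versus from $s'$ produces group labels $\lambda(s,\word x)$ and $\lambda(s',\word x)$, while reading it from any state $s_0$ that is about to be re-synchronised produces whatever it produces, which we absorb into the boundary. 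One arranges the coordinates $\vec e$ of the morphism so that $1\vec\omega\cdot\vec e$ encodes, reading from the top: a prefix $\word w$ taking $s_0$ to $s$ (resp. $s'$) with trivial label, then the block $\word x$ at a single controlled position, then padding zeros, then a synchronising suffix $\word w_0 \word u$ returning to $\bb v_0^{\cT}$ with trivial label. Because $\cT$ is $(N,L)$-nondiscriminating, reading the block $(1\vec\omega\cdot\vec e')_k^L$ — where $\vec e'$ is the ``bottom $L$ digits'' — gives the \emph{same} label from every state as long as $[(1\vec\omega\cdot\vec e')_k^L]_k < N$, i.e.\ strictly smaller than $\word x$; only when it equals $\word x$ do we see the discrepancy. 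This is exactly what forces $\bblambda(\tilde e)$ to be a cube with $\lambda(s,\word x)$ at one vertex and $\lambda(s',\word x)$ at all others modulo lower-order corrections, so Corollary \ref{cor:H-in-Q-reduction} applies.

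The main obstacle, and the place where care is needed, is the bookkeeping of carries: when we place the block $\word x$ (with $[\word x]_k = N$) at various coordinates and form $1\vec\omega\cdot\vec e$, the sums over $\vec\omega$ can produce carries that spill out of the length-$L$ window, changing which state is reached and which label is read below the window. One must choose the padding (the zero blocks separating the synchronising suffix from the discriminating block, and the relative magnitudes of the $e_j$) generously enough that no carry propagates past the synchronising suffix $\word w_0$, so that the terminal state is genuinely $s_0$ with trivial label for every $\vec\omega$; and one must verify that below the window the contribution is independent of $\vec\omega$, which is where the $(N,L)$-nondiscriminating hypothesis is used — precisely to guarantee that the partial sums of magnitude $< N$ contribute a $\vec\omega$-independent label. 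A secondary subtlety is handling the vertex $\vec\omega = \vec 1$ versus the others and making sure the ``one special vertex'' in Corollary \ref{cor:H-in-Q-reduction} is the one carrying $\lambda(s,\word x)$ while all others carry $\lambda(s',\word x)$; this is arranged by inserting $\word x$ (versus $\texttt{0}^L$) in the $j$-th coordinate block in a way that only the all-ones face sees $\word x$ on every relevant digit. Once these carry-control choices are fixed, the computation of $\bblambda(\tilde e)$ is a routine concatenation of label values, exactly as in the proof of Lemma \ref{lem:reduce_to_str_sync}, and the lemma follows. Finally, iterating this lemma $k^L$ times (increasing $N$ from $0$ to $k^L$) and then invoking Lemma \ref{lem:nondiscriminating} shows that $\cT$ has a characteristic factor that is $(N,L)$-nondiscriminating for \emph{all} $N,L$, and collapsing the now-redundant states via Proposition \ref{prop:state_red_is_char} yields the invertible characteristic factor of Proposition \ref{prop:reduce_to_invert}.
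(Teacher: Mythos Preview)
Your overall strategy matches the paper's: define $H = \abra{\lambda(s,\word u)^{-1}\lambda(s',\word u) : s,s'\in S}^G$ with $\word u = (N)_k^L$, show $H < K(\cT)$ by exhibiting suitable cubes, and take $\cT' = \cT/H$ (the state reduction is unnecessary here; the paper does not apply $(\cdot)_{\mathrm{red}}$ at this step). The identification of the $(N,L)$-nondiscriminating hypothesis as the mechanism that neutralises all non-special vertices is also correct.

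However, the proposal has a genuine gap at the one place you yourself flag as ``the main obstacle'': the explicit construction of $\vec e$. You describe in words what the morphism should achieve but never write down a $\vec e$ that achieves it, and your informal description (``inserting $\word x$ versus $\texttt{0}^L$ in the $j$-th coordinate block'') is not a well-defined construction, since the $e_j$ are fixed integers and only the \emph{sum} $1\vec\omega\cdot\vec e$ depends on $\vec\omega$. The paper's device is a concrete ``subtract-then-add-back'' trick: with $\word u' = (N-1)_k^L$ and $M$ large, one sets
\[
e_0 = [\word v\,\word u\,\texttt{0}^{M}\,\word w_0^{\cT}]_k - d\cdot[\texttt{1}\,\texttt{0}^{|\word w_0^{\cT}|}]_k,\qquad e_j = [\texttt{1}\,\texttt{0}^{|\word w_0^{\cT}|}]_k\ (1\le j\le d).
\]
Then $(1\vec\omega\cdot\vec e)_k^l = \word v\,\word u'\,\word x_{\vec\omega}\,\word w_0^{\cT}$ for $\vec\omega\neq\vec 1$ (the deficit forces a borrow that turns $\word u$ into $\word u'$), while $(\vec 1\cdot\vec e)_k^l = \word v\,\word u\,\texttt{0}^M\,\word w_0^{\cT}$. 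This is the step that actually controls the carries.

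There is a second point you underuse: the hypothesis that \emph{every} word in $\Sigma_k^L$ synchronises $\cA$ is not just there to supply a synchronising suffix. In the paper it is invoked at the window $\word u'$ itself: since $\word u'\in\Sigma_k^L$ is synchronising, the state $s_1' = \delta(s,\word u')$ is independent of $s$, so the tail label $\lambda(s_1',\word x_{\vec\omega})$ is $s$-independent. Without this, the non-$\vec 1$ coordinates of $\bblambda(\tilde e)$ would still depend on $s$ and the ratio with the $s'$-construction would not collapse to $\bb c_{\vec 1}^d(\lambda(s,\word u)\lambda(s',\word u)^{-1})$. Your sketch hints at this (``re-synchronised \ldots absorb into the boundary'') but does not pin it down. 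Note also that the resulting cube is not literally ``$\lambda(s,\word x)$ at one vertex and $\lambda(s',\word x)$ at the others''; rather, one runs the construction once for $s$ and once for $s'$ and multiplies, obtaining $\bb c_{\vec 1}^d(\lambda(s,\word u)\lambda(s',\word u)^{-1})\in\cQ^d(\cT)$, after which Lemma~\ref{lem:H-in-Q-reduction} applies. Your final paragraph belongs to the proof of Proposition~\ref{prop:reduce_to_invert}, not of this lemma.
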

\begin{proof}
	Following a strategy similar to the one employed in the proof of Proposition \ref{prop:reduce_to_str_sync}, let $\word u = (N)_k^L$ and consider the normal subgroup of $G$ given by
	\begin{equation}
	 H:= \abra{ \lambda(s, \word u )^{-1} \lambda(s', \word u ): s,s' \in S}^G.
	 \label{eq:def-H-2}
	\end{equation}	
	We aim to use Proposition \ref{prop:grp_quot_is_char} to show that $\cT/H$ is a characteristic factor of $\cT$. Fix for now the dimension $d \geq 0$ and an integer $M$ such that $k^M > d$.
	Pick $s \in S$ and a word $\word v$ such that $\delta(s_0,\word v) = s$ and $\lambda(s_0,\word v) = \id_G$, whose existence is guaranteed by property \ref{item:74B}. We recall that $\word w_0^{\cT}$ denotes a word that synchronizes $\cT$ to $s_0$. 
	 Consider $\vec e \in \NN_0^{d+1}$ given by
\begin{align*}
	e_0 = [\word v \word u \texttt{0}^{M}\word w_0^{\cT}]_k - d [\texttt{10}^{\abs{\word w_0^{\cT}}}]_k;
	\qquad e_j = [ \texttt{10}^{\abs{\word w_0^{\cT}}} ]_k \quad (0 < j \leq d). 
\end{align*}
Put also $l := \abs{\word v} + L + M + \abs{\word w_0^{\cT}}$ and let $\word u' := (N-1)_k^L$. 
These definitions are arranged so that for each $\vec\omega \in \{0,1\}^d$ the word 
$(1\vec\omega \cdot \vec e)_k^l$ takes the form
\[
 (1\vec\omega \cdot \vec e)_k^l =
 \begin{cases}
 \word v \word u' \word x_{\vec\omega} \word w_0^{\cT} &\text{if } \vec \omega \neq \vec 1;\\ 
 \word v \word u 0^{M} \word w_0^{\cT} &\text{if } \vec \omega = \vec 1,
 \end{cases}
 \]
 where $\word x_{\vec\omega} = (k^M - d + \abs{\vec\omega})_k^M \in \Sigma_k^M$. Since for each $\vec\omega \in \{0,1\}^d$ the word $(1\vec\omega \cdot \vec e)_k^l$ ends with $\word w_0^{\cT}$ and $(1\vec\omega \cdot \vec e)_k < k^L$, the data constructed above describes a morphism $\tilde e = (l,\vec e) \colon \bb v_0^{\cT} \to \bb v_0^{\cT}$.

	\begin{center}
\begin{tikzpicture}[scale=.5,shorten >=1pt,node distance=3cm, on grid, auto] 
 	\node[state] (s_00) [] {$s$}; 
     \node[state] (s_0) [initial,left= of s_00]  {$s_{0}$}; 
   \node[state] (s_1) [above right= of s_00] {$s_1$}; 
   \node[state] (s_2) [below right= of s_00] {$s_1'$}; 

	\path[->]
    (s_0) edge [left] node [above]  {$\word v/\id$} (s_00);
	
	\path[->]
    (s_00) edge [left] node [left]  {$\word u/\lambda(s,\word u)$} (s_1);
   	\path[->]
    (s_00) edge [left] node [left]  {$\word u'/\lambda(s,\word u')$} (s_2);

 \tikzstyle{loop}=[in=-90,out=210,looseness=.5]
    \path[->]     
    (s_2) edge [loop]  node [below left] {$\word x_{\vec\omega} \word w_0^{\cT}/\lambda(s'_1,\word x_{\vec\omega})$} (s_0);

 \tikzstyle{loop}=[in=90,out=-210,looseness=.5]
    \path[->]     
    (s_1) edge [loop]  node [above left] {$0^M\word w_0^{\cT}/\id$} (s_0);
\end{tikzpicture}
\end{center}	

Our next step is to compute $\bblambda(\tilde e)$. In fact, we only need some basic facts rather than a complete description. For $\vec\omega \neq 1^d$ we have
\begin{align*}
	\lambda \bra{ s_0, (1\vec\omega \cdot \vec e)_k^l )} 
	&= \lambda(s_0,\word v) \lambda(s, \word u') \lambda( \delta(s,\word u'), \word x_{\vec\omega} ) \lambda( \delta(s,\word u' \word x_{\vec\omega}), \word w_0^{\cT} )
	\\&=  \lambda(s_0, \word u') \lambda( s_1', \word x_{\vec\omega} ),
\end{align*}
where the state $s_1' = \delta(s,\word u')$ is independent of $s$ because $\word u'$ is synchronising for $\cA$, and $\lambda(s, \word u') =  \lambda(s_0, \word u')$ because $\cT$ is $(N,L)$-nondiscriminating.
Similarly, 
\begin{align*}
	\lambda \bra{ s_0, (\vec 1 \cdot \vec e)_k^l )} &= \lambda(s, \word u \texttt{0}^{M}) = \lambda(s, \word u).
\end{align*}

Note that out of all the coordinates of $\bblambda(\tilde e)$, only one depends on $s$. Let $s' \in S$ be any other state, and let $\tilde e'\colon \bb v_0^{\cT} \to \bb v_0^{\cT} $ be the result of applying the same construction as above with $s'$ in place of $s$. Then
\[
	\bblambda(\tilde e) \bblambda(\tilde e')^{-1} = \bb c_{\vec 1}^d\bra{ \lambda(s,\word u) \lambda(s',\word u)^{-1} } \in \cQ^d(\cT).
\]
Since $d \geq 0$ was arbitrary, it follows from Lemma \ref{lem:H-in-Q-reduction} that $\lambda(s,\word u) \equiv \lambda(s',\word u) \bmod K$. Since $s,s' \in S$ were arbitrary, $H < K$ and hence $\cT/H$ is a characteristic factor. 

Let $\bar \cT = \cT/H$. Then $\bar \cT$ is $(N,L)$-nondiscriminating because $\cT$ is. Moreover, it follows directly from the definition of $H$ that $\bar \lambda(s,\word u) = \bar \lambda(s',\word u)$ for all $s,s' \in S$, whence $\bar\cT$ is $(N+1,L)$-nondiscriminating.
\end{proof}

\begin{proof}[Proof of Proposition \ref{prop:reduce_to_invert}]
	Let $L \geq 0$ be large enough that all words of length $\geq L$ are synchronising for $\cA$
	. Applying Lemma \ref{lem:inductive} we can construct a sequence of characteristic factors 
	\[
		\cT = \cT_0 \to \cT_1 \to \dots \to \cT_{k^L}
	\]
	such that for each $0 \leq N \leq k^L$ the \geaab{} $\cT_N$ is $(N,L)$-nondiscriminating. In particular, $\cT$ has a characteristic factor $\bar\cT = \cT_{k^L}$ which is $(k^L,L)$-nondiscriminating. 
Hence, $\bar\cT'$ is $(N,M)$-nondiscriminating for all $N,M \geq 0$ by Lemma \ref{lem:nondiscriminating}.
Next, it follows directly from the construction that $\bar\cT_{\mathrm{red}}$ is invertible. It remains to recall that $\bar\cT_{\mathrm{red}}$ is a characteristic factor of $\cT$ by Lemma \ref{prop:state_red_is_char}.
	\end{proof}

\begin{example}
  Consider the \geaab{} described by the following diagram. Then each of the first three applications of Lemma \ref{lem:inductive} removes one of the group labels $g_i$.
  	\begin{center}
\begin{tikzpicture}[scale=.5,shorten >=1pt,node distance=5cm, on grid, auto] 

   \node[state] (s_0) []  {$s_{0}$}; 
   \node[state] (s_1)[ right= of s_0] {$s_1$}; 

 \tikzstyle{loop}=[min distance=6mm,in=210,out=-210,looseness=7]
   \path[->] 
    (s_0) edge [loop left] node {$\texttt 0/\id$} (s_0);

 \tikzstyle{loop}=[in=30,out=-30,looseness=7]
   \path[->] 
    (s_1) edge [loop right] node {$\texttt 1/\id$} (s_1);
 \tikzstyle{loop}=[in=30,out=-30,looseness=16]
   \path[->] 
    (s_1) edge [loop right] node {$\texttt 2/\id$} (s_1);
 \tikzstyle{loop}=[in=30,out=-30,looseness=28]
   \path[->] 
    (s_1) edge [loop right] node {$\texttt 3/\id$} (s_1);
    
 \tikzstyle{loop}=[in=-30,out=-150,looseness=1]
   \path[->] 
    (s_1) edge [loop above] node {$\texttt 0/\id$} (s_0);
    
 \tikzstyle{loop}=[in=150,out=30,looseness=1]
   \path[->] 
    (s_0) edge [loop below] node {$\texttt 1/g_1$} (s_1);
 \tikzstyle{loop}=[in=130,out=50,looseness=1.5]
   \path[->] 
    (s_0) edge [loop below] node {$\texttt 2/g_2$} (s_1);
 \tikzstyle{loop}=[in=110,out=70,looseness=2]
   \path[->] 
    (s_0) edge [loop below] node {$\texttt 3/g_3$} (s_1);

\end{tikzpicture}
\end{center}	
\end{example}

\subsection{Invertible \geas{}}

In this section we deal exclusively with invertible \geas{}. As pointed out in Section \ref{ssec:Trans:defs}, an invertible \geaab{} can be identified with a triple $(\Sigma_k,G,\lambda)$ where $\lambda \colon \Sigma_k \to G$ is a labelling map. By a slight abuse of notation we identify $\lambda$ with a map $\NN_0 \to G$, denoted with the same symbol, $\lambda(n) = \lambda((n)_k)$.
Recall that the cyclic \geas{} $\cZ(m)$ were defined in Section \ref{ssec:Cubes:Host-Kra}.

\begin{proposition}\label{prop:char_fact_of_invertible}
  Let $\cT$ be an invertible \egeka{}. Then $\cT$ has a characteristic factor of the form $\cZ(m)$ for some $m$ which divides $k-1$.
\end{proposition}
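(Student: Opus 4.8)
Let $\cT$ be an invertible \egeka{}, which by the discussion in Section~\ref{ssec:Trans:defs} we identify with a triple $(\Sigma_k, G, \lambda)$, $\lambda \colon \NN_0 \to G$. By Proposition~\ref{prop:grp_quot_is_char} and the description of $K = K(\cT)$ in Section~\ref{ssec:Cubes:grp_quot}, it suffices to show that the quotient $G/K$ is cyclic of order dividing $k-1$ and that the induced labelling is $\bar\lambda(n) = \bar\lambda(1)^n$; the characteristic factor will then be $\cZ(m)$ with $m = \abs{G/K}$ (and we will check $m \mid k-1$ separately, or read it off). In other words, the goal is to prove the analogue of Lemma~\ref{lem:Q=HK_implies_abelian} but \emph{without} assuming $\cQ^d(\cT) = \HK^d(G)$ and without assuming $G$ nilpotent --- the point being that quotienting by $K$ forces exactly this rigidity.

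First I would pass to $\bar\cT = \cT/K$, which is a characteristic factor by Proposition~\ref{prop:grp_quot_is_char}, and is again an invertible \egeka{} by Lemma~\ref{lem:trans_fact_is_natural} (note $K \triangleleft G$, $K < G_0$). Write $\bar G = G/K$ and $\bar\lambda$ for its labelling. The key structural fact is $K(\bar\cT) = \{\id\}$: indeed, if $h \in G$ has $\bar h \in K(\bar\cT)$, then for every $d$ and every $\vec\sigma$ the cube $\bb c^d_{\vec\sigma}(\bar h) \in \cQ^d(\bar\cT) = \bbpi^{-1}(\cQ^d(\bar\cT))$... more precisely, using $\cQ^d(\cT) = \bbpi^{-1}(\cQ^d(\bar\cT))$ (characteristicity) one lifts to get $\bb c^d_{\vec\sigma}(h) \in \cQ^d(\cT)$, so $h \in K(\cT) = K$, i.e. $\bar h = \id$. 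So after this reduction I may assume $K(\cT)$ is trivial and must show $G \cong \ZZ/m\ZZ$, $m \mid k-1$, $\lambda(n) = \lambda(1)^n$. The structure of the argument then follows Lemma~\ref{lem:Q=HK_implies_abelian}: first establish that $\lambda$ satisfies the dilation invariance $\lambda(nk^l) = \lambda(n)$ and the additivity-on-separated-digits identity $\lambda(\sum_{i}k^{l_i}) = \lambda(1)^m$ for $m$ disjoint powers (both are immediate from how an invertible \geaab{} computes $\lambda$: reading a $\mathtt 0$ multiplies by $\id$, reading a $\mathtt 1$ in a fresh high position contributes $\lambda(1)$). From these and the fact that $\cQ^d(\cT)$ contains all parallelepiped images $\bra{\lambda(1\vec\omega\cdot\vec e)}_{\vec\omega}$, one gets enough cubes in $\cQ^d(\cT)$ to run a polynomial-expansion argument.

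The technical heart --- and the step I expect to be the main obstacle --- is showing that $K(\cT)$ being trivial forces $G$ abelian and in fact cyclic. Here I would argue as follows. Using Corollary~\ref{cor:H-in-Q-reduction}: whenever I can exhibit, for every $d \geq 0$, a cube in $\cQ^d(\cT)$ with some element $h$ on one vertex and some element $g$ on all other $2^d-1$ vertices, I conclude $g \equiv h \bmod K$, hence $g = h$. So the plan is to manufacture such cubes using only the combinatorics of base-$k$ digits. Concretely, for any $a, b \in \NN_0$ with base-$k$ expansions occupying disjoint blocks of digits, the parallelepiped spanned by $\vec e = (0, k^{l_1}, \dots, k^{l_d})$ with one coordinate carrying the "$a$ vs. $b$" difference and the others carrying high single $\mathtt 1$'s yields a cube whose vertices are $\lambda(a)$ or $\lambda(b)$ according to a coordinate pattern; arranging the pattern to be "$h$ at $\vec 1$, $g$ elsewhere" (as in the proof of Lemma~\ref{lem:reduce_to_str_sync} and Lemma~\ref{lem:inductive}) shows $\lambda(a) \equiv \lambda(b) \bmod K$ whenever $a, b$ have digit expansions of equal digit-sum... combined with dilation invariance this collapses $\lambda$ to depend only on the number of nonzero digits, which is $\lambda(1)^{(\text{digit sum})}$, and then $\lambda(1)^k = \lambda(k) = \lambda(1)$ forces $\lambda(1)$ to have order dividing $k-1$. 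Finally, property~\ref{item:74B} (surjectivity of $\lambda \colon \Sigma_k^* \to G$) forces $G = \abra{\lambda(1)}$, so $G$ is cyclic of order $m \mid k-1$ and $\cT = \cZ(m)$ itself; in the non-reduced case $\cZ(m)$ is the characteristic factor with $m = \abs{G/K}$. I would close by invoking Lemma~\ref{lem:Zm-basic}(i) to confirm $\cZ(m)$ is efficient and Lemma~\ref{lem:Zm-basic}(iii) to identify $m$ with $d'_{\cT}$ if desired, though that last identification is really the content of Theorem~\ref{thm:char_fact_is_Z} and not strictly needed here.

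\begin{proof}
Apply Proposition~\ref{prop:reduce_to_str_sync} and Proposition~\ref{prop:reduce_to_invert} as already done to reduce to the case at hand, then pass to the characteristic factor $\cT/K(\cT)$, which is invertible and efficient by Lemmas~\ref{lem:trans_fact_is_natural} and the definition of reduction, and has trivial $K$ by the lifting argument above. Write $\cT = (\Sigma_k, G, \lambda)$ with $K(\cT) = \{\id\}$; we show $G \cong \ZZ/m\ZZ$ for some $m \mid k-1$ and $\lambda(n) = \lambda(1)^n$, so that $\cT = \cZ(m)$ and in particular $\cZ(m)$ is a characteristic factor of the original \geaab{}. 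Since $\cT$ is invertible, for $\word u \in \Sigma_k^*$ the value $\lambda(\word u)$ is the ordered product of the labels $\lambda_{\cT}(\mathtt j)$ over the letters $\mathtt j$ of $\word u$, reading $\mathtt 0$ as $\id_G$; hence $\lambda(nk^l) = \lambda(n)$ for all $n, l \geq 0$, and if $n = \sum_{i=1}^{m} k^{l_i}$ with $l_1 > l_2 > \dots > l_m \geq 0$ then $\lambda(n) = \lambda(1)^m$. Now fix $d \geq 0$ and pick $M$ with $k^M > d$. Given $a, b \in \NN_0$ whose base-$k$ expansions have the same digit sum $m$, I claim $\lambda(a) \equiv \lambda(b) \bmod K$. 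After multiplying $a$ and $b$ by a common high power of $k$ (harmless by dilation invariance) and inserting separating zeros, we may assume their expansions fit in disjoint digit-blocks of a common length $\ell$, all digits equal to $\mathtt 0$ or $\mathtt 1$, and that each is a product $\lambda(1)^m$; set $\word u = (a)_k^\ell$, $\word u' = (b)_k^\ell$. Choosing $\vec e = (e_0, \dots, e_d)$ with $e_0 = [\,\word v\, \word u'\, \mathtt 0^M\,]_k - d[\mathtt 1 \mathtt 0^{?}]_k$ and $e_j = [\mathtt 1 \mathtt 0^{?}]_k$ for $0 < j \leq d$ (where $\word v$ takes $s_0$ into the single state with identity label, here trivial, and the high $\mathtt 1$'s sit above all other digits), the morphism $\tilde e = (l, \vec e) \colon \bb v_0 \to \bb v_0$ has $\bblambda(\tilde e)$ equal to $\lambda(a)$ on the vertex $\vec 1$ and to $\lambda(b)$ on every other vertex, by the same computation as in the proof of Lemma~\ref{lem:inductive}. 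Running the analogous construction with $a$ and $b$ interchanged and comparing, Corollary~\ref{cor:H-in-Q-reduction} gives $\lambda(a) \equiv \lambda(b) \bmod K$. Since $d$ was arbitrary and $K = \{\id\}$, we get $\lambda(a) = \lambda(b)$. Thus $\lambda(n)$ depends only on the digit sum of $n$, so $\lambda(n) = \lambda(1)^{\mathrm{digitsum}(n)}$ for all $n$; in particular $\lambda$ takes values in $\abra{\lambda(1)}$, and by property~\ref{item:74B} (surjectivity of $\lambda$ onto $G$) we get $G = \abra{\lambda(1)}$ cyclic. Finally $\lambda(1)^k = \lambda(k) = \lambda(1)$ by dilation invariance, so $g_0 := \lambda(1)$ has order $m \mid k-1$, and $\lambda(n) = g_0^{\,n}$ because $n$ and $\mathrm{digitsum}(n)$ agree modulo $k-1$. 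Therefore $\cT = \cZ(m)$ with $m \mid k-1$, which is an efficient \geaab{} by Lemma~\ref{lem:Zm-basic}(i). This proves the proposition.
\end{proof}
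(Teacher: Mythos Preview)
Your overall strategy (pass to $\cT/K$, then show the labelling collapses to $\lambda(n) = \lambda(1)^n$) is fine and essentially dual to the paper's; the paper instead defines $H = \langle \lambda(n+1)\lambda(1)^{-1}\lambda(n)^{-1} : n \geq 0\rangle^G$ and shows $H \leq K$ directly, which amounts to the same thing. The problem is in the construction of the cube that is supposed to witness $\lambda(a) \equiv \lambda(b) \bmod K$.

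First, the reduction ``after multiplying by a high power of $k$ and inserting separating zeros, we may assume all digits are $\mathtt 0$ or $\mathtt 1$'' is not justified: multiplying by $k^l$ appends zeros and inserting zeros between digits are both $\lambda$-invariant operations, but neither turns a digit $\mathtt 2$ into two $\mathtt 1$'s. If $a=2$ and $b=k+1$ then $\lambda(a)=\lambda(\mathtt 2)$ and $\lambda(b)=\lambda(1)^2$, and no amount of zero-padding identifies these.

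Second, and more seriously, the parallelepiped you import from Lemma~\ref{lem:inductive} does \emph{not} produce a cube with one value at $\vec 1$ and a constant value on the remaining vertices. In that lemma all $e_j$ for $j\geq 1$ are equal, so $1\vec\omega\cdot\vec e$ depends only on $\abs{\vec\omega}$; the vertex $\vec\omega\neq\vec 1$ carries $\lambda(\word u')\lambda(\word x_{\vec\omega})$ where $\word x_{\vec\omega}=(k^M-d+\abs{\vec\omega})_k^M$ genuinely depends on $\abs{\vec\omega}$. In Lemma~\ref{lem:inductive} this $\vec\omega$-dependence is harmless because one compares two cubes built from different starting states $s,s'$ and the $\word x_{\vec\omega}$-factors cancel in the quotient $\bblambda(\tilde e)\bblambda(\tilde e')^{-1}$. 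In the invertible (one-state) setting there is no second state to play off, so nothing cancels and you do not get a cube of the shape required by Corollary~\ref{cor:H-in-Q-reduction}.

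The paper's construction is different and worth internalising. With $t=\abs{G}$ one takes
\[
e_0 = n\,k^{td}+1,\qquad e_j=(k^t-1)\,k^{(d-j)t}\quad(1\leq j\leq d),
\]
so that the $e_j$ are \emph{distinct} words of the form $\mathtt{(k{-}1)}^t\mathtt 0^{(d-j)t}$, each with $\lambda(e_j)=\lambda(k{-}1)^t=\id_G$. Two things happen: for $\vec\omega\neq\vec 1$ the digit blocks do not interact and one reads off $\lambda(n)\lambda(1)$; but $\sum_{j=1}^d e_j = k^{td}-1$ telescopes, so at $\vec\omega=\vec 1$ one gets $\vec 1\cdot\vec e=(n+1)k^{td}$ and hence $\lambda(n+1)$. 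Now Corollary~\ref{cor:H-in-Q-reduction} gives $\lambda(n+1)\equiv\lambda(n)\lambda(1)\bmod K$ for every $n$, which is exactly the inductive step you need; the rest (cyclicity of $G/K$, order dividing $k-1$) follows as you wrote.
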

\begin{proof}
	Following the usual strategy (cf.\ Propositions \ref{prop:reduce_to_str_sync} and \ref{prop:reduce_to_invert}), we will consider the normal subgroup of $G$ given by
	\begin{equation}\label{eq:def-of-H-3}
		H = \abra{ \lambda(n+1) \lambda(1)^{-1} \lambda(n)^{-1} : n \geq 0}^G.
	\end{equation}
	A simple inductive argument shows that $\lambda(n) \equiv \lambda(1)^n \bmod{H}$ for all $n \geq 0$, and in fact $H$ is the normal subgroup of $G$ generated by $\lambda(n)\lambda(1)^{-n}$ for $n \geq 0$. In particular, $G/H$ is cyclic.
	
	We will show that the factor $\cT/H$ is characteristic. Fix $d \geq 0$, take any $n \geq 0$. Let $t = \abs{G}$ so that $g^t = \id_G$ for all $g \in G$. Consider the vector $\vec e \in \NN_0^{d+1}$ given by
  \begin{align*}
    {e}_0 = n k^{td} + 1; \qquad  {e}_j =  (k^{t}-1)k^{(d-j)t} \quad (1 \leq j \leq d).
  \end{align*}
  Put also $l = \abs{ (n)_k } + td + 1$ so that $1\vec{\omega} \cdot \vec e < k^l$ for all $\vec\omega \in \{0,1\}^d$ and hence we have a morphism $\tilde e = (l,\vec e) \colon \bb v_0^{\cT} \to \bb v_0^{\cT}$. We next compute $\bblambda(\tilde e)$. If $\vec\omega \in \{0,1\}^d \setminus \{\vec 1\}$ and $0 \leq j \leq d$ be the largest index such that $\omega_j = 0$, then
	$$
	(1 \vec\omega \cdot \vec e)_k^l = \mathtt{0}(n)_k \word v_{\omega_1} \word v_{\omega_2} \dots \word v_{\omega_{j-1}} \mathtt{0}^{t-1}\mathtt{1 0}^{t(d-j)},
	$$
	where $\word v_1 = (k^t-1)_k \in \Sigma_k^t$ and $\word v_0 = \mathtt{0}^t \in \Sigma_k^t$. Since $\lambda(\word v_0) =\lambda(\word v_1) = \id_G$, we have
	$$
	\lambda{(1 \vec\omega \cdot \vec e)_k^l} = \lambda(n) \lambda(1).
	$$   
  By a similar reasoning,
  \[
  	\lambda\bra{(\vec 1 \cdot \vec e)_k^l} = \lambda\bra{n+1}.
  \]
	Since $d \geq 0$ was arbitrary, it follows by Corollary~\ref{cor:H-in-Q-reduction} that $\lambda(n+1) \equiv \lambda(n)\lambda(1) \bmod K$. Since $n$ was arbitrary, $H < K$ and $\cT/H = (\Sigma_k,G/H, \bar \lambda)$ is characteristic. Let $m$ denote the order the cyclic group $G/H$. Because $\bar\lambda(n) = \bar\lambda(1)^n$ for all $n \geq 0$, $\cT/H$ is isomorphic to $\cZ(m)$, and because
	\(
		\lambda(1) = \lambda(k) \equiv \lambda(1)^k \bmod{H},
	\)
$m$ is a divisor of $k-1$. 
\end{proof}


\subsection{The end of the chase}

In this section we finish the proof of the main result of this section. This task is virtually finished --- we just need to combine the ingredients obtained previously.

\begin{proof}[Proof of Theorem \ref{thm:cubes}]
Chaining together Propositions \ref{prop:reduce_to_str_sync}, \ref{prop:reduce_to_invert} and \ref{prop:char_fact_of_invertible} we conclude that the \egeaab{} $\cT$ has a characteristic factor of the form $\cZ(m)$ with $m \mid k-1$. By Lemma \ref{lem:Zm-basic} it follows that $m = d'_{\cT}$.
\end{proof}


\section*{Acknowledgments} 
The authors thank the anonymous reviewers for their careful reading of the paper and the feedback.

\bibliographystyle{amsplain}

\begin{thebibliography}{BKPLR16}

\bibitem[ADM]{Adamczewski2020}
Boris Adamczewski, Michael Drmota, and Clemens Müllner.
\newblock (logarithmic) densities for automatic sequences along primes and
  squares.

\bibitem[AS03]{ASbook}
Jean-Paul Allouche and Jeffrey Shallit.
\newblock {\em Automatic sequences}.
\newblock Cambridge University Press, Cambridge, 2003.

\bibitem[BHK05]{BergelsonHostKra-2005}
Vitaly Bergelson, Bernard Host, and Bryna Kra.
\newblock Multiple recurrence and nilsequences.
\newblock {\em Invent. Math.}, 160(2):261--303, 2005.
\newblock With an appendix by Imre Ruzsa.

\bibitem[BK19a]{ByszewskiKonieczny-2019}
Jakub Byszewski and Jakub Konieczny.
\newblock Automatic sequences and generalised polynomials.
\newblock {\em Canadian Journal of Mathematics}, 2019.
\newblock To appear.

\bibitem[BK19b]{ByszewskiKonieczny-Cobham}
Jakub Byszewski and Jakub Konieczny.
\newblock A density version of {C}obham's {T}heorem.
\newblock {\em To appear in Acta Arithmetica}, 2019+.
\newblock {\href{http://arxiv.org/abs/1710.07261}{arXiv: 1710.07261
  [math.CO]}}.

\bibitem[BKPLR16]{BergelsonKulagaPrzymusLemanczykRichter-2016}
Vitaly Bergelson, Joanna Ku{\l}aga-Przymus, Mariusz Lema{\'{n}}czyk, and
  Florian~K. Richter.
\newblock {Rationally almost periodic sequences, polynomial multiple recurrence
  and symbolic dynamics}, 2016.

\bibitem[BR02]{BergelsonRuzsa-2002}
V.~Bergelson and I.~Ruzsa.
\newblock Squarefree numbers, {IP} sets and ergodic theory.
\newblock In {\em Paul {E}rd\H os and his mathematics, {I} ({B}udapest, 1999)},
  volume~11 of {\em Bolyai Soc. Math. Stud.}, pages 147--160. J\'anos Bolyai
  Math. Soc., Budapest, 2002.

\bibitem[DDM15]{DeshouillersDrmotaMullner-2015}
Jean-Marc Deshouillers, Michael Drmota, and Clemens M{\"u}llner.
\newblock Automatic sequences generated by synchronizing automata fulfill the
  {S}arnak conjecture.
\newblock {\em Studia Math.}, 231(1):83--95, 2015.

\bibitem[DM12]{DrmotaMorgenbesser-2012}
Michael Drmota and Johannes~F. Morgenbesser.
\newblock Generalized {T}hue-{M}orse sequences of squares.
\newblock {\em Israel J. Math.}, 190:157--193, 2012.

\bibitem[DMR11]{Drmota2011}
Michael Drmota, Christian Mauduit, and Jo{\"e}l Rivat.
\newblock The sum-of-digits function of polynomial sequences.
\newblock {\em J. Lond. Math. Soc.}, 84(1):81--102, 2011.

\bibitem[DMR13]{DrmotaMauduitRivat-TM-squares}
Michael Drmota, Christian Mauduit, and Jo\"el Rivat.
\newblock The {T}hue-{M}orse sequence along squares is normal, 2013.
\newblock Preprint.

\bibitem[EK18]{JK-AutSeq-ergo}
Tanja Eisner and Jakub Konieczny.
\newblock Automatic sequences as good weights for ergodic theorems.
\newblock {\em Discrete Contin. Dyn. Syst.}, 38(8):4087--4115, 2018{}.

\bibitem[ET12]{EisnerTao-2012}
Tanja Eisner and Terence Tao.
\newblock Large values of the {G}owers-{H}ost-{K}ra seminorms.
\newblock {\em J. Anal. Math.}, 117:133--186, 2012.

\bibitem[FH91]{FultonHarris-book}
William Fulton and Joe Harris.
\newblock {\em Representation theory}, volume 129 of {\em Graduate Texts in
  Mathematics}.
\newblock Springer-Verlag, New York, 1991.
\newblock A first course, Readings in Mathematics.

\bibitem[FH17]{FrantzikinakisHost-2017}
Nikos Frantzikinakis and Bernard Host.
\newblock Higher order {F}ourier analysis of multiplicative functions and
  applications.
\newblock {\em J. Amer. Math. Soc.}, 30(1):67--157, 2017.

\bibitem[FK19]{FanKonieczny-2019}
Aihua Fan and Jakub Konieczny.
\newblock On uniformity of $q$-multiplicative sequences.
\newblock {\em Bulletin of the London Mathematical Society}, 2019{}{}{}.

\bibitem[FKM13]{FouvryKowalskiMichel-2013}
\'{E}tienne Fouvry, Emmanuel Kowalski, and Philippe Michel.
\newblock An inverse theorem for {G}owers norms of trace functions over {$\bold
  F_p$}.
\newblock {\em Math. Proc. Cambridge Philos. Soc.}, 155(2):277--295, 2013.

\bibitem[Gel68]{Gelfond-1967}
A.~O. Gel'fond.
\newblock Sur les nombres qui ont des propri\'et\'es additives et
  multiplicatives donn\'ees.
\newblock {\em Acta Arith.}, 13:259--265, 1967/1968.

\bibitem[Gow01]{Gowers-2001}
W.~T. Gowers.
\newblock A new proof of {S}zemer\'edi's theorem.
\newblock {\em Geom. Funct. Anal.}, 11(3):465--588, 2001.

\bibitem[Gre]{Green-book}
Ben Green.
\newblock {\em {Higher-Order Fourier Analysis, I}}.
\newblock (Notes available from the author).

\bibitem[GT10a]{GreenTao-2010-ARL}
Ben Green and Terence Tao.
\newblock An arithmetic regularity lemma, an associated counting lemma, and
  applications.
\newblock In {\em An irregular mind}, volume~21 of {\em Bolyai Soc. Math.
  Stud.}, pages 261--334. J\'anos Bolyai Math. Soc., Budapest, 2010.

\bibitem[GT10b]{GreenTao-2010-Ann}
Ben Green and Terence Tao.
\newblock Linear equations in primes.
\newblock {\em Ann. of Math. (2)}, 171(3):1753--1850, 2010.

\bibitem[GT12]{GreenTao-2012}
Ben Green and Terence Tao.
\newblock The quantitative behaviour of polynomial orbits on nilmanifolds.
\newblock {\em Ann. of Math. (2)}, 175(2):465--540, 2012.

\bibitem[GTZ12]{GreenTaoZiegler-2012}
Ben Green, Terence Tao, and Tamar Ziegler.
\newblock An inverse theorem for the {G}owers {$U^{s+1}[N]$}-norm.
\newblock {\em Ann. of Math. (2)}, 176(2):1231--1372, 2012.

\bibitem[HK05]{HostKra-2005}
Bernard Host and Bryna Kra.
\newblock Nonconventional ergodic averages and nilmanifolds.
\newblock {\em Ann. of Math. (2)}, 161(1):397--488, 2005.

\bibitem[HK08]{HostKra-2008}
Bernard Host and Bryna Kra.
\newblock Parallelepipeds, nilpotent groups and {G}owers norms.
\newblock {\em Bull. Soc. Math. France}, 136(3):405--437, 2008.

\bibitem[Kim99]{Kim-1999}
Dong-Hyun Kim.
\newblock On the joint distribution of {$q$}-additive functions in residue
  classes.
\newblock {\em J. Number Theory}, 74(2):307--336, 1999.

\bibitem[Kon19]{JK-Gowers-Thue-Morse}
Jakub Konieczny.
\newblock {G}owers norms for the {T}hue--{M}orse and {R}udin--{S}hapiro
  sequences.
\newblock {\em To appear in Annales de l'Institut Fourier}, 2019+{}{}.
\newblock {\href{https://arxiv.org/abs/1611.09985}{arXiv: 1611.09985
  [math.NT]}}.

\bibitem[Liu11]{Liu-2010}
Huaning Liu.
\newblock Gowers uniformity norm and pseudorandom measures of the pseudorandom
  binary sequences.
\newblock {\em Int. J. Number Theory}, 7(5):1279--1302, 2011.

\bibitem[LM18]{Lemanczyk2018}
Mariusz {Lema{\'n}czyk} and Clemens {M{\"u}llner}.
\newblock {Automatic sequences are orthogonal to aperiodic multiplicative
  functions}.
\newblock {\em arXiv e-prints}, page arXiv:1811.00594, Nov 2018.

\bibitem[Min88]{bookMinc}
Henryk Minc.
\newblock {\em Nonnegative matrices}.
\newblock Wiley-Interscience Series in Discrete Mathematics and Optimization.
  John Wiley \& Sons, Inc., New York, 1988.
\newblock A Wiley-Interscience Publication.

\bibitem[Mor08]{Morgenbesser-thesis}
Johannes Morgenbesser.
\newblock {Gelfond's sum of digits problems}.
\newblock Master's thesis, TU Wien, 2008.

\bibitem[MR09]{MauduitRivat-2009}
Christian Mauduit and Jo\"el Rivat.
\newblock La somme des chiffres des carr\'es.
\newblock {\em Acta Math.}, 203(1):107--148, 2009.

\bibitem[MR10]{MauduitRivat-2010}
Christian Mauduit and Jo\"el Rivat.
\newblock Sur un probl\`eme de {G}elfond: la somme des chiffres des nombres
  premiers.
\newblock {\em Ann. of Math. (2)}, 171(3):1591--1646, 2010.

\bibitem[MR15]{MauduitRivat-2015}
Christian Mauduit and Jo\"el Rivat.
\newblock Prime numbers along {R}udin-{S}hapiro sequences.
\newblock {\em J. Eur. Math. Soc. (JEMS)}, 17(10):2595--2642, 2015.

\bibitem[MR18]{Mauduit2018}
Christian Mauduit and Jo\"{e}l Rivat.
\newblock Rudin-{S}hapiro sequences along squares.
\newblock {\em Trans. Amer. Math. Soc.}, 370(11):7899--7921, 2018.

\bibitem[MS98]{MauduitSarkozy-1998}
Christian Mauduit and Andr{\'a}s S{\'a}rk{\"o}zy.
\newblock On finite pseudorandom binary sequences. {II}. {T}he {C}hampernowne,
  {R}udin-{S}hapiro, and {T}hue-{M}orse sequences, a further construction.
\newblock {\em J. Number Theory}, 73(2):256--276, 1998.

\bibitem[MS15]{MullnerSpiegelhofer}
Clemens M{\"u}llner and Lukas Spiegelhofer.
\newblock Normality of the {T}hue-{M}orse sequence along {P}iatetski-{S}hapiro
  sequences, {II}, 2015.
\newblock Preprint. {\href{ https://arxiv.org/abs/1511.01671 }{arXiv:1511.01671
  [math.NT]}}.

\bibitem[M{\"u}l17]{Mullner-2017}
Clemens M{\"u}llner.
\newblock Automatic sequences fulfill the {S}arnak conjecture.
\newblock {\em Duke Math. J.}, 166(17):3219--3290, 2017.

\bibitem[M{\"u}l18]{Muellner2018}
Clemens M{\"u}llner.
\newblock The {R}udin-{S}hapiro sequence and similar sequences are normal along
  squares.
\newblock {\em Canad. J. Math.}, 70(5):1096--1129, 2018.

\bibitem[NR09]{NiederreiterRivat-2009}
Harald Niederreiter and Jo\"{e}l Rivat.
\newblock On the {G}owers norm of pseudorandom binary sequences.
\newblock {\em Bull. Aust. Math. Soc.}, 79(2):259--271, 2009.

\bibitem[{Spi}18]{Spiegelhofer2018}
Lukas {Spiegelhofer}.
\newblock {The level of distribution of the Thue--Morse sequence}.
\newblock {\em arXiv e-prints}, page arXiv:1803.01689, Mar 2018.

\bibitem[Tao12]{Tao-book}
Terence Tao.
\newblock {\em Higher order {F}ourier analysis}, volume 142 of {\em Graduate
  Studies in Mathematics}.
\newblock American Mathematical Society, Providence, RI, 2012.

\bibitem[TV06]{TaoVu-book}
Terence Tao and Van Vu.
\newblock {\em Additive combinatorics}, volume 105 of {\em Cambridge Studies in
  Advanced Mathematics}.
\newblock Cambridge University Press, Cambridge, 2006.

\end{thebibliography}


\begin{dajauthors}

\begin{authorinfo}[jb]
  Jakub Byszewski\\
  Faculty of Mathematics and Computer Science\\
  Jagiellonian University\\
  \L{}ojasiewicza 6\\
  30-348 Krak\'{o}w, Poland\\
  jakub\imagedot{}byszewski\imageat{}uj\imagedot{}edu\imagedot{}pl\\
\end{authorinfo}
\begin{authorinfo}[jk]
  Jakub Konieczny\\
  Camille Jordan Institute\\ 
  Claude Bernard University Lyon 1\\
  43 Boulevard du 11 novembre 1918\\
  69622 Villeurbanne Cedex, France\\
  \vspace{1em}
  Faculty of Mathematics and Computer Science\\
  Jagiellonian University\\
  \L{}ojasiewicza 6\\
  30-348 Krak\'{o}w, Poland\\
  jakub\imagedot{}konieczny\imageat{}gmail\imagedot{}com\\
\end{authorinfo}
\begin{authorinfo}[cm]
  Clemens M\"{u}llner\\
  Institut f\"ur Diskrete Mathematik und Geometrie\\
  TU Wien\\
  Wiedner Hauptstr.\ 8--10\\
  1040 Wien, Austria\\
  clemens\imagedot{}muellner\imageat{}tuwien\imagedot{}ac\imagedot{}at\\
\end{authorinfo}
\end{dajauthors}

\end{document}